\definecolor{refkey}{gray}{.75}
\definecolor{labelkey}{gray}{.5}
\colorlet{DarkGreen}{green!50!black}
\colorlet{DarkGray}{gray!60!black}
\numberwithin{equation}{section}
\renewcommand{\restriction}{\mathord{\upharpoonright}}
\renewcommand{\epsilon}{\varepsilon}
\newcommand{\given}{\;\big|\;}
\newcommand{\one}{\mathbf{1}}
 \definecolor{refkey}{gray}{.5}
 \definecolor{labelkey}{gray}{.5}
\definecolor{light}{gray}{.9}
\newtheorem{maintheorem}{Theorem}
\newtheorem{theorem}{Theorem}[section]
\newtheorem*{theorem*}{Theorem}
\newtheorem{lemma}[theorem]{Lemma}
\newtheorem{claim}[theorem]{Claim}
\newtheorem{proposition}[theorem]{Proposition}
\newtheorem{observation}[theorem]{Observation}
\newtheorem{fact}[theorem]{Fact}
\newtheorem{corollary}[theorem]{Corollary}
\theoremstyle{definition}{

\newtheorem{definition}[theorem]{Definition}

\newtheorem*{definition*}{Definition}

\newtheorem{remark}[theorem]{Remark}

}
\newcommand{\E}{\mathbb E}
\newcommand{\R}{\mathbb R}
\newcommand{\Z}{\mathbb Z}
\newcommand{\cC}{\ensuremath{\mathcal C}}
\newcommand{\cF}{\ensuremath{\mathcal F}}
\newcommand{\cG}{\ensuremath{\mathcal G}}
\newcommand{\cH}{\ensuremath{\mathcal H}}
\newcommand{\cI}{\ensuremath{\mathcal I}}
\newcommand{\cJ}{\ensuremath{\mathcal J}}
\newcommand{\cK}{\ensuremath{\mathcal K}}
\newcommand{\cL}{\ensuremath{\mathcal L}}
\newcommand{\cP}{\ensuremath{\mathcal P}}
\newcommand{\cS}{\ensuremath{\mathcal S}}
\newcommand{\llb }{\llbracket}
\newcommand{\rrb }{\rrbracket}
\newcommand{\sT}{{\ensuremath{\mathscr T}}}
\newcommand{\fm}{\mathfrak{m}}
\newcommand{\fG}{\mathfrak{G}}
\newcommand{\fP}{\mathfrak{P}}
\newcommand{\fW}{\mathfrak{W}}
\newcommand{\fX}{\mathfrak{X}}
\newcommand{\sB}{{\ensuremath{\mathscr B}}}
\newcommand{\sC}{{\ensuremath{\mathscr C}}}
\newcommand{\sE}{{\ensuremath{\mathscr E}}}
\newcommand{\sF}{{\ensuremath{\mathscr F}}}
\newcommand{\sX}{{\ensuremath{\mathscr X}}}
\newcommand{\sfh}{{\ensuremath{\mathsf{h}}}}
\newcommand{\g}{{\ensuremath{\mathbf g}}}
\newcommand{\br}{{\ensuremath{\mathbf r}}}
\newcommand{\bB}{{\ensuremath{\mathbf B}}}
\newcommand{\bC}{{\ensuremath{\mathbf C}}}
\newcommand{\bD}{{\ensuremath{\mathbf D}}}
\newcommand{\bF}{{\ensuremath{\mathbf F}}}
\newcommand{\bH}{{\ensuremath{\mathbf H}}}
\newcommand{\bI}{{\ensuremath{\mathbf I}}}
\newcommand{\bV}{{\ensuremath{\mathbf V}}}
\newcommand{\bW}{{\ensuremath{\mathbf W}}}
\newcommand{\bX}{{\ensuremath{\mathbf X}}}
\newcommand{\bY}{{\ensuremath{\mathbf Y}}}
\newcommand{\bP}{{\ensuremath{\mathbf P}}}
\newcommand{\fs}{\mathfrak{s}}
 \renewcommand{\epsilon}{\varepsilon}
\DeclareMathOperator{\diam}{diam}
\DeclareMathOperator{\ber}{Ber}
\newcommand{\Clust}{{\mathsf{Clust}}}
\newcommand{\Iso}{{\mathsf{Iso}}}
\newcommand{\iso}{{\mathsf{iso}}}
\newcommand{\Cone}{{\mathsf{Cone}}}
\newcommand{\swap}{{\mathsf{swap}}}
\newcommand{\Cyl}{{\mathsf{Cyl}}}
\newcommand{\ext}{{\mathsf{ext}}}
\DeclareMathOperator{\hgt}{ht}
\newcommand{\tv}{{\textsc{tv}}}
\newcommand{\trivincr}{\varnothing}
\newcommand{\hull}[1]{{\mathbullet{#1}}}
\DeclareMathOperator{\isodim}{\dim_{\sf ip}}
\newcommand{\mufloor}{{\widehat\upmu}}
\newcommand{\superimpose}[2]{%
  {\ooalign{$#1\@firstoftwo#2$\cr\hfil$#1\@secondoftwo#2$\hfil\cr}}}
\newcommand{\sbullet}{%
  \hbox{\fontfamily{lmr}\fontsize{.4\dimexpr(\f@size pt)}{0}\selectfont\textbullet}}
\DeclareRobustCommand{\mathbullet}{\accentset{\sbullet}}
\newcommand{\udarrow}{{\ensuremath{\scriptscriptstyle \updownarrow}}}
\newcommand{\lrvec}[1]{\overset{\,{}_\leftrightarrow}{#1}\!}
\newcommand{\red}{\textsc{red}}
\newcommand{\blue}{\textsc{blue}}
\newcommand{\green}{\textsc{green}}
\begin{document}

\title{Approximate domain Markov property for rigid Ising interfaces}

\author{Reza Gheissari}
\address{R.\ Gheissari\hfill\break
Department of Statistics and EECS \\ UC Berkeley }
\email{gheissari@berkeley.edu}

\author{Eyal Lubetzky}
\address{E.\ Lubetzky\hfill\break
Courant Institute\\ New York University\\
251 Mercer Street\\ New York, NY 10012, USA.}
\email{eyal@courant.nyu.edu}

\vspace{-1cm}

\begin{abstract}
Consider the Ising model on a centered box of side length $n$ in $\mathbb Z^d$ with $\mp$-boundary conditions that are minus in the upper half-space and plus in the lower half-space. 
Dobrushin famously showed that in dimensions $d\ge 3$, at low-temperatures the Ising interface (dual-surface separating the plus/minus phases) is rigid, i.e., it has $O(1)$ height fluctuations. Recently, the authors decomposed these oscillations into pillars and identified their typical shape, leading to a law of large numbers and tightness of their maximum.

Suppose we condition on a height-$h$ level curve of the interface, bounding a set $S \subset \mathbb Z^{d-1}$, along with the entire interface outside the cylinder $S\times \mathbb Z$: what does the interface in $S\times \mathbb Z$ look like? Many models of random surfaces (e.g., SOS and DGFF) fundamentally satisfy the \emph{domain Markov property}, whereby their heights on $S$ only depend on the heights on $S^c$ through the heights on $\partial S$. The Ising interface importantly does not satisfy this property; the law of the interface depends on the full spin configuration outside $S\times \mathbb Z$. 

Here we establish an approximate domain Markov property inside the level curves of the Ising interface. 
We first extend Dobrushin's result to this setting, showing the interface in $S\times \mathbb Z$ is rigid about height $h$, with exponential tails on its height oscillations.  Then we show that the typical tall pillars in $S\times \mathbb Z$ are uniformly absolutely continuous with respect to tall pillars of the unconditional Ising interface. Using this we identify the law of large numbers, tightness, and Gumbel tail bounds on the maximum oscillations in $S\times \mathbb Z$ about height $h$, showing that these only depend on the conditioning through the cardinality of $S$. 
\end{abstract}

\maketitle
\vspace{-0.8cm}

\section{Introduction}

The Ising model $\mu_\Lambda^\mp$ on a finite graph $\Lambda\subset \Z^d$ at inverse-temperature $\beta>0$ is the following distribution on $\pm 1$-\emph{spin} configurations $\sigma$ on $\sC(\Lambda)$, the
$d$-dimensional cells of $\Lambda$, which we  identify with their midpoints in $(\Z+\frac 12)^d$. Setting $\sigma(u)=-\operatorname{sign}(u_d)$ 
for $u=(u_1,\ldots,u_d)\in \sC(\Z^d)\setminus \sC(\Lambda)$ (Dobrushin $\mp$-boundary conditions),
\[ \mu_\Lambda^\mp(\sigma) \propto e^{-\beta \cH(\sigma)}\,,\qquad\mbox{where}\qquad\cH(\sigma)= \sum_{u,v:\, d(u,v)= 1} \!\!\!\one\{\sigma(u)\neq\sigma(v)\}\,,
\]
the sum is on pairs in $\sC(\Lambda\cup \partial \Lambda) = \{u\in \sC(\Z^d): d(u,\sC(\Lambda))\le 1\}$,
and $d(\cdot,\cdot)$ denotes Euclidean distance. This extends to infinite subgraphs of $\Z^d$ via weak limits.  We consider the low-temperature regime ($\beta$ large)~on  
\[ \Lambda_n= \cL_{0,n} \times \llb-\infty,\infty\rrb\,,\qquad\mbox{where}\qquad \cL_{0,n} = \llb - n,n\rrb^{d-1}\,\qquad \mbox{and}\qquad d\ge 3\,.\]
The Ising \emph{interface} $\cI$ corresponding to a configuration $\sigma$ under $\mu_{\Lambda_n}^\mp$ is the unique infinite $*$-connected component of faces ($(d-1)$-cells) separating plus and minus $d$-cells 
(two faces are $*$-adjacent, if they share a common bounding vertex).
Informally, $\cI$ separates the plus phase (below) from the minus phase (above). 

Our focus in this work is on the domain Markov property (DMP), a fundamental feature of many well-studied models of height functions (viewed as random surfaces), e.g., the Discrete Gaussian Free Field and the family of $|\nabla\varphi|^p$ models, which includes Solid-On-Solid and the Discrete Gaussian. The DMP states that, for any subset $S$, conditioning on the values of the field on $\partial S$ gives the same model on $S$, with the induced boundary conditions, and this is conditionally independent of its values on~$S^c$ (see, e.g.,~\cite{Funaki05,IV18,Velenik06,Zeitouni16} for accounts on these models and the progress made in the last two decades, where DMP played a crucial role).

Unlike these random height functions, the law of the Ising interface within $S$ \emph{does} depend on the interface in $S^c$ beyond $\partial S$, e.g., through the finite bubbles in the spin configurations above and below the interface. In dimension $d=2$,  Ornstein--Zernike (OZ) theory (cf.~\cite{PV97,PV99,CIV03}) for high-temperature connections yields an approximate DMP for the low-temperature interfaces by duality, which was useful for proving scaling limits, entropic repulsion, and wetting and pinning phenomena~(see, e.g., the recent survey~\cite{IV18}). This theory has no low temperature analog in dimension $d\geq 3$, where the interfaces are random surfaces (rather than curves).

Here we show that for $d\ge 3$ and low temperatures, when $\partial S$ is a height-$h$ \emph{level line}, the law of the oscillations of the interface in $S$ about height $h$, conditional on the interface in $S^c$, resembles the law of the interface oscillations about height $0$ in the unconditional Ising distribution on $S$ with $\mp$-boundary conditions.
For simplicity of notation, we present our results for $d=3$ (their adaptation to $d>3$ is straightforward).

Let us first recall the properties of the interface under the unconditional law $\mu_{\Lambda_n}^\mp$.
In a pioneering work by Dobrushin~\cite{Dobrushin72a} in 1972, it was shown that the interface is \emph{rigid}, in that the
height fluctuations in $\cI$ above any point in $\cL_{0,n}$, say the origin, are $O_{\textsc p}(1)$, and moreover obey an exponential tail: for $\beta$ large enough,
\[ \mu_{\Lambda_n}^\mp\left( \cI \cap (\{0,0\}\times[h,\infty)) \neq \emptyset\right)\leq \exp(-\beta h /3)\qquad\mbox{for every $h\geq 1$}\,.\]
Dobrushin's proof relied on a novel decomposition of the interfaces into \emph{walls and ceilings} (whose definitions we will recall in~\S\ref{subsec:results}) and a delicate grouping of the walls that allowed a Peierls argument to flatten $\cI$. It follows from Dobrushin's results that, if $\beta$ is large enough, then with high probability, the interface would have height $0$ above at least $0.99$ of the faces in $\cL_{0,n}$, denoted $\sF(\cL_{0,n})$, and the maximum height of the interface $M_{\Lambda_n}$ satisfies (by a union bound on the above tail estimate)  $M_{\Lambda_n} \leq (C_0/\beta) \log n$ for any $C_0>6$. 

In the recent work~\cite{GL19a}, the authors  introduced the notion of the \emph{pillar} $\cP_x$ above a face $x\in \sF(\cL_{0,n})$ in order to describe the local height oscillations of the interface near $x$. That work established the shape and 
limiting large deviation rate for $\cP_x$ attaining height $h$, from which it follows that
\begin{align}\label{eq:ld-rate-height} 
\lim_{h\to\infty} -\frac1h\log \mu_{\Z^3}^\mp\left(\cI \cap (\{0,0\}\times[h,\infty))\neq \emptyset\right)= \alpha\,,
\end{align}
where 
 $\alpha$ is also the exponential decay rate of the probability that a $*$-connected plus chain connects the origin to height $h$ in $\Z^2\times [0,h]$ under $\mu^\mp_{\Z^3}$, and satisfies $\alpha \in (4\beta-C,4\beta+C)$ for some absolute constant $C$.
The framework of~\cite{GL19a} then led to a law of large numbers (LLN) for the maximum: $M_{\cL_{0,n}}/\log n \xrightarrow{\,\mathrm{p}\,} 2/\alpha$.

Via a substantially more refined analysis, the follow-up~\cite{GL19b} established tightness of the centered maximum:
\[ M_{\cL_{0,n}} - \E[M_{\cL_{0,n}}] = O_{\textsc p}(1)\,,\qquad\mbox{and}\qquad \E[M_{\cL_{0,n}}]-m^*_{|\cL_{0,n}|}=O(1)\]
for an explicit deterministic sequence $m_s^*$. Furthermore, the centered maximum obeys Gumbel tail bounds:
\[ 
e^{- C\exp\left(-4\beta k + C|k|\right)}
\leq \mu_{\Lambda_n}^\mp\big(M_{\cL_{0,n}} -m_{|\cL_{0,n}|}^* < k \big)
\leq e^{- c\exp\left(- 4\beta k - C|k|\right)}\quad\mbox{for any fixed $k\in\Z$}\,.\]

Even though the interface is not a height function (rather it is the boundary of a 3D connected component), Dobrushin's rigidity result implies that, in a typical interface $\cI$, at least $0.99$ of the
vertical columns of $\cL_{0,n}\times \mathbb \Z$ will intersect $\cI$ in exactly $1$ horizontal face.
The height of $\cI$ above those points is unique, giving rise to level lines: for a face $x$ in $\sF(\cL_{0,n})$, let $\hgt_\cI(x)$ be the height of the horizontal face of $\cI$ above $x$, whenever it is unique; a height-$h$ \emph{level line} is the external boundary of a $*$-connected component of $\{x\in \sF(\cL_{0,n}) : \hgt_\cI(x)=h\}$, i.e., the connected set of edges separating it from the infinite component of~$\Z^2$. We denote by $\mathfrak{L}_h=\mathfrak{L}_h(\cI)$ the set of height-$h$ level lines of the interface $\cI$; see Fig.~\ref{fig:level-lines}.
(We note in passing that, in terms of Dobrushin's definitions which we recall in~\S\ref{subsec:results}, a level line is the external boundary of a \emph{ceiling}.)

By analogy with random surface models which satisfy DMP, one would like to reason that the Ising distribution over interfaces inside a height-$h$ level line $\gamma_n$ bounding a set $S$ is essentially the same as that under $\mu_{S\times\Z}^\mp$ but shifted by $h$, even conditionally on the value of $\cI \cap (S^c \times \Z)$. However, if we were to expose the entire spin configuration (which \emph{does} satisfy DMP) outside $S\times \Z$ under this conditional measure, it would have a constant (sub-critical) fraction of plus-sites along the boundary $\partial S$ above height $h$, and minus sites below height $h$, and moreover, the law of these sub-critical bubbles is affected by the conditioning.

Estimates on the covariance of 3D Ising interface oscillations about the flat interface date back to follow-up work of Dobrushin~\cite{Dobrushin73} as well as~\cite{BLP79b}; these showed that under $\mu_{\Lambda_n}^{\mp}$, the covariance between those oscillations (formally, between \emph{walls}, which are connected oscillations of the interface supporting its level lines) decays exponentially. In particular, the total variation  distance between the joint law of level-lines through $x$ and $y$, and a product measure on these, decays exponentially in $d(x,y)$. Turning this bound---which is essentially sharp---
into one for the conditional distribution of the oscillations through $x$, given an $h$-level line $\gamma_n$ passing through $y$, is problematic: the conditioning has the effect of dividing the bound by the exponentially small quantity $\mu_{\Lambda_n}^{\mp}(\gamma_n\in \mathfrak L_h)\leq \exp ( - (\beta -C)|\gamma_n|)$. Alternatively, using cluster expansion~\cite{Minlos-Sinai} and viewing the conditional distribution over interfaces in $S\times \Z$ as a tilt of  $\mu_{S\times \Z}^{\mp}$, we would similarly find that the former is a tilt by at least $\exp ( C|\gamma_n|)$ of the latter (see e.g.,~\cite{HolickyZahradnik} for a sketch of this bound)---which is sharp due to the presence of sub-critical bubbles above and below the interface along $\partial S$. Using either approach, one has no control over conditional probabilities of events inside $S$ whose probabilities are greater than $\exp ( -C |\gamma_n|)$. 

In this work we find that for any $S\subset \cL_{0,n}$,  \emph{conditioning} on $\partial S$ being a height-$h$ level line generated by any $\cI \cap (S^c \times \Z)$, leads to a rigid interface in $S$ about height $h$. Moreover, if $S$ is ``thick" in that its perimeter is negligible in terms of its volume, then, the resulting distribution resembles the Ising interface under $\mu_{S\times \Z}^\mp$ shifted by $h$. The following special case of our Theorem~\ref{thm:gumbel} below, demonstrates this for the maximum. 

\begin{theorem*}[approximate DMP for a maximum, special case]
Fix $\beta>\beta_0$ large. There exists an explicit sequence  $m_s^*\asymp\log s$ so the following holds for all fixed $k\in\Z$:
If $\gamma_n\subset \sE(\Z^2)$ is a simple closed curve whose interior $S_n$ has $|S_n|>|\gamma_n|^{1.1}$, and $M_{S_n}$ is the maximum of $\cI$ above $S_n$, then for all $h\geq 1$ and large~$n$,
\begin{align*}
e^{- C\exp\left(- 4\beta k + C|k|\right)}&\leq \mu_{\Lambda_n}^\mp\left(M_{S_n}-h -m_{|S_n|}^* < k\given \gamma_n \in\mathfrak{L}_h\,,\, \cI \cap (S_n^c \times \Z) \right)
\leq e^{- c\exp\left(- 4\beta k -C|k|\right)}\,. 
\end{align*}
\end{theorem*}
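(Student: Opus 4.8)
The plan is to treat this as the special case of Theorem~\ref{thm:gumbel} in which $\partial S_n$ is a single $*$-connected level-line component, and to derive the Gumbel bounds for $M_{S_n}$ from the two structural inputs established earlier in the paper: (i) the extension of Dobrushin's rigidity to the conditional interface, i.e.\ that under $\nu_n:=\mu_{\Lambda_n}^\mp(\,\cdot\mid \gamma_n\in\mathfrak L_h,\ \cI\cap(S_n^c\times\Z))$ the interface in $S_n\times\Z$ is rigid about height $h$ with exponentially decaying tails on its oscillations, uniformly in the conditioning; and (ii) the uniform absolute continuity of tall pillars in $S_n\times\Z$ with respect to tall pillars of the unconditional interface under $\mu^\mp_{S_n\times\Z}$ shifted by $h$. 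From (i), $\nu_n$-a.s.\ the interface over $S_n$ decomposes into pillars rooted on the height-$h$ ceiling bounded by $\gamma_n$, all but a negligible fraction of which are trivial, so $M_{S_n}-h=\max_{x\in S_n}\hgt(\cP_x)$; it therefore suffices, for each fixed $k$, to control $N_k:=\#\{x\in S_n:\hgt(\cP_x)\ge m^*_{|S_n|}+k\}$ and in particular $\nu_n(N_k\ge 1)$.

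First I would split $S_n$ into a bulk $S_n^\circ=\{x:d(x,\gamma_n)\ge L\}$ with $L\asymp\log|S_n|$ and a boundary layer $S_n\setminus S_n^\circ$. On the bulk, (ii) transfers the tail of a single pillar height: for $x\in S_n^\circ$ and $h'\le C\log n$, $\nu_n(\hgt(\cP_x)\ge h')=(1+o(1))\,\mu^\mp_{S_n\times\Z}(\hgt(\cP_0)\ge h')$ uniformly in the conditioning, so $\E_{\nu_n}[N_k]$ agrees up to a $1+o(1)$ factor with the quantity computed for the unconditional model in \cite{GL19a,GL19b}; this is the step that pins the centering at $h+m^*_{|S_n|}$ and shows that the conditioning enters only through the cardinality $|S_n|$ via the column count (the normalization built into $m^*_s$ cancels the $n$-dependence, leaving $\E_{\nu_n}[N_k]=e^{-4\beta k+O(|k|)}$). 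To upgrade the first moment to $\nu_n(N_k\ge 1)$ in both directions I would rerun the Poissonization/second-moment scheme of \cite{GL19b}: pass to a sub-collection of bulk columns pairwise at distance $\gg L$, group them into $\asymp\max(\E_{\nu_n}[N_k],1)$ blocks, and use an approximate product structure for the events $\{\hgt(\cP_x)\ge m^*_{|S_n|}+k\}$ over those blocks to get $\nu_n(N_k=0)=\exp(-\Theta(\E_{\nu_n}[N_k]))$, which is exactly the two-sided $\exp(-4\beta k\pm C|k|)$ form (the gap between $4\beta$ and the true large-deviation rate $\alpha=4\beta+O(1)$ being absorbed into the $C|k|$ slack, as in the unconditional statement).

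Next I would dispose of the boundary layer: it contains at most $|\gamma_n|\cdot O(L)\le|S_n|^{1/1.1}\,O(\log|S_n|)$ columns, and for each of them (i) bounds $\nu_n(\hgt(\cP_x)\ge h')$ by an exponentially decaying function of $h'$, so a union bound makes the maximum over this layer at most $\tfrac1\alpha\log|\gamma_n|+O(\log\log n)\le\tfrac1{1.1}m^*_{|S_n|}+O(\log\log n)$ with $\nu_n$-probability $1-o(1)$, which for every fixed $k$ and all large $n$ lies below $m^*_{|S_n|}+k$ by a diverging margin — this is precisely the role of the thickness hypothesis $|S_n|>|\gamma_n|^{1.1}$. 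Combining with the bulk estimate gives $\nu_n(M_{S_n}-h-m^*_{|S_n|}<k)=\nu_n(N_k=0)+o(1)$ and hence the claimed two-sided Gumbel bound.

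The hard part will be the approximate product structure of the tall-pillar height events under $\nu_n$ needed in the second step. The usual exponential decay of correlations between well-separated walls (as in \cite{Dobrushin73,BLP79b}) degrades by a factor $\mu_{\Lambda_n}^\mp(\gamma_n\in\mathfrak L_h)^{-1}\ge e^{(\beta-C)|\gamma_n|}$ once one conditions on $\{\gamma_n\in\mathfrak L_h\}$, so it cannot be quoted directly; instead one must argue inside $\nu_n$, using the rigidity input to screen each bulk pillar from $\partial S_n$ by a flat stretch of interface, and port the cluster-expansion cancellation underlying the single-pillar absolute continuity (ii) to pairs of well-separated bulk pillars, so that the product-approximation error is summably small uniformly over the conditioning. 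The true technical weight of the argument, of course, sits upstream in establishing (i) and (ii) themselves.
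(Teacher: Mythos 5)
Your overall framing is right: the displayed bound is precisely the $d=11$ instance of Theorem~\ref{thm:gumbel} (since $|\gamma_n|<|S_n|^{1/1.1}=|S_n|^{10/11}$ means $\isodim(S_n)\le 11$), with $\bW_n$ the walls recording $\cI\cap(S_n^c\times\Z)$, and the paper's proof of that theorem is exactly a bulk/boundary split, an iterated revealing of walls to replace FKG, and a second-moment argument inside the bulk. The high-level structure of your proposal matches Propositions~\ref{prop:max-generic}--\ref{prop:max-thick}. Two concrete pieces of your sketch would fail as stated, though.

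First, your scale $L\asymp\log|S_n|$ for the boundary layer is too small. Corollary~\ref{cor:cond-pillar-rate} (and the coupling in Theorem~\ref{thm:cond-uncond-simple} that it rests on) requires $\log s_n = o(d(x,\partial S_n))$ uniformly in the conditioning, and at the relevant height $h\approx m^*_{s_n}\asymp\log s_n$ the exposed walls from earlier revealing steps can themselves have diameter of order $\log s_n$; a logarithmic buffer is not enough. The proof of Proposition~\ref{prop:max-thick} takes the tile side and buffer of size $e^{\Theta(h)}$, i.e.\ a small polynomial in $s_n$, and it is precisely the isoperimetric-dimension hypothesis that makes the resulting boundary layer $|\partial S_n|\cdot e^{O(h)}\le s_n^{10/11+O(1/\sqrt\beta)}$ still $o(s_n)$. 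With your $L$, the coupling step breaks.

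Second, once $L$ is corrected to the polynomial scale, the sparse sub-collection "pairwise at distance $\gg L$" that you propose for the second-moment step has density $L^{-2}$, hence expected number of tall pillars at height $m^*_{s_n}+k$ of order $\gamma e^{-\alpha_k}/L^2 = o(1)$, not $\Theta(1)$. Such a sub-collection simply does not see the maximum, so $\nu_n(N_k=0)=\exp(-\Theta(\E[N_k]))$ cannot be obtained this way. The paper's actual mechanism is different: it considers \emph{all} columns in each tile $Q_i'$, runs a Bonferroni inclusion--exclusion within each tile, and controls the within-tile pairwise term not by the swap coupling (which needs separation $\gg h$) but by the crude, FKG-free bound of Proposition~\ref{prop:two-pillars} obtained from a map that deletes one of the two pillars entirely. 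That crude near-field decorrelation is the ingredient your sketch is missing, and it is also why the paper emphasizes that it does not follow the multiscale-coupling/FKG route of~\cite{GL19b} (which you cite for this step) but rather an iterated-revealing version of the~\cite{CLMST14,LMS16} argument.
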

The proof of this result requires several new ingredients, which will be outlined in~\S\ref{subsec:methods} below,
including a modified version of Dobrushin's walls-and-ceilings rigidity argument which (unlike the original one) does apply within a level line (Theorem~\ref{thm:intro-rigidity-inside-ceiling}); a coupling of the shifted pillar at a point in the bulk of $S_n$ under the conditional law $\mu_{\Lambda_n}^\mp(\cdot\mid\gamma_n\in\mathfrak{L}_h\,,\cI\cap(S_n^c\times\Z))$ with pillars under the unconditional law $\mu_{\Z^3}^\mp$ (Theorem~\ref{thm:cond-uncond-simple}); and sharp tail bounds on the maximum in the absence of FKG. 
These yield sharp bounds on the maximum $M_{S_n}$ conditioned on the walls in $S_n^c$---Theorem~\ref{thm:gumbel}---of which the above theorem is a special case.

\begin{figure}
\vspace{-0.05in}
    \centering
    \includegraphics[width=0.5\textwidth]{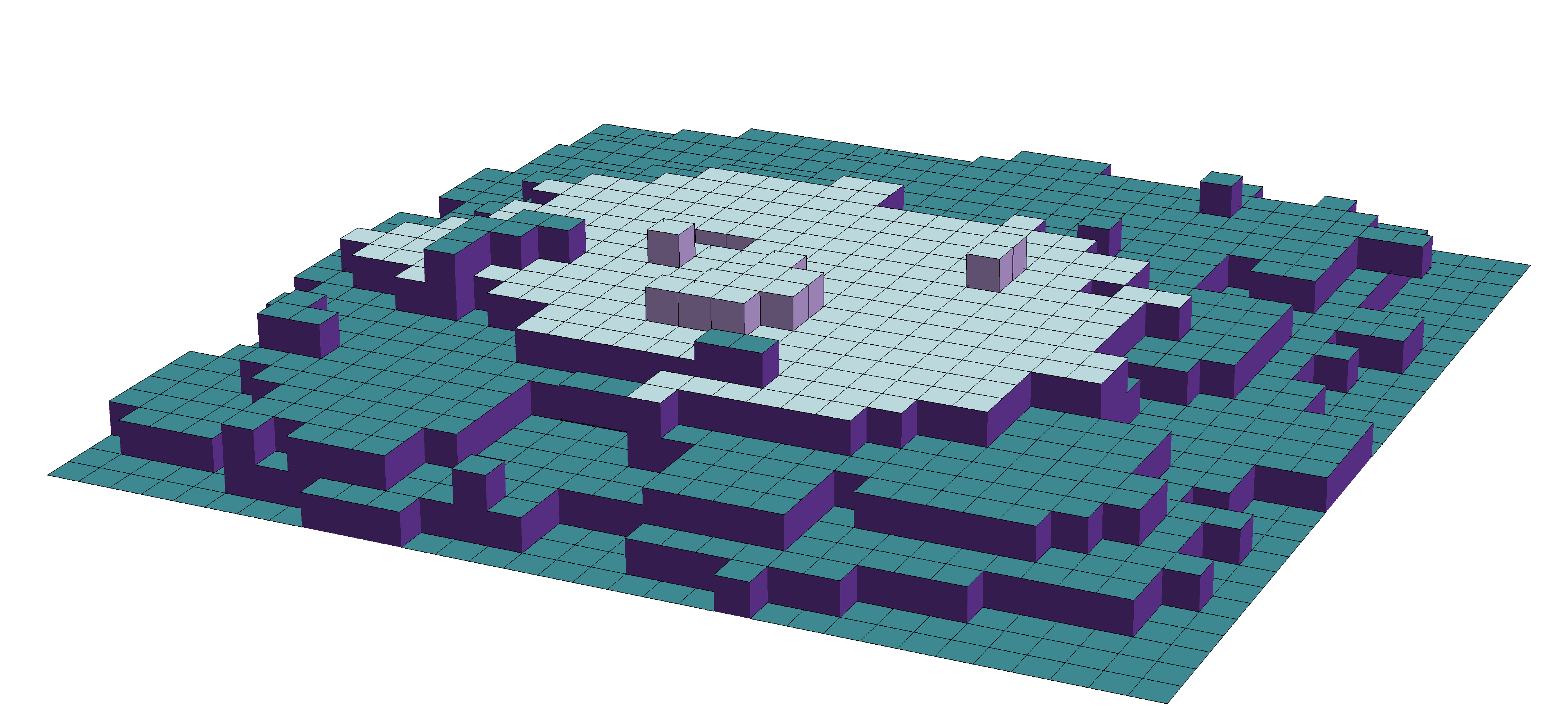}
    \hspace{-.3in}
    \raisebox{-20pt}{\includegraphics[width=0.4
    \textwidth]{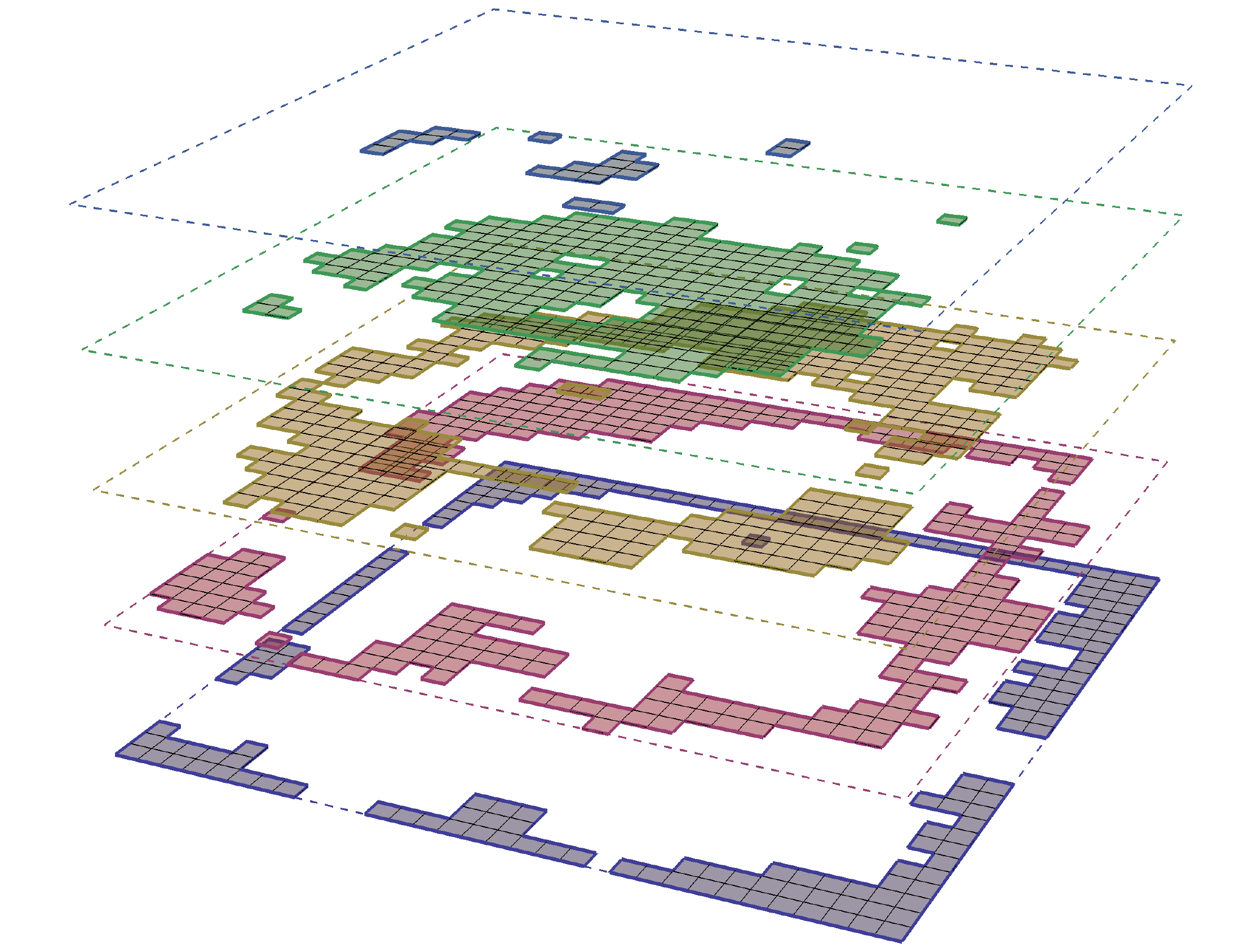}}
    \vspace{-0.1in}
    \caption{Level sets $\{x:\hgt_\cI(x) = h\}$ of the interface $\cI$ (right). The set $S_n$ is the interior of the largest height-3 level line, bounding the restriction $\cI \cap (S_n \times \Z)$ (highlighted, left).}
    \label{fig:level-lines}
\end{figure}

\subsection{Main results}\label{subsec:results}
In this section, we state our full results precisely. Towards that, let us begin by recalling the wall and ceiling decomposition introduced in~\cite{Dobrushin72a} to prove rigidity of the interface under $\mu_{\Lambda_n}^\mp$. Viewing  $\cL_{0,n}= \llb -n,n\rrb^{d-1}\times \{0\}$ as a subset of $\Lambda_n$, the ground state (lowest energy) interface is the flat one $\sF(\cL_{0,n})$; the vertical oscillations of $\cI$ about $\cL_{0,n}$ are grouped into \emph{walls}, defined next. For this it will help to define a \emph{projection} of a set $A\subset \Lambda_n$ into $\cL_{0,n}$ via $\rho(A) = \{(x_1,x_2,0): (x_1,x_2,x_3)\in A\mbox{ for some $x_3$}\}$.

\begin{definition*}[Ceilings and walls: see also~Def.~\ref{def:ceilings-walls}]
A face $f\in\cI$ is a \emph{ceiling} face if it is a horizontal face (having normal vector $e_3$) such that no other horizontal face $f'\in\cI$ has $\rho(f') = \rho(f)$. A face $f\in \cI$ is a \emph{wall} face if it is not a ceiling face. 
A \emph{wall} $W$ is a $*$-connected set of wall faces of $\cI$ and a \emph{ceiling} $\cC$  is a $*$-connected set of ceiling faces of $\cI$.  We assign the walls of $\cI$ to the index set of faces of $\cL_{0,n}$, setting $W_f := W$ if $f\in \sF(\cL_{0,n})$ is incident to and interior to $\rho(W)$; if there is no such wall in $\cI$, set $W_f = \trivincr$.
\end{definition*}

\noindent (Note that the boundary faces of a ceiling $\cC$ are at the same height, denoted $\hgt(\cC)$; thus, when projected down to $\cL_{0,n}$ the boundary of the ceiling forms a level line at this height, relating it to $\mathfrak{L}_{\hgt(\cC)}$ from above.)

For a set $A_n\subset \cL_{0,n}$ such that $\cL_{0,n}\setminus A_n$ is simply-connected, and an admissible set of walls $\bW_n=(W_z)_{z\in A_n}$ (some possibly $\trivincr$), let $\bI_{\bW_n}$ be the set of interfaces such that for every $z\in A_n$, the wall assigned to $z$ is $W_z$. 

The generalization of the level-line conditioning from above is to fix a simply-connected $S_n\subset \cL_{0,n}$ and condition on all exterior walls, i.e., condition on $\bI_{\bW_n}$ for $\bW_n$ such that $S_n \cap \bigcup_{W\in \bW_n} \rho(W)= \emptyset$. In this setting, all interfaces $\cI \in \bI_{\bW_n}$ have the same restriction $\cI \cap (S_n^c \times \Z)$. The collection of walls $\bW_n$ dictate a height $\hgt(\cC_{\bW_n})$ as follows: letting $\cI_{\bW_n}$ denote the unique interface whose only walls are $\bW_n$, then $\cI_{\bW_n}$ has a single ceiling called $\cC_{\bW_n}$ such that $S_n\subset\rho(\cC_{\bW_n})$. In the case where $S_n,\bW_n$ are such that $\partial S_n = \partial \rho(\cC_\bW)$, the conditioning on $\bI_{\bW_n}$ corresponds exactly to conditioning on $\partial S_n \in \mathfrak L_{\hgt(\cC_{\bW})}$ and $\cI \cap (S_n^c \times \Z)$; see Fig.~\ref{fig:hole-in-ceiling}.

Given $\bI_{\bW_n}$, the lowest-energy interface is the one that has all horizontal faces at $\hgt(\cC_{\bW_n})$ inside $S_n$; our aim is therefore to show that the interface $\cI \cap (S_n\times \Z)$ resembles an interface under $\mu_{S_n\times \Z}^{\mp}$ shifted up by $\hgt(\cC_{\bW_n})$. 
The approximate DMP for the interface $\cI \cap (S\times \Z)$ cannot hold for all events, as emphasized by the fact that the tilt on the partition function is of order $\exp ( C |\partial S_n|)$. Nonetheless, we show that the distribution $\mu_{\Lambda_n}^{\mp} (\cI\cap (S\times \Z)\in \cdot \mid \bI_{\bW_n})$ behaves like a vertical shift of $\mu_{S_n\times \Z}^{\mp}$ by $\hgt(\cC_{\bW_n})$ in two key aspects: the behavior of its maximum height oscillation, and the shape of its typical pillars.  

\subsubsection*{Approximate domain Markov for the maximum}
In order to present our main result on the maximum height oscillation within $S$, we recall certain quantities from~\cite{GL19a,GL19b} which will be used to explicitly determine the location of the maximum. For any subset $S \subset \cL_{0,n}$, let 
\begin{equation}\label{eq:def-Mn}
    M_S = M_S(\cI) := \max \{ x_3: (x_1, x_2, x_3) \in \cI, (x_1, x_2, 0)\in S\}\,.
\end{equation}
The LLN established for $M_{\cL_{0,n}}$ in~\cite{GL19a} states that there exists $\beta_0$ such that for every $\beta>\beta_0$ we have ${M_{\cL_{0,n}}} = \frac{2}{\alpha}{\log n}+ o(\log n)$ in $\mu_{\Lambda_n}^\mp$-probability, for $\alpha = \alpha(\beta)$ given as follows. 
If  $v\xleftrightarrow[A~]{+}w$ denotes existence of a $*$-adjacent path of plus spins connecting $v$ and $w$ in $A$, then
\begin{equation}\label{eq:alpha-alpha-h-def}
\alpha := \lim_{h\to\infty} \frac{\alpha_h}h\qquad\mbox{for}\qquad\alpha_h(\beta) = -\log \mu_{\Z^3}^\mp\Big((\tfrac12,\tfrac12,\tfrac12) \xleftrightarrow[\R^2\times [0,\infty)]{+} (\Z+\tfrac 12)^2\times\{h-\tfrac12\}\Big)\,.
\end{equation}
(The existence of the limit above is non-trivial and was an important step in the proof of~\cite{GL19a} of the LLN.) 
The tightness of $(M_{\cL_{0,n}}-\E[M_{\cL_{0,n}}])$, obtained in~\cite{GL19b}, was accompanied by  $|\E[M_{\cL_{0,n}}] - m_{|\cL_{0,n}|}^*|\leq 1$ for
\begin{equation}\label{eq:m*-def}
m^*_{s_n} =  \inf\{ h \geq 1 \;:\, \alpha_h(\beta) > \log (s_n) - 2\beta\}\,,
\end{equation}
as well as uniform lower and upper Gumbel tails for the 
centered maximum $M_{\cL_{0,n}}-m_{|\cL_{0,n}|}^*$.

We will be interested in the height fluctuations of $\cI$ within $S$ conditioned on $\bI_{\bW_n}$ for $\bW_n$ satisfying $\rho(\bW_n) \cap S = \emptyset$. We wish to study $M_S$ relative to the height induced by $\cI \cap (S^c \times \Z)$, namely $\hgt(\cC_\bW)$. Towards that, define the relative maximum
 \[ \bar M_{S} = M_S -\hgt(\cC_\bW)\,.\]

\begin{definition*}[isoperimetric dimension of face sets]
A simply-connected subset of faces $S\subset \sF(\cL_{0,n})$ is said to have isoperimetric dimension at most $d$, denoted $\isodim(S) \leq d$, if $|\partial S| \leq |S|^{(d-1)/d}$.
\end{definition*}

\begin{figure}
    \centering
    \includegraphics[width=0.55\textwidth]{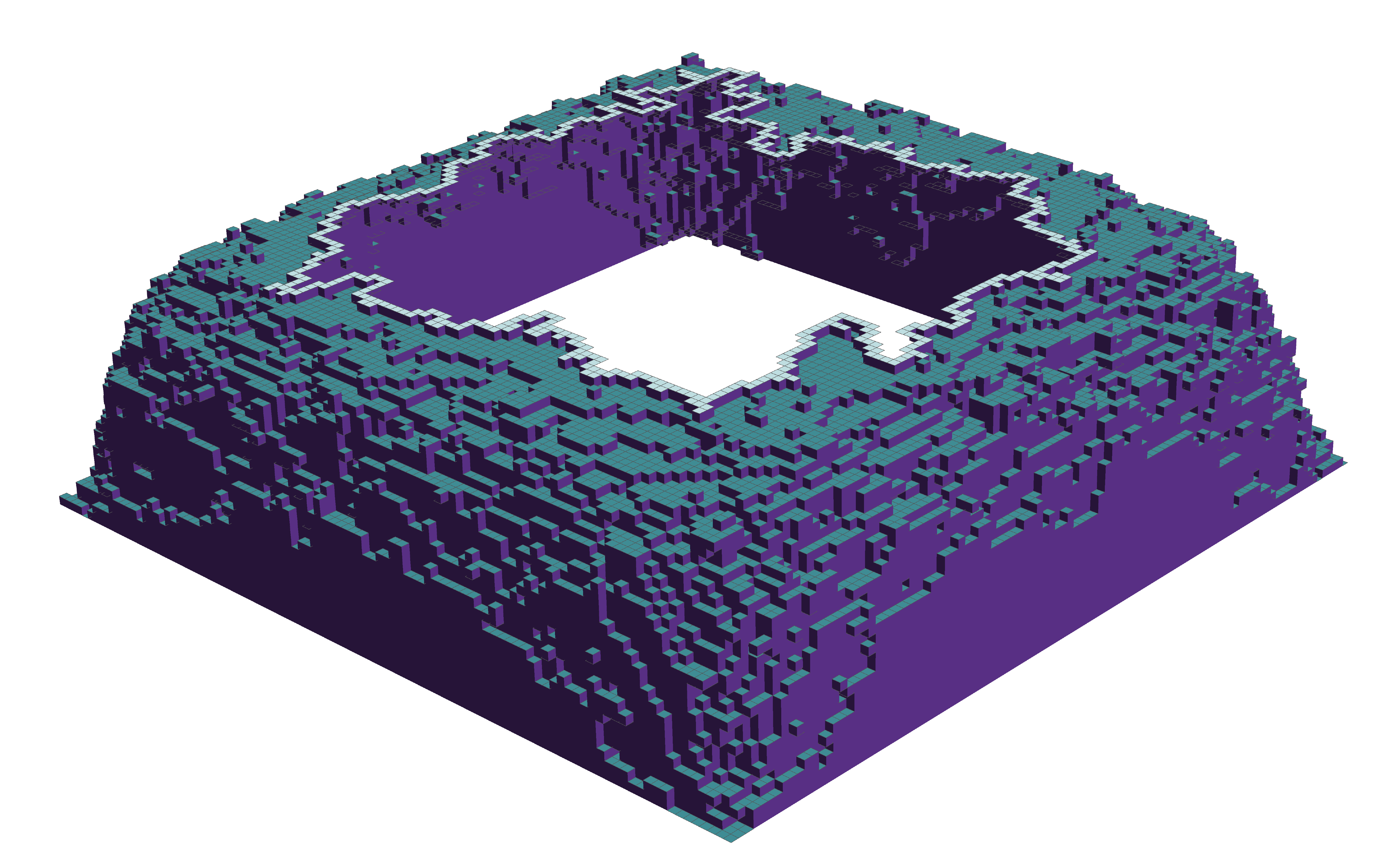}
    \hspace{-8pt}
    \raisebox{35pt}{\includegraphics[width=0.44\textwidth]{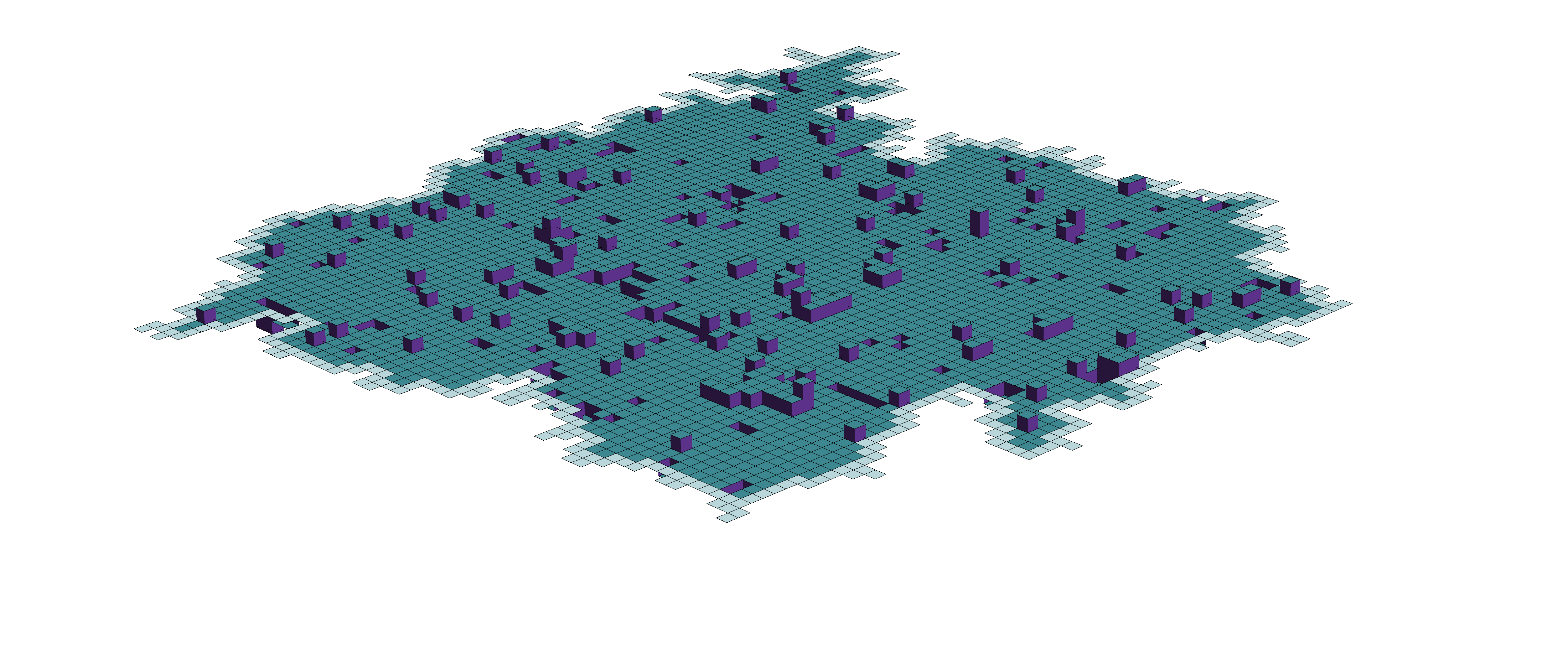}}
    \caption{Left: $\cI\cap (S_n^c\times \Z)$ consists of $\bW_n$ supporting $\cC_{\bW_n}$ (whose boundary faces are highlighted) at height $\hgt(\cC_{\bW_n})=30$. Right: $\cI\cap (S_n\times \Z)$ shifted down by~$\hgt(\cC_{\bW_n})$.}
    \label{fig:hole-in-ceiling}
\end{figure}

\begin{maintheorem}\label{thm:gumbel}
For every $d> 2$ there exists $\beta_0>0$ so that the following holds for all $\beta>\beta_0$. For every sequence of simply-connected sets $S_n\subset \cL_{0,n}$ with $\isodim(S_n)\leq d$ and $s_n:=|S_n|\to\infty$ as $n\to\infty$, and every set of walls $\bW_n = ( W_z)_{z\notin S_n}$ such that $\rho(\bW_n)\cap S_n = \emptyset$, we have for every fixed $k\geq 1$ and large enough~$n$,
\begin{align}\label{eq:M-right}
(1-\epsilon_\beta)\gamma
 \exp\left(-\overline\alpha \, k\right)&\leq \mu_{\Lambda_n}^\mp\left(\bar M_{S_n} -m_{s_n}^* \geq \;\;k \;\given \bI_{\bW_n} \right)
\leq (1+\epsilon_\beta)\gamma \exp\left(-\alpha_k\right)\,,\\
\label{eq:M-left}
e^{-(1+\epsilon_\beta)\gamma \exp\left(\overline\alpha\, k\right)}&\leq \mu_{\Lambda_n}^\mp\left(\bar M_{S_n} -m_{s_n}^* < -k\given \bI_{\bW_n} \right)
\leq e^{-(1-\epsilon_\beta) \gamma \exp\left(\alpha_k\right)}\,,
\end{align}
with $m_{s_n}^*$ as in~\eqref{eq:m*-def}, constants $\epsilon_\beta,\overline\alpha>0$ depending only on $\beta$, satisfying $\epsilon_\beta\downarrow 0$ and $\overline\alpha\downarrow \alpha$ as $\beta\uparrow\infty$, and 
\begin{align}
\label{eq:gamma-def}
\gamma &:= s_n \exp(-\alpha_{m_{s_n}^*})\in (e^{-2\beta-\epsilon_\beta},e^{2\beta})\,.
\end{align}
\end{maintheorem}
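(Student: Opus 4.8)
The plan is to reduce the conditional Gumbel tails to the two structural inputs advertised in the introduction---the modified Dobrushin rigidity inside a level line (Theorem~\ref{thm:intro-rigidity-inside-ceiling}) and the pillar absolute-continuity coupling (Theorem~\ref{thm:cond-uncond-simple})---and then run a second-moment/Poisson-approximation argument on the number of tall pillars, mimicking the unconditional proof of~\cite{GL19b} but carefully tracking the $O(|\partial S_n|)$ distortions that the conditioning introduces. Throughout we work under $\mu_{\Lambda_n}^\mp(\,\cdot\mid \bI_{\bW_n})$ and, after translating vertically by $\hgt(\cC_{\bW_n})$, think of the interface in $S_n\times\Z$ as a rigid surface about height $0$. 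First I would establish the ``one-point'' estimate: for a face $x$ in the bulk of $S_n$ (i.e. at distance $\gg \log n$ from $\partial S_n$), the conditional probability that the pillar $\cP_x$ reaches relative height $h$ is $(1\pm\epsilon_\beta)e^{-\alpha_h}$, uniformly in $\bW_n$. The upper bound here comes from the rigidity theorem (exponential tails with rate $\beta/3$ suffice for the crude union bound controlling pillars near $\partial S_n$), while the sharp constant comes from Theorem~\ref{thm:cond-uncond-simple}: the coupling with the unconditional pillar under $\mu_{\Z^3}^\mp$ transfers the known asymptotics $\mu_{\Z^3}^\mp(\cP_0\text{ reaches }h)=e^{-\alpha_h(1+o(1))}$ and, more precisely, the refined prefactor analysis of~\cite{GL19b}, up to a multiplicative $(1\pm\epsilon_\beta)$.

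Next I would set up the Poissonization. Let $N_h=\#\{x\in S_n: \cP_x\text{ reaches relative height }\ge h\}$; then $\bar M_{S_n}\ge h$ iff $N_h\ge 1$. By the one-point estimate, $\E[N_h\mid\bI_{\bW_n}]=(1\pm\epsilon_\beta)s_n e^{-\alpha_h}(1+o(1))$, and by definition~\eqref{eq:m*-def} of $m^*_{s_n}$ together with~\eqref{eq:gamma-def}, taking $h=m^*_{s_n}+k$ gives mean $(1\pm\epsilon_\beta)\gamma e^{-\alpha_k}$ (using the near-multiplicativity $\alpha_{m^*+k}\approx\alpha_{m^*}+\alpha_k$, which costs another $|k|$-dependent slack and is the source of $\overline\alpha$ versus $\alpha$). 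For the upper bound in~\eqref{eq:M-right} I apply Markov's inequality. For the matching lower bound in~\eqref{eq:M-right} and the left tail~\eqref{eq:M-left} I need a second-moment / local-dependence argument: the key is that pillars at well-separated faces $x,y$ are \emph{approximately independent} under the conditional law. This I would obtain by exposing the interface outside small cylinders around $x$ and $y$ and re-applying Theorem~\ref{thm:cond-uncond-simple} to each (the conditional-on-$\bI_{\bW_n}$ law, further conditioned on everything outside $\Cyl(x)\cup\Cyl(y)$, still admits the pillar coupling), so that $\cov(\one\{\cP_x\text{ tall}\},\one\{\cP_y\text{ tall}\}\mid\bI_{\bW_n})$ decays in $d(x,y)$; summing gives $\var(N_h\mid\bI_{\bW_n})=(1+o(1))\E[N_h\mid\bI_{\bW_n}]$ in the relevant regime, and a Chen--Stein / Paley--Zygmund bound upgrades this to $\mu^\mp_{\Lambda_n}(N_h=0\mid\bI_{\bW_n})=e^{-(1\pm\epsilon_\beta)\E[N_h\mid\bI_{\bW_n}]}$, which is exactly~\eqref{eq:M-left}, and to $\mu(N_h\ge1)\ge(1-\epsilon_\beta)\E[N_h]$ when $\E[N_h]\le 1$, which is~\eqref{eq:M-right}.

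The main obstacle I anticipate is the absence of FKG under the conditional measure $\mu_{\Lambda_n}^\mp(\cdot\mid\bI_{\bW_n})$: in the unconditional setting of~\cite{GL19b} monotonicity is used to localize the maximum and to control the joint upper tail of many pillars, and none of that is available here. I would circumvent this exactly as hinted in the introduction---via ``sharp tail bounds on the maximum in the absence of FKG''---by replacing monotone-coupling arguments with the explicit cluster-expansion / Peierls weight estimates underlying the rigidity theorem: concretely, the probability that \emph{any} pillar in a region of $m$ faces exceeds height $h$ is at most $m e^{-(\alpha-\epsilon_\beta)h}$ directly from the modified wall-grouping, with no recourse to monotonicity, and the required near-independence of far-apart tall pillars comes from the cylinder-exposure plus Theorem~\ref{thm:cond-uncond-simple} rather than from positive association. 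A secondary technical point is ensuring all the $(1\pm\epsilon_\beta)$ and $\overline\alpha$ slacks genuinely depend only on $\beta$ (not on $\bW_n$ or on $S_n$ beyond $\isodim(S_n)\le d$): this is where the uniformity in $\bW_n$ built into Theorems~\ref{thm:intro-rigidity-inside-ceiling} and~\ref{thm:cond-uncond-simple}, together with the isoperimetric hypothesis $|\partial S_n|\le|S_n|^{(d-1)/d}=o(s_n)$ guaranteeing that boundary pillars contribute negligibly to $N_h$, is used decisively.
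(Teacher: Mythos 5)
Your proposal captures the high-level skeleton the paper uses---rigidity inside a level line as a crude input, the pillar coupling of Theorem~\ref{thm:cond-uncond-simple} as the sharp one-point estimate, then a Poissonized first/second-moment argument in place of FKG---but the implementation differs in two ways from what is actually carried out in \S\ref{sec:fluctuations-in-ceiling}, and one of them is a genuine gap.

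\emph{What is different.} The paper does not run a global second moment or a Chen--Stein/Poisson approximation over all of $S_n$. Instead, Proposition~\ref{prop:max-thick} coarse-grains $S_n$ into tiles $Q_i$ of side $\asymp e^{4C_0 h}$ and reveals walls tile by tile, in a carefully chosen order so that when it comes time to handle the pillars in tile $Q_i'$, the walls already revealed all lie at distance $\gg h$ from $Q_i'$ (this is precisely what makes Corollary~\ref{cor:cond-pillar-rate} applicable at each step). For the \emph{lower} bound on $\mu(\bar M_{S}<h\mid\bI_{\bW})$ (eq.~\eqref{eq:max-thick-lower-large-h}), the proof is not a second moment at all: it is a direct product bound over the revealing steps, iterating the conditional estimate $\mu(\hgt(\cP_{x,S})\ge h\mid\cF_{i-1})\le(1+\epsilon_\beta)e^{-\alpha_h}$. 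The second-moment step (Bonferroni) is only used for the \emph{upper} bound~\eqref{eq:max-thick-upper-large-h}, and only \emph{within} a single tile $Q_i'$, never across tiles.

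\emph{Where the gap is.} Your covariance argument---``expose the interface outside small cylinders around $x$ and $y$ and re-apply Theorem~\ref{thm:cond-uncond-simple}''---fails for \emph{nearby} pairs, i.e.\ $d(x,y)=O(h)$ (or even $O(1)$). Theorem~\ref{thm:cond-uncond-simple} and Corollary~\ref{cor:cond-uncond-k-simple} require $h=o(\Delta_n)$, where $\Delta_n$ is the distance from the pillar to all the revealed walls; cylinders of radius $\gg h$ around $x$ and $y$ overlap when $d(x,y)=O(h)$, so you cannot get two independent-looking pillars that way. But these nearby pairs are exactly the ones that dominate the variance term in a global second moment (and are needed for any Chen--Stein bound), so without controlling them your Paley--Zygmund and Poisson-approximation steps don't close. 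The paper handles this with a separate, deliberately crude estimate---Proposition~\ref{prop:two-pillars}---obtained by an explicit pillar-deletion map $\Psi_{y,S}$ yielding $\mu(\overline E_{x,S}^h\cap\overline E_{y,S}^h\mid\bI_{\bW})\le e^{-(4\beta-C)h}\mu(\overline E_{x,S}^h\mid\bI_{\bW})$, valid for \emph{any} distinct $x,y$ including adjacent ones. Some such input is indispensable, and it is the piece that is absent from your account. The rest of your outline (Markov for~\eqref{eq:M-right}'s upper bound, the near-multiplicativity $\alpha_{m^*+k}\approx\alpha_{m^*}+\alpha_k$ producing $\overline\alpha$ vs.\ $\alpha$, the isoperimetric hypothesis killing boundary contributions) matches what the paper does in \S\ref{sec:fluctuations-in-ceiling}, although you should note that the isoperimetric bound is not merely ``$|\partial S_n|=o(s_n)$'' but the stronger requirement that $|\partial S_n|\cdot e^{O(h)}=o(s_n)$ for $h\asymp\log s_n$, which forces $\isodim(S_n)\le\sqrt\beta$ and hence the $\beta_0(d)$ dependence.
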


\begin{remark}\label{rem:thm-1-k=0}
The estimates~\eqref{eq:M-right}--\eqref{eq:M-left} actually hold for every $1 \leq k = k_n \leq \frac1{\beta^2}\log s_n$.
We further find that~\eqref{eq:M-left} also holds for $k=0$, and so $\gamma$ from the above theorem is such that
\[ (1-\epsilon_\beta)\gamma \leq -\log\mu_{\Lambda_n}^\mp(\bar M_{S_n} < m^*_{s_n} \mid \bI_{\bW_n}) \leq (1+\epsilon_\beta)\gamma\,.\]
\end{remark}

\begin{remark}
The same proof, with $\bW_n = \{\varnothing\}_{z\in S_n^c}$, gives the same Gumbel tail bounds~\eqref{eq:M-right}--\eqref{eq:M-left} for $M_{S_n}$ under the (unconditional) $\mu_{S\times \Z}^\mp$, to which, we recall, a true DMP would couple $\cI\cap (S\times \Z) - (0,0,\hgt(\cC_{\bW_n}))$ under $\mu_{\Lambda_n}^{\mp}(\cdot \mid \bI_{\bW_n})$. We recover the tightness and Gumbel tails of $M_{\cL_{0,n}}$~\cite{GL19b} as the special case $S_n  = \cL_{0,n}$. 
\end{remark}

\begin{figure}
    \centering
    \includegraphics[width=0.52\textwidth]{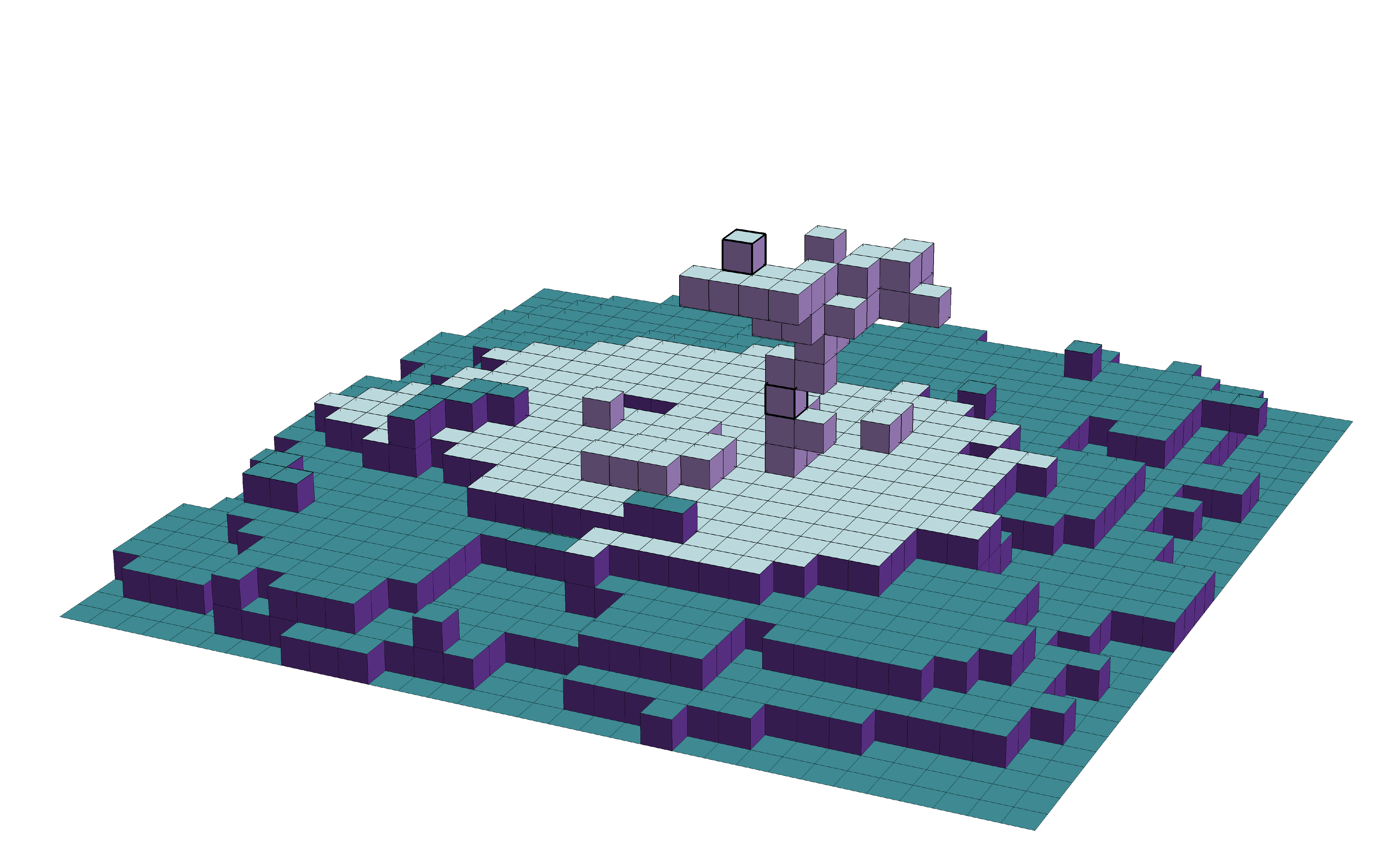}
    \hspace{-.35in}
    \includegraphics[width=0.48\textwidth]{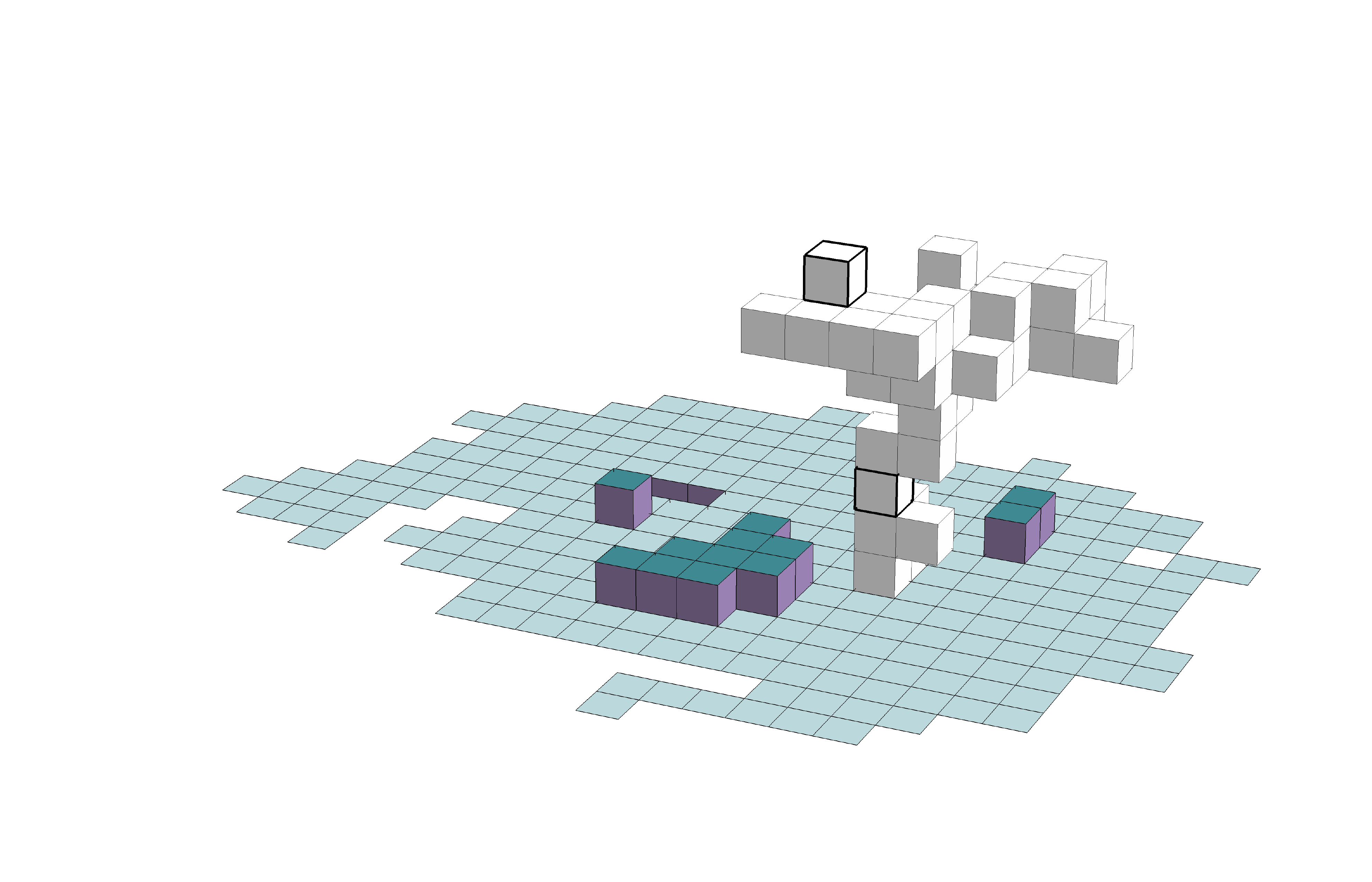}
    \caption{Left: $\cI \cap (S_n \times \Z)$ (highlighted) has a tall pillar above height $\hgt(\cC_{\bW_n}) = 3$. Right: the restricted pillar $\cP_{x,S}$ (white) is extracted from $\cI \cap (S_n \times \Z)$.}
    \label{fig:shifted-pillar}
\end{figure}

\subsubsection*{Approximate domain Markov for pillars}
We next give a version of the approximate DMP for the individual oscillations of the interface within $S_n$. While we could pursue this at the level of individual walls, we choose to work with a decomposition of the interface into \emph{pillars}, which were introduced in~\cite{GL19a} and are more tailored to studying the height profile of the interface.  Let us define \emph{restricted pillars} of the restricted interface $\cI \cap (S\times \Z)$, which generalizes the notion of pillars from~\cite{GL19a}; see Fig.~\ref{fig:shifted-pillar}. 

\begin{definition*}[Restricted pillars]
Fix a simply-connected $S_n\subset \cL_{0,n}$, and a set of walls $\bW_n = \{W_z: z\in S_n^c\}$ such that $S_n \cap \rho(\bW_n) = \emptyset$, and inducing $\hgt(\cC_{\bW_n})$. For every interface $\cI\in \bI_{\bW_n}$, the \emph{restricted pillar} of $x\in S_n$, denoted $\cP_{x,S_n}=\cP_{x,S_n}(\cI)$, 
is the following subset of faces of $\cI$ with a marked root face.
\begin{enumerate}[(i)]
\item Let $\sigma(\cI)$ be the (unique) spin configuration on $\Lambda_n$ such that $f\in \cI$ if and only if it separates sites $u,v$ having differing spins under $\sigma(\cI)$. 
 \item Let $x'=x+(0,0,\hgt(\cC_\bW))$ and let $\sigma(\cP_{x,S_n})$ be the $*$-connected plus-component of $x' + (0,0, \frac12)$  in $\sigma(\cI)$ restricted to $\cL_{>\hgt(\cC_{\bW})}=\{z:z_3 >\hgt(\cC_\bW)\}$. 
The pillar $\cP_{x,S_n}$, viewed both as a subset of $\cI$ and as a face set rooted at $x'$ (modulo translations in $\Z^3$), is the set of bounding faces of $\sigma(\cP_{x,S_n})$ in $\cL_{>\hgt(\cC_{\bW})}$.
 \end{enumerate}
The height $\hgt(\cP_{x,S_n})$ is defined as $\max\{x_3:(x_1,x_2,x_3)\in \cP_{x,S}\} - \hgt(\cC_\bW)$. For an interface $\cI$, we let $\cP_x = \cP_{x,\cL_{0,n}}$ denote the (unrestricted) pillar wherein we take $S_n = \cL_{0,n}$ and thus $\hgt(\cC_{\bW_n}) = 0$. 
\end{definition*}

Observe that, in terms of this definition, $\bar M_{S_n} $ in Theorem~\ref{thm:gumbel} is nothing but $ \max\{\hgt(\cP_{x,S_n}):x\in S_n\}$.

\begin{maintheorem}\label{thm:cond-uncond-simple}
There exist $\beta_0>0$ and $\epsilon_\beta\downarrow 0$ as $\beta\uparrow\infty$ such that the following hold for every~$\beta>\beta_0$.
Let $S_n\subset \cL_0$ be simply-connected, with $s_n = |S_n|\to\infty$ with $n$,
and let $\bW_n  = (W_z)_{z\notin S_n}$ be a wall set with $S_n \cap \rho(\bW_n) = \emptyset$. 
For every $x=x_n\in S_n$ and $1 \le h=h_n\le o(\Delta_n)$, where $\Delta_n := d(x,\partial S_n)$, we have
\begin{align}
    \label{eq:cond-uncond-coupling-1}
1-\epsilon_\beta \leq \frac{\mu_{\Lambda_n}^\mp\bigl(\hgt(\cP_{x,S_n})\geq h \mid \bI_{\bW_n}\bigr)}{ \mu_{\Z^3}^\mp(\hgt(\cP_o)\geq h)} \leq 1+\epsilon_\beta \,,
\end{align}
where $o\in \cL_{0,n}$ is a fixed \emph{origin} face, say $(\frac 12,\frac 12, 0)$. Moreover, 
\begin{align}
    \label{eq:cond-uncond-coupling-2}
\left\| \mu_{\Lambda_n}^\mp\bigl(\cP_{x,S_n}\in\cdot  \mid \hgt(\cP_{x,S_n})\geq h\,,\,
\bI_{\bW_n}\bigr) - \mu_{\Z^3}^\mp\bigl(\cP_o\in\cdot \mid \hgt(\cP_o)\geq h\bigr) \right\|_{\tv} \leq \epsilon_\beta \,.
\end{align}
\end{maintheorem}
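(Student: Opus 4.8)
The plan is to reduce both assertions to a single shape-by-shape comparison of pillar weights, and to prove that comparison by a localized surgery. Fix $x=x_n\in S_n$, $\Delta_n:=d(x,\partial S_n)$ and $1\le h_n\le o(\Delta_n)$, abbreviate $\mu:=\mu_{\Lambda_n}^\mp(\cdot\mid\bI_{\bW_n})$ and $\nu:=\mu_{\Z^3}^\mp$, and recall that $\cP_{x,S_n}$ and $\cP_o$ are both rooted $*$-connected face-sets taken modulo translations, so that $\{\cP_{x,S_n}=P\}$ and $\{\cP_o=P\}$ refer to the same objects $P$; write $\|P\|$ for the number of faces of $P$. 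Since $h_n=o(\Delta_n)$, for all large $n$ we may fix a threshold $L_n$ with $C_\beta h_n\le L_n\le\Delta_n/2$, where $C_\beta$ is chosen (using the pillar size estimates of~\cite{GL19a}) so that $\nu(\|\cP_o\|>L_n\mid\hgt(\cP_o)\ge h_n)\le\epsilon_\beta$. The crux is the estimate
\[
\mu\bigl(\cP_{x,S_n}=P\bigr)=\bigl(1\pm\epsilon_\beta\bigr)\,\nu\bigl(\cP_o=P\bigr)\qquad\text{for every pillar shape $P$ with $\hgt(P)\ge1$ and $\|P\|\le L_n$.}
\]
Granting it, \eqref{eq:cond-uncond-coupling-1} follows by summing over $\{P:\hgt(P)\ge h_n,\ \|P\|\le L_n\}$, since the remaining shapes carry at most an $\epsilon_\beta$-fraction of the mass of $\{\hgt\ge h_n\}$ under $\nu$ (by the choice of $L_n$) and under $\mu$ (by Theorem~\ref{thm:intro-rigidity-inside-ceiling} and the Peierls estimates accompanying it, which transport the~\cite{GL19a} size tail to the conditional law); and \eqref{eq:cond-uncond-coupling-2} follows by dividing the displayed estimate by \eqref{eq:cond-uncond-coupling-1} and summing the differences, the discarded shapes again contributing $O(\epsilon_\beta)$.

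For the displayed estimate I would use a pillar-deletion surgery as in~\cite{GL19a}: given $\cI\in\bI_{\bW_n}$ with $\cP_{x,S_n}=P$ and $\|P\|\le L_n\le\Delta_n/2$, excising $P$ and inserting flat horizontal faces at height $\hgt(\cC_{\bW_n})$ is an essentially injective operation producing $\cI'\in\bI_{\bW_n}$ with a trivial pillar above $x$; the horizontal extent of $P$ is at most $\|P\|\le\Delta_n/2<d(x,\partial S_n)$, so the operation stays strictly inside $S_n\times\Z$ and leaves $\bW_n$ untouched. Processing the ratio of Boltzmann weights through the Minlos--Sinai cluster expansion~\cite{Minlos-Sinai} produces a weight $\widehat q_{S_n,\bW_n}(P)$ with $\mu(\cP_{x,S_n}=P)=\widehat q_{S_n,\bW_n}(P)\,(1\pm\epsilon_\beta)\,\mu(R)$, where $R$ is a fixed $O(1)$-size reference event and the $(1\pm\epsilon_\beta)$ carries the convergent cluster correction; the identical construction under $\nu$ gives $\widehat q_{\Z^3}(P)$ and $\nu(\cP_o=P)=\widehat q_{\Z^3}(P)\,(1\pm\epsilon_\beta)\,\nu(R)$. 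Two facts then close the estimate. First, $\mu(R)=(1\pm\epsilon_\beta)\nu(R)$: by Theorem~\ref{thm:intro-rigidity-inside-ceiling}, conditionally on $\bI_{\bW_n}$ the interface in $S_n$ is rigid about $\hgt(\cC_{\bW_n})$ with exponential tails, while $\bW_n$, $\partial S_n$ and $\partial\Lambda_n$ all lie at distance $\ge\Delta_n$ from $x$, so a fixed local event near $x$ has (after the shift by $\hgt(\cC_{\bW_n})$) $\mu$- and $\nu$-probabilities that agree up to $O(e^{-c\Delta_n})$. Second, $\widehat q_{S_n,\bW_n}(P)$ and $\widehat q_{\Z^3}(P)$ are, after the shift, the same explicit functional of the Boltzmann and cluster weights near $P$, and the clusters in the expansion that register the difference between the two settings must join $P$ to distance $\ge\Delta_n$, whence $\bigl|\log\bigl(\widehat q_{S_n,\bW_n}(P)/\widehat q_{\Z^3}(P)\bigr)\bigr|=O(\|P\|\,e^{-c\Delta_n})=o(1)$, using $\|P\|\le L_n\le\Delta_n/2$.

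The genuine work is concentrated in two places. The first is that the entire~\cite{GL19a} pillar apparatus invoked above --- the deletion surgery, the weights $\widehat q$, and the height and size tail bounds for the conditional law --- must be built from scratch: here FKG is unavailable and the interface in $S_n$ genuinely feels the spins outside $S_n\times\Z$, so monotone coupling is not an option, and the substitute is Theorem~\ref{thm:intro-rigidity-inside-ceiling} together with the cluster-expansion control underlying its proof, which exhibit $\mu_{\Lambda_n}^\mp(\cdot\mid\bI_{\bW_n})$ in the bulk of $S_n$ as a small perturbation of the flat ground state at height $\hgt(\cC_{\bW_n})$ and thereby license the expansions used above.

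The second place, which I expect to be the principal obstacle, is the quantitative decoupling bound $\bigl|\log(\widehat q_{S_n,\bW_n}(P)/\widehat q_{\Z^3}(P))\bigr|=O(\|P\|e^{-c\Delta_n})$. Without monotonicity it must be obtained by isolating, inside the cluster expansion, precisely those polymers that connect the support of $P$ to the far boundary data ($\bW_n$, $\partial S_n$, $\partial\Lambda_n$) --- every such polymer necessarily of diameter $\ge c\Delta_n$ --- and bounding their aggregate contribution. The hypothesis $h_n=o(\Delta_n)$ enters exactly here: it guarantees that the pillar shapes carrying essentially all the relevant mass (height $\ge h_n$ and size $\le L_n\le\Delta_n/2$) are small on the scale $\Delta_n$, so that this error is $o(1)$ uniformly, the accumulated discrepancy over the at most $\|P\|$ sensitive clusters staying negligible.
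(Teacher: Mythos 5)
Your overall reduction---establish a shape-by-shape comparison of the pillar laws and then sum---is structurally similar to the paper's deduction of Theorem~\ref{thm:cond-uncond-simple} from Theorems~\ref{thm:shape-inside-ceiling} and~\ref{thm:cond-uncond}, but the \emph{content} of the comparison you propose to prove is different in a way that is fatal. You assert
\[
\mu\bigl(\cP_{x,S_n}=P\bigr)=(1\pm\epsilon_\beta)\,\nu\bigl(\cP_o=P\bigr)
\]
for every pillar shape $P$ with $\|P\|\le L_n$, i.e., with no restriction on the environment in which $P$ sits. This is not what the paper proves, and there is a genuine obstruction to proving it by your surgery. The weight $\mu_n^\mp(\cI)$ of an interface with $\cP_{x,S_n}(\cI)=P$ is, by Theorem~\ref{thm:cluster-expansion}, $e^{-\beta|\cI|}$ times a cluster term $\exp(\sum_{f\in\cI}\g(f,\cI))$, and this second factor couples $P$ to the nearby walls of $\cI\setminus\cP_{x,S_n}$: if a wall of excess area $k$ sits at bounded distance from $P$, the $\g$-contribution changes by $\Theta(k)$, which is not $O(\epsilon_\beta)$. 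Hence after your excision $\cI\mapsto\Psi(\cI)$, the ratio $\mu_n^\mp(\cI)/\mu_n^\mp(\Psi(\cI))$ is \emph{not} a fixed function $\widehat q(P)(1\pm\epsilon_\beta)$; it depends on $\cI$ through its environment near $P$ by $\Theta(1)$ multiplicative factors, so the claimed factorization $\mu(\cP_{x,S_n}=P)=\widehat q_{S_n,\bW_n}(P)(1\pm\epsilon_\beta)\mu(R)$ does not hold. The paper avoids this precisely by introducing isolated pillars (Definition~\ref{def:isolated-pillars}): the event $\Iso_{x,L,h}$ constrains \emph{both} the pillar (thin column, controlled growth) \emph{and} the surrounding walls (empty in a disk of radius $L$, then logarithmic growth), which by Lemma~\ref{lem:iso-pillar-interactions} forces the total $\g$-interaction between pillar and environment to be $O(e^{-cL})$, uniformly. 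Only on that event can one prove a two-sided weight comparison, and the paper then separately shows (Theorem~\ref{thm:shape-inside-ceiling}) that conditionally on height $\ge h$ isolation holds with probability $1-\epsilon_\beta$ on \emph{both} sides, which is what actually lets the restricted comparison lift to~\eqref{eq:cond-uncond-coupling-1}--\eqref{eq:cond-uncond-coupling-2}.

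The second part of your argument---the decoupling bound $|\log(\widehat q_{S_n,\bW_n}(P)/\widehat q_{\Z^3}(P))| = O(\|P\| e^{-c\Delta_n})$ via polymers of diameter $\ge\Delta_n$---is precisely the naive route that the paper explains, in the introduction and again at the top of~\S\ref{subsec:methods}, \emph{cannot} work. The unconditional decorrelation of~\cite{Dobrushin73,BLP79b,GL19a} gives $\|\mu_n^\mp(\cP_{x,S_n}\in\cdot\,,\bI_{\bW_n})-\mu_n^\mp(\cP_x\in\cdot)\mu_n^\mp(\bI_{\bW_n})\|_\tv\le e^{-\Delta_n/C}$, but this becomes useless for the \emph{conditional} law once you divide by $\mu_n^\mp(\bI_{\bW_n})\asymp e^{-\beta|\bW_n|}$, because $\Delta_n\le|\partial S_n|\le|\bW_n|$. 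Equivalently, via cluster expansion the conditional law on $S_n$ is a tilt of $\mu_{S_n\times\Z}^\mp$ by a factor of order $e^{\Theta(\beta|\partial S_n|)}$, generated by subcritical bubbles concentrated near $\partial S_n$; this tilt does not show up as a sum of long polymers reaching $P$, so there is nothing you can truncate at scale $\Delta_n$. The paper's fix is a \emph{symmetric} 2-to-2 swap map $\Phi_\swap$ (Section~\ref{sec:pillar-couplings}) acting on pairs $(\cI,\cI')$: exchanging the two isolated pillars preserves the total face count exactly (Lemma~\ref{lem:swap-map-properties}) and changes the $\g$-term by at most $O(e^{-cL})$ (Proposition~\ref{prop:swap-map}), so the problematic environment contributions \emph{cancel} between $\cI$ and $\cJ=\Phi_\swap^1(\cI,\cI')$ rather than needing to be small individually. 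This cancellation mechanism, together with the isolation condition that makes the swap well-defined and controls the residual interactions, is the core idea your proposal is missing, and the gap is not one of polish: the one-sided decorrelation estimate you rely on is false in the regime of interest.
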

 A routine comparison of  $\{\hgt(\cP_o)\geq h\}$ to the event in~\eqref{eq:alpha-alpha-h-def} (see~\cite[Eq.~(5.3)]{GL19b}) 
then gives the following.

\begin{corollary}[LD rate of restricted pillars]\label{cor:cond-pillar-rate}
In the setting of Theorem~\ref{thm:cond-uncond-simple}, if $x=x_n\in S_n$ is such that $\log s_n = o(d(x,\partial S_n))$ then for all $n$ and all $1\leq h= h_n\leq O(\log s_n)$, 
\begin{align*}
(1-\epsilon_\beta)e^{ - \alpha_h} \leq \mu^\mp_{\Lambda_n} \bigl(\hgt(\cP_{x,S_n})\geq h \given \bI_{\bW_n}\bigr) \leq (1+\epsilon_\beta)e^{-\alpha_h}  \,.
\end{align*}
\end{corollary}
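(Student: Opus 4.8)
The plan is to read off Corollary~\ref{cor:cond-pillar-rate} directly from Theorem~\ref{thm:cond-uncond-simple}, by replacing the unconditional probability $\mu_{\Z^3}^\mp(\hgt(\cP_o)\ge h)$ appearing there with $e^{-\alpha_h}$ --- an identification that is essentially built into the definitions.

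First I would check that the hypotheses of Theorem~\ref{thm:cond-uncond-simple} are in force throughout the stated range of heights. By assumption $\log s_n = o(d(x,\partial S_n)) = o(\Delta_n)$, while $h = h_n \le O(\log s_n)$, so $1 \le h \le o(\Delta_n)$; hence~\eqref{eq:cond-uncond-coupling-1} applies and, after clearing its denominator, yields
\[
(1-\epsilon_\beta)\,\mu_{\Z^3}^\mp\bigl(\hgt(\cP_o)\ge h\bigr)\ \le\ \mu_{\Lambda_n}^\mp\bigl(\hgt(\cP_{x,S_n})\ge h\given\bI_{\bW_n}\bigr)\ \le\ (1+\epsilon_\beta)\,\mu_{\Z^3}^\mp\bigl(\hgt(\cP_o)\ge h\bigr).
\]

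Next I would identify the flanking probability with $e^{-\alpha_h}$. By the definition of the unrestricted pillar $\cP_o$ --- rooted at $o+(0,0,\tfrac12)=(\tfrac12,\tfrac12,\tfrac12)$ and built from the $*$-connected plus-component of that cell inside $\{z_3>0\}$ --- the event $\{\hgt(\cP_o)\ge h\}$ holds precisely when this plus-component contains a cell at height $h-\tfrac12$: a $*$-path of cells changes height by at most one per step, so reaching any height $\ge h-\tfrac12$ forces passing through $h-\tfrac12$, and a plus-cell at height $h-\tfrac12$ contributes a bounding face at height $h$, whereas no configuration lacking such a cell produces a face at height $\ge h$. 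This is exactly the $*$-connected plus-chain event $(\tfrac12,\tfrac12,\tfrac12)\xleftrightarrow[\R^2\times[0,\infty)]{+}(\Z+\tfrac12)^2\times\{h-\tfrac12\}$ of~\eqref{eq:alpha-alpha-h-def} --- the regions $\{z_3>0\}$ and $\R^2\times[0,\infty)$ containing the same cells --- whose probability is $e^{-\alpha_h}$. Carried out with the precise conventions of~\cite{GL19a,GL19b}, this is the estimate $\mu_{\Z^3}^\mp(\hgt(\cP_o)\ge h)=(1\pm\epsilon_\beta)e^{-\alpha_h}$ recorded in~\cite[Eq.~(5.3)]{GL19b}. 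Substituting into the display above and absorbing the product of the two multiplicative errors into a single $\epsilon_\beta\downarrow 0$ gives the corollary.

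I do not anticipate a genuine obstacle here: all the analytic content is already in Theorem~\ref{thm:cond-uncond-simple}, and what remains is bookkeeping --- verifying that $h=O(\log s_n)$ together with $\log s_n=o(\Delta_n)$ does place $h$ in the admissible window $h=o(\Delta_n)$ of Theorem~\ref{thm:cond-uncond-simple}, and confirming that $\{\hgt(\cP_o)\ge h\}$ and the plus-chain connection event of~\eqref{eq:alpha-alpha-h-def} agree up to conventions in the bottom slab $\{z_3\approx 0\}$, as in~\cite[Eq.~(5.3)]{GL19b}. If anything, the mildly delicate point is keeping the multiplicative error genuinely of the form $1\pm\epsilon_\beta$ with $\epsilon_\beta\downarrow0$ when composing the two estimates, rather than degrading it to an $e^{\pm O(\beta)}$ factor.
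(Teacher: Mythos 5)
Your proposal is correct and matches the paper's one-line derivation: apply~\eqref{eq:cond-uncond-coupling-1} of Theorem~\ref{thm:cond-uncond-simple} (whose hypothesis $h=o(\Delta_n)$ holds since $h=O(\log s_n)$ while $\log s_n = o(d(x,\partial S_n))$), identify $\mu_{\Z^3}^\mp(\hgt(\cP_o)\ge h)$ with $(1\pm\epsilon_\beta)e^{-\alpha_h}$ via the cited~\cite[Eq.~(5.3)]{GL19b}, and absorb the composed multiplicative errors into a single $\epsilon_\beta\downarrow 0$. One imprecision worth flagging: $\{\hgt(\cP_o)\ge h\}$ is not literally the plus-chain event of~\eqref{eq:alpha-alpha-h-def}, because $\cP_o$ is built from the reconstructed bubble-free configuration $\sigma(\cI)$ whereas the chain event lives in the raw spin field $\sigma$ (which retains finite bubbles on both sides of the interface) --- that mismatch is exactly what~\cite[Eq.~(5.3)]{GL19b} quantifies and is why the identification carries a $1\pm\epsilon_\beta$ factor rather than being an exact equality.
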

In addition, iterating Theorem~\ref{thm:cond-uncond-simple} immediately gives the following $k$-point coupling to infinite-volume.

\begin{corollary}[$k$-point decorrelation]\label{cor:cond-uncond-k-simple}
Fix $k\geq 2$. In the setting of Theorem~\ref{thm:cond-uncond-simple}, if $x_1,\ldots,x_k\in S_n $ and $\Delta_n:=\min_i d(x_i,\partial S_n)$ are such that $d(x_i,x_j)\geq \Delta_n$ for all $i\neq j$, then for $1\leq h_1\leq\ldots\leq h_k = o(\Delta_n)$,
\begin{align}
    \label{eq:cond-uncond-k-coupling-1}
1-\epsilon_\beta \leq \frac{\mu_{\Lambda_n}^\mp\big(\bigcap_{i=1}^k \{ \hgt(\cP_{x_i,S_n})\geq h_i\} \mid\bI_{\bW_n}\big)}{ \prod_{i=1}^k \mu_{\Z^3}^\mp(\hgt(\cP_o)\geq h_i)} \leq 1+\epsilon_\beta \,,
\end{align}
and
\begin{align}
    \label{eq:cond-uncond-k-coupling-2}
\Bigl\| \mu_{\Lambda_n}^\mp\Big(\big(\cP_{x_i,S_n})_{i=1}^k \in\cdot  \given \mbox{$\bigcap_{i=1}^k \{ \hgt(\cP_{x_i,S_n})\geq h_i\}$} \,,\,\bI_{\bW_n} \Big) - \prod_{i=1}^k \mu_{\Z^3}^\mp\left(\cP_o\in\cdot\mid \hgt(\cP_o)\geq h_i\right) \Bigr\|_{\tv} \leq \epsilon_\beta \,.
\end{align}
\end{corollary}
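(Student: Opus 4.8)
The plan is to derive Corollary~\ref{cor:cond-uncond-k-simple} directly from Theorem~\ref{thm:cond-uncond-simple} by a telescoping argument: rather than re-running the walls-and-ceilings machinery, we peel off the pillars at $x_1,\dots,x_k$ one at a time, each time invoking the single-site approximate DMP on a suitably updated conditioning. The conceptual point is that once we expose the pillar $\cP_{x_1,S_n}$, we have in effect revealed a (random) collection of walls near $x_1$; conditioned on this data, the residual interface again lives in $\bI_{\bW_n'}$ for an enlarged wall set $\bW_n'$ whose projected support still misses $x_2,\dots,x_k$ because $d(x_i,x_j)\geq \Delta_n$ and $\hgt(\cP_{x_1})=o(\Delta_n)$ bounds the horizontal extent of the exposed walls. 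So Theorem~\ref{thm:cond-uncond-simple} applies verbatim to $x_2$ under the new conditioning, and we iterate.

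Concretely, for~\eqref{eq:cond-uncond-k-coupling-1} I would write the left-hand probability as a product of $k$ conditional factors,
\[
\mu_{\Lambda_n}^\mp\Bigl(\textstyle\bigcap_{i=1}^k \{\hgt(\cP_{x_i,S_n})\geq h_i\}\ \big|\ \bI_{\bW_n}\Bigr)
=\prod_{j=1}^k \mu_{\Lambda_n}^\mp\Bigl(\hgt(\cP_{x_j,S_n})\geq h_j \ \big|\ \bI_{\bW_n},\,\textstyle\bigcap_{i<j}\{\hgt(\cP_{x_i,S_n})\geq h_i\}\Bigr),
\]
and show that each factor is within $1\pm\epsilon_\beta$ of $\mu_{\Z^3}^\mp(\hgt(\cP_o)\geq h_j)$ by applying Theorem~\ref{thm:cond-uncond-simple} with $S_n$ replaced by the sub-domain obtained after removing a ball around $x_1,\dots,x_{j-1}$ of radius comparable to $\max_i h_i = o(\Delta_n)$, whose distance from $x_j$ is still $\Delta_n(1-o(1))$. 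The only subtlety is that conditioning on $\{\hgt(\cP_{x_i})\geq h_i\}$ is not itself a conditioning of the form $\bI_{\bW}$; one resolves this by further conditioning on the full wall decomposition in the exposed region and then averaging, using that the conclusion of Theorem~\ref{thm:cond-uncond-simple} is uniform over admissible $\bW_n$. Replacing $\alpha_\beta$ errors multiplicatively $k$ times turns $1\pm\epsilon_\beta$ into $1\pm O(k\epsilon_\beta)$, which we simply absorb into a relabeled $\epsilon_\beta$ (legitimate since $k$ is fixed). For the total-variation bound~\eqref{eq:cond-uncond-k-coupling-2}, I would construct the coupling sequentially: couple $\cP_{x_1,S_n}$ with an independent $\mu_{\Z^3}^\mp(\cP_o\in\cdot\mid\hgt\geq h_1)$-sample using~\eqref{eq:cond-uncond-coupling-2}; on the (probability $\geq 1-\epsilon_\beta$) event that they agree, expose the induced walls and repeat for $x_2$, etc.; a union bound over the $k$ coupling steps gives total error $k\epsilon_\beta$, again absorbed.

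The step I expect to be the main (though still routine) obstacle is the bookkeeping that justifies ``exposing $\cP_{x_i,S_n}$ leaves us in a conditioning to which Theorem~\ref{thm:cond-uncond-simple} applies with nearly the same $\Delta_n$.'' This requires: (a) a deterministic geometric bound showing that the walls touched by a pillar of height $h$ are supported within horizontal distance $O(h)$ of its base — which follows from the standard fact that wall faces sit within the $*$-connected oscillation, combined with $h=o(\Delta_n)$; (b) checking that after deleting an $O(h)$-neighborhood of $x_1,\dots,x_{i-1}$ from $S_n$ the remaining set is still simply-connected and has $d(x_i,\cdot\,\partial)=\Delta_n-o(\Delta_n)$, so the hypothesis $h_i=o(d(x_i,\partial S_n'))$ persists; and (c) verifying measurability/independence so that the tower-rule factorization above is valid. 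None of these is deep, but they are exactly the places where the hypothesis $d(x_i,x_j)\geq\Delta_n$ and $h_k=o(\Delta_n)$ get used, and they should be stated carefully.

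Finally, Corollary~\ref{cor:cond-pillar-rate} is immediate from Theorem~\ref{thm:cond-uncond-simple}: under the stated hypothesis $\log s_n = o(d(x,\partial S_n))$ one has in particular $h_n = O(\log s_n) = o(\Delta_n)$, so~\eqref{eq:cond-uncond-coupling-1} applies and it remains only to replace $\mu_{\Z^3}^\mp(\hgt(\cP_o)\geq h)$ by $e^{-\alpha_h}$ up to a $(1\pm\epsilon_\beta)$ factor, which is the cited comparison from~\cite[Eq.~(5.3)]{GL19b} between the pillar-height event and the plus-connection event defining $\alpha_h$ in~\eqref{eq:alpha-alpha-h-def}.
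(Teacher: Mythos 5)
Your overall route — peel off the pillars one at a time via the tower rule, invoking Theorem~\ref{thm:cond-uncond-simple} at each step after upgrading the conditioning $\bI_{\bW_n}$ to include the walls just exposed, then absorb a $k\epsilon_\beta$ error into a relabeled $\epsilon_\beta$ — is exactly what the paper means by ``iterating Theorem~\ref{thm:cond-uncond-simple}'' (the paper offers no further detail). The chain-rule factorization, the sequential TV coupling, and the idea of disintegrating the event $\{\hgt(\cP_{x_i,S_n})\geq h_i\}$ over the underlying wall data so that each conditioning is again of the admissible form $\bI_{\bW'}$ are all in the right place. However, two of the three ``routine'' bookkeeping claims you list are stated incorrectly, and the fix is precisely where the hypotheses $d(x_i,x_j)\geq \Delta_n$ and $h_j=o(\Delta_n)$ do real work, so these deserve to be flagged as genuine gaps.

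First, claim (a) is not a deterministic bound. The restricted pillar $\cP_{x_1,S_n}$ and, more importantly, the wall data $\fG_{x_1,S_n}$ that determines it (Observation~\ref{obs:pillar-nested-sequence-of-walls}) can have arbitrary horizontal extent; $\hgt(\cP_{x_1,S_n})\geq h_1$ forces $\fm(\fW_{x_1,S_n})\geq 4h_1$ but gives no upper bound on the diameter. The correct statement is probabilistic: by Corollary~\ref{cor:nested-sequence-inside-a-ceiling} and Remark~\ref{rem:excess-area-properties}, $\diam(\fW_{x_1,S_n})\leq \fm(\fW_{x_1,S_n})<r$ holds except with conditional probability $e^{-(\beta-C)r}$, so one takes $r=\Delta_n/10$ (say) and absorbs the exceptional event into $\epsilon_\beta$ using $h_1\leq h_k=o(\Delta_n)$.

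Second, claim (b) is false as written: deleting an interior ball from a simply-connected $S_n$ destroys simple connectedness (equivalently, $\cL_0\setminus S_n'$ acquires an isolated component), and this hypothesis is required throughout the chain Corollary~\ref{cor:nested-sequence-inside-a-ceiling}--Theorem~\ref{thm:shape-inside-ceiling}--Theorem~\ref{thm:cond-uncond}, where it is what guarantees that a nested sequence $\fW_{x_j,S'}$ cannot nest any wall of the conditioning $\bW'$, i.e., that $\rho(\bW')\cap\rho(\hull\fW_{x_j,S'})=\emptyset$ in Theorem~\ref{thm:rigidity-inside-wall}. So you cannot ``apply Theorem~\ref{thm:cond-uncond-simple} verbatim'' to the punctured domain. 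The salvage, which the proposal does not identify, is that this nesting failure requires $\diam(\fW_{x_j,S'})\geq \Delta_n - o(\Delta_n)$ (to wrap around the hole at $x_i$), which by Corollary~\ref{cor:nested-sequence-inside-a-ceiling} has probability $e^{-(\beta-C)\Delta_n(1-o(1))}$, negligible against $e^{-\alpha_{h_j}}$ since $h_j=o(\Delta_n)$. One therefore either re-runs the proofs of Corollary~\ref{cor:nested-sequence-inside-a-ceiling} and Theorem~\ref{thm:cond-uncond-simple} restricting the sum to nested sequences of diameter $<\Delta_n/2$, for which Theorem~\ref{thm:rigidity-inside-wall} applies directly, or observes that on the complementary event the crude bound suffices. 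In either case, the statement ``the remaining set is still simply-connected'' must be replaced by this typical-event argument; without it the proposal does not actually verify the hypotheses it invokes.
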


\subsection{Method of proof}\label{subsec:methods} We now outline some of the key innovations in the proofs of Theorems~\ref{thm:gumbel}--\ref{thm:cond-uncond-simple}.

\subsubsection*{Conditional rigidity}
The first step of the proof is reproving the rigidity of Ising interfaces in $S_n$, conditionally on $\bI_{\bW_n}$. The general approach to proving rigidity considered a Peierls--type map whereby a wall $W_x$ is deleted, and its interior ceilings and walls are correspondingly shifted vertically. 

After an application of the map, the $\mu_{\Lambda_n}^{\mp}$-weight gained by the deletion of $W_x$ is compared to the entropy from the choices of such a wall.
However, if $W_x$ is close to some large wall $W_y$, the interior of $W_x$ may interact with $W_y$ through the sub-critical bubbles in the full Ising configuration, and this interaction may even be larger than the energy gain of $|W_x|$. 
Dobrushin's remedy to this~\cite{Dobrushin72a} was to delete additional walls via a subtle notion of a \emph{group-of-walls}, balancing the need to control the interaction with other walls against the multiplicity cost when deleting them.
This grouping of walls, essentially verbatim, has been central to proofs of rigidity in other low-temperature separating surfaces in dimension $d\ge 3$, such as the Widom--Rowlinson model~\cite{BLOP79a,BLP79b}, Falicov--Kimball model~\cite{DMN00}, and percolation and random-cluster/Potts models~\cite{GielisGrimmett}. 

In our conditional setting, the group-of-walls containing $x$ may ``bypass'' $\partial S_n$ to contain walls of $\bW_n$, whereby the deletion of the group-of-walls will take us outside $\bI_{\bW_n}$, breaking the argument. We introduce an alternative \emph{one-sided} grouping criterion: a wall $W$ will only be grouped with $W'$ if $W$ is interior to the external boundary of $W'$. This directed notion leads to a new grouping of walls
that, in particular, remains confined to $S_n$ (see Definition~\ref{def:wall-cluster}), and enables us to prove the following.

\begin{maintheorem}\label{thm:intro-rigidity-inside-ceiling}
There exist $\beta_0>0$ and $C>0$ such that the following holds for every~$\beta>\beta_0$.
Let $S_n\subset \cL_{0,n}$ be simply-connected, and let $\bW_n  = (W_z)_{z\notin S_n}$ be such that $S_n \cap \rho(\bW_n)=\emptyset$. 
For every $x=(x_1,x_2,0)\in S_n$, 
\[
\mu_{\Lambda_n}^\mp\left( \max\big\{x_3:(x_1,x_2,x_3)\in\cI\big\} \geq \hgt(\cC_{\bW_n})+h \given \bI_{\bW_n} \right) \leq \exp\big(-(4\beta-C)h\big) \qquad \mbox{for every $h\ge 1$}\,.
\]
\end{maintheorem}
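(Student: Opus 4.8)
The plan is to adapt Dobrushin's walls-and-ceilings Peierls argument so that it operates entirely inside $S_n$, using the one-sided grouping of walls announced in \S\ref{subsec:methods}. First I would fix the conditioning $\bI_{\bW_n}$ and note that, since every $\cI\in\bI_{\bW_n}$ agrees on $S_n^c\times\Z$ and contains the single ceiling $\cC_{\bW_n}$ with $S_n\subset\rho(\cC_{\bW_n})$, the measure $\mu_{\Lambda_n}^\mp(\cdot\mid\bI_{\bW_n})$ is a Gibbs measure on interface configurations inside $S_n\times\Z$ whose ``ground state'' is the flat interface at height $\hgt(\cC_{\bW_n})$. Translating vertically by $-\hgt(\cC_{\bW_n})$, it is no loss to assume $\hgt(\cC_{\bW_n})=0$. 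The event $\{\max\{x_3:(x_1,x_2,x_3)\in\cI\}\ge h\}$ forces the face $x=(x_1,x_2,0)$ to be interior to $\rho(W)$ for some nontrivial wall $W=W_x$ of $\cI$ lying over $S_n$ with $\hgt(W)\ge h$, hence $|W|\ge $ (roughly) $4h$ — the standard geometric fact that a wall of height $h$ has at least $4h$ faces, coming from the four ``vertical strips'' a pillar of height $h$ must drag along. I would then set up the Peierls map exactly as in \cite{Dobrushin72a}: delete the one-sided group-of-walls $\fG$ containing $W_x$ (Definition~\ref{def:wall-cluster}), shift the ceilings and walls nested inside the external boundaries of the deleted walls down accordingly, obtaining a new interface $\cI'$ that still lies in $\bI_{\bW_n}$ because the one-sided criterion keeps $\fG\subset S_n$ and never reaches $\partial S_n$ or any wall of $\bW_n$.

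The core estimate is the energy–entropy balance. Writing $\|\fG\|=\sum_{W\in\fG}|W|$, the map gains a multiplicative weight of at least $e^{(\beta-C')\|\fG\|}$ from cluster expansion of the partition functions above and below the interface (the $C'\|\fG\|$ slack absorbing the sub-critical bubble interactions, exactly as in the unconditional proof — here it is crucial that the one-sided grouping guarantees $W_x$'s interior only interacts with walls that have themselves been deleted, so the interaction is bounded by $C'\|\fG\|$ and not by anything reaching into $\bW_n$). Against this we pay: (i) the number of interfaces mapped to a given $\cI'$, bounded by the number of ways to choose the family of walls $\fG$ that could have been deleted — a standard combinatorial count giving at most $e^{C''\|\fG\|}$ choices of the supporting cells times $C^{\|\fG\|}$ for the wall shapes; and (ii) summing over all possible $\fG\ni W_x$ with $\|\fG\|\ge 4h$. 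Choosing $\beta_0$ large so that $\beta - C' - C'' - \log C =: 4\beta - C > 0$ (after rescaling constants; the leading term $4\beta$ is what survives because $|W|\ge 4h$ and the per-face weight is $e^{-\beta}$ while the per-face entropy is $O(1)$), the geometric sum over $\|\fG\|\ge 4h$ is dominated by its first term and yields the bound $e^{-(4\beta-C)h}$.

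Two points need care. The first — and I expect it to be the main obstacle — is verifying that the one-sided group-of-walls has all the properties needed for both the energy gain and the entropy bound: it must be (a) confined to $S_n$, (b) large enough that deleting it genuinely decouples $W_x$'s interior from everything not deleted (so the cluster-expansion error is $O(\|\fG\|)$), and (c) small enough, via a ``admissible family'' counting lemma in the spirit of Dobrushin's, that the multiplicity of the map is only $e^{O(\|\fG\|)}$. Reconciling (b) and (c) with a \emph{directed} grouping rule — where Dobrushin's was symmetric — is exactly the new content; I would prove the requisite structural lemma (every wall is grouped with only ``few'' others, in an appropriate weighted sense) by induction on nesting depth of external boundaries. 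The second point is purely bookkeeping: the bubble-interaction terms and the comparison of partition functions $Z^+_{\text{above}}(\cI)/Z^+_{\text{above}}(\cI')$ and the minus analogue are controlled by standard low-temperature cluster expansion (Minlos–Sinai \cite{Minlos-Sinai}), uniformly in $n$ and in the conditioning $\bW_n$ since those estimates are local; I would cite this rather than redo it. Finally, a union bound over the at most $\lesssim |\fG|$ possibilities consistent with $\{x \text{ under a wall of height}\ge h\}$ is unnecessary since the Peierls sum already ranges over all such $\fG$, so the stated bound follows directly.
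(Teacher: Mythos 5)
Your proposal captures exactly the paper's strategy: a directed (one-sided) grouping of walls into clusters confined to $S_n$ (the paper's ``wall clusters,'' Definitions~\ref{def:closely-nested}--\ref{def:wall-cluster}), a Peierls-type deletion map $\Phi_\bV$, and the cluster-expansion energy vs.\ combinatorial-entropy balance (Lemmas~\ref{lem:wall-cluster-weights}--\ref{lem:wall-cluster-multiplicity}). The paper packages this as Theorem~\ref{thm:rigidity-inside-wall} (exponential tails in $\fm(\bV)$ for a fixed admissible family $\bV=(W_1,\ldots,W_r)$, conditionally on $\bI_{\bW_n}$), then Corollary~\ref{cor:nested-sequence-inside-a-ceiling} (union bound over nested sequences through $x$), which is precisely the energy--entropy calculation you describe, and you have correctly identified the crux — the grouping criterion must be one-sided so that deletion never reaches $\bW_n$.

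One step as written is imprecise and needs repair before the $4\beta$ rate comes out. You assert that $\{\max\{x_3:(x_1,x_2,x_3)\in\cI\}\ge\hgt(\cC_{\bW_n})+h\}$ forces a single wall $W=W_x$ with $\hgt(W)\ge h$, hence $|W|\gtrsim 4h$. This is false in general: the height $h$ may be accumulated across a nested sequence of several walls, each of small individual height (a wall lifting a ceiling by $h_1$, a wall nested in that ceiling lifting a further ceiling by $h_2$, and so on), and the specific wall $W_x$ assigned to $x$ can be arbitrarily small. What the height event actually forces is the cumulative bound $\fm(\fW_{x,S_n})\ge 4h$ on the full nested sequence through $x$ (Observation~\ref{obs:height-nested-sequence-of-walls}). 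This matters for your map: the one-sided wall cluster of the single wall $W_x$ reaches only \emph{inward}, so deleting $W_x$ and its cluster does not remove the outer walls responsible for most of the height, and the energy gain of that deletion need not be comparable to $4h$. The Peierls map must instead target $\bV=\fW_{x,S_n}$ together with its cluster, and the Peierls sum must range over all nested sequences of total excess area $\ge 4h$ (Fact~\ref{fact:number-of-walls} controls this entropy). With this substitution your argument goes through and coincides with the paper's.
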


\subsubsection*{Conditional law of tall pillars}With Theorem~\ref{thm:intro-rigidity-inside-ceiling} in hand, we are interested in comparing the rate of the exponential in Theorem~\ref{thm:intro-rigidity-inside-ceiling} with the unconditional rate~\eqref{eq:ld-rate-height}. Indeed the behavior of the maximum oscillations in $S_n$ are governed by the large deviation rate for its height profile in its \emph{bulk}, i.e., when $d(x,S_n^c)\gg h$.  

The decorrelation estimates of~\cite{Dobrushin73,BLP79b} adapted to pillars in~\cite[Corollary 6.6]{GL19a} imply the covariance bound 
\begin{align*}
    \|\mu_{\Lambda_n}^{\mp} (\cP_{x,S_n} \in\cdot ,  \bI_{\bW_n}) - \mu_{\Lambda_n}^{\mp} (\cP_{x}\in \cdot) \mu_{\Lambda_n}^{\mp}(\bI_{\bW_n}) \|_\tv \le \exp (  -   d(x,S_n^c)/C)\,;
\end{align*}
however, the bound on the conditional law, obtained via dividing by $\mu_{\Lambda_n}^{\mp}(\bI_{\bW_n})$, would leave the right-hand side as $e^{ -( d(x,S_n^c)+ \beta |\bW_n|)/C}$ which is useless since $d(x,S_n^c)\le |\partial S_n| \leq |\bW_n|$ in any nontrivial situation. 

Our approach to proving Theorem~\ref{thm:cond-uncond-simple} is to construct a bijection on pairs of interfaces $(\cI,\cI')$ that swaps the pillar $\cP_{x,S_n}$ in the interface $\cI\in \bI_{\bW_n}$ with the pillar $\cP_{x'}$ in the interface $\cI'$, to obtain two new interfaces. One could deduce Theorem~\ref{thm:cond-uncond-simple}  if the product of probabilities of $(\cI,\cI')$ were always within a factor of $1\pm \epsilon_\beta$ of the product of probabilities of the resulting pair after the swap. However, not only is this false, but the swap operation may not even be well-defined. To ensure the validity of this swap, and then control the interactions of the pillars with their new environments, we introduce a notion of an \emph{isolated pillar} (see Definition~\ref{def:isolated-pillars}). 

Informally, an interface is said to have an \emph{isolated pillar} at $x$ if (a) for height $K$ (some large constant), the pillar is simply a straight column of width $1$, then grows moderately while being confined to a cone, and (b) the walls are empty in a disk of radius $K$ about $x$, and then grow at most logarithmically in their distance to $x$. It turns out that if $\cI\cap (S_n \times \Z)$ has an isolated restricted pillar $\cP_{x,S_n}$, and $\cI'$ has an isolated pillar~$\cP_{x'}$, then (i) the pillar swap  described above is well-defined, and (ii) the tilt induced by the change in interactions between the pillars and their environments through sub-critical bubbles is $1\pm \epsilon$ (see Theorem~\ref{thm:cond-uncond}). Of course, one must also show that the isolated events have probability at least $1- \epsilon$ under both the conditional and unconditional measures: this is established by Theorem~\ref{thm:shape-inside-ceiling}, which entails adapting the shape theorem of tall pillars used for tightness~\cite[Theorem 4]{GL19b} to tall restricted pillars conditionally on $\bI_{\bW_n}$.

\subsubsection*{Law of the conditional maximum height oscillation}
In~\cite{GL19b}, the tightness and Gumbel tails of $M_{\cL_{0,n}}$ were proved using a modified second moment method, and a multiscale coupling of the maximum on $\cL_{0,n}$ to the maximum on independent copies at smaller scales. This coupling importantly relied on the aforementioned covariance bounds of~\cite{Dobrushin72b,BLP79b}; in our conditional setting, we do not have access to general covariance estimates, only the single-pillar decorrelation of Theorem~\ref{thm:cond-uncond-simple}. We instead follow a different approach similar to that of~\cite{CLMST14,LMS16} for the maximum heights in the (2+1)D SOS and DG models: there, the approach was to use FKG to exponentiate the pillar LD rate for the upper bound on the maximum, and to plant typical tall pillars wherever possible for the lower bound. Unfortunately, the conditional measure $\mu_{\Lambda_n}^{\mp}(\cdot \mid \bI_{\bW_n})$ does \emph{not} satisfy FKG, and we cannot plant pillars with the correct LD rate of $\alpha_h$, as this rate is obtained not by a fixed pillar, but by the infinite-volume distribution over random-walk-like pillars in nice environments.

We overcome these obstacles by tiling $S_n$ by smaller scale boxes and iteratively exposing the collection of restricted pillars in each of these boxes: this revealing procedure is designed to enable application of Corollary~\ref{cor:cond-pillar-rate}, by (1) only having revealed walls external to the next pillar we consider, and (2) ensuring the revealed walls are at distance $\gg h$ from the next pillar we consider. See  \S\ref{subsec:thick-ceils} for more details.

We emphasize that unlike~\cite{GL19b}, we never appeal to the FKG inequality; this suggests that the tightness and Gumbel tails of the (restricted) maximum can, in principle, be extended to other models e.g., wired-wired random-cluster and ordered-ordered Potts separating surfaces in $d\ge 3$, to which the rigidity results of~\cite{Dobrushin72a} have been extended~\cite{GielisGrimmett}, but whose underlying measures do not satisfy the FKG inequality.

\subsection{Future applications}
Whereas having a macroscopic height-$h$ level line for $h\neq 0$ is exponentially  unlikely under $\mu_{\Lambda_n}^\mp$, it becomes the typical scenario in various settings of physical interest. 
E.g., in the presence of a hard barrier (either conditioning on the interface to be in the upper half-space, or placing plus boundary conditions in the lower half-space), the Ising model is expected to exhibit \emph{entropic repulsion}, a ubiquitous feature of random surfaces: the presence of a hard barrier drives the bulk of the interface to a \emph{typical} height $h_n \gg 1$ as in Fig.~\ref{fig:hole-in-ceiling} (see the arguments sketched in~\cite{Basuev,HolickyZahradnik}). 
Basic features, such as the asymptotics of the new typical height $h_n$ and the shape of the $h_n$-level line, remain unknown. As an ingredient, one would need to understand the law of the oscillations inside a level line, our focus in this work.

The following picture is known for the $(2+1)$D Solid-On-Solid (SOS) model, a distribution on integer valued height functions, viewed as random surfaces, approximating the low-temperature 3D Ising interface.
The classical work of Bricmont, El-Mellouki and Fr\"ohlich~\cite{BEF86} showed that if $H_n$ is the height at the origin in the SOS model, and $\widehat H_n$ is the analogous variable after conditioning that the surface is nonnegative (a hard barrier), then at low enough temperature, 
while $H_n=O_{\textsc{p}}(1)$ one has that $(c/\beta)\log n \leq \E \widehat H_n \leq (C/\beta)\log n$. As part of a detailed analysis of the model in~\cite{CLMST12,CLMST14,CLMST16}, it was shown that if $M_n$ is the maximum height, and $\widehat M_n$ is its analog conditional on the surface being nonnegative, then for some deterministic sequence $h_n\asymp \log n$,
\begin{equation}\label{eq:1:2:3-SOS} \widehat H_n / h_n \xrightarrow{\,\mathrm p\,} 1\,,\qquad
M_n / h_n \xrightarrow{\,\mathrm p\,} 2\,,\qquad
\widehat M_n / h_n \xrightarrow{\,\mathrm p\,} 3\,.
\end{equation}
Theorem~\ref{thm:gumbel} is consistent with this picture in the context of the 3D Ising model, and moreover reduces the task of establishing this 1:2:3 scaling for heights in the 3D Ising interface to the analysis of the ratio $\widehat H_n/h_n$. For instance, if one showed that its $h_n$-level set is macroscopic (bounding a fixed proportion of the sites, as is the case for the SOS model), then Theorem~\ref{thm:gumbel} would immediately imply the lower bound $\widehat M_n \geq h_n + M'_n$, where $M'_n$ has the same centering and Gumbel tails as the unconditional maximum $M_n$. 

Indeed, in the follow-up work~\cite{GL21} we use the results of this paper as a key element towards understanding the phase transition between rigidity at height $0$ and entropic repulsion, related to the discussion above. Let~$\mufloor_n^\sfh$ be the Ising measure conditionally on its interface lying above a hard floor at height $-\sfh$, for~$\sfh \ge 0$. 
The main result identifies the critical $\sfh$ below which the floor would induce entropic repulsion: letting
\begin{equation}\label{eq:h-star-def}
h_n^* = h_n^* (\beta) : = \inf\{ h\geq 1 \,:\; \alpha_h > \log (2n) - 2\beta\}\,,
\end{equation}
we establish there that for $\beta>\beta_0$, with probability $1-o(1)$, the interface $\cI\sim\mufloor_n^\sfh$ satisfies
\begin{align}
\label{eq:A-subcritical-h}
\big|\cI \cap (\llb- \tfrac{n}{2},\tfrac{n}{2}\rrb^2\times\{0\})\big| &< \epsilon_\beta \, n^2   \qquad\qquad \mbox{if $\sfh < h_n^*-1$}\,, \\
\label{eq:A-supercritical-h} 
 \big|\cI\cap (\llb- \tfrac{n}{2},\tfrac{n}{2}\rrb^2\times\{0\})\big| &> (1-\epsilon_\beta)n^2 \qquad \mbox{if $\sfh \geq h_n^*$}\,.
\end{align}

Theorem~\ref{thm:gumbel} also has implications for boundary conditions that are slightly \emph{tilted}. Consider, e.g., boundary conditions forming a single step ---the plus/minus split is about height $0$ in the left half-space $\Z_-\times \Z^2$ and height~$1$ in the right half-space. Miracle-Sol\`e~\cite{Miracle-Sole1995} showed that this interface (and more generally, the interface for an angled 1-step boundary) typically comprises two macroscopic ceilings (at heights $0$ and $1$); the new result shows that each of these ceilings would be rigid and feature the same oscillations as the unconditional Ising model under a flat boundary condition. Theorem~\ref{thm:gumbel} shows that here the centered maximum $M_n$ will be the maximum of two i.i.d.\ tight random variables with Gumbel tails. More generally, if we have a $k$-step boundary condition, $M_n$ will be the maximum of $k$ independent tight r.v.'s, one for each of the ceilings.

 \subsection{Organization of the paper}
 In \S\ref{sec:preliminaries}, we recall the main notations, definitions, and properties we use from preceding works, primarily~\cite{Dobrushin72a} and~\cite{GL19a,GL19b}. In \S\ref{sec:wall-clusters}, we define the directed grouping of walls, and use it to prove Theorem~\ref{thm:intro-rigidity-inside-ceiling}. In \S\ref{sec:tall-pillar-shape}, we define isolated pillars and establish that tall pillars are typically isolated. In \S\ref{sec:pillar-couplings}, we use a swap map on pairs of interfaces with isolated pillars to establish Theorem~\ref{thm:cond-uncond-simple}. In \S\ref{sec:fluctuations-in-ceiling}, we establish Theorem~\ref{thm:gumbel}. Finally, in \S\ref{sec:deferred-proofs}, we compile proofs of technical lemmas whose proofs we deferred from \S\ref{sec:tall-pillar-shape}.

\section{Preliminaries}\label{sec:preliminaries}
In this section, we lay out notation we use throughout the paper, then recall the definitions of the Ising interface, and its decomposition into walls and ceilings from~\cite{Dobrushin72a}, leading to its standard wall representation. We then recall the definitions of pillars as introduced in~\cite{GL19a}, and their decomposition into a base and a spine composed of increments. We conclude the section with the key input from cluster expansion which we rely on, giving an approximate expression for the projection of the Ising distribution $\mu_{\Lambda_n}^\mp$ onto interfaces.  

\subsection{Notation}\label{sec:notation} In this section we compile much of the notation used throughout the paper. 

\subsubsection*{Lattice notation}
Because our primary interest in this paper is the interface separating plus and minus sites, it will be convenient for us to consider the Ising model on the vertices of the \emph{dual} graph $(\Z^3)^*= (\Z+\frac 12)^3$. 
To be precise, let $\Z^3$ be the integer lattice graph with vertices at $(x_1,x_2,x_3)\in \Z^3$ and edges between nearest neighbor vertices (at Euclidean distance one). A \emph{face} of $\Z^3$ is the open set of points bounded by four edges (or four vertices) forming a square of side length one, lying normal to one of the coordinate directions. A face is \emph{horizontal} if its normal vector is $\pm e_3$, and is \emph{vertical} if its normal vector is one of $\pm e_1$ or $\pm e_2$. 
A \emph{cell} or \emph{site} of $\Z^3$ is the set of points bounded by six faces (or eight vertices) forming a cube of side length one. 

We will frequently identify edges, faces, and cells with their midpoints, so that points with two integer and one half-integer coordinate are midpoints of edges,  points with one integer and two half-integer coordinates are midpoints of faces, and points with three half-integer coordinates are midpoints of cells. A subset $\Lambda \subset \Z^3$ identifies an edge, face, and cell collection via the edges, faces, and cells of $\mathbb Z^3$ all of whose bounding vertices are in $\Lambda$; denote the resulting edge set $\sE(\Lambda)$, face set $\sF(\Lambda)$, and cell set $\sC(\Lambda)$.  

We call two edges adjacent if they share a vertex, two faces adjacent if they share a bounding edge, and two cells adjacent if they share a bounding face. 
We will denote adjacency by the notation $\sim$.
It will also be useful to have a notion of connectivity in $\R^3$ (as opposed to $\Z^3$); we say that an edge/face/cell is $*$-adjacent, denoted $\sim^*$, to another edge/face/cell if they share a bounding vertex. A set of faces (resp., edges, cells) is connected (resp., $*$-connected) if for any pair of faces (edges, cells), there is a sequence of adjacent (resp., $*$-adjacent) faces (edges, cells) starting at one and ending at the other. 

We use the notation $d(A,B)=\inf_{x\in A, y\in B} d(x,y)$ to denote the Euclidean distance in $\mathbb R^3$ between two sets $A,B$. 
We then let $B_r(x) = \{y: d(y,x)\le r\}$. When these balls are viewed as subsets of edges/faces/cells, we include all those edges/faces/cells whose midpoint falls in $B_r(x)$.

\subsubsection*{Subsets of $\Z^3$}The main subsets of $\Z^3$ with which we will be concerned are of the form of cubes and cylinders. In view of that, define the centered $2n\times 2m \times 2h$ box,  
\begin{align*}
\Lambda_{n,m,h} :=\llb -n,n\rrb \times \llb -m,m\rrb \times \llb -h,h\rrb \subset \Z^3\,,
\end{align*}
where $\llb a,b\rrb := \{a,a+1,\ldots,b-1,b\}$. We can then let $\Lambda_n$ denote the special case of the cylinder $\Lambda_{n,n,\infty}$. The (outer) boundary $\partial \Lambda$ of the cell set $\sC(\Lambda)$ is the set of cells in $\sC(\Z^3)\setminus \sC(\Lambda)$ adjacent to a cell in $\sC(\Lambda)$. 

Additionally, for any $h \in \Z$ let $\cL_h$ be the subgraph of $\Z^3$ having vertex set $\Z^2\times \{h\}$  and correspondingly define edge and face sets $\sE(\cL_h)$ and $\sF(\cL_h)$. For a half-integer $h\in \Z + \frac 12$, let $\cL_h$ collect the faces and cells in $\sF(\Z^3) \cup \sC(\Z^3)$ whose midpoints have half-integer $e_3$ coordinate $h$. Finally we use $\cL_{>h} = \bigcup_{h'>h} \cL_{h'}$ and $\cL_{<h} = \bigcup_{h'<h} \cL_{h'}$ for half-spaces.

\subsubsection*{Projections onto $\cL_0$}
For a face $f\in \sF(\Z^3)$, its \emph{projection} $\rho(f)$ is the edge or face given by $\{(x_1,x_2,0):(x_1,x_2,s)\in f \mbox{ for some $s\in \R$}\}\subset \cL_0$. Specifically, the projection of a \emph{horizontal} face is a face in $\sF(\cL_0)$, while the projection of a \emph{vertical} face is an edge in $\sE(\cL_0)$. 
The projection of a collection of faces $F$ is $\rho(F) := \bigcup_{f\in F} \rho(f)$, which may consist both of edges and faces of $\cL_0$. 

With this notation in hand, define the cylinder of radius $r$ about $x\in \cL_0$ as $\Cyl_{x,r} = \{z: \rho(z)\in B_r(x)\}$.  

\subsubsection*{Ising model}
An Ising configuration $\sigma$ on $\Lambda \subset \Z^3$ is an assignment of $\pm1$-valued spins to the cells of $\Lambda$, i.e., $\sigma \in \{\pm 1\}^{\sC(\Lambda)}$.  For a finite connected subset $\Lambda\subset \Z^3$, the Ising model on $\Lambda$ with boundary conditions $\eta \in \{\pm 1\}^{\sC(\Z^3)}$ is the probability distribution over $\sigma \in \{\pm 1\}^{\sC(\Lambda)}$ given by 
\begin{align*}
\mu_{\Lambda}^{\eta} (\sigma) \propto \exp \left[ - \beta  \cH(\sigma)\right]\,, \qquad \mbox{where}\qquad \cH(\sigma) =  \sum_{\substack{ v,w\in \sC(\Lambda) \\  v\sim w }} \one\{\sigma_v\neq \sigma_w\} +  \sum_{\substack{ v\in \sC(\Lambda), w\in \sC(\Z^3)\setminus \sC(\Lambda) \\  v\sim w}} \one\{\sigma_v\neq \eta_w\}\,. 
\end{align*}
Throughout this paper, we will be considering the boundary conditions $\eta_w = -1$ if $w$ is in the upper half-space ($w_3 > 0$) and $\eta_w = +1$ if $w$ is in the lower half-space ($w_3 <0$). We refer to these boundary conditions as \emph{Dobrushin boundary conditions}, and denote them by $\eta = \mp$; for ease of notation, let $\mu_{n,m,h}^\mp = \mu_{\Lambda_{n,m,h}}^{\mp}$.

\subsubsection*{Infinite-volume measures} Care is needed to define the Ising model on infinite graphs; infinite-volume Gibbs measures are defined via what is known as the \emph{DLR conditions}. Namely, for an infinite graph $G$, a measure $\mu_G$ on $\{\pm 1\}^G$, defined in terms of its finite dimensional distributions, satisfies the DLR conditions if for every finite subset $\Lambda \subset G$, 
\begin{align*}
\E_{\mu_G(\sigma_{G\setminus \Lambda}\in \cdot)}\big[\mu_G(\sigma_\Lambda \in \cdot\mid \sigma_{G\setminus \Lambda} )\big ] = \mu_{G}(\sigma_\Lambda\in \cdot)\,.
\end{align*}
On $\Z^d$, infinite-volume measures arise as weak limits of finite-volume measures, say $n\to\infty$ limits of the Ising model on boxes of side length $n$ with prescribed boundary conditions. At low temperatures $\beta>\beta_c(d)$, the Ising model on $\Z^d$ admits multiple infinite-volume Gibbs measures; taking plus and minus boundary conditions on boxes of side-length $n$ yield the distinct infinite-volume measures $\mu^+_{\Z^3}$ and $\mu^-_{\Z^3}$. While in $\mathbb Z^2$, all infinite-volume  measures are mixtures of $\mu_{\Z^2}^+$ and $\mu_{\Z^2}^-$, the rigidity result of~\cite{Dobrushin72a} showed that when $d\ge 3$, the weak limit $\mu_{\Z^3}^{\mp}:= \lim_{n\to\infty} \mu_{n,n,n}^{\mp}$ is a DLR measure distinct from any mixtures of $\mu_{\Z^2}^+$ and $\mu_{\Z^2}^-$.

\subsection{Interfaces under Dobrushin boundary conditions}\label{sec:dobrushin-definitions}
We begin by formally defining the interface induced by an Ising configuration with $\mp$ boundary conditions. We then recall the key combinatorial decomposition from~\cite{Dobrushin72a} into walls and ceilings. We refer the reader to~\cite{Dobrushin72a} for more details. 

\begin{definition}[Interfaces]\label{def:interface} For a domain $\Lambda_{n,m,h}$ with Dobrushin boundary conditions, and an Ising configuration
$\sigma$ on $\sC(\Lambda_{n,m,h})$, the \emph{interface} $\cI=\cI(\sigma)$
is defined as follows:  
\begin{enumerate}
\item Extend $\sigma$ to a configuration on $\sC(\mathbb Z^3)$ by taking $\sigma_v=+1$ (resp., $\sigma_v=-1$) if $v\in \cL_{<0}\setminus \sC(\Lambda_{n,m,h})$ (resp., $v\in \cL_{>0}\setminus \cC(\Lambda_{n,m,h})$). 
\item Let $F(\sigma)$ be the set of faces in $\sF(\mathbb Z^3)$ separating cells with differing spins under $\sigma$.
\item  Call the (maximal) $*$-connected component containing $\cL_0 \setminus \sF(\Lambda_{n,m,h})$ in $F(\sigma)$, the \emph{extended interface}. (This is also the unique infinite $*$-connected component in $F(\sigma)$.)
\item The interface $\cI$  is the restriction of the extended interface to $\sF(\Lambda_{n,m,h})$. 
\end{enumerate}
\end{definition}

Taking the $h\to\infty$ limit $\mu_{n,m,h}^\mp$ to obtain the infinite-volume measure $\mu_{n,m,\infty}^\mp$, the interface defined above stays finite almost surely. Thus, $\mu_{n,m,\infty}^\mp$-almost surely, the above process also defines the interface for configurations on all of $\sC(\Lambda_{n,m,\infty})$. 

\begin{remark}\label{rem:interface-spin-config}
Every interface uniquely defines a configuration with exactly one $*$-connected plus component and exactly one $*$-connected minus component. For every $\cI$, we can obtain this configuration, which we call $\sigma(\cI)$, by iteratively assigning spins to $\cC(\Lambda_{n,m,h})$, starting from $\partial \Lambda$ and proceeding inwards, in such a way that adjacent sites have differing spins if and only if they are separated by a face in $\cI$. 
 Informally, $\sigma(\cI)$ is indicating the sites that are in the ``plus phase" and ``minus phase" given the interface $\cI$.
\end{remark}

\subsubsection*{Walls and ceilings} Following~\cite{Dobrushin72a}, we can decompose the faces in $\cI$ into wall faces and ceiling faces. 

\begin{definition}[Walls and ceilings]\label{def:ceilings-walls}
A face $f\in \cI$ is a \emph{ceiling face} if it is horizontal and there is no $f'\in \cI \setminus  \{f\}$ such that $\rho(f)=\rho(f')$. A face $f\in \cI$ is a \emph{wall face} if it is not a ceiling face. 
A \emph{wall} is a (maximal) $*$-connected set  of wall faces.  A \emph{ceiling} of $\cI$ is a (maximal) $*$-connected set of ceiling faces. 
\end{definition}

Projections of walls can be viewed as (relaxed) contours in $\mathbb R^2$, defining an important notion of interior/exterior with respect to a wall. 

\begin{definition}[Nesting of walls]\label{def:nesting-of-walls}
For a wall $W$, the complement of its projection (a subset of $\R^2$) 
$$\rho(W)^c:= (\sE(\cL_0)\cup \sF(\cL_0)) \setminus \rho(W)\,,$$
splits into one infinite component, and some finite ones. We say an edge or face $u\in \sE(\cL_0)\cup \sF(\cL_0)$ is \emph{interior} to (or \emph{nested} in) a wall $W$, denoted by $u\Subset W$, if $u$ is not in the infinite component of $\rho(W)^c$ (and strictly interior to or strictly nested in, if it is in one of the finite components of $\rho(W)^c$). 
A wall $W$ is nested in a wall $W'$, denoted $W\Subset W'$, if every element of $\rho(W)$ is interior to $W'$. Similarly, a ceiling $\cC$ is nested in a wall $W'$ if every element of $\rho(\cC)$ is interior to $W'$. 
\end{definition}

We can then identify the connected components of $\rho(W)^c$ with the ceilings incident to $W$. 

\begin{lemma}[{\cite{Dobrushin72a}}]\label{lem:wall-ceiling-bijection}
For a projection of the walls of an interface, each connected component of that projection (as a subset of edges and faces) corresponds to a single wall. Moreover, there is a 1-1 correspondence between the ceilings adjacent to a standard wall $W$ and the connected components of $\rho(W)^c$. Similarly, for a wall $W$, all other walls $W'\neq W$ can be identified to the connected component of $\rho(W)^c$ they project into, and in that manner they can be identified to the ceiling of $W$ to which they are interior.
\end{lemma}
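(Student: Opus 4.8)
The assertion is purely combinatorial and is essentially the content of~\cite{Dobrushin72a} (see also the exposition in~\cite{GL19a}); I would organize the argument around a few steps. \emph{Step 1 (easy direction).} If $f$ and $f'$ are $*$-adjacent faces of $\sF(\Z^3)$ then $\rho(f)$ and $\rho(f')$ are equal or incident in $\sE(\cL_0)\cup\sF(\cL_0)$, so the projection of any $*$-connected face set is connected; in particular $\rho(W)$ is connected for every wall $W$. \emph{Step 2 (fibers).} For $u\in\sE(\cL_0)\cup\sF(\cL_0)$ call $F_u:=\rho^{-1}(u)\cap\cI$ the \emph{fiber} over $u$. Reading off the definition of ceiling face: if $u$ is an edge then $F_u$ consists of vertical faces, all of which are wall faces; and if $u$ is a $2$-cell then either $|F_u|=1$, in which case that face is a ceiling face, or $|F_u|\ge 2$, in which case none of its faces is a ceiling face. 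Consequently the edges and $2$-cells of $\cL_0$ that lie in some wall projection are precisely those whose fiber contains no ceiling face.

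The heart of the matter, \emph{Step 3}, is that distinct walls $W\ne W'$ have projections that are disjoint and moreover non-incident, so that $\rho(W)\cup\rho(W')$ is disconnected. The mechanism is that whenever $F_u$ contains a wall face it lies inside a single wall, and in fact the $\cI$-faces projecting into the $3\times 3$ block of $\cL_0$-cells around $u$, when they are all wall faces, form a single $*$-connected family: this is exactly the local bookkeeping of~\cite{Dobrushin72a}, where one tracks how the sign changes of the configuration $\sigma(\cI)$ from Remark~\ref{rem:interface-spin-config} along and around the vertical column over $u$ force the separating faces of $\cI$ there to link up. Granting this, if $\rho(W)$ and $\rho(W')$ were incident at points $u_1\in\rho(W)$ and $u_2\in\rho(W')$ that are equal or share a vertex, then wall faces of $W$ and of $W'$ over $u_1$ and $u_2$ would be joined by a $*$-path of wall faces through the fibers over that block, forcing $W=W'$. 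Combining Steps 1 and 3, $\bigcup_W\rho(W)$ is a disjoint union of connected, pairwise non-incident sets, so each of its connected components is exactly one $\rho(W)$ --- the first assertion.

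\emph{Step 4 (ceilings).} Recall from~\cite{Dobrushin72a} that every ceiling $\cC$ of $\cI$ has a well-defined height $\hgt(\cC)$, its boundary faces all sitting at this height. Fix a wall $W$. A ceiling $\cC$ adjacent to $W$ has $\rho(\cC)$ disjoint from $\rho(W)$ (a $2$-cell carrying a ceiling face has $|F_u|=1$, hence lies in no wall projection), and $\rho(\cC)$ is connected by Step 1, so it sits inside a single connected component $D$ of $\rho(W)^c$; this defines a map $\cC\mapsto D$. For the inverse, fix a component $D$ of $\rho(W)^c$: the faces of $\cI$ incident to $W$ from the $D$-side assemble into a single ceiling $\cC_D$ adjacent to $W$ --- again part of the wall/ceiling formalism of~\cite{Dobrushin72a}, since $W$, being one wall, presents a single face-rim, at one height, to $D$ (a step between ceiling cells of $D$ would have to cross a wall face projecting into $\rho(W)\cap D=\emptyset$). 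These two maps are mutually inverse, giving the claimed $1$--$1$ correspondence. Finally, any other wall $W'\ne W$ has $\rho(W')$ connected (Step 1) and disjoint from $\rho(W)$ (first assertion), hence contained in a single component $D$ of $\rho(W)^c$; identifying $W'$ with the ceiling $\cC_D$ it faces --- with $D$ finite corresponding to the usual nesting $W'\Subset W$ and $D$ infinite to the outer ceiling of $W$ --- gives the last assertion.

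The step I expect to be the genuine obstacle is the local $*$-connectivity used in Step 3: the fact that the interface over a single column cannot be ``torn'' into pieces belonging to different walls. This is precisely what makes the walls-and-ceilings decomposition well posed in~\cite{Dobrushin72a}, and the cleanest route is to argue through the spin configuration $\sigma(\cI)$ and the connectivity of the plus and minus regions in $\Lambda_n$ (each is a single $*$-connected set, so $\cI$ is a genuine separating surface over $\cL_{0,n}$), rather than manipulating face adjacencies directly; once that is in place, the remainder is elementary planar topology of the contour $\rho(W)$.
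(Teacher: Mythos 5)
The paper does not supply a proof of this lemma; it is quoted verbatim from Dobrushin and the reader is referred to~\cite{Dobrushin72a} (see also Lemma~2.12 and its surroundings in~\cite{GL19a}). So there is no in-paper argument to compare against: what you have written is a reconstruction of Dobrushin's original reasoning, and as such it is on the right track and correctly identifies the crux.

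On the substance: Steps~1 and~2 are fine, though a small refinement worth recording is that for a $2$-cell $u$ the fiber $F_u$ has \emph{odd} cardinality (an odd number of sign changes along the column from $+$ at $-\infty$ to $-$ at $+\infty$), so the dichotomy is $|F_u|=1$ versus $|F_u|\geq 3$, not $\geq 2$; this parity is exactly what forces the ``bubble'' structure used in Step~3. Step~3 is indeed the genuine content: the non-incidence of distinct wall projections reduces to the claim that all wall faces of $\cI$ over a $3\times 3$ block of $\cL_0$ form a single $*$-connected family, which follows from $\cI$ being the boundary of the unique $*$-connected plus (resp.\ minus) region of $\sigma(\cI)$; your sketch of this is acceptable though not complete, and you are right that this is where Dobrushin's care is concentrated. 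In Step~4, the map $\cC\mapsto D$ is well-defined for the reasons you give, but the justification of injectivity/surjectivity via the parenthetical ``a step between ceiling cells of $D$ would have to cross a wall face projecting into $\rho(W)\cap D=\emptyset$'' does not quite parse: a height step between two ceiling cells in $D$ would be mediated by vertical faces projecting into $D$ (or into the edge boundary between $D$ and $\rho(W)$), not into $\rho(W)\cap D$. The correct argument shows that the ceiling faces in $D$ that are $*$-adjacent to $W$ are all at a single height (because the rim of $W$ facing $D$ sits at one height, by connectedness of $W$ together with Step~3) and form a single $*$-connected set; one then checks that any ceiling face in $D$ is connected through ceiling faces to this rim. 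This part of the argument, like Step~3, requires the same local-connectivity input and is the second place where you are (openly) relying on Dobrushin rather than proving it; I would flag it as such rather than leaving the parenthetical as if it settled the matter.
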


The above correspondence can be made more transparent by introducing the following notion. 

\begin{definition}
For a wall $W$, the ceilings incident to $W$ can be decomposed into \emph{interior ceilings} of $W$ (those ceilings identified with the finite connected components of $\rho(W)^c$), and a single exterior ceiling, called the $\emph{floor}$ of $W$, identified with the infinite connected component of $\rho(W)^c$. 
\end{definition}

\begin{definition}\label{def:hull}
The hull of a ceiling $\cC$, denoted $\hull{\cC}$ is the minimal simply-connected set of horizontal faces containing $\cC$. The hull of a wall, $\hull{W}$ is the union of $W$ with the hulls of its interior ceilings. Abusing notation, for a collection of walls $\bW$, we let $\hull{\bW}$ be the union of the hulls of $W\in \bW$. 
\end{definition}

\begin{observation}
For every interior ceiling $\cC$ of $W$, the projection of its hull, $\rho(\hull{\cC})$, is exactly the finite component of $\rho(W)^c$ it projects into. On the other hand, the projection of the hull of the floor of $W$ is all of $\cL_{0,n}$. Finally, the set $\rho(\hull{W})$ is the union of $\rho(W)$ with all the finite components of $\rho(W)^c$. 
\end{observation}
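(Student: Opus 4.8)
The plan is to derive all three assertions by unwinding Definitions~\ref{def:hull} and~\ref{def:nesting-of-walls}, with Dobrushin's wall–ceiling correspondence (Lemma~\ref{lem:wall-ceiling-bijection}) as the sole nontrivial ingredient. The preliminary fact to record is that every ceiling, and more generally every $*$-connected set of horizontal faces, lies in a single horizontal plane $\cL_h$: two horizontal faces of $\Z^3$ at distinct heights have vertex sets in distinct planes and hence cannot be $*$-adjacent. Therefore, for a ceiling $\cC$ at height $h := \hgt(\cC)$, its hull $\hull{\cC}$ — the minimal simply-connected horizontal face set containing it — is again contained in $\cL_h$ (a simply-connected face set is connected, and any superset of $\cC$ containing a face off $\cL_h$ is disconnected, hence not minimal). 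Consequently $\rho$ is injective on $\hull{\cC}$, and $\rho(\hull{\cC})$ is exactly $\rho(\cC)$ with all of its bounded complementary components filled in.

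For the first assertion, let $\cC$ be an interior ceiling of $W$ and let $U$ be the finite component of $\rho(W)^c$ with which it is identified in Lemma~\ref{lem:wall-ceiling-bijection}; recall this identification is via $\rho(\cC)\subseteq U$ (indeed $\rho(\cC)\cap\rho(W)=\emptyset$, since a ceiling face is by definition the unique face of $\cI$ over its column, and $\rho(\cC)$ is connected, so it lies in a single component of $\rho(W)^c$). It then suffices to show $U\setminus\rho(\cC)=\bigcup_{W'}\rho(\hull{W'})$, the union running over the walls $W'$ whose floor (exterior ceiling) is $\cC$: these turn out to be precisely the bounded components of $\rho(\cC)^c$ lying in $U$, so filling them in turns $\rho(\cC)$ into $U$, and by the previous paragraph this filled set is $\rho(\hull{\cC})$. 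The inclusion $\supseteq$ holds because each such $\rho(W')\subseteq U$ separates $\rho(\cC)$ from the bounded region $\rho(\hull{W'})\setminus\rho(W')$; the inclusion $\subseteq$ holds because any bounded complementary component of $\rho(\cC)$ inside $U$ is bordered by wall faces of $\cI$ which, being $*$-connected and disjoint from $W$, form exactly such a wall $W'$ (again Lemma~\ref{lem:wall-ceiling-bijection}).

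The second assertion is the same computation applied to the exterior ceiling $\cC_0$ (the floor) of $W$: $\rho(\cC_0)$ is $\sF(\cL_{0,n})$ with the bounded regions cut out by $\rho(W)$ (and by any walls not nested in $W$) removed; filling these bounded holes back in and using that $\sF(\cL_{0,n})$ is itself simply-connected yields $\rho(\hull{\cC_0})=\sF(\cL_{0,n})$. Finally, the third assertion is immediate from the first: by Definition~\ref{def:hull}, $\hull{W}=W\cup\bigcup_{\cC}\hull{\cC}$ over the interior ceilings $\cC$ of $W$, so $\rho(\hull{W})=\rho(W)\cup\bigcup_\cC\rho(\hull{\cC})$; the first assertion replaces each $\rho(\hull{\cC})$ by the corresponding finite component $U_\cC$, and the bijection of Lemma~\ref{lem:wall-ceiling-bijection} between interior ceilings of $W$ and finite components of $\rho(W)^c$ identifies $\bigcup_\cC U_\cC$ with the union of \emph{all} finite components of $\rho(W)^c$.

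I expect the one step needing genuine care to be the set equality $U\setminus\rho(\cC)=\bigcup_{W'}\rho(\hull{W'})$ in the first assertion — checking that the bounded complementary components of a flat ceiling's projection are in bijection with the hulls of the walls directly nested in that ceiling, with no face of $U$ left unaccounted for. This is essentially Lemma~\ref{lem:wall-ceiling-bijection} restated ``one level up,'' and the key enabling input is the characterization of a ceiling face as the unique face of $\cI$ over its column, which forces the boundary of any ceiling to be made of wall faces; everything else is routine planar topology.
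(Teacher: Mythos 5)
The paper states this as an Observation with no proof attached, so your write-up is a reconstruction rather than something to compare against a model argument. Your reduction to planar topology is the right key step and is correct: a $*$-connected set of horizontal faces stays in a single plane, so $\hull{\cC}\subset\cL_{\hgt(\cC)}$, $\rho$ is injective on it, and $\rho(\hull{\cC})$ is exactly $\rho(\cC)$ with its bounded complementary components filled in.

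Two things to tighten. Your proof of the first assertion is circular as written: you reduce $\rho(\hull{\cC})=U$ to showing $U\setminus\rho(\cC)=\bigcup_{W'}\rho(\hull{W'})$, but $\hull{W'}$ is defined (Definition~\ref{def:hull}) through the hulls of the interior ceilings of $W'$, so that identity already invokes the present Observation one level down the nesting tree. This can be repaired by an explicit induction on nesting depth (finite, since $\cI$ has finitely many walls), but it is cleaner to avoid $\rho(\hull{W'})$ entirely: after your reduction all that is needed is that every face of $U\setminus\rho(\cC)$ lies in a \emph{bounded} component of $\rho(\cC)^c$, i.e., that $\rho(\cC)$ separates the interior of $U$ from $\rho(W)$. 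That separation (the faces of $\cC$ which are $*$-adjacent to $W$ line the inside of $U$ all the way around) is the one genuine input from Lemma~\ref{lem:wall-ceiling-bijection} and is better proved directly than routed through $\hull{W'}$. Separately, the assertion that the projection of the hull of the floor of $W$ equals $\cL_{0,n}$ — as stated in the Observation and as you prove it — holds only when the floor of $W$ reaches $\partial\cL_{0,n}$. If $W$ is nested inside another wall $W''$, the floor of $W$ is an interior ceiling of $W''$, and by the first assertion its hull projects onto the finite component of $\rho(W'')^c$ containing $\rho(W)$, not onto all of $\cL_{0,n}$. This imprecision is the Observation's, not yours (it is clearly written with the prototypical, unnested case in mind, to motivate excluding the floor from $\hull{W}$), but your parenthetical ``(and by any walls not nested in $W$)'' quietly assumes that no wall nests $W$, and that hypothesis should be made explicit.
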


Finally, we can assign the index points of $\cL_0$ the walls of an interface $\cI$ as follows. 

\begin{remark}\label{rem:indexing-walls}
Given an interface $\cI$, for every face $x\in \sF(\cL_0)$, assign $x$ the wall $W$ of $\cI$ if $x\Subset W$ and $x$ shares an edge with $\rho(W)$. If there is no $W$ for which this is the case, let $W_x = \trivincr$. Importantly, this labeling scheme is such that $x$ is only assigned one wall, but the same wall may be assigned to many index faces. 
\end{remark}

\subsubsection*{The standard wall decomposition} While it is evident that the wall faces of an interface uniquely determine the interface, a key property of the walls and ceilings decomposition of~\cite{Dobrushin72a} is that the vertical positions of the walls are not needed to recover the interface. 

\begin{definition}[Standard walls]\label{def:standard-walls}
A wall $W$ is a \emph{standard wall} if there exists an interface $\cI_W$ such that $\cI_W$ has exactly one wall, $W$; as such the floor of $W$ in $\cI$ must be a subset of $\cL_0$. 
A collection of standard walls is \emph{admissible} if they have pairwise (vertex) disjoint projections. 
\end{definition}

\begin{definition}[Standardization of walls]\label{def:std-of-wall}
To each ceiling $\cC$ of $\cI$, we can identify a unique \emph{height} $\hgt(\cC)$ since all faces in the ceiling have the same $x_3$ coordinate. For every wall $W$ of $\cI$ , we can define its \emph{standardization} $\Theta_{\textsc {st}}W$ which is the translate of the wall by $(0,0,-s)$ where $s$ is the height of its floor. (The standardization of  a wall $W$ may depend on $\cI$, but we leave this dependence to be contextually understood.)
\end{definition}

We then have the following important bijection between interfaces and their standard wall representation, defined as the collection of standard walls given by standardizing all walls of $\cI$: see Fig.~\ref{fig:1-1-correspond-std} for a depiction. 

\begin{lemma}[{\cite{Dobrushin72a}, as well as~\cite[Lemma 2.12]{GL19a}}]\label{lem:interface-reconstruction}
The standard wall representation yields a bijection between the set of all interfaces and the set of all admissible collections of standard walls. 
In particular, the standardization $\Theta_{\textsc {st}}W$ of a wall $W$ is a standard wall. 
\end{lemma}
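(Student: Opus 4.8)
The statement to prove is Lemma~\ref{lem:interface-reconstruction}: the standard wall representation yields a bijection between interfaces and admissible collections of standard walls.

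\medskip

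The plan is to exhibit explicit maps in both directions and show they are mutually inverse. In the forward direction, given an interface $\cI$, I would take each wall $W$ of $\cI$, locate its floor (the exterior ceiling, which sits at some height $s = s(W)$), and set $\Theta_{\textsc{st}}W := W - (0,0,s)$; the output is the family $\{\Theta_{\textsc{st}}W : W \text{ a wall of } \cI\}$. The first task is to check this output is a well-defined admissible collection of standard walls: that each $\Theta_{\textsc{st}}W$ is genuinely a standard wall (i.e.\ there is an interface $\cI_{\Theta_{\textsc{st}}W}$ with that as its unique wall), and that the projections are pairwise vertex-disjoint. Disjointness of projections is essentially inherited from $\cI$, since distinct walls of $\cI$ are distinct $*$-connected components of wall faces and translating vertically does not change $\rho(\cdot)$; one must just note that two walls of $\cI$ cannot have touching projections (otherwise their wall faces would be $*$-connected, contradicting maximality — this uses Lemma~\ref{lem:wall-ceiling-bijection}). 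That $\Theta_{\textsc{st}}W$ is a standard wall is where one builds $\cI_{\Theta_{\textsc{st}}W}$ by hand: start from the flat interface $\sF(\cL_0)$, excise the faces over $\rho(W)$, and glue in the (shifted) wall $W$ together with its interior ceilings lowered to meet it; one checks this is a legitimate interface (a $*$-connected separating surface coinciding with $\sF(\cL_0)$ outside a compact set) whose only wall is $\Theta_{\textsc{st}}W$.

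\medskip

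In the reverse direction, given an admissible collection $\{W_i\}$ of standard walls, I would reconstruct an interface by, roughly, simultaneously performing all the local surgeries above: over each projected region $\rho(\hull{W_i})$ install the wall $W_i$ and its interior ceilings, but now shifted vertically so as to be consistent with the nesting structure — a wall $W_j$ nested inside an interior ceiling $\cC$ of $W_i$ gets raised/lowered by $\hgt(\cC)$ relative to its standard position, and this propagates recursively through the nesting forest. Admissibility (disjoint projections) is exactly what guarantees the nesting forest is well-defined and the surgeries do not collide, so this recursive vertical placement is unambiguous. One checks the result is an interface and that reading off its walls returns the $W_i$ with their floors at the prescribed heights, so that standardizing them recovers $\{W_i\}$.

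\medskip

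The two maps are then seen to be inverse: applying reconstruction to the standard wall representation of $\cI$ rebuilds $\cI$ because the heights of all ceilings of $\cI$ are themselves determined recursively by the nesting structure together with the heights of the floors (each wall's floor height equals the height of the ceiling of the next wall out, and the outermost floors lie in $\cL_0$) — this is the content of the claim that ``the vertical positions of the walls are not needed to recover the interface,'' and it is the crux of the argument. I expect the main obstacle to be precisely this step: verifying that the vertical coordinate of every face of $\cI$ is forced by the combinatorial data $(\{W_i\}, \text{nesting})$, i.e.\ that no two distinct interfaces can have the same standard wall representation. Concretely one argues by induction on the nesting depth: the outermost ceilings are at height $0$ (their projections reach the infinite component, and the boundary condition pins them), each wall adjacent to a height-$h$ ceiling has its floor at height $h$ hence is placed as that ceiling's height plus its standardized shape, and each interior ceiling of that wall acquires a determined height, etc. Once this rigidity of vertical placement is established, injectivity and surjectivity of the standard wall representation both follow, completing the proof. \qed
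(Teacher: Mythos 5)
The paper does not prove this lemma; it is cited to Dobrushin~\cite{Dobrushin72a} and to~\cite[Lemma 2.12]{GL19a}. Your sketch — explicit forward and inverse maps with vertical placement of walls and ceilings determined recursively by the nesting structure (rooted at the outermost ceilings at height $0$) — is exactly the standard argument carried out in those references, and it correctly identifies the crux as the forced vertical positioning.
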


We note the following important observation based on the bijection given by Lemma~\ref{lem:interface-reconstruction}.  

\begin{observation}\label{obs:1-to-1-map-faces}
Consider interfaces $\cI$ and $\cJ$, such that the standard wall representation of $\cI$ contains that of $\cJ$ (and additionally has the standard walls $\Theta_{\textsc{st}}\bW=(\Theta_{\textsc{st}}W_1,\ldots,\Theta_{\textsc{st}}W_r)$). There is a 1-1 map between the faces of $\cI \setminus \bW$ and the faces of $\cJ \setminus \bH$ where $\bH$ is the set of faces in $\cJ$ projecting into $\rho (\bW)$. Moreover, this bijection can be encoded into a map $f\mapsto \theta_{\udarrow} f$ that only consists of vertical shifts, and such that all faces projecting into the same connected component of $\rho(\bW)^c$ undergo the same vertical shift. 
\end{observation}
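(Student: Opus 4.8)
The plan is to leverage the bijection of Lemma~\ref{lem:interface-reconstruction} between interfaces and admissible collections of standard walls, together with the correspondence between ceilings incident to a wall and connected components of $\rho(W)^c$ (Lemma~\ref{lem:wall-ceiling-bijection}). First I would pass to the standard wall representations: write $\Theta_{\textsc{st}}\bW = (\Theta_{\textsc{st}}W_1,\ldots,\Theta_{\textsc{st}}W_r)$ for the extra standard walls present in $\cI$ but not in $\cJ$, and let $\Theta_{\textsc{st}}\bV$ be the standard walls common to both. Deleting $\Theta_{\textsc{st}}\bW$ from the standard wall representation of $\cI$ yields, by Lemma~\ref{lem:interface-reconstruction}, precisely the interface $\cJ$ (the collections $\Theta_{\textsc{st}}\bV$ and $\Theta_{\textsc{st}}\bW$ are each admissible, and admissibility is preserved under taking subcollections since the projections are pairwise disjoint). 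So the operation ``delete the walls $\bW$'' is well-defined and produces $\cJ$.

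Next I would build the face map $f\mapsto\theta_\udarrow f$ explicitly. Any face $f\in\cI\setminus\bW$ is either a wall face belonging to some $W_j\in\bV$, or a ceiling face. Since $\rho(W_1),\ldots,\rho(W_r)$ are pairwise disjoint and also disjoint from $\rho(\bV)$, every such $f$ projects either into $\rho(\bV)$ or into a connected component of the complement $\rho(\bV)^c$; in the latter case that component is identified, via Lemma~\ref{lem:wall-ceiling-bijection}, with a unique ceiling $\cC$ of $\cI$ (an interior ceiling of some wall in $\bV$, or the global floor). The height shift is then determined by how much the heights of the ceilings of $\cI$ change when the walls $\bW$ are removed: concretely, for a ceiling $\cC$ of $\cI$, its new height in $\cJ$ is obtained by subtracting the total ``lift'' contributed by those walls of $\bW$ that $\cC$ was nested in (or that were nested in ceilings below $\cC$ along the nesting tree of $\cI$). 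I would define $\theta_\udarrow f := f + (0,0,-s_\cC)$ when $f$ projects into the component associated with $\cC$, where $s_\cC$ is exactly this accumulated vertical displacement; faces projecting into $\rho(\bV)$ (wall faces of $\bV$, which keep their relative position) get shifted by the displacement of $\bV$'s floor. By construction all faces projecting into the same component of $\rho(\bV)^c$, hence into the same component of $\rho(\bW)^c$ in the statement, undergo the same shift, and the map consists only of vertical translations.

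Finally I would check the map is a bijection onto $\cJ\setminus\bH$, where $\bH$ is the set of faces of $\cJ$ projecting into $\rho(\bW)$: injectivity is immediate since distinct components of $\rho(\bV)^c$ are shifted rigidly and within a component the shift is a fixed translation preserving the face structure; surjectivity follows because, running the reconstruction of Lemma~\ref{lem:interface-reconstruction} for $\cJ$, every face of $\cJ$ not projecting into $\rho(\bW)$ arises from a face of $\cI\setminus\bW$ under exactly this shift (the walls of $\bV$ stack at the correct heights once $\bW$ is removed, and the ceilings fill in flatly where the $\bW$-hulls used to sit). The main obstacle I anticipate is bookkeeping the vertical displacement $s_\cC$ correctly through the nesting tree — one must argue that when several walls of $\bW$ are removed simultaneously, the heights of a surviving ceiling shift additively by the sum of the contributions of all removed walls enclosing it, and that this is consistent with the constraint that boundary faces of each ceiling stay at a common height; this is really a statement about how $\hgt(\cC)$ decomposes as a sum over the walls of $\cI$ containing $\cC$, which follows from the inductive reconstruction but requires care to state precisely.
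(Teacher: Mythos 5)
Your overall plan — pass to standard wall representations, note that deleting $\Theta_{\textsc{st}}\bW$ yields $\cJ$ by Lemma~\ref{lem:interface-reconstruction}, and interpret the shift of a face as the accumulated height contribution of deleted nesting walls — is the right idea and matches the spirit of what the paper treats as an immediate consequence of the wall bijection. However, there is a genuine error in how you organize the shift, which also derails the final "constancy on components of $\rho(\bW)^c$" claim.

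You group faces by connected components of $\rho(\bV)^c$, identify each such component (incorrectly) with a single ceiling $\cC$ of $\cI$, and then assert that constancy of the shift on components of $\rho(\bV)^c$ "hence" gives constancy on components of $\rho(\bW)^c$. Both steps are flawed. First, components of $\rho(\bV)^c$ are \emph{not} in bijection with ceilings of $\cI$: one such component typically contains several distinct ceilings of $\cI$ (e.g., the interior ceilings of several walls of $\bW$ lying inside it). Second, the shift is not in fact constant on a component of $\rho(\bV)^c$: if two faces of that component are nested by different subsets of $\bW$ (or lie in different interior ceilings of some $W_i\in\bW$), they undergo different vertical displacements. Third, even if the shift were constant on components of $\rho(\bV)^c$, this would not imply constancy on components of $\rho(\bW)^c$; neither partition refines the other, so the "hence" is a non sequitur.

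The correct organizing unit is $\rho(\bW)^c$ directly. The shift at $\rho(f)$ equals (minus) the sum, over walls $W\in\bW$ nesting $\rho(f)$, of the height of the interior ceiling of $\Theta_{\textsc{st}}W$ containing $\rho(f)$; this follows from Observation~\ref{obs:height-nested-sequence-of-walls} and the reconstruction of Lemma~\ref{lem:interface-reconstruction}. Then: (i) if some $W\in\bW$ nests $\rho(f)$ but not $\rho(g)$, the contour $\rho(W)\subset\rho(\bW)$ separates them, so they lie in different components of $\rho(\bW)^c$; (ii) if $W$ nests both but through different interior ceilings (different finite components of $\rho(W)^c$), again $\rho(W)$ separates them. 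Hence two faces in the same component of $\rho(\bW)^c$ are nested by identical walls of $\bW$ and fall in the same interior ceiling of each, so they receive identical shifts. This gives exactly the constancy claimed in the observation, without any detour through $\rho(\bV)^c$.
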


\subsubsection*{Nested sequences of walls} Finally, we introduce a notion of nested sequences of walls. 
 
 \begin{definition}\label{def:nested-sequence-of-walls}
To any edge/face/cell $z$, we can assign a \emph{nested sequence of walls} $\fW_z = (W_1,\ldots,W_s)$ where $(W_1,\ldots,W_s)$ are the set of all walls in $\cI$ nesting $\rho(z)$ (by Definition~\ref{def:nesting-of-walls}, this forms a nested sequence of walls, and can be ordered such that $W_i$ is nested in $W_j$ for all $j\ge i$). 
\end{definition}

\begin{observation}\label{obs:height-nested-sequence-of-walls}
For $u\in \cL_0$, one can read off the height of the face(s) of $\cI$ projecting onto $u$ from $\Theta_{\textsc{st}}\fW_u$. In particular, if a face $f\in \cI$ has height $h$, its nested sequence of walls must be such that the sum of the heights of the walls in $\Theta_{\textsc{st}}\fW_{\rho(f)}$ exceeds~$4h$.
\end{observation}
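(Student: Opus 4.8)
The plan is to extract the height of a face of $\cI$ above $u\in\cL_0$ purely combinatorially from the standardized nested sequence $\Theta_{\textsc{st}}\fW_u = (\Theta_{\textsc{st}}W_1,\ldots,\Theta_{\textsc{st}}W_s)$, by tracking how each successive wall shifts the ceiling on which the next one sits. Recall from Definition~\ref{def:standard-walls} and Lemma~\ref{lem:interface-reconstruction} that an interface is reconstructed from its admissible standard wall collection by placing each standard wall on the floor determined by the walls nesting it; and from Definition~\ref{def:std-of-wall} that $\Theta_{\textsc{st}}W$ is $W$ translated down by the height of its floor. So I would first isolate, for each $W_i$ in the nested sequence, the \emph{relative displacement} it contributes: if $\cC_i$ is the interior ceiling of $W_{i+1}$ (or the final ceiling above $u$, when $i=s$) into whose projected component $u$ falls, then $\hgt(\cC_i) - \hgt(\text{floor of }W_{i+1})$ is a quantity $\delta_i$ depending only on the standard wall $\Theta_{\textsc{st}}W_{i+1}$ and the projected location of $u$ inside it, because the standard wall $\cI_{W_{i+1}}$ (the interface with $W_{i+1}$ as its only wall, via Definition~\ref{def:standard-walls}) exhibits exactly this ceiling-to-floor height gap. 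Telescoping, $\hgt(f) = \sum_{i} \delta_i$, and each $\delta_i$ is read off from $\Theta_{\textsc{st}}W_i$ alone, which gives the first assertion.

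For the second, quantitative assertion, I would bound $\hgt(f) = \sum_i \delta_i$ by relating each $\delta_i$ to the number of wall faces $|W_i|$, and ultimately to the \emph{total} height $\sum_i \hgt(W_i)$ of the walls in the sequence, where $\hgt(W_i)$ denotes the vertical extent (number of horizontal levels spanned) of the standardized wall. The point is that a single standard wall $W$ can only raise an interior ceiling relative to its floor by an amount bounded by the vertical span of $W$ itself: the ceiling of $\cI_{W}$ inside a finite component of $\rho(W)^c$ is connected through wall faces of $W$ to the floor at height $0$, so its height is at most the number of distinct heights appearing among faces of $W$, i.e.\ at most $\hgt(W)$. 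Hence $0 \le \delta_i \le \hgt(\Theta_{\textsc{st}}W_i)$ for each $i$ (the one with $u$ interior), and summing, $\hgt(f) \le \sum_i \hgt(\Theta_{\textsc{st}}W_i)$. The stated inequality ``the sum of the heights exceeds $4h$'' then follows a fortiori: in fact one gets $\sum_i \hgt(\Theta_{\textsc{st}}W_i) \ge h$, and the factor-$4$ version is the (weaker, but convenient) bound one obtains by the cruder estimate counting that each unit of height forces at least $4$ vertical wall faces stacked around the column — i.e.\ going up one level in the interface above $u$ requires, on the boundary of the shifted ceiling, at least the four vertical faces of a unit box, so $\sum_i |W_i| \ge 4h$ and a fortiori $\sum_i \hgt(\Theta_{\textsc{st}}W_i)$, interpreted as total wall ``size/height'', is $\ge 4h$ in the normalization used.

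The main obstacle I anticipate is purely bookkeeping: pinning down precisely which interior ceiling of $W_{i+1}$ the index $u$ lands in, and checking that the chain of inclusions ``$u$ interior to $W_s$, whose supporting ceiling is interior to $W_{s-1}$, \ldots'' is exactly the nested sequence of Definition~\ref{def:nested-sequence-of-walls} with the correct ordering (innermost to outermost), so that the telescoping is legitimate and no displacement is double-counted or omitted. Once the correspondence between ``the component of $\rho(W)^c$ containing $u$'' and ``the interior ceiling of $W$ above $u$'' from Lemma~\ref{lem:wall-ceiling-bijection} is invoked cleanly, and one observes (Observation~\ref{obs:1-to-1-map-faces}) that all faces over a fixed component of $\rho(\bW)^c$ undergo a common vertical shift, the argument is a short induction on $s$. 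I would present it as: (1) base case $s=0$, $f$ at height $0$; (2) inductive step peeling off the outermost wall $W_1$, which shifts the entire sub-interface over its relevant interior component by $\hgt(\cC_1)$, a number determined by $\Theta_{\textsc{st}}W_1$; (3) conclude $\hgt(f)=\hgt(\cC_1)+\hgt(f')$ where $f'$ is the corresponding face in the sub-interface generated by $(W_2,\ldots,W_s)$, and apply the hypothesis; (4) read off the stated bound from $\hgt(\cC_1)\le\hgt(W_1)$ at each stage.
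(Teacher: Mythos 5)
Your telescoping argument for the first assertion is correct and is exactly the natural argument: by Lemma~\ref{lem:interface-reconstruction} and Observation~\ref{obs:1-to-1-map-faces}, peeling off the outermost standard wall shifts the remaining sub-interface over the relevant component of $\rho(W_1)^c$ by a height $\delta_1$ read off from $\Theta_{\textsc{st}}W_1$ alone, and induction gives $\hgt(f)=\sum_i \delta_i$.

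For the quantitative assertion you correctly identify the mechanism behind the factor of $4$, but let me dispel the confusion about ``heights of the walls.'' The paper does not define a vertical-extent notion $\hgt(W)$; as you can see from how this Observation is invoked later (notably in the proof of Claim~\ref{clm:iso-pillar-containments}, item (2), which reads ``the height of an interface above $y$ is bounded by the sum of the \emph{excess areas} of all walls that nest it''), the intended quantity is $\sum_i \fm(\Theta_{\textsc{st}}W_i)$. Your face-counting argument delivers exactly this: if $\cC_i$ denotes the interior ceiling of $W_i$ whose projection contains $u$ (so that $\delta_i=\hgt(\cC_i)$ in $\cI_{\Theta_{\textsc{st}}W_i}$), then the vertical faces of $\Theta_{\textsc{st}}W_i$ projecting onto each boundary edge of $\rho(\hull\cC_i)$ must span heights from $0$ to $\delta_i$, and $|\partial\rho(\hull\cC_i)|\ge 4$; since each $g\in\sF(\rho(W_i))$ carries at least one horizontal wall face, $\fm(W_i)=|W_i|-|\sF(\rho(W_i))|$ is at least the number of vertical faces, hence $\fm(W_i)\geq 4|\delta_i|$. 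Summing and using the triangle inequality gives $\sum_i\fm(\Theta_{\textsc{st}}W_i)\geq 4\sum_i|\delta_i|\geq 4h$. Do note, though, that your inequality ``$0\le\delta_i$'' is not valid in general: a wall can support a downward oscillation (an interior ceiling below its floor), so $\delta_i$ may be negative. This does not affect the conclusion --- one just replaces $\delta_i$ by $|\delta_i|$ throughout --- but it should be stated, and it also shows your intermediate bound ``$\hgt(f)\leq\sum_i\hgt(\Theta_{\textsc{st}}W_i)$'' (with $\hgt$ read as vertical extent) needs the same absolute-value fix. With those adjustments, your proposal is a correct proof of the intended statement.
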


We conclude this section with the following fact enumerating over walls and nested sequences of walls rooted at a given face (see e.g.,~\cite[Lemma 2]{Dobrushin72a} as well as~\cite[Observation 2.27]{GL19a}). 

\begin{fact}\label{fact:number-of-walls}
There exists a universal (lattice dependent) constant $C$ such that the number of $*$-connected face sets of cardinality at most $M$, nesting a fixed $x\in \sF(\mathbb Z^3)$, is at most $C^M$. In particular, there exists a universal constant $C$ such that the number of nested sequences of standard walls of total cardinality at most~$M$, nesting $x$, is at most $C^M$. 
\end{fact}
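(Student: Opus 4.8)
The plan is to reduce both statements to a standard counting bound on $*$-connected subsets of $\Z^3$ (or $\Z^2$) containing a fixed cell, and then to handle the enumeration over nested sequences by a ``concatenation'' argument. First I would recall the classical Peierls-type estimate: there is a lattice-dependent constant $C_0$ such that the number of $*$-connected sets of faces in $\sF(\Z^3)$ of cardinality exactly $m$ that contain (or are incident to) a fixed face $x$ is at most $C_0^m$. This is proved in the usual way: a $*$-connected set of $m$ faces admits a spanning tree, and such a tree can be encoded by a depth-first traversal which at each of the $2m$ steps chooses one of a bounded number $B$ of $*$-adjacent faces (the degree in the $*$-adjacency graph is uniformly bounded since it is a fixed lattice); hence the count is at most $B^{2m} =: C_0^m$. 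Summing over $1 \le m \le M$ only changes the constant, giving the first assertion: the number of $*$-connected face sets of cardinality at most $M$ nesting $x$ is at most $\sum_{m\le M} C_0^m \le C^M$ for $C = 2C_0$, say. (Here ``nesting $x$'' is stronger than ``incident to/containing a face projecting near $x$'', so the same bound applies a fortiori; alternatively one notes that if $W$ nests $x$ then $\rho(W)$ is a $*$-connected set of edges/faces in $\cL_0$ surrounding $x$, and one counts those in $\Z^2$ with the analogous bound.)

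For the second assertion I would argue as follows. A nested sequence of standard walls $(W_1,\dots,W_s)$ rooted at $x$ (meaning each $W_i$ nests $\rho(x)$, so by Definition~\ref{def:nesting-of-walls} they are linearly ordered by $\Subset$) with $\sum_i |W_i| \le M$ is determined, as an unordered object, by the single face set $\bigcup_{i} \Theta_{\textsc{st}} W_i$ together with the information of how to partition it back into the individual standard walls; but in fact the ordering is forced by nesting, so it suffices to enumerate over the possible multisets. I would bound this by first choosing the cardinalities $m_1,\dots,m_s$ with $\sum m_i \le M$ — the number of such compositions (over all $s$) is at most $2^M$ — and then, given the cardinalities, choosing each wall $W_i$. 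The key point making this a product bound is that each standard wall in the sequence, being nested in the previous one and nesting $\rho(x)$, must itself nest $\rho(x)$; so by the first part each $W_i$ is one of at most $C_0^{m_i}$ choices once we also specify its floor height, and since it is a \emph{standard} wall its floor is a subset of $\cL_0$ (Definition~\ref{def:standard-walls}), so no height needs to be specified. Thus the number of nested sequences is at most $2^M \prod_i C_0^{m_i} = 2^M C_0^{\sum_i m_i} \le (2C_0)^M$, which is $C^M$ after renaming the constant.

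A cleaner alternative for the second part, which I would actually prefer to write, is to observe that the union $\bigcup_i \Theta_{\textsc{st}}W_i$ of the standardized walls in a nested sequence rooted at $x$ is itself a $*$-connected face set of cardinality $\le M$ nesting $x$: it is $*$-connected because consecutive walls in the nesting have projections that are ``adjacent'' in $\cL_0$ (the inner wall projects into a finite component of the outer wall's projection complement, sharing boundary structure), and after standardizing they all sit in a bounded neighborhood of $\cL_0$. One then applies the first assertion to this union and notes that the decomposition of the union back into the constituent standard walls is determined by the data (at most $2^M$ choices of which faces belong to which wall, or simply by the $*$-connectivity structure). Either way the bound $C^M$ follows.

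The main obstacle, and the only place requiring genuine care rather than bookkeeping, is justifying that passing from the individual walls $W_i$ to their standardizations $\Theta_{\textsc{st}}W_i$ and taking the union produces a set that is again $*$-connected and of controlled cardinality, so that the first (single-set) counting bound can be invoked — in other words, checking that ``nested sequence of standard walls rooted at $x$'' really is encodable as a single decorated $*$-connected set rooted at $x$. This is essentially the content of Lemma~\ref{lem:interface-reconstruction} and the observations following it (the standard wall representation is a bijection, and nesting is determined by projections), so the argument is short, but it is the conceptual heart of why an exponential-in-$M$ bound, rather than something like $M!\,C^M$, holds for sequences.
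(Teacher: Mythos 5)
The paper does not actually prove this Fact; it cites \cite[Lemma~2]{Dobrushin72a} and \cite[Observation~2.27]{GL19a}, so there is no in-paper proof to compare against. Judged on its own terms, your proposal has one real error and one smaller gap.

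The real error is in the ``cleaner alternative'' for the second assertion, which you say you would actually prefer. The union $\bigcup_i \Theta_{\textsc{st}}W_i$ of a nested sequence of standard walls is in general \emph{not} $*$-connected, for two reasons. First, consecutive walls in the nesting order can be separated by an arbitrarily large annulus of ceiling: take $W_2$ a large loop of diameter $100$ around $\rho(x)$ and $W_1$ a small wall just around $\rho(x)$; they share no bounding vertex. Second, and more structurally, an admissible collection of standard walls has pairwise vertex-disjoint projections by Definition~\ref{def:standard-walls}, and two faces whose projections share no vertex of $\cL_0$ cannot share a vertex of $\Z^3$; so two distinct standard walls in a wall representation are \emph{never} $*$-adjacent, no matter how close. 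Thus the reduction to the first assertion fails as written, and your closing paragraph, which singles out the $*$-connectedness of the union as ``the conceptual heart'', is flagging a claim that is false. This is precisely why the paper's own witness constructions (Lemma~\ref{lem:wall-cluster-multiplicity}, Lemma~\ref{lem:multiplicity-witness}) explicitly add auxiliary connecting paths of faces to stitch the disconnected pieces together into a single $*$-connected set before invoking this Fact. Your first argument for the second assertion --- choose a composition $m_1+\cdots+m_s\le M$ (at most $2^M$ choices) and then for each $i$ use the first assertion to select a standard wall of cardinality $m_i$ nesting $\rho(x)$ --- does not rely on connectedness of the union, is correct, and is the one you should keep.

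The smaller gap is in the first assertion: ``nesting $x$'' is not stronger than ``containing $x$'' or ``incident to $x$''. A wall nesting $x$ need not contain any face near $x$; its closest face can be at distance of order $M$. The spanning-tree/DFS count requires a fixed root face, so one needs the observation that a $*$-connected set of at most $M$ faces whose projection encircles $\rho(x)$ must contain a face projecting to within $\ell^\infty$-distance $O(M)$ of $\rho(x)$ (the enclosing loop in $\rho(W)$ forces a crossing of, say, the positive $e_1$-ray from $\rho(x)$ at distance at most the diameter of $\rho(W)$), and that the standardization pins the heights to a window of size $O(M)$. This gives polynomially many candidate roots, which is then absorbed into the base of the exponential. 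Your ``alternatively'' sentence gestures at this via the projection but does not close the argument, and also note that without some vertical pinning (as for standardized walls) the count would be infinite, since ``nesting'' constrains only the projection.
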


\subsection{Restricted interfaces}
Recall that much of the paper will regard statements that are conditional on the interface outside some level set. Towards that, as in the introduction, we will fix a set $S_n \subset \cL_{0,n}$ and condition on the wall collection in $S_n^c = \cL_{0,n}\setminus S_n$. Namely, let $\bW_n =(W_z)_{z\in S_n^c}$---where if $z\in S_n^c$ is not assigned any wall, we say $W_z = \trivincr$---such that that $\rho(\bW_n)\subset S_n^c$. 
If $\cI_\bW$ is the interface whose standard wall collection is $\Theta_{\textsc{st}}\bW_n$, there is a unique ceiling, called $\cC_{\bW}$, of $\cI_{\bW}$ such that $\rho(\cC_\bW)\supset S_n$.  

Our object of interest is the interface $\cI\restriction_{S_n}$: this is the interface whose standard wall representation consists of the standardizations of all walls in $\cI$ whose projection is in $S_n$ (for $\cI\in \bI_\bW$, this is the interface whose standard wall representation is that of $\cI$ minus the standardizations of $\bW_n$). 
Observe, then, that for any such $S_n, \bW_n$, every $\cI\in \bI_{\bW_n}$ can be decomposed as
\begin{align}\label{eq:I-inside-outside-decomp}
    \cI \cap (S^c \times \Z) = \cI\restriction_{S_n^c}\cap (S^c\times \Z)\,, \qquad \mbox{and}\qquad \cI \cap (S\times \Z) = \cI\restriction_{S_n} \cap (S\times \Z) + (0,0,\hgt(\cC_\bW))\,.
\end{align}
In particular, the restriction $\cI \cap (S\times \Z)$ is given by the vertical shift by $\hgt(\cC_{\bW})$ of the interface $\cI\restriction_{S_n}$.

We define restricted nested sequences of walls, $\fW_{x,S}$ as $\fW_{x}\setminus \bW$ and note that $\fW_{x,S}$ is the vertical shift by $\hgt(\cC_\bW)$ of the nested sequence $\fW_{x}$ of the restricted interface $\cI\restriction_{S_n}$.  

\subsection{Interface pillars}\label{sec:pillar-def}
The standard wall representation of interfaces was central to~\cite{Dobrushin72a}, but is too coarse to characterize sharp asymptotics of the maximum height fluctuation. Instead, a pillar decomposition was introduced by~\cite{GL19a} to study the large height deviations of the interface.

\begin{figure}
    \begin{tikzpicture}
   \node (fig1) at (0,0) {
    	\includegraphics[width=0.5\textwidth]{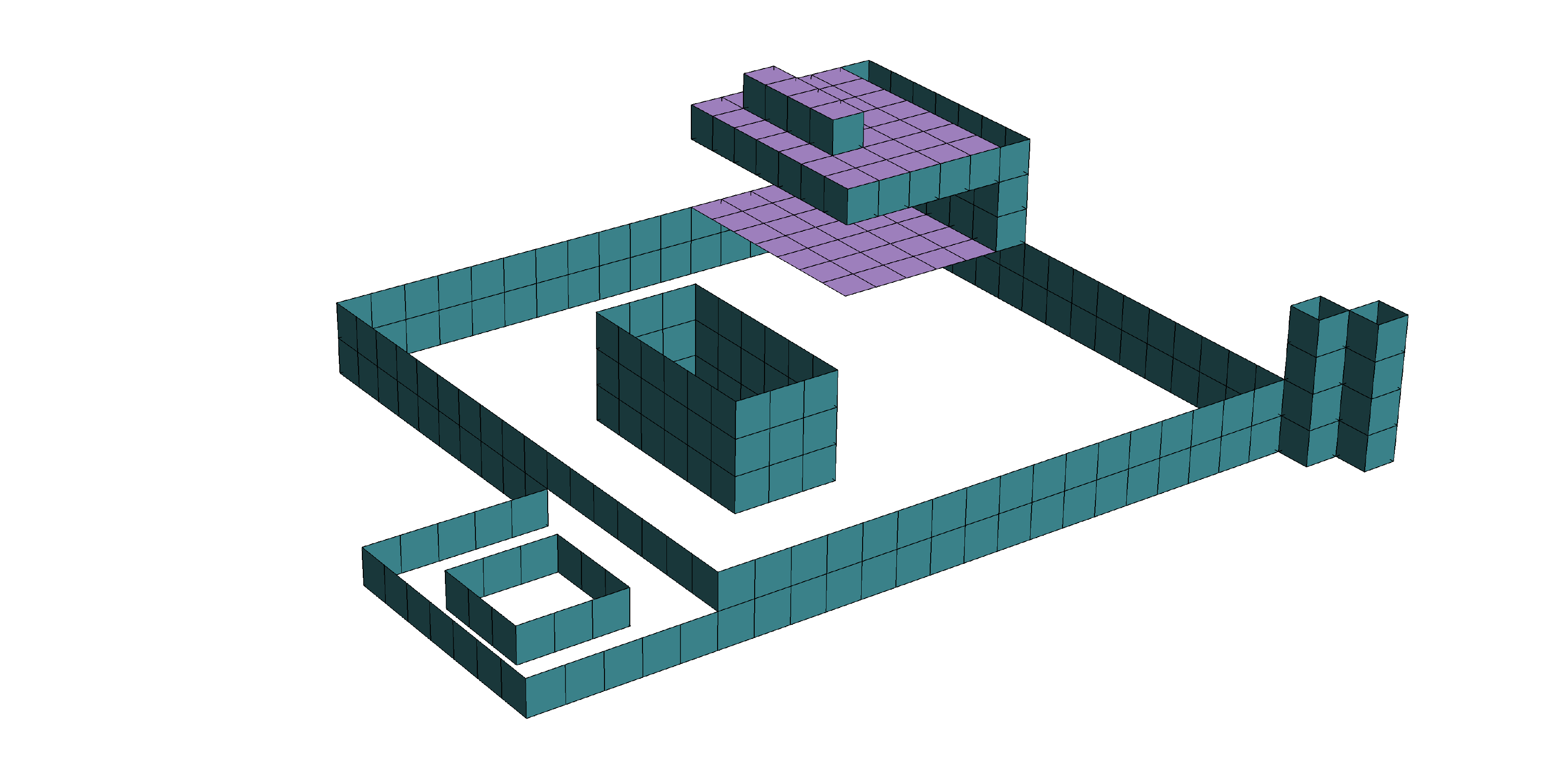}};
   \node (fig2) at (7,0) {
   	\includegraphics[width=0.48\textwidth]{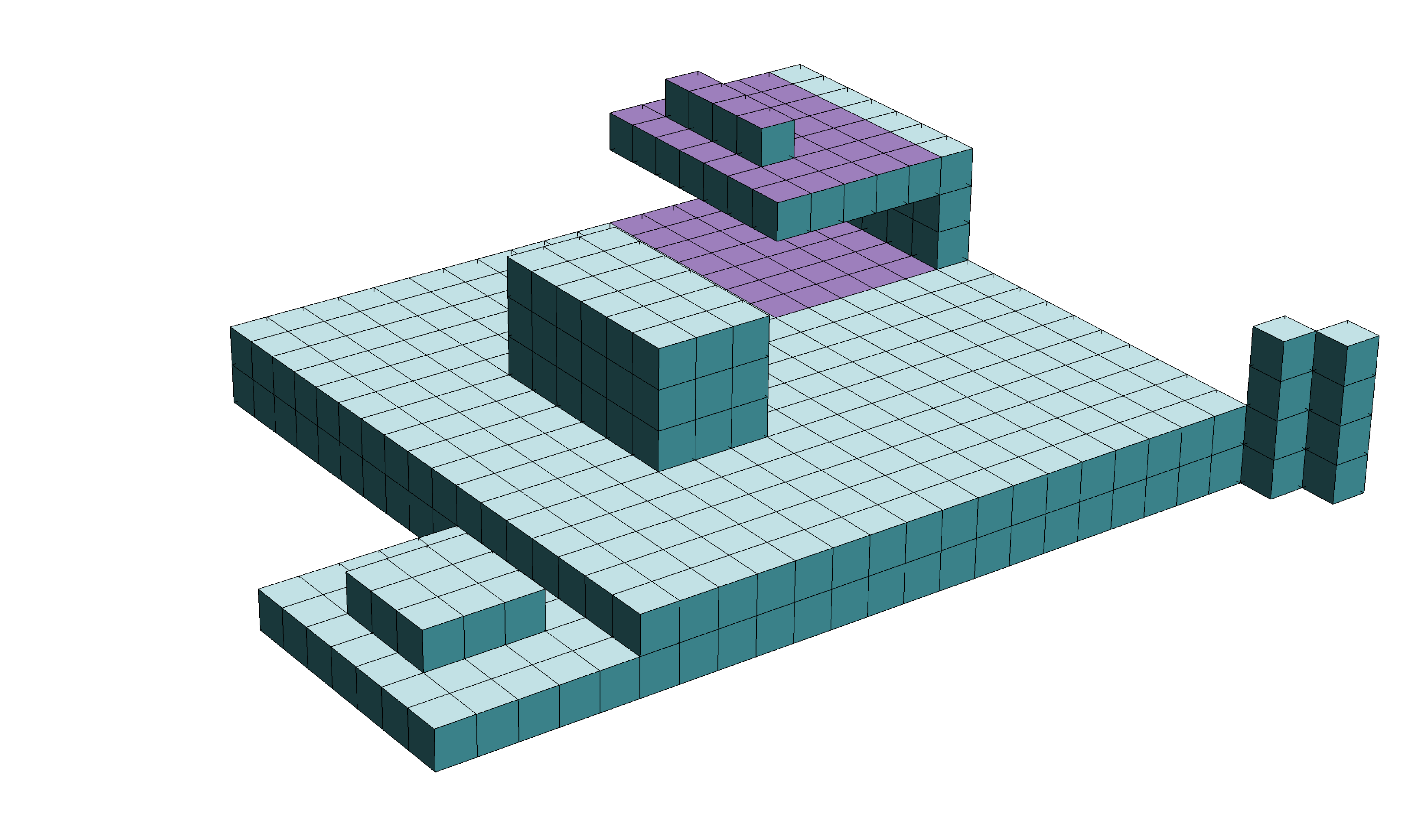}};
    \end{tikzpicture}
    \caption{Left: an admissible collection of three standard walls (with horizontal wall faces depicted in purple). Right: the interface with that standard wall representation per Lemma~\ref{lem:interface-reconstruction}, obtained by appropriate vertical shifts of the standard walls on the right.}
    \label{fig:1-1-correspond-std}
\end{figure}

\begin{definition}
For an interface $\cI$ and a face $x \in \sF(\cL_0)$, we define the \emph{pillar} $\cP_x$ as follows: consider the Ising configuration $\sigma(\cI)$ and let $\sigma(\cP_x)$ be the $*$-connected plus component of the cell with mid-point $x+(0,0, \frac 12)$ in the upper half-space $\cL_{>0}$. Define the face set $\cP_x$ as the set of bounding faces of $\sigma(\cP_x)$ in $\cL_{>0}$.  
\end{definition}

\begin{remark}
If we recall the definition of the restricted pillar from the introduction, we see that this corresponds to  $\cP_{x,\cL_{0,n}}$. More generally, we have that the restricted pillar $\cP_{x,S_n}$ is precisely the vertical shift by $\hgt(\cC_\bW)$ of the (unrestricted) pillar $\cP_x$ of the interface $\cI \restriction_{S_n}$, and by such a translation, the definitions below are extended naturally to restricted pillars.
\end{remark}

The following observation relates pillars and nested sequences of walls. 

\begin{observation}\label{obs:pillar-nested-sequence-of-walls}
Let $\fG_x$ be the union of $\fW_x$ together with all walls $(W_z)_{z\in \rho(\hull{\fW}_x)}$ nested in a wall of $\fW_x$. 
The wall faces of $\cP_x$ are a subset of $\fG_x$. In particular, if the wall collections of $\cI$ and $\cJ$ agree on $\fG_x$, then $\cP_x^\cI = \cP_x^\cJ$. Thus, if $f\in \cP_x$, there must exist $W\in \fG_x$ such that both $\rho(f)$ and $\rho(x)$ are nested in~$W$. 
\end{observation}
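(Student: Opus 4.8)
The plan is to show that the wall faces of $\cP_x$ lie inside the wall collection $\fG_x$, and then deduce the remaining two assertions as easy consequences. The starting point is the local nature of the pillar: by definition, $\cP_x$ consists of the bounding faces (in $\cL_{>0}$) of the $*$-connected plus-component $\sigma(\cP_x)$ of the cell $x+(0,0,\tfrac12)$ in $\sigma(\cI)$. So every face $f\in\cP_x$ separates a plus cell of $\sigma(\cP_x)$ from an adjacent minus cell, and in particular $f\in\cI$. I first argue that $f$ is a \emph{wall} face unless it is at height $0$: a ceiling face at height $h\ge 1$, by Observation~\ref{obs:height-nested-sequence-of-walls} (and the definition of ceilings), would have a nontrivial nested sequence of standard walls summing to height at least $4h>0$, but more to the point, a ceiling face cannot bound the plus-component sitting strictly above it in $\cL_{>0}$ while $\sigma(\cP_x)$ stays connected — so the only ceiling faces in $\cP_x$ are the ones at height $0$, which are exactly the faces of $\cL_0$ bounding the base of the pillar, and these project into $\rho(x)$ itself. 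Hence it suffices to handle the genuine wall faces of $\cP_x$.

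For a wall face $f\in\cP_x$, let $W$ be the wall of $\cI$ containing $f$. I must show $W\in\fG_x$. By Observation~\ref{obs:1-to-1-map-faces} and the reconstruction bijection (Lemma~\ref{lem:interface-reconstruction}), the height of $f$ is governed by the standardized nested sequence $\Theta_{\textsc{st}}\fW_{\rho(f)}$: if $W\notin\fW_x$ and $W$ is not nested in any wall of $\fW_x$, then $\rho(f)$ and $\rho(x)$ lie in different connected components of the relevant $\rho(\cdot)^c$'s, so the vertical shifts applied to the faces over $\rho(f)$ and over $\rho(x)$ are decoupled in the sense of Observation~\ref{obs:1-to-1-map-faces}. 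The key point is then a connectivity argument inside $\sigma(\cP_x)$: because $\sigma(\cP_x)$ is $*$-connected and contains the column over $x$ up to the top, one can trace a $*$-path of plus cells from $x+(0,0,\tfrac12)$ to a plus cell incident to $f$; projecting this path to $\cL_0$ yields a connected set linking $\rho(x)$ to $\rho(f)$ that stays in $\rho(\hull{\fW_x})$ (this is where the definition of $\fG_x$ as $\fW_x$ together with all walls $(W_z)_{z\in\rho(\hull{\fW_x})}$ nested in a wall of $\fW_x$ is exactly what is needed). Consequently either $W\in\fW_x$, or $\rho(W)$ meets $\rho(\hull{\fW_x})$ and $W$ is nested in some wall of $\fW_x$ — in both cases $W\in\fG_x$, so $f\in\fG_x$.

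Granting $\cP_x\subset\fG_x$ (the wall faces, plus the base in $\rho(x)$, which is anyway determined), the second assertion is immediate: the face set $\cP_x$ is a deterministic function of $\sigma(\cI)$ restricted to the cells incident to faces of $\fG_x\cup\rho(x)$, and $\sigma(\cI)$ on that region is itself determined by the walls in $\fG_x$ via Remark~\ref{rem:interface-spin-config} and Lemma~\ref{lem:interface-reconstruction}; hence if $\cI$ and $\cJ$ have the same walls on $\fG_x$ then $\cP_x^{\cI}=\cP_x^{\cJ}$. The final assertion is just the contrapositive unpacking of $f\in\fG_x$: membership in $\fG_x$ means $f$ belongs to some $W$ that is either in $\fW_x$ (so $\rho(f)\Subset W$ and trivially $\rho(x)\Subset W$ since $W\in\fW_x$), or is nested in some $W'\in\fW_x$ with $\rho(f)$ in $\rho(\hull{\fW_x})$, in which case $W'$ nests both $\rho(f)$ and $\rho(x)$; either way there is a single wall nesting both projections. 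The main obstacle is the connectivity argument in the second paragraph — making precise that a $*$-connected plus-path in $\sigma(\cP_x)$ projects to a connected subset of $\rho(\hull{\fW_x})$, using that the pillar cannot ``escape'' the hull of its own nested sequence of walls without creating a wall face that is itself in $\fW_x$; this requires care with the relaxed-contour interior/exterior bookkeeping of Definition~\ref{def:nesting-of-walls} but no new ideas beyond Lemma~\ref{lem:wall-ceiling-bijection}.
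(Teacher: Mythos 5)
The paper states Observation~\ref{obs:pillar-nested-sequence-of-walls} without proof, so there is no explicit argument to compare against; the intended reasoning is essentially the connectivity picture you lay out. Your overall structure---pillar is $*$-connected, so a wall face of $\cP_x$ lies in a wall reachable by a plus-path from $x$, and this confines the wall to $\fG_x$---is the right idea, and your deduction of the second and third assertions from the containment $\{\text{wall faces of } \cP_x\}\subset\fG_x$ is correct and matches what one would write. That said, there are two problems.

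The first is minor but worth flagging: your opening paragraph on ceiling faces is wrong on both counts. There are no faces at height~$0$ in $\cP_x$ at all, since $\cP_x$ consists of bounding faces \emph{in} $\cL_{>0}$, which excludes $\cL_0$. And $\cP_x$ genuinely can contain ceiling faces of $\cI$ at heights $\ge 1$: if a horizontal face $f$ at height $h\ge 1$ is the unique face of $\cI$ projecting to $\rho(f)$, and the cell just below it lies in $\sigma(\cP_x)$, then $f\in\cP_x$ and $f$ is a ceiling face (the plus phase is below the interface, so ``bounding a plus cell sitting strictly above $f$'' is not the relevant configuration---you need the plus cell below $f$). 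Your reasoning that ceiling faces of $\cP_x$ must be at height~$0$ is therefore doubly off. Fortunately the observation only asserts something about the \emph{wall} faces of $\cP_x$, so this does not break the argument, but it should be fixed.

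The second is the genuine gap, which you identify yourself but leave open: you assert that the $*$-connected plus-path in $\sigma(\cP_x)$ ``projects to a connected subset of $\rho(\hull{\fW_x})$,'' and the parenthetical that the definition of $\fG_x$ ``is exactly what is needed'' is circular, since confinement to $\rho(\hull{\fW_x})$ is precisely the thing to be proved. A closing argument would go roughly as follows: let $W^{\mathrm{out}}$ be the outermost wall of $\fW_x$; its floor is at height $0$ (else a further nesting wall would also nest $\rho(x)$, contradicting outermostness). Every cell $v\in\sigma(\cP_x)$ is plus at height $>0$, so the interface has height $\geq 1$ over $\rho(v)$, meaning $\rho(v)$ is nested in some wall. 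Suppose the $*$-path exits $\rho(\hull{W^{\mathrm{out}}})$ at some step $u\to v$ with $\rho(u)\Subset W^{\mathrm{out}}$ and $\rho(v)\not\Subset W^{\mathrm{out}}$; let $W_v$ be the wall assigned to $\rho(v)$. Then $W_v$ is neither nested in $W^{\mathrm{out}}$ (its projection would stay inside $\rho(\hull{W^{\mathrm{out}}})$) nor nesting $W^{\mathrm{out}}$ (that would contradict $W^{\mathrm{out}}$ being outermost in $\fW_x$), so $W_v$ and $W^{\mathrm{out}}$ are distinct, unnested walls whose projections are nonetheless brought within $*$-adjacency by the step $u\to v$. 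One must then argue---using that wall faces bounding $*$-adjacent plus cells at height $>0$ are themselves $*$-adjacent, so they belong to the same (maximal $*$-connected) wall, contradicting distinctness---that no such exit step exists. This is the ``care with the relaxed-contour bookkeeping'' you gesture at, but it is the entire content of the observation and needs to actually be carried out, not deferred.
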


It will be useful to identify the height oscillations of the interface, with the pillar that attains those heights. 

\begin{definition}\label{def:heights}
For a point $(x_1,x_2, x_3)\in \R^3$, we say its height is $\hgt(x) = x_3$. The height of a cell is the height of its midpoint. For a pillar  $\cP_x \subset \cI$, its height is given by $\hgt(\cP_x) = \max\{x_3: (x_1,x_2,x_3)\in f, f\in \cP_x\}$, and for a restricted pillar that height is given by $\hgt(\cP_{x,S}) = \max\{x_3: (x_1,x_2,x_3)\in f, f\in \cP_x\} - \hgt(\cC_\bW)$.
\end{definition}

\subsubsection*{Decomposition of the pillar}We next recall the structural decomposition from~\cite{GL19b} of a pillar into a base $\sB_x$ and a spine $\cS_x$, which is further decomposed into an increment sequence $(\sX_i)_{i\ge 1}$. 

\begin{definition}[Cut-points]
A half-integer $h\in \{\frac 12, \frac 32, \ldots \}$ is a \emph{cut-height} of $\cP_x$ if $\sigma(\cP_x) \cap \cL_{h}$ consists of a single cell. In that case, that cell (identified with its midpoint $v\in (\Z+\frac 12)^3$) is a cut-point of $\cP_x$. 
We enumerate the cut-points of $\cP_x$ in order of increasing height as $v_1 , v_2 , \ldots$.  
\end{definition}

\begin{definition}[Spine and base]
The spine of $\cP_x$, denoted $\cS_x$ is the set of cells in $\sigma(\cP_x)$ (resp., faces in $\cP_x$) intersecting $\cL_{>\hgt(v_1)- \frac 12}$. The base $\sB_x$ of $\cP_x$ is the set of cells in $\sigma(\cP_x) \setminus \cS_x$ (resp., faces in $\cP_x \setminus \sF(\cS_x)$).
\end{definition}

\begin{definition}[Increments]
Consider a spine $\cS_x$ with cut-points $v_1,v_2,\ldots,v_{\sT+1}$. Then, for every $i\leq \sT$, define the $i$-th increment of $\cS_x$ as
\[\sX_i = \cS_x \cap (\R^2 \times [\hgt(v_i) - \tfrac 12,\hgt(v_{i+1})+\tfrac 12])\,,
\]
so that the $i$-th increment is the subset of $\cS_x$ delimited from below by $v_i$ and from above by $v_{i+1}$ and there are exactly $\sT$ increments. (If there are fewer than two cut-points, we say that $\sT=0$.)

Beyond the $\sT$ increments, the spine additionally may have a \emph{remainder} $\sX_{>\sT}$, which we define as the set of faces intersecting $\R^2 \times [\hgt(v_{\sT+1})-\frac 12, \infty)$. For readability, for a spine $\cS_x$ with increments $\sX_1, \ldots, \sX_{\sT}, \sX_{>\sT}$, we use the notation $\sX_{\sT+1}:= \sX_{>\sT}$ to simultaneously index over increments and the remainder.
\end{definition}

Abusing notation, we may view increments not as subsets of an interface, but as  finite $*$-connected sets of at least two cells, whose only cut-points are its bottom-most and top-most cells  (modulo lattice translations, achieved by, say, rooting them at the origin). The face-set of such an increment consists of all its bounding faces except its bottom-most and top-most horizontal ones.  A remainder increment is defined similarly, but its only cut-point is its bottom-most cell. With this, we obtain the following decomposition of the spine. 

\begin{lemma}\label{lem:spine-reconstruction}
There is a 1-1 correspondence between triplets of $v_1$, a sequence of increments  $X_1 ,\ldots, X_{T}\in \fX^{T}$ and a remainder $X_{>T}$, and possible spines of $T$ increments whose first cut-point is at $v_1$.    
\end{lemma}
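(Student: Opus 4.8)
The plan is to construct the bijection explicitly in both directions and check that the two constructions are mutually inverse, the key point being that the cut-point structure of a spine is exactly what is recorded by the triplet $(v_1,(X_1,\ldots,X_T),X_{>T})$. First I would describe the forward map: given a spine $\cS_x$ with cut-points $v_1,\ldots,v_{\sT+1}$, read off $v_1$, and for each $i\le \sT$ set $X_i := \sX_i$, viewed modulo translation as a finite $*$-connected set of at least two cells whose only cut-points are its bottom-most and top-most cells (this last property is immediate from the definition of increment, since by construction $\sX_i$ is delimited from below by the cut-point $v_i$ and from above by $v_{i+1}$, and any interior cut-point of $\sX_i$ would be a cut-point of $\cS_x$ strictly between $v_i$ and $v_{i+1}$, contradicting the enumeration); finally set $X_{>T} := \sX_{>\sT}$, which has bottom-most cell $v_{\sT+1}$ as its unique cut-point for the same reason. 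This gives a well-defined element of $\{v_1\}\times\fX^{\sT}\times(\text{remainders})$.

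Next I would describe the inverse map: given $v_1$, a sequence $X_1,\ldots,X_T\in\fX^T$, and a remainder $X_{>T}$, build a spine by stacking. Place (a translate of) $X_1$ so that its bottom-most cell is at $v_1$; its top-most cell then sits at some height $\hgt(v_2)$, and we place (a translate of) $X_2$ so that its bottom-most cell coincides with that top-most cell; continue inductively, and finally glue $X_{>T}$ on top of $X_T$ in the same way. The resulting cell set $\cS$ is $*$-connected (each consecutive pair shares a cell, hence is $*$-adjacent). The crucial verification is that the cut-points of $\cS$ are exactly the gluing cells $v_1,\ldots,v_{T+1}$: each $v_i$ is a cut-point because $\cL_{\hgt(v_i)}\cap\cS$ is the single shared cell (the layers of $X_{i-1}$ and $X_i$ at that height both reduce to $\{v_i\}$ since it is the top/bottom cut-point of each), while no other height can be a cut-height because within a single $X_i$ (excluding its extreme layers) no layer is a single cell, by the defining property that the only cut-points of $X_i$ are its extremes. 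Hence the reconstructed object genuinely is ``a spine of $T$ increments whose first cut-point is at $v_1$,'' and applying the forward map to it returns the original triplet; conversely, starting from a spine, running the forward map and then the inverse map reassembles $\cS_x$ because a spine is determined by its layers and the stacking recovers each layer. I would note that a one-cell ambiguity at each interface is the only thing to watch: the shared cell is counted once, which is consistent with the convention that the face-set of an increment omits its top-most and bottom-most horizontal faces, so no faces are double-counted or lost.

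The one genuine subtlety — and the step I expect to require the most care — is showing that the two notions of ``increment'' match up exactly, i.e.\ that a $*$-connected set of at least two cells whose only cut-points are its two extremes is precisely what appears between consecutive cut-points of an honest spine, and that stacking such sets does not create spurious cut-points at the seams or destroy the $*$-connectedness/no-overhang structure that makes $\cS$ the spine of some pillar. This amounts to an unwinding of Definitions of cut-points, increments, spine, and base, together with the observation that cut-heights are a local-in-height property (a height $h$ is a cut-height of $\cS$ iff $\cS\cap\cL_h$ is a single cell, which depends only on the $X_i$ containing that layer); once this locality is in hand, both the ``no new cut-points inside an $X_i$'' and the ``seam cells are cut-points'' claims follow. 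Everything else is bookkeeping of vertical translations, exactly parallel to the standard-wall reconstruction bijection of Lemma~\ref{lem:interface-reconstruction} and Observation~\ref{obs:1-to-1-map-faces}, and can be invoked in that spirit.
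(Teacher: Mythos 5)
Your proposal is correct and follows the same route the paper takes: the paper's proof is a one-sentence remark describing exactly your inverse (stacking) map and deferring details to~\cite{GL19a}, and you've correctly identified and verified the substantive point — that cut-heights are a height-local property, so the stacked object has cut-points precisely at the seams $v_1,\ldots,v_{T+1}$ and none in the interior of any $X_i$, which is what makes the two maps mutually inverse.
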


The correspondence of Lemma~\ref{lem:spine-reconstruction} follows in the obvious manner, by identifying the bottom cut-point of $X_1$ with $v_1$ and sequentially translating the increments in the increment sequence to identify their bottom cut-point with the top cut-point of the previous increment. For more details, see \cite[Section~3]{GL19a}.

Finally, it will be useful to refer to the simplest increment, consisting of two vertically consecutive cells, one on top  of the other (resp., its eight bounding vertical faces) the \emph{trivial increment} denoted $X_\trivincr$.

\subsection{Excess area} 
For a pair of interfaces, we need to quantify the energy cost from having one interface over the other. The competition of this energy cost  as compared to the reference interface $\cL_{0,n}$, with the entropy gain from additional fluctuations governs the typical behavior of the interface.  

\begin{definition}[Excess area]
\label{def:excess-area}
For two interfaces $\cI,\cJ$,
the \emph{excess area} of $\cI$ with respect to $\cJ$,
denoted $\fm(\cI;\cJ)$, is given by $$\fm(\cI;\cJ):= |\cI|-|\cJ|\,.$$
Evidently, for any valid interface $\cI$, we have that $\fm(\cI; \cL_{0,n})\geq 0$.   

We can extend the definition of excess area to walls and increments. For instance, for a wall $W$, if we denote by $\cI_{\Theta_{\textsc{st}}W}$ the interface whose only wall is $\Theta_{\textsc{st}}W$, then  $\fm(W)=\fm(\cI_{\Theta_{\textsc{st}}W}; \cL_{0,n})$. The excess area of a collection of walls $\bW$ is analogously defined, and one can easily see that $\fm(\bW) = \sum_{W\in \bW} \fm(W)$.
\end{definition}

\begin{remark}\label{rem:excess-area-properties}
Notice that for a wall $W$, its excess area is exactly given by 
\begin{align*}
\fm(W) = \fm(\theta_{\textsc {st}}(W))= |W| - |\sF(\rho(W))|\,.
\end{align*}
 This form of the excess area makes a few key properties clear: 
\begin{align}\label{eq:excess-area-wall-relations}
\fm(W) \geq \frac{1}{2} |W|\,, \qquad \mbox{and} \qquad \fm(W) \geq |\rho(W)| = |\sE(\rho(W)|+ |\sF(\rho(W))|\,.
\end{align} 
Moreover, any two faces $x,y\in \cL_{0,n}$ nested in $W$ satisfy $d(x,y) \leq \fm(W)$. 
\end{remark}

Finally, we define excess areas of increments following the conventions of~\cite{GL19a,GL19b}, with respect to trivial increments. For an increment $\sX_i$ ($1\leq i \leq \sT$), define $\fm(\sX_i)$ as
 \begin{align}\label{eq:increment-excess-area}
     \fm(\sX_i) = |\sF(
     \sX_i)| - 4(\hgt(v_{i+1}) - \hgt(v_i)+1)
 \end{align}
 (recall that $\sF(X)$ does not include the top most and bottom most faces bounding $X$). This can be viewed as the difference in the number of faces from $\sX_i$ versus a stack of trivial increments of the same height as $\sX_i$. For the remainder increment $\sX_{\sT+1} = \sX_{>T}$, this can be defined consistently by arbitrarily setting $\hgt(v_{\sT+2}):=\hgt(\cP_x)- \frac 12$. With these definitions, we notice that if $\sX_i\neq X_\trivincr$ then
 \begin{align}\label{eq:increment-excess-inequalities}
     \fm(\sX_i) \geq 2(\hgt(v_{i+1})- \hgt(v_i) -1)\,\vee\, 2\qquad \mbox{and} \qquad |\sF(\sX_i)|\leq 3\fm(\sX_i)+4\,.
 \end{align}

\subsection{The induced distribution over interfaces}\label{sec:cluster-expansion}
 Using the tool of cluster expansion,~\cite{Minlos-Sinai} proved refined properties of the single-phase Ising measures $\mu^-_{\Z^3}$ and $\mu^+_{\Z^3}$ at sufficiently low temperatures. 
Dobrushin~\cite{Dobrushin72a} subsequently used these techniques to express the Ising measure over interfaces as a perturbation of the distribution $\exp(- \beta |\cI|)$, where the perturbative term takes into account the bubbles of the low-temperature Ising configurations in the minus and plus phases above and below the interface respectively. We recall this expression for the Ising distribution over interfaces in what follows. 

Here and throughout the paper, let $\mu_n^{\mp} = \mu_{\Lambda_n}^\mp = \lim_{h\to\infty} \mu_{n,n,h}^\mp$.

\begin{theorem}[{\cite[Lemma 1]{Dobrushin72a}}]
\label{thm:cluster-expansion} There exists $\beta_0>0$ and a function $\g$ such that for every $\beta>\beta_{0}$ and any
two interfaces $\cI$ and $\cI'$, 
\begin{align*}
\frac{\mu_n^\mp(\cI)}{\mu_n^\mp(\cI')}= & \exp\bigg(-\beta \fm(\cI; \cI') +\Big(\sum_{f\in\cI}\g(f,\cI)-\sum_{f'\in\cI'}\g(f',\cI')\Big)\bigg)\,,
\end{align*}
and $\g$ satisfies the following for some $\bar{c},\bar{K}>0$
independent of $\beta$: for all $\cI, \cI'$ and $f\in \cI$ and $f'\in\cI'$,
\begin{align}
|\g(f,\cI)| & \leq\bar{K} \label{eq:g-uniform-bound} \\
|\g(f,\cI)-\g(f',\cI')| & \leq \bar K e^{-\bar{c}\br(f,\cI;f',\cI')} \label{eq:g-exponential-decay}
\end{align}
where $\br(f,\cI;f',\cI')$ is the largest radius around
the origin on which $\cI-f$ ($\cI$ shifted by the midpoint of the face $f$) is \emph{congruent} to $\cI'-f'$. That is to say,
\begin{align*}
\br(f,\cI; f', \cI') := \sup \{r: (\cI - f) \cap B_{r}(0) \equiv (\cI' - f') \cap B_r(0)\}\,,
\end{align*}
where the congruence relation $\equiv$ is equality as subsets of $\R^3$.  
\end{theorem}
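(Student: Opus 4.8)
The plan is to run a convergent low-temperature cluster (polymer) expansion for the Ising partition function with the interface held fixed, and then reorganize the expansion as a sum of local weights indexed by the faces of the interface. First I would fix an interface $\cI$ and use (as in Remark~\ref{rem:interface-spin-config}) that $\cI$ partitions $\sC(\Lambda_n)$ into a plus side $V^+(\cI)$ and a minus side $V^-(\cI)$, and that the configurations $\sigma$ with $\cI(\sigma)=\cI$ are precisely those obtained from $\sigma(\cI)$ by flipping finite spin bubbles inside $V^{\pm}(\cI)$ that do not merge into $\cI$; since $\cH(\sigma)$ then equals $|\cI|$ plus the total perimeter of those bubbles, this gives $\sum_{\sigma:\,\cI(\sigma)=\cI} e^{-\beta\cH(\sigma)} = e^{-\beta|\cI|}\,\Xi^+(\cI)\,\Xi^-(\cI)$, where $\Xi^{\pm}(\cI)$ is a hard-core polymer partition function over the admissible bubbles in $V^{\pm}(\cI)$. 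Taking the ratio for two interfaces $\cI,\cI'$ cancels the normalization of $\mu_n^\mp$ and leaves $\mu_n^\mp(\cI)/\mu_n^\mp(\cI') = e^{-\beta\fm(\cI;\cI')}\,\Xi^+(\cI)\Xi^-(\cI)\big/\bigl(\Xi^+(\cI')\Xi^-(\cI')\bigr)$, so everything reduces to analyzing $\log\Xi^{\pm}$.

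For $\beta$ large the polymer expansion for these bubble ensembles converges (Minlos--Sinai), giving $\log\Xi^{\pm}(\cI)=\sum_C\phi(C)$, a convergent sum over clusters $C$ of bubbles, with weights satisfying $|\phi(C)|\le e^{-(\beta-c_0)\|C\|}$ (where $\|C\|$ is the total size of $C$ and $c_0$ a lattice constant) and with at most $C_1^{m}$ clusters of size $m$ through a fixed cell. I would then split the clusters into those whose support meets the cells incident to $\cI$ (``near $\cI$''), those meeting $\partial\Lambda_n$, and the remaining ``bulk'' clusters. A bulk cluster in $V^{\pm}(\cI)$ contributes exactly as it would in the pure $\pm$-phase, so summing bulk clusters over both sides produces $f_\beta\,|\sC(\Lambda_n)|$ plus a purely boundary term; crucially $|V^+(\cI)|+|V^-(\cI)|=|\sC(\Lambda_n)|$ is independent of $\cI$, and any two $\mp$-interfaces $\cI,\cI'$ agree near $\partial\Lambda_n$, so all bulk and boundary contributions cancel in the ratio, leaving $\log\bigl(\Xi^+(\cI)\Xi^-(\cI)/(\Xi^+(\cI')\Xi^-(\cI'))\bigr)=\sum_{C\,\text{near }\cI}\phi(C)-\sum_{C\,\text{near }\cI'}\phi(C)$. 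For each cluster near $\cI$ I would assign, by a fixed deterministic lattice rule, a single anchor face $a(C)\in\cI$ (e.g.\ the lexicographically smallest face of $\cI$ at minimal distance to $\mathrm{supp}(C)$), and set $\g(f,\cI):=\sum_{C\,\text{near }\cI,\ a(C)=f}\phi(C)$, which yields exactly the claimed identity.

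The bound~\eqref{eq:g-uniform-bound} is then immediate: a cluster near $\cI$ and anchored at $f$ has support within distance $O(\|C\|)$ of $f$, so $|\g(f,\cI)|\le\sum_{m\ge1}(\mathrm{const}\cdot C_1)^{m}e^{-(\beta-c_0)m}=:\bar K$, a finite constant depending only on $\beta_0$ and the lattice once $\beta_0$ is large enough for convergence (and independent of $\beta>\beta_0$ since $e^{-(\beta-c_0)m}\le e^{-(\beta_0-c_0)m}$). For~\eqref{eq:g-exponential-decay}, put $r=\br(f,\cI;f',\cI')$, so that $\cI$ near $f$ and $\cI'$ near $f'$ are congruent inside $B_r$. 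The admissibility and weight of a bubble cluster anchored at $f$ depend only on $\cI$ (together with the fixed $\mp$-orientation, which pins down the local plus/minus labeling) in a neighbourhood of $\mathrm{supp}(C)$; hence every cluster anchored at $f$ with $\|C\|<r/2$ is matched, via the translation sending $f$ to $f'$, with an equal-weight cluster anchored at $f'$. These cancel in $\g(f,\cI)-\g(f',\cI')$, so only clusters of size $\ge r/2$ survive, whose total anchored weight is at most $2\sum_{m\ge r/2}(\mathrm{const}\cdot C_1)^{m}e^{-(\beta-c_0)m}\le\bar K e^{-\bar c\,r}$ by a standard geometric-tail estimate, after enlarging $\bar K$ and fixing $\bar c>0$ (both depending only on $\beta_0$ and the lattice).

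The main obstacle is the second step: one must set up the constrained bubble ensembles on the $\cI$-dependent regions $V^{\pm}(\cI)$ so that the cluster expansion converges \emph{uniformly over all interfaces}, and isolate the $\cI$-independent bulk and near-boundary contributions cleanly enough that they genuinely cancel in the ratio --- this is precisely the content of Dobrushin's Lemma~1 and rests on the Minlos--Sinai analysis of the pure-phase Ising measures. A secondary point requiring care is the orientation bookkeeping behind~\eqref{eq:g-exponential-decay}: congruence of interfaces as subsets of $\R^3$ must be shown to determine the local plus/minus sides, which holds because every interface considered here carries the same $\mp$-orientation.
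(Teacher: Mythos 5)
This statement is a cited result (Dobrushin's Lemma 1, relying on Minlos--Sinai), and the paper does not reprove it; so there is no in-paper proof to compare against. Your sketch follows exactly the route Dobrushin takes: decompose configurations with fixed interface into bubbles on either side, write the constrained partition function as $e^{-\beta|\cI|}\Xi^+(\cI)\Xi^-(\cI)$, run the convergent low-temperature polymer expansion for $\log\Xi^\pm$, isolate the $\cI$-independent volume and boundary contributions so they cancel in the ratio, anchor the surviving clusters to interface faces to define $\g(f,\cI)$, and read off~\eqref{eq:g-uniform-bound} from the geometric tail and~\eqref{eq:g-exponential-decay} from the translation-matching of small clusters under the congruence $(\cI-f)\cap B_r\equiv(\cI'-f')\cap B_r$. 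You correctly identify where the real work lies (uniform convergence over $\cI$-dependent domains and the clean isolation of bulk/boundary terms), and your remark about the $\mp$-orientation pinning down the local plus/minus labeling under congruence is the right way to close the loop on~\eqref{eq:g-exponential-decay}.

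One small point worth being more careful about: you assert that any two $\mp$-interfaces in $\Lambda_n$ ``agree near $\partial\Lambda_n$,'' but that is not literally true as sets --- they agree only in that both are pinned to $\cL_0$ at the lateral boundary, while their oscillations near $\partial\Lambda_n$ may differ. What one actually shows (and what Dobrushin does) is that the boundary corrections to $\log\Xi^\pm$ depend on $\cI$ only through clusters that are simultaneously near $\partial\Lambda_n$ \emph{and} near $\cI$; those are legitimately part of the ``near $\cI$'' group and get absorbed into the $\g(f,\cI)$ weights rather than cancelling outright. The rest of your outline is sound.
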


We will say that the radius $\br(f,\cI; f',\cI')$ is \emph{attained by} a face $ g \in \cI$ (resp., $g' \in \cI'$) of minimal distance to $f$ (resp., $f'$) whose presence prevents $\br(f, \cI; f',\cI')$ from being any larger.

\subsection{Notational comments}
We end this section with some comments on the notation we use. The key object of study in this paper is the restricted interface $\cI\restriction_{S_n}$, whose height shift by $\hgt(\cC_{\bW_n})$, we recall gives the interface $\cI$ inside the cylinder $S_n\times \mathbb Z$. We will frequently use a subscript of $S$ to denote that the relevant object is defined with respect to the interface $\cI\restriction_{S_n}$ as opposed to $S_n$, then shifted by $\hgt(\cC_\bW)$. For example, for a given $\cI$, the nested sequence of walls $\fW_{x,S_n}$, the pillar $\cP_{x,S_n}$, and its base $\sB_{x,S_n}$ and spine $\cS_{x,S_n}$ are all the corresponding quantities in the interface $\cI_{S_n}$, veritcally shifted by $\hgt(\cC_\bW)$. 

All statements in the paper will hold for $\beta$ sufficiently large, which we indicate by $\beta>\beta_0$ for a universal $\beta_0$. These statements concern the asymptotic regime of $n$ large, and are to be interpreted as holding uniformly over all $n$ large enough; when contextually understood we may therefore drop the $n$ dependence from certain notation---e.g., $S = S_n, \bW = \bW_n, x = x_n$. For ease of
presentation, when we divide regions into sub-regions of some fixed size and we do not differentiate
between the remainder sub-regions if there are divisibility problems.
All such rounding issues and integer effects can be handled via the obvious modifications. 

Finally, throughout the paper we let $\bar c,\bar K$ be the constants of~\eqref{eq:g-uniform-bound}--\eqref{eq:g-exponential-decay}, while using $C, C'$ to denote the existence of some universal constant (independent of $\beta, n$), allowing these to change from line to line.

\section{Wall clusters and conditional rigidity}\label{sec:wall-clusters}
Our goal in this section is to prove Theorem~\ref{thm:intro-rigidity-inside-ceiling}.
The rigidity proof of~\cite{Dobrushin72a} relied crucially on a grouping of nearby walls into \emph{groups of walls}, and then proving an exponential tail on the group-of-walls through an $x\in \cL_{0,n}$. However, under $\bI_{\bW}$, the group-of-walls through $x$ may include the walls in $\bW$, precluding proving a bound on the wall through $x$ conditionally on $\bI_\bW$ using the groups of walls decomposition.

Recall the definition of hulls of ceilings and walls, denoted $\hull{\cC}, \hull{W}$ from Definition~\ref{def:hull}. We replace the groups of walls formalism with a one-sided clustering of close walls, to obtain the following conditional exponential tail on the walls of $\cI\in \bI_\bW$, analogous to~\cite[Theorem 1]{Dobrushin72a} in the unconditional setting. 

\begin{theorem}\label{thm:rigidity-inside-wall}
There exists $C>0$ such that for every $\beta>\beta_0$ the following holds. Fix any two admissible collections of walls $(W_1, \ldots, W_r)$, and $\bW = (W_z)_{z\in A}$ such that $\rho(\bW) \cap  \rho(\bigcup_{i\le r} \hull{W}_i)= \emptyset$. Then 
\[
\mu_n^\mp(W_1,\ldots,W_r \mid \bI_\bW) \leq \exp\Big(-(\beta-C)\sum_{i\le r}\fm(W_i)\Big)\,,
\]
(where the event on the left-hand side indicates that $W_1,\ldots,W_r$ are walls of $\cI$). 
\end{theorem}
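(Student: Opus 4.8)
The plan is to carry out a Peierls-type argument in the spirit of Dobrushin~\cite{Dobrushin72a}, but using a \emph{directed} clustering of walls (a ``wall cluster'' as alluded to in Definition~\ref{def:wall-cluster}) in place of Dobrushin's symmetric group-of-walls, so that the clusters of the walls $W_1,\dots,W_r$ being estimated stay inside $\rho(\hull{\bigcup_i W_i})$ and hence never touch $\rho(\bW)$.

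\textbf{Step 1: the map.} Given an interface $\cI\in\bI_\bW$ containing the walls $W_1,\dots,W_r$, I would form the wall cluster $\cK_i$ of each $W_i$ by iteratively adjoining any wall $W'$ that is nested inside the external boundary of a wall already in $\cK_i$ and lies within an appropriate distance of it (the ``one-sided'' criterion: only walls \emph{interior} to a wall of the cluster get absorbed, never the other way around). The key structural point, to be proven first, is that each $\cK_i\subset\rho(\hull{W_i})$ and that the $\cK_i$ are themselves nested or disjoint; in particular $\rho(\bigcup_i\cK_i)\cap\rho(\bW)=\emptyset$, so the map stays within $\bI_\bW$. Then I delete all of the clusters $\cK_1,\dots,\cK_r$ simultaneously: by the standard-wall bijection (Lemma~\ref{lem:interface-reconstruction}) and Observation~\ref{obs:1-to-1-map-faces}, deleting these walls and vertically shifting the interior ceilings and walls produces a valid interface $\cJ\in\bI_\bW$. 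Because the deleted walls are exactly the clusters, the nested sequences of walls of $\cJ$ through points outside $\rho(\bigcup\cK_i)$ are unchanged.

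\textbf{Step 2: energy vs. entropy via cluster expansion.} Using Theorem~\ref{thm:cluster-expansion} to compare $\mu_n^\mp(\cI)$ and $\mu_n^\mp(\cJ)$, the weight ratio is $\exp(-\beta\,\fm(\cI;\cJ) + \sum_{f\in\cI}\g(f,\cI) - \sum_{f'\in\cJ}\g(f',\cJ))$. The excess-area term gains $\beta\sum_i\fm(\cK_i)\geq \beta\sum_i\fm(W_i)$ (since $\fm$ is additive over walls and $W_i\in\cK_i$). The $\g$-correction is handled exactly as in~\cite{Dobrushin72a}: faces far from all deleted clusters are in congruent neighborhoods in $\cI$ and $\cJ$, so by~\eqref{eq:g-exponential-decay} their contributions telescope with an error that sums (geometrically in the distance, using~\eqref{eq:excess-area-wall-relations} to bound the number of faces at each distance) to at most $C\sum_i\fm(\cK_i)$, while faces within $O(1)$ of the clusters contribute at most $\bar K$ each by~\eqref{eq:g-uniform-bound}, again a total of $O(\sum_i\fm(\cK_i))$. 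Hence $\mu_n^\mp(\cI\mid\bI_\bW)/\mu_n^\mp(\cJ\mid\bI_\bW) \leq \exp(-(\beta-C)\sum_i\fm(\cK_i))$; crucially the partition-function normalizations for the conditional measure cancel because both $\cI$ and $\cJ$ lie in $\bI_\bW$.

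\textbf{Step 3: multiplicity count.} Finally I sum over all interfaces $\cI\in\bI_\bW$ mapping to a given $\cJ$ with a given deleted cluster-collection of total excess area $\sum_i\fm(\cK_i)=m$. Reconstructing $\cI$ from $\cJ$ requires specifying the clusters and where to re-insert them; by Fact~\ref{fact:number-of-walls} the number of $*$-connected wall-sets of total size $\le M$ rooted near a given face is at most $C^M$, and by~\eqref{eq:excess-area-wall-relations} the size is controlled by $m$, while the insertion locations are pinned down by the nesting structure. This gives a multiplicity bound $e^{Cm}$, which is absorbed into the $(\beta-C)$ rate at the cost of enlarging $C$. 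Summing the resulting bound $e^{-(\beta-C)m}$ over $m\ge \sum_i\fm(W_i)$ and over the (at most $e^{C\sum_i\fm(W_i)}$ many) choices of how the $W_i$ sit inside their clusters yields $\mu_n^\mp(W_1,\dots,W_r\mid\bI_\bW)\le \exp(-(\beta-C)\sum_{i\le r}\fm(W_i))$.

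\textbf{Main obstacle.} The delicate point is Step 1: showing that the one-sided clustering rule genuinely keeps every cluster $\cK_i$ confined to $\rho(\hull{W_i})$ (hence away from $\rho(\bW)$), \emph{and} that after deleting the clusters the resulting object is still a legitimate interface in $\bI_\bW$ --- i.e., that the directed grouping does not create an inconsistency where some absorbed wall ``reaches out'' past $\partial S_n$. Equally delicate is balancing, in Step 2, the interaction error against the energy gain: one must be sure the $\g$-corrections coming from walls that were close to (but \emph{not} absorbed into) a cluster are still $O(\fm(\cK_i))$ and not larger --- this is exactly what forced Dobrushin's original grouping, and verifying that the weaker one-sided rule suffices here (because we only need a bound conditional on $\bI_\bW$, not an unconditional one, and because absorbed walls are strictly interior hence their sizes are dominated by the enclosing wall's excess area) is the crux of the argument.
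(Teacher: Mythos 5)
Your proposal is correct and follows essentially the same route as the paper: a directed ``wall cluster'' grouping (Definitions~\ref{def:closely-nested}--\ref{def:wall-cluster}), a deletion map $\Phi_\bV$ staying inside $\bI_\bW$, a weight-gain bound via cluster expansion (Lemma~\ref{lem:wall-cluster-weights}), and a multiplicity count via rooted connected face-sets (Lemma~\ref{lem:wall-cluster-multiplicity}). Your ``Main obstacle'' paragraph correctly identifies the crux --- that non-absorbed walls are far from the cluster boundary relative to their own excess area, which is exactly what Claim~\ref{clm:wall-cluster-ceiling-to-else} exploits.
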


A consequence of Theorem~\ref{thm:rigidity-inside-wall} is that the height fluctuation of the interface at $x$ about the height of $x$ in $\cI_{\Theta_{\textsc{st}}\bW}$ has an exponential tail. In particular, when $\bW$ generates a ceiling $\cC_\bW$ nesting $x$, the height oscillations of $x$ about $\hgt(\cC_\bW)$ are tight with exponential tails. The following corollary (adapting~\cite[Eq.~(2.4)]{GL19a} to this conditional setting) on exponential tails for nested sequences of walls implies Theorem~\ref{thm:intro-rigidity-inside-ceiling}, by Observation~\ref{obs:pillar-nested-sequence-of-walls}. 

\begin{corollary}\label{cor:nested-sequence-inside-a-ceiling}
There exists $C>0$ such that for every $\beta>\beta_0$ the following holds.  Fix a set $S_n\subset \cL_{0,n}$ such that $\cL_{0}\setminus S_n$ is connected, and an admissible collection of walls $\bW_n = (W_z)_{z\notin S_n}$ such that $\rho(\bW_n) \subset S_n^c$. 
For every $x\in S_n$ and every $r\ge 1$,
\begin{align*}
    \mu_n^\mp\left( \fm(\fW_{x,S_n}) \ge r \mid \bI_{\bW_n}
    \right) \leq \exp ( -  (\beta - C)r)\,.
\end{align*}
\end{corollary}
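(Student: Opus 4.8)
\textbf{Proof proposal for Corollary~\ref{cor:nested-sequence-inside-a-ceiling}.}

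The plan is to deduce this from Theorem~\ref{thm:rigidity-inside-wall} by a union bound over the possible nested sequences of walls $\fW_{x,S_n}$ achieving excess area at least $r$. First, recall that $\fW_{x,S_n} = \fW_x \setminus \bW_n$ consists of those walls of $\cI$ nesting $\rho(x)$ that lie strictly inside $S_n$; by hypothesis $\rho(\bW_n)\subset S_n^c$, and since $\cL_0\setminus S_n$ is connected, any wall nesting $x\in S_n$ whose projection meets $S_n^c$ would have to be one of the $W_z$ with $z\notin S_n$ — hence $\fW_{x,S_n}$ really is a nested sequence of walls all of whose projections lie in $S_n$, and in particular it is disjoint from $\bW_n$ and admissible together with $\bW_n$. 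Moreover, because the walls in $\fW_{x,S_n}$ all nest $\rho(x)\in S_n$ while $\rho(\bW_n)\subset S_n^c$, none of the interior ceilings of a wall in $\fW_{x,S_n}$ can contain a point of $\rho(\bW_n)$, so the hypothesis $\rho(\bW_n)\cap\rho(\bigcup \hull{W_i}) = \emptyset$ of Theorem~\ref{thm:rigidity-inside-wall} is automatically met for any candidate sequence. This is the step where the \emph{one-sided} clustering of \S\ref{sec:wall-clusters} is doing its work behind the scenes: it guarantees Theorem~\ref{thm:rigidity-inside-wall} applies without the group-of-walls ``bypassing'' $\partial S_n$.

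Next I would write, using $\fm(\fW_{x,S_n}) = \sum_{W\in\fW_{x,S_n}}\fm(W)$,
\begin{align*}
\mu_n^\mp\left(\fm(\fW_{x,S_n})\ge r \mid \bI_{\bW_n}\right)
&\le \sum_{\substack{(W_1,\ldots,W_s)\ \mathrm{nested},\ \mathrm{nesting}\ \rho(x)\\ \sum_i \fm(W_i)\ge r}} \mu_n^\mp(W_1,\ldots,W_s\mid \bI_{\bW_n})\\
&\le \sum_{\substack{(W_1,\ldots,W_s)\ \mathrm{nested},\ \mathrm{nesting}\ \rho(x)\\ \sum_i \fm(W_i)\ge r}} \exp\Big(-(\beta-C)\sum_i\fm(W_i)\Big)\,,
\end{align*}
where the first inequality is a union bound over the (disjoint) events that $\fW_{x,S_n}$ equals a given admissible nested sequence of total excess area at least $r$, and the second is Theorem~\ref{thm:rigidity-inside-wall}. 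Now I would split the weight $\exp(-(\beta-C)\sum_i\fm(W_i))$ into $\exp(-\tfrac12(\beta-C)r)$ times $\exp(-\tfrac12(\beta-C)\sum_i\fm(W_i))$, using $\sum_i\fm(W_i)\ge r$; then bound the residual sum $\sum_{(W_1,\ldots,W_s)} \exp(-\tfrac12(\beta-C)\sum_i\fm(W_i))$ by a constant using the enumeration bound Fact~\ref{fact:number-of-walls} (the number of nested sequences of standard walls of total cardinality at most $M$ nesting $x$ is at most $C_0^M$) together with $\fm(W_i)\ge \tfrac12|W_i|$ from~\eqref{eq:excess-area-wall-relations}: indeed the total excess area $m$ forces total cardinality $\le 2m$, so the number of nested sequences with $\sum_i\fm(W_i)=m$ is at most $C_0^{2m}$, and $\sum_{m\ge 1} C_0^{2m} e^{-\tfrac12(\beta-C)m} \le 1$ for $\beta$ large. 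This yields $\mu_n^\mp(\fm(\fW_{x,S_n})\ge r\mid\bI_{\bW_n})\le \exp(-\tfrac12(\beta-C)r)$, which is of the claimed form after renaming the constant (replacing $C$ by $2C$, say, and absorbing the factor of two, valid for $\beta>\beta_0$).

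The main obstacle I anticipate is not the counting — that is the standard Peierls bookkeeping — but rather being careful that the conditioning event $\bI_{\bW_n}$ is genuinely compatible with each candidate nested sequence $(W_1,\ldots,W_s)$, i.e.\ that $(W_1,\ldots,W_s)$ together with $\bW_n$ forms an admissible collection of standard walls so that the event $\{W_1,\ldots,W_s\text{ are walls of }\cI\}\cap \bI_{\bW_n}$ is nonempty and Theorem~\ref{thm:rigidity-inside-wall} is literally applicable. This is exactly where one uses that $\cL_0\setminus S_n$ is connected (so that projections of walls nesting a point of $S_n$ cannot wrap around and collide with $\rho(\bW_n)$) and that $\fW_{x,S_n}$ is defined as $\fW_x\setminus\bW_n$; I would state this compatibility as a short lemma or inline observation before invoking Theorem~\ref{thm:rigidity-inside-wall}. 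Finally, the passage from Corollary~\ref{cor:nested-sequence-inside-a-ceiling} to Theorem~\ref{thm:intro-rigidity-inside-ceiling} is immediate: by Observation~\ref{obs:height-nested-sequence-of-walls} the event $\{\max\{x_3:(x_1,x_2,x_3)\in\cI\}\ge \hgt(\cC_{\bW_n})+h\}$ forces the standardized nested sequence at $\rho(x)$ (inside $S_n$) to have total height, hence total excess area, at least $\gtrsim h$ (using $\fm(W)\ge|\rho(W)|$ and the height-from-walls relation), so the bound $\exp(-(\beta-C)r)$ at $r\asymp h$ gives the stated $\exp(-(4\beta-C)h)$ after tracking the constant $4$ coming from the ``sum of wall heights $\ge 4h$'' in Observation~\ref{obs:height-nested-sequence-of-walls}.
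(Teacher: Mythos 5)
Your approach is the same as the paper's: union bound over nested sequences, Theorem~\ref{thm:rigidity-inside-wall} applied termwise (valid because, with $\cL_0\setminus S_n$ connected, $\rho(\hull W)\subset S_n$ for each $W\in\fW_{x,S_n}$, so the hypothesis $\rho(\bW_n)\cap\rho(\hull{\bV})=\emptyset$ holds automatically), and Fact~\ref{fact:number-of-walls} combined with $\fm(W)\ge\frac12|W|$ for the entropy count. The compatibility observation you flag as the main subtlety is exactly what the paper notes.

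There is, however, a genuine slip in the final arithmetic. Splitting $\exp\big({-}(\beta-C)\sum\fm(W_i)\big)$ into $\exp\big({-}\tfrac12(\beta-C)r\big)\cdot\exp\big({-}\tfrac12(\beta-C)\sum\fm(W_i)\big)$ produces the final bound $\exp\big({-}\tfrac12(\beta-C)r\big)=\exp\big({-}(\tfrac{\beta}{2}-\tfrac{C}{2})r\big)$, and no renaming of the constant $C$ recovers a bound of the form $\exp\big({-}(\beta-C')r\big)$ with $C'$ independent of $\beta$: the rate has genuinely been halved. The correct bookkeeping (and the paper's) is to absorb the entropy into the exponential rate directly rather than splitting off half of it: for each $k\ge r$ the number of admissible nested sequences with total excess $k$ is at most $C_0^{2k}$ (from Fact~\ref{fact:number-of-walls} and $\fm(W)\ge\tfrac12|W|$), so
\[
\sum_{k\ge r} C_0^{2k}\, e^{-(\beta-C)k}
=\sum_{k\ge r} e^{-(\beta-C-2\log C_0)k}
\le 2\, e^{-(\beta-C-2\log C_0)r}
\]
once $\beta$ is large enough that the ratio of the geometric series is $\le\tfrac12$. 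This is of the claimed form with $C'=C+2\log C_0+\log 2$. The rest of your argument, including the closing remarks on how Corollary~\ref{cor:nested-sequence-inside-a-ceiling} feeds into Theorem~\ref{thm:intro-rigidity-inside-ceiling}, is fine.
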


Finally, we can use the above to deduce upper and lower exponential tail bounds on the probability of a pillar attaining a height $h$: this is the conditional analogue of~\cite[Theorem 2.26 and Proposition 2.29]{GL19a}.

\begin{corollary}\label{cor:pillar-inside-a-ceiling}
There exists $C>0$ such that for every $\beta>\beta_0$ the following holds.  Fix a set $S_n\subset \cL_{0,n}$ such that $\cL_0\setminus S_n$ is connected, and an admissible collection of walls $\bW_n = (W_z)_{z\notin S_n}$ such that $\rho(\bW_n) \subset S_n^c$. For every $x\in S_n$ and every $h\ge 1$, 
\begin{align*}
    \exp ( - 4(\beta + C)h) \le \mu_n^{\mp}\big(\hgt(\cP_{x,S_n})\ge h \mid \bI_{\bW_n}\big) \le \exp( - 4(\beta - C)h)\,.
\end{align*}
\end{corollary}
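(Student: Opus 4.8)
The plan is to derive Corollary~\ref{cor:pillar-inside-a-ceiling} from Corollary~\ref{cor:nested-sequence-inside-a-ceiling} and the cluster expansion of Theorem~\ref{thm:cluster-expansion}, mirroring the unconditional argument of~\cite{GL19a}. By the decomposition~\eqref{eq:I-inside-outside-decomp} and the discussion of restricted interfaces, the event $\{\hgt(\cP_{x,S_n})\ge h\}$ under $\mu_n^\mp(\cdot\mid\bI_{\bW_n})$ is exactly the event that the (unrestricted) pillar of $x$ in the restricted interface $\cI\restriction_{S_n}$ reaches height $h$. So it suffices to prove the two-sided bound for this quantity; throughout, $C$ will absorb the cluster-expansion constants $\bar K,\bar c$ via $\sum_{f\in\cI}|\g(f,\cI)|$-type sums, which are controlled because $\g$ is uniformly bounded by $\bar K$ and decays exponentially in the congruence radius.

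\medskip
\noindent\textbf{Upper bound.} First I would observe that if $\hgt(\cP_{x,S_n})\ge h$ then, by Observation~\ref{obs:pillar-nested-sequence-of-walls}, there is a wall $W$ in $\cI\restriction_{S_n}$ with both $\rho(x)$ and some face of height $\ge h$ nested in $W$; by Observation~\ref{obs:height-nested-sequence-of-walls}, the nested sequence $\fW_{x,S_n}$ (possibly together with walls nested inside it) must then carry total excess area at least $4h$ — more precisely, the relevant wall containing a height-$h$ face forces $\fm(\fW_{x,S_n})\ge 4h-O(1)$, or else one argues that $\sum$ of the heights of the standardized walls in $\Theta_{\textsc{st}}\fW_{\rho(f)}$ exceeds $4h$ and these heights are at most the corresponding excess areas. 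Hence $\{\hgt(\cP_{x,S_n})\ge h\}\subset\{\fm(\fW_{x,S_n})\ge 4h - C\}$, and applying Corollary~\ref{cor:nested-sequence-inside-a-ceiling} gives $\mu_n^\mp(\hgt(\cP_{x,S_n})\ge h\mid\bI_{\bW_n})\le \exp(-(\beta-C)(4h-C))\le\exp(-4(\beta-C')h)$ after adjusting constants (valid since $h\ge 1$). This is the conditional analogue of~\cite[Theorem 2.26]{GL19a}.

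\medskip
\noindent\textbf{Lower bound.} For this I would exhibit a single cheap configuration: let $\cI^\uparrow\in\bI_{\bW_n}$ be the interface agreeing with $\cI_{\bW_n}$ outside $S_n$ and, inside $S_n$, having $\cC_{\bW_n}$ everywhere except for a trivial straight column of unit width rising from $x'=x+(0,0,\hgt(\cC_{\bW_n}))$ to height $\hgt(\cC_{\bW_n})+h$ (a stack of trivial increments $X_\varnothing$). Since the restricted pillar of $x$ in $\cI^\uparrow$ has height exactly $h$, the event $\{\hgt(\cP_{x,S_n})\ge h\}\cap\bI_{\bW_n}$ contains (at least) this one interface — in fact to get the right constant it is cleaner to lower bound by the conditional probability of the \emph{single} interface $\cI^\uparrow$, i.e. $\mu_n^\mp(\hgt(\cP_{x,S_n})\ge h\mid\bI_{\bW_n})\ge \mu_n^\mp(\cI^\uparrow\mid\bI_{\bW_n})\ge \mu_n^\mp(\cI^\uparrow)/\mu_n^\mp(\cI_{\bW_n})$, where $\cI_{\bW_n}$ is the flat-inside interface. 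Applying Theorem~\ref{thm:cluster-expansion} with $\cI=\cI^\uparrow$, $\cI'=\cI_{\bW_n}$: the excess area is $\fm(\cI^\uparrow;\cI_{\bW_n}) = 4h$ (the column has $4$ vertical faces per unit height, and the two horizontal faces it removes/adds cancel the endpoints), and the $\g$-difference is bounded in absolute value by a constant uniformly in $h$ — because $\cI^\uparrow$ and $\cI_{\bW_n}$ are congruent outside an $O(h)$-neighborhood of the column but the per-face discrepancy $|\g(f,\cI^\uparrow)-\g(f,\cI_{\bW_n})|$ decays exponentially in the distance to the column, so the total is $O(1)$ (geometric series over the $O(1)$ faces per height level), \emph{independently of $h$}. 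This yields $\mu_n^\mp(\cI^\uparrow)/\mu_n^\mp(\cI_{\bW_n})\ge\exp(-4\beta h - C)\ge\exp(-4(\beta+C)h)$, again using $h\ge1$. This is the conditional analogue of~\cite[Proposition 2.29]{GL19a}.

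\medskip
\noindent The main obstacle is the bookkeeping in the lower bound: one must check that replacing the flat interface by the straight column does not change the $\g$-sums by more than an additive constant uniform in $h$ and in the conditioning $\bW_n$. This is where the exponential decay~\eqref{eq:g-exponential-decay} is essential — summing $\bar K e^{-\bar c r}$ over the faces at congruence radius $\asymp r$ from the column (there are $O(1)$ such faces per unit height, so $O(h)$ in total but weighted by $e^{-\bar c r}$, giving a convergent sum bounded independently of $h$), and separately noting the faces of $\bW_n$ are at distance $\ge d(x,\partial S_n)\ge 1$ and contribute a further convergent tail. A minor secondary point is justifying the $4h-O(1)$ bound on $\fm(\fW_{x,S_n})$ in the upper bound, which follows verbatim from Observation~\ref{obs:height-nested-sequence-of-walls} together with $\fm(W)\ge|\rho(W)|$ and the fact that the wall supporting a height-$h$ ceiling face has excess area at least (a constant times) $h$.
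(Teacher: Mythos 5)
Both halves of your proposal contain genuine gaps.

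\textbf{Upper bound.} The claimed inclusion $\{\hgt(\cP_{x,S_n})\ge h\}\subset\{\fm(\fW_{x,S_n})\ge 4h-C\}$ is false. The pillar $\cP_{x,S_n}$ is a $*$-connected plus component; it can reach height $h$ by running up along a wall $W'$ that nests some face $y$ $*$-adjacent to $x$ while \emph{not} nesting $x$. For instance, take $\fW_{x,S_n}$ to be a single small wall $W$ with interior ceiling at height $1$ containing $x$ and $y$, and let $W'$ be a column of height $h-1$ over $y$: then $\fm(\fW_{x,S_n})=\fm(W)=4$, but the cell above $x$ and the column over $y$ lie in the same $*$-connected plus component, so $\hgt(\cP_{x,S_n})\ge h$. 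You acknowledge $\fm(\fW_{\rho(f),S_n})\ge 4h$, but $\rho(f)\ne x$ in general, and you give no mechanism for converting a bound at the unknown location $\rho(f)$ into a probability bound. The paper's proof precisely handles this: it union-bounds over the height $h_1$ contributed by $\fW_{x,S_n}$ and the location $y$ (whose range is controlled by $\fm(\fW_{x,S_n})\ge r$), and applies Corollary~\ref{cor:nested-sequence-inside-a-ceiling} twice, to $\fW_{x,S_n}$ and then conditionally to $\fW_{y,S_n}\setminus\fW_{x,S_n}$.

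\textbf{Lower bound.} The chain
\[
\mu_n^\mp(\cI^\uparrow\mid\bI_{\bW_n})\ \ge\ \mu_n^\mp(\cI^\uparrow)/\mu_n^\mp(\cI_{\bW_n})
\]
is backwards. Since $\cI_{\bW_n}\in\bI_{\bW_n}$, one has $\mu_n^\mp(\bI_{\bW_n})\ge\mu_n^\mp(\cI_{\bW_n})$, hence
\[
\mu_n^\mp(\cI^\uparrow\mid\bI_{\bW_n}) = \frac{\mu_n^\mp(\cI^\uparrow)}{\mu_n^\mp(\bI_{\bW_n})} \ \le\ \frac{\mu_n^\mp(\cI^\uparrow)}{\mu_n^\mp(\cI_{\bW_n})}\,,
\]
the opposite direction. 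To turn the cluster-expansion computation $\mu_n^\mp(\cI^\uparrow)/\mu_n^\mp(\cI_{\bW_n})\ge e^{-4(\beta+C)h}$ into a lower bound on the conditional probability, you would need $\mu_n^\mp(\cI_{\bW_n}\mid\bI_{\bW_n})$ bounded below by a constant; but that is the probability the interface is \emph{exactly} flat inside $S_n$, which is $\exp(-\Theta(|S_n|))$. The paper avoids paying this entropy by building a forcing \emph{map} $\Psi_{x,h}$ that appends the column $W_{x,\parallel}^h$ to any interface with $\fW_{\bar x,S}=\trivincr$ --- a set of conditional probability at least $1-\epsilon_\beta$ by Corollary~\ref{cor:nested-sequence-inside-a-ceiling} --- and sums the resulting lower bound over the whole preimage (this is Claim~\ref{clm:lower-bound-forcing}). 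Your cluster-expansion estimate itself is essentially the content of that claim; what is missing is that it must be applied injectively to a large family of preimages rather than to the single pair $(\cI^\uparrow,\cI_{\bW_n})$.
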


\subsection{Ceiling and wall clusters}
In~\cite{Dobrushin72a} a notion of \emph{closeness} of walls was introduced whereby two walls $W_1,W_2$ are close if there exist two edges/faces $u_1\in \rho(W_1)$ and $u_2\in \rho(W_2)$ such that the number of faces of $\cI$ projecting onto $u_i$, denoted $N_\rho(u_i)$, satisfy $d(u_1,u_2) \le \sqrt{N_\rho(u_1)}+ \sqrt{N_\rho(u_2)}$. 
This grouped walls together if they interact too strongly through the bubbles in the spin configuration (the function $\g$ in Theorem~\ref{thm:cluster-expansion}).
To control the wall through a point $x\in \cL_{0,n}$ conditionally on some large nesting wall in $\bW$, we must decouple the wall $W_x$ from $\bW$. This entails replacing the \emph{group-of-wall} clustering by a new grouping scheme, which we call \emph{wall clusters}. The key change is that our notion of closeness will be one-sided, so that the wall cluster of $W_x$ only consists of walls interior to $W_x$, enabling the conditioning on arbitrary external walls. 
\begin{definition}\label{def:closely-nested}
We say a wall $W$ is \emph{closely nested} in a ceiling $\cC$ if $W$ is nested in $\cC$  ($W\Subset \cC$) and 
\[
d_\rho(\partial \hull{\cC}, W) := d(\rho(\partial \hull{\cC}),\rho(W)) \le \fm(W)\,.
\]
(where $\rho(\partial \hull \cC) = \partial \rho(\hull\cC)$ is the edge boundary of $\rho(\hull \cC)$ in $\cL_{0}$). 
We say a wall $W_1$ is closely nested in $W_2$, if $W_1$ is closely nested in an interior ceiling of $W_2$. 
\end{definition}

\begin{remark}
While we choose to use the above definition, where the horizontal distance is compared to the excess area, there are many one-sided choices here that would work. For instance, a definition that more closely resembles the original definition of~\cite{Dobrushin72a} is as follows: $W_1$ is closely nested in $W_2$ if $W_1 \Subset W_2$ and there exists $u_1 \in \rho(W_1)$ such that $d_\rho(W_2,u_1)\leq \sqrt {N_\rho(u_1)}$. We choose to stick with Definition~\ref{def:closely-nested}.
\end{remark}

\begin{definition}\label{def:ceiling-cluster}
For a ceiling $\cC$, define the \emph{ceiling cluster} $\mathsf{Clust}(\cC)$ as follows (see Fig.~\ref{fig:wall-cluster} for a depiction):
\begin{enumerate}
    \item Initialize $\mathsf{Clust}(\cC)$ with all walls $W$ that are closely nested in $\cC$.
    \item Iteratively, add to $\mathsf{Clust}(\cC)$ all walls $W$ that are closely nested in some wall $W'\in \mathsf{Clust}(\cC)$. 
\end{enumerate}
\end{definition}

\begin{definition}\label{def:wall-cluster}
For a set of walls $\bV$ in $\cI$ with interior ceilings $\cC_1,\ldots,\cC_r$, define its \emph{wall cluster} to be
$$\mathsf{Clust}(\bV):= \bV\cup \bigcup_{i=1}^r \mathsf{Clust}(\cC_i)\,.$$
\end{definition}

\begin{remark}
As discussed above, the most important benefit of wall clusters is that unlike groups of walls, the wall cluster of $W$ is completely interior to $W$, i.e., $\rho(\Clust(\bV))\subset \rho(\hull{\bV})$.
Wall clusters also have the nice property that if $W_1 \Subset W_2 \Subset W_3$ and $W_1$ is closely nested in $W_3$, then $W_1$ is closely nested in $W_2$.
\end{remark}

\begin{figure}
    \centering
     \includegraphics[width=0.6\textwidth]{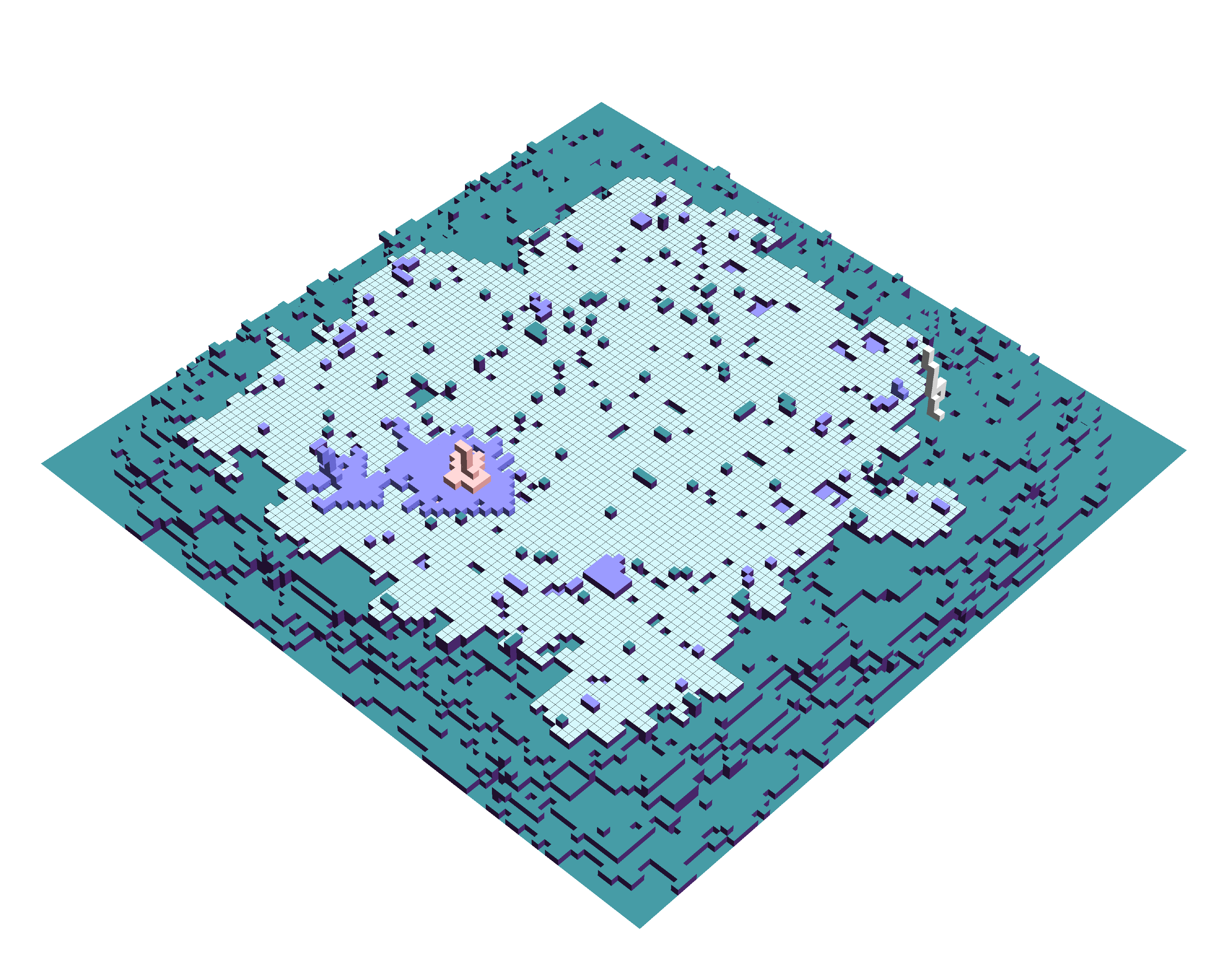}
    \caption{The ceiling cluster of the identified ceiling $\cC$ (highlighted in light teal) is indicated  by purple and orange---blue marks those walls that are closely nested in $\cC$, and orange marks those that are in turn closely nested in some blue wall (but not themselves closely nested in $\cC$). In white is a large wall near, but outside, $\cC$, and therefore not in the ceiling cluster of $\cC$.}
    \label{fig:wall-cluster}
\end{figure}

\subsection{Rigidity conditionally on nesting walls}
The goal of this section is to prove Theorem~\ref{thm:rigidity-inside-wall}.
Throughout this section, fix $\bV = (W_1,\ldots,W_r)$, and fix $\bW$ having $\rho(\bW) \cap \rho(\bigcup_{i\le r} \hull{W}_i)= \emptyset$. 

\begin{definition}
Let $\Phi_{\bV}$ be the following map on interfaces $\cI \in \bI_{\bW}\cap \bI_{\bV}$:  
\begin{enumerate}
    \item Remove from the standard wall representation of $\cI$ all standard walls in $\Theta_{\textsc{st}}\Clust(\bV)$. 
    \item From the remaining standard walls, generate the interface $\Phi_{\bV}(\cI)$ via Lemma~\ref{lem:interface-reconstruction}.
\end{enumerate}
\end{definition}
Recall that by Definition~\ref{def:wall-cluster}, $\rho(\Clust(\bV))\subset \rho(\bigcup_{i\le r} \hull{W}_i)$; as such, the standard wall representation of $\Phi_{\bV}(\cI)$ has the same wall collection indexed by faces outside $\rho(\bigcup_{i\le r} \hull {W}_i)$ as $\cI$ does, and, $\Phi_{\bV}(\cI)\in \bI_{\bW}$. This property enables us to make the exponential tail probability of $\bV$ conditional on $\bI_{\bW}$. 

We first analyze the weight gain under the map $\Phi_{\bV}$. 

\begin{lemma}\label{lem:wall-cluster-weights}
There exists $C>0$ such  that for every $\beta>\beta_0$, and for every $\cI \in \bI_{\bW}\cap \bI_{\bV}$, 
\begin{align*}
   \Big|\log\frac{\mu_n^\mp (\cI)}{\mu_n^\mp(\Phi_{\bV}(\cI))} + \beta\fm(\Clust(\bV))\Big| \le C \fm(\Clust(\bV))\,.
\end{align*}
\end{lemma}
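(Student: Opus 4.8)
The plan is to apply the cluster expansion formula of Theorem~\ref{thm:cluster-expansion} to the pair $(\cI, \Phi_{\bV}(\cI))$ and bound the two resulting contributions: the $-\beta\fm(\cI;\Phi_{\bV}(\cI))$ term, which should equal $-\beta\fm(\Clust(\bV))$ exactly, and the $\g$-sum, which should be controlled by $C\fm(\Clust(\bV))$. First I would verify the combinatorial identity $\fm(\cI;\Phi_{\bV}(\cI)) = \fm(\Clust(\bV))$. By construction, $\Phi_{\bV}(\cI)$ is obtained by deleting exactly the standard walls in $\Theta_{\textsc{st}}\Clust(\bV)$ from the standard wall representation of $\cI$; by Lemma~\ref{lem:interface-reconstruction} the remaining standard walls reconstruct a valid interface, and by Observation~\ref{obs:1-to-1-map-faces} there is a face bijection between $\cI\setminus\Clust(\bV)$ and $\Phi_{\bV}(\cI)$ minus the faces projecting into $\rho(\Clust(\bV))$, consisting only of vertical shifts. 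Hence $|\cI| - |\Phi_{\bV}(\cI)|$ is precisely the excess-area contribution of the deleted walls over the flat faces that replace them, which by Definition~\ref{def:excess-area} and Remark~\ref{rem:excess-area-properties} is $\sum_{W\in\Clust(\bV)}\fm(W) = \fm(\Clust(\bV))$.

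The main work is bounding the $\g$-discrepancy $\big|\sum_{f\in\cI}\g(f,\cI) - \sum_{f'\in\Phi_{\bV}(\cI)}\g(f',\Phi_{\bV}(\cI))\big|$. I would split the faces of $\cI$ into (a) the faces of $\Clust(\bV)$ themselves, and (b) the faces of $\cI\setminus\Clust(\bV)$, matched via the bijection $f\mapsto\theta_{\udarrow}f$ of Observation~\ref{obs:1-to-1-map-faces} with faces of $\Phi_{\bV}(\cI)$ (and the horizontal faces of $\Phi_{\bV}(\cI)$ that fill in $\rho(\Clust(\bV))$, matched back against horizontal faces of $\cC_{\bV}$-type ceilings — these are finitely many per deleted wall and each contributes $O(1)$ by~\eqref{eq:g-uniform-bound}). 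For the faces in (a), the crude bound~\eqref{eq:g-uniform-bound} gives a contribution at most $2\bar K|\Clust(\bV)| \le 4\bar K\fm(\Clust(\bV))$ using $\fm(W)\ge\frac12|W|$ from~\eqref{eq:excess-area-wall-relations}, summed over $W\in\Clust(\bV)$ (plus the $O(1)$-per-wall fill-in faces, also absorbed via $\fm(W)\ge|\rho(W)|\ge 1$). For the faces in (b), I would use the decay estimate~\eqref{eq:g-exponential-decay}: for such an $f$, $|\g(f,\cI) - \g(\theta_{\udarrow}f,\Phi_{\bV}(\cI))| \le \bar K e^{-\bar c\, \br(f,\cI;\theta_{\udarrow}f,\Phi_{\bV}(\cI))}$, and the radius of congruence is at least the distance from $f$ to the nearest face of $\cI$ that was moved or deleted, i.e. to $\Clust(\bV)$ (more precisely, to $\bigcup_{W\in\Clust(\bV)}(\rho(W)\times\Z)$ together with the shifted ceilings). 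Summing $\bar K e^{-\bar c\, d(f,\Clust(\bV))}$ over all $f\in\cI$ and then over the connected pieces of $\Clust(\bV)$ yields a geometric sum bounded by $C'\sum_{W\in\Clust(\bV)}|\rho(\hull W)| \le C'\sum_{W\in\Clust(\bV)}|\partial\rho(\hull W)|\cdot\diam(\rho(\hull W))$, which I would in turn bound by $C''\fm(\Clust(\bV))$ using that the total horizontal footprint and diameter of each wall are controlled by its excess area via~\eqref{eq:excess-area-wall-relations} and the final remark in Remark~\ref{rem:excess-area-properties} that any two nested faces are within $\fm(W)$.

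The delicate point — and the step I expect to be the main obstacle — is organizing the sum in (b) so that each face $f$ of $\cI$ is charged only against the closest deleted wall at the correct exponential rate, without double counting, and ensuring the per-wall bound is linear in $\fm(W)$ rather than in $|\rho(\hull W)|$ or $\diam^2$. The subtlety is that a single wall $W$ can have a large hull, so one cannot simply sum $e^{-\bar c\, d(f,\cdot)}$ over all $f$ within distance $O(\log)$ of $\partial\rho(\hull W)$ and expect $O(\fm(W))$; instead one sums over the edge-boundary $\partial\rho(\hull W)$ (whose size is $\le|\rho(W)|\le\fm(W)$ by~\eqref{eq:excess-area-wall-relations}), assigning each nearby $f$ to the nearest boundary edge and noting the shift $\theta_{\udarrow}f$ changes $\g$ only through faces within distance $\br$ of $f$, which must lie near that boundary portion. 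One must also handle the fact that the shifts $\theta_{\udarrow}$ are different on different ceilings of $\cI$, so a face $f$ deep inside a retained interior ceiling that is itself nested in a deleted wall still gets shifted; the congruence radius argument still applies because the local picture around $f$ is unchanged (a rigid vertical translation of a whole neighbourhood), so $\br(f,\cI;\theta_{\udarrow}f,\Phi_{\bV}(\cI))$ is again at least the distance to the nearest wall face of $\Clust(\bV)$. Collecting the contributions from (a), the fill-in faces, and (b) gives the claimed bound with an absolute constant $C$, completing the proof.
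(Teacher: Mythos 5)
Your overall strategy matches the paper's: apply Theorem~\ref{thm:cluster-expansion}, verify $\fm(\cI;\Phi_{\bV}(\cI))=\fm(\Clust(\bV))$, split the faces of $\cI$ into the deleted walls $\Clust(\bV)$ (plus the fill-in faces $\bH$ of $\cJ$) and the remaining faces matched via $\theta_\udarrow$, bound the first group crudely by~\eqref{eq:g-uniform-bound}, and bound the second group by~\eqref{eq:g-exponential-decay} charging each face against the nearby deleted structure. You also correctly identify the main subtlety: you must charge to the edge-boundary $\partial\rho(\hull W)$ (whose size is $\le\fm(W)$), not to the full hull, or you overshoot to $\fm(W)^2$.

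However, there is a genuine gap in how you close that subtlety. You assert that, after assigning each $f\in\cI\setminus\Clust(\bV)$ to its nearest boundary edge $u\in\partial\rho(\hull W)$, summing $e^{-\bar c\,d(f,u)}$ over $f$ gives $O(1)$ per edge $u$. This is \emph{not} automatic: a retained wall of $\cI$ nested inside $\hull W$ could pile arbitrarily many wall faces near $u$. The thing that rescues this is precisely the one-sided closeness criterion (Definition~\ref{def:closely-nested}): any wall $W'$ nested in an interior ceiling $\cC_i$ of the deleted cluster but \emph{not} itself deleted cannot be closely nested, hence $d(\rho(W'),\rho(\partial\hull\cC_i))>\fm(W')\ge\frac12|W'|$. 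This lets you write $\sum_{W'}|W'|e^{-\bar c\,d(\rho(W'),u)}\le\sum_{W'}2\fm(W')e^{-\frac{\bar c}{2}(d+\fm(W'))}\le C\sum_{W'}e^{-\frac{\bar c}{2}d(\rho(W'),u)}=O(1)$, since disjoint walls have disjoint projections (this is Claim~\ref{clm:wall-cluster-ceiling-to-else} in the paper). Without invoking this structural fact, the per-edge estimate does not hold and the argument fails.

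A second, related inaccuracy: you claim $\br(f,\cI;\theta_\udarrow f,\cJ)$ is always at least the distance from $f$ to the nearest face of $\Clust(\bV)$. That is false for a face $f\in\bB_i$ that sits just outside the hull of a retained wall $W'$ with interior ceiling $\cC_j$ nested in $\cC_i$: the faces of $\bB_j$ undergo a different vertical shift than $f$ does, so $\br$ can be attained by a wall face of $\bB_j$, which need not be a face of $\Clust(\bV)$ nor far away. The paper handles this with an explicit third case in the bound of the fourth term of~\eqref{eq:wall-cluster-exp-tail-splitting}, charging those contributions to $\rho(\partial\hull\cC_j)$ and again invoking Claim~\ref{clm:wall-cluster-ceiling-to-else}. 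Once you add both of these ingredients the argument closes; as written, they are missing.
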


\begin{proof}
For ease of notation, let $\cJ = \Phi_{\bV}(\cI)$.  By definition, $\fm(\cI;\cJ)=\fm(\Clust(\bV))$. By Theorem~\ref{thm:cluster-expansion}, 
\begin{align*}
    \Big|\log\frac{\mu_n^\mp (\cI)}{\mu_n^\mp(\cJ)} + \beta\fm(\Clust(\bV))\Big| \leq \Big|\sum_{f\in \cI} \g(f,\cI)- \sum_{f'\in \cJ} \g(f',\cJ)\Big|\,.
\end{align*}
Now we split up the set of faces in $\cI$ and $\cJ$ as follows. The interface $\cI_{\Theta_{\textsc{st}}\Clust(\bV)}$ has  (interior) ceilings which we can enumerate by $\cC_1,\ldots,\cC_s$. 
Now partition the faces of $\cI$ as follows.  
\begin{itemize}
    \item $\bF_{\ext}= \{f\in \cI: \rho(f)\notin \bigcup_{i\le r} \rho(\hull{W}_i)\}$ is the set of all faces projecting outside the hulls of $(W_i)_{i\le r}$.   
    \item $\Clust(\bV)$ is the set of wall faces removed by $\Phi_{\bV}$.
    \item $(\bB_{i})_{i\le s}$: collects all other faces of $\cI$, indexed by their innermost nesting ceiling among $(\cC_i)_{i \le s}$. 
\end{itemize}
With the above splitting in hand, we now decompose $\cJ$ as follows. 
\begin{itemize}
    \item $\bF_{\ext}$ is as defined above.
    \item $\bH$ is the set of faces in $\cJ$ whose projection is in $\sF(\rho(\Clust{(\bV)}))$. 
    \item $(\theta_{\udarrow} \bB_i)_{i \le s}$ collects all other faces in $\cJ$.
\end{itemize}
Observation~\ref{obs:1-to-1-map-faces} describes a 1-1 correspondence between $\bB_i$ and $\theta_{\udarrow} \bB_i$ given by the vertical shifts induced by the ceilings of deleted walls of $\Theta_{\textsc{st}}\Clust{(\bV)}$ from the standard wall representation of $\cI$: we encode this 1-1 correspondence into $f\mapsto \theta_{\udarrow} f$. Evidently, for each $i$, all faces in $\bB_i$ undergo the same vertical shift.  

We can then decompose 
\begin{align}\label{eq:wall-cluster-exp-tail-splitting}
    \Big|\sum_{f\in \cI}\g(f,\cI) - \sum_{f'\in \cJ} \g(f',\cJ)\Big|& \leq \sum_{f\in \Clust{(\bV)}} |\g(f,\cI)|+\sum_{f'\in \bH} |\g(f',\cJ)|+\sum_{f\in \bF_{\ext}} |\g(f,\cI) - \g(f,\cJ)| \nonumber \\ 
    & \qquad + \sum_{i=1}^{s} \sum_{f\in \bB_i} |\g(f,\cI) - \g(\theta_\udarrow f,\cJ)|\,.
\end{align}

We control the above sum using two claims. The first controls the interactions of faces in $\Clust{(\bV)}\cup \bH$.

\begin{claim}\label{clm:wall-cluster-W-to-else}
There exists a universal $\bar C$ such that  
\begin{align*}
    \sum_{f\in \Clust{(\bV)} \cup \bH}\sum_{g\in \sF(\mathbb Z^3)}  e^{- \bar c d(f,g)}\leq \bar C\fm(\cI;\cJ)\,.
\end{align*}
\end{claim}
\begin{proof}
The claim follows immediately by integrating the exponential tails, to bound the left-hand side by $C  |\Clust{(\bV)} \cup \bH|$ for some $C$, which by~\eqref{eq:excess-area-wall-relations} is at most $4C \fm(\Clust{(\bV)})= 4 C \fm(\cI;\cJ)$.
\end{proof}

The next claim controls interactions between walls nested in some $\cC_i$, that were not deleted (i.e., faces in $\bB_i$), with those outside $\cC_i$ (as these may have undergone different vertical shifts). 
\begin{claim}\label{clm:wall-cluster-ceiling-to-else}
There exists a universal $\bar C$ such that for every $i\leq r$,  
\begin{align*}
    \sum_{f\in \bB_i \cup \theta_\udarrow \bB_i} \sum_{g\in \sF(\mathbb Z^3): \rho(g)\notin \rho(\hull {\cC}_i)}  e^{ - \bar c d(f,g)}   \leq \bar C | \rho(\partial \hull{\cC}_i)|\,.
\end{align*}
\end{claim}
\begin{proof}
Clearly, by vertical translation invariance of the index set of the latter sum, it suffices to prove this for a sum over $f\in \bB_i$, say.  First, we write for some universal $C$, 
\begin{align*}
\sum_{f\in \bB_i} \sum_{g\in \sF(\mathbb Z^3): \rho(g)\notin \rho(\hull\cC_i)} e^{- \bar c d(f,g)} & \leq \sum_{f\in \bB_i} \sum_{u\in \rho(\partial \hull \cC_i)} C e^{ - \bar c d(\rho(f), u)} \\
& \leq \sum_{f\in \rho(\hull{\cC}_i)} \sum_{u\in \rho(\partial \hull{\cC}_i)} C e^{ - \bar c d(f,u)}  +  \sum_{u\in \rho(\partial \hull \cC_i)} \sum_{W: W\subset \bB_i} C |W| e^{ - \bar cd(\rho(W),u)}\,,
\end{align*} 
where the first term accounts for the ceiling faces of $\bB_i$ and the latter sum runs over all walls of $\cI$ that are a subset of $\bB_i$. The first term above is clearly at most $C|\partial \hull{\cC}_i|$ for some other $C$. 
Because $W$ is nested in a wall of $\Clust{(\bV)}$ while not being in $\Clust{(\bV)}$, $W$ is not in the ceiling cluster of $\cC_i$ and therefore $d(\rho(W),\rho(\partial \hull \cC_i))>\fm(W)$. Thus, using~\eqref{eq:excess-area-wall-relations}, the second term above is at most 
\begin{align*}
    \sum_{u\in \rho(\partial \hull \cC_i)} \sum_{W\subset \bB_i} 2C \fm(W) e^{-\frac 12 \bar c [d(\rho(W),u)+\fm(W)]} \leq \sum_{u\in \rho(\partial \hull \cC_i)} \sum_{W\subset \bB_i} C' e^{ - \frac 12\bar c d(\rho(W),u)}\,.
\end{align*}
Since disjoint walls have disjoint projections, summing out the inner sum, we see that this term is altogether at most $C |\rho (\partial \hull \cC_i)|$ for some other $C$. Combining the two bounds yields the desired. 
\end{proof}

With the above claims in hand, we proceed with bounding each of the terms in~\eqref{eq:wall-cluster-exp-tail-splitting}. The first and second terms in~\eqref{eq:wall-cluster-exp-tail-splitting} are easily seen to each be bounded by $\bar C\bar K \fm(\cI;\cJ)$ by~\eqref{eq:g-uniform-bound} and Claim~\ref{clm:wall-cluster-W-to-else}. 

The third term of~\eqref{eq:wall-cluster-exp-tail-splitting} is controlled as follows. For every face $f\in \bF_{\ext}$, the radius $\br(f,\cI;f,\cJ)$ must be attained by a wall face, either in $\Clust{(\bV)}$ or nested in some wall of $\Clust{(\bV)}$. As such, by~\eqref{eq:g-exponential-decay}, we can bound the third term of~\eqref{eq:wall-cluster-exp-tail-splitting} by
\begin{align*}
   \sum_{f\in \bF_{\ext}} \bar K e^{ - \bar c \br(f,\cI;f,\cJ)} \leq  \sum_{f\in \bF_{\ext}} \sum_{g\in \Clust{(\bV)}} \bar K e^{ - \bar c d(f,g)}+ \sum_{f\in \bF_{\ext}}\sum_i \sum_{g\in \bB_i} \bar K e^{ - \bar c d(f,g)}\,.
\end{align*}
The first sum is clearly at most $\bar C \bar K \fm(\cI;\cJ)$ by Claim~\ref{clm:wall-cluster-W-to-else}, while the second is clearly at most $\bar C\bar K \sum_i |\rho(\partial \hull {\cC}_i)|$ by Claim~\ref{clm:wall-cluster-ceiling-to-else}. This is then easily seen to be at most $2 \bar C \bar K |\Clust{(\bV)}| \leq 4 \bar C \bar K \fm(\cI;\cJ)$. 

The fourth term of~\eqref{eq:wall-cluster-exp-tail-splitting} is bounded as follows. Fix any $i$. For every $f\in \bB_i$, the radius $\br(f,\cI; \theta_\udarrow f,\cJ)$ is attained either by a face  projecting into $\rho(\hull \cC_i)^c$, a wall face of $\Clust{(\bV)}$ whose innermost nesting ceiling is $\cC_i$, or a wall face of $\bB_j$ for some $j$ satisfying $\rho(\cC_i) \subset \rho(\hull \cC_j)^c$ (i.e., $\cC_j$ nested in $\cC_i$). Thus, we can write 
\begin{align*}
    \sum_{i=1}^s \sum_{f\in \bB_i} |\g(f,\cI)- \g(\theta_{\udarrow}f,\cJ)| & \leq \sum_i \sum_{f\in \bB_i} \sum_{g\in \sF(\mathbb Z^3): \rho(g)\notin \rho(\hull \cC_i)} \bar K e^{ - \bar c d(f,g)} + \sum_i \sum_{f\in \bB_i} \sum_{g\in \Clust{(\bV)}} \bar K e^{ - \bar c d(f,g)} \\ 
    & \quad + \sum_{i} \sum_{f\in \bB_i} \sum_{j:\rho(\cC_i) \subset \rho(\hull \cC_j)^c} \sum_{g\in \bB_j} \bar K e^{-\bar c d(f,g)}\,.
\end{align*}

In the first case, the sum is bounded, via Claim~\ref{clm:wall-cluster-ceiling-to-else} by $ \sum_i \bar C \bar K |\rho (\partial \hull \cC_i)|$; in the second case, the sum is bounded, via Claim~\ref{clm:wall-cluster-W-to-else}, by $\bar C \bar K \fm(\cI;\cJ)$. The last sum can be rewritten as 
\[
\sum_{j} \sum_{g\in \bB_j} \sum_{i:\rho(\cC_i) \subset \rho(\hull \cC_j)^c} \sum_{f\in \bB_i} \bar K e^{- \bar c d(f,g)} \leq \sum_j \sum_{g\in \bB_j} \sum_{f\in \sF(\mathbb Z^3): \rho(f)\notin \rho(\hull \cC_i)} \bar K e^{ - \bar cd(f,g)} \leq \sum_j \bar C \bar K |\rho (\partial \hull \cC_j)|\,,
\]
by Claim~\ref{clm:wall-cluster-ceiling-to-else}. Combining these three and again using the fact that $\sum_i |\partial \rho(\hull \cC_i)|\leq 2\fm(\cI;\cJ)$, we conclude that the right hand side of~\eqref{eq:wall-cluster-exp-tail-splitting} is at most $C \fm(\cI;\cJ)$ for some universal $C$ as desired. 
\end{proof}

It now remains to analyze the multiplicity of the map $\Phi_{\bV}$ on the set of interfaces $\cI\in \bI_{\bW}\cap \bI_\bV$.

\begin{lemma}\label{lem:wall-cluster-multiplicity}
There exists a universal constant $C$ such that for every $M\ge 1$, for every $\cJ\in \Phi_{(W_i)}(\bI_{\bW}\cap \bI_\bV)$
\[
\big|\{\cI \in \Phi_{\bV}^{-1}(\cJ)\,:\, \fm(\cI;\cJ) = M\}\big|\leq C^M\,.
\]
\end{lemma}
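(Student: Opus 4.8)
The plan is to bound the number of preimages $\cI$ of a fixed interface $\cJ$ by reconstructing $\cI$ from $\cJ$ together with a bounded amount of combinatorial data, each piece of which costs $C^{O(M)}$ choices. Recall that the map $\Phi_{\bV}$ acts by deleting, from the standard wall representation of $\cI$, all the standard walls in $\Theta_{\textsc{st}}\Clust(\bV)$. Since the standard wall representation is a bijection with interfaces (Lemma~\ref{lem:interface-reconstruction}), to recover $\cI$ from $\cJ$ it suffices to recover the \emph{deleted} collection of standard walls $\Theta_{\textsc{st}}\Clust(\bV)$, together with the information of which connected component of $\rho(\Clust(\bV))^c$ each of them nests into — equivalently, where in $\cJ$ their hulls are to be re-inserted. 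Because re-inserting a standard wall only shifts the faces projecting into a single component of its complement, and all of this data is supported on $\rho(\bigcup_{i\le r}\hull{W}_i)$, recovering it from $\cJ$ plus the combinatorial data determines $\cI$ uniquely.

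\medskip\noindent\textbf{The steps.} First, observe that $\fm(\cI;\cJ)=\fm(\Clust(\bV))=\sum_{W\in\Clust(\bV)}\fm(W)=M$; in particular each deleted wall $W$ has $\fm(W)\ge \tfrac12|W|$ by~\eqref{eq:excess-area-wall-relations}, so the total number of faces in $\Clust(\bV)$ is at most $2M$, and there are at most $2M$ deleted walls. Second, I will choose an anchoring scheme: since $\bV=(W_1,\dots,W_r)$ is fixed (it is part of the data defining the event $\bI_\bV$, hence the same for all $\cI\in\Phi_\bV^{-1}(\cJ)$ — this is the key point), the walls $W_1,\dots,W_r$ themselves are \emph{known} from $\cJ$ once we know which faces of $\cJ$ lie in $\rho(\bigcup\hull W_i)$; wait — more carefully, the preimage set $\Phi_\bV^{-1}(\cJ)\cap(\bI_\bW\cap\bI_\bV)$ already fixes $\bV$, so $\rho(\bigcup_i\hull W_i)$ is a fixed subset of $\cL_0$, and thus a fixed region $R\subset\cL_0$ on which all the clustered walls project. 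So the deleted standard walls form an admissible collection of standard walls, each nesting some face of $R$, of total cardinality $\le 2M$. By Fact~\ref{fact:number-of-walls} (more precisely the $*$-connected face-set count, applied wall by wall), the number of standard walls of cardinality $\le m$ nesting a fixed face is at most $C^m$; choosing, for each deleted wall, a root face in $R$ (at most $|R|$ choices — but $|R|$ is not bounded in $M$, so this must be replaced by a relative anchoring) together with its shape costs $C^{O(M)}$ \emph{provided} we anchor each wall not to an absolute location in $R$ but to the structure already present.

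\medskip\noindent\textbf{The clean way} to do the anchoring, which I would adopt, is to reconstruct the clustered walls in the nesting order of Definition~\ref{def:ceiling-cluster}: the walls closely nested directly in $\cC_{\bV}$ (the floor-side ceilings of $\bV$) are nested in the $W_i$ themselves, and by the close-nesting criterion each such $W$ satisfies $d_\rho(\partial\hull\cC, W)\le\fm(W)$, so once the boundary $\rho(\partial\hull{W_i})$ is known (it is, from $\cJ$ and the fixed data $\bV$), $W$'s projection lies within distance $\fm(W)$ of a known curve of length $|\partial\hull{W_i}|\le\fm(W_i)\le M$; hence the number of choices of root face for $W$ is at most $(\text{length of known curve})\cdot(2\fm(W)+1)^2 \le C M\fm(W)^2$, and combined with $C^{\fm(W)}$ for the shape this is $C^{O(\fm(W))}$. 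Iterating: each subsequently clustered wall $W'$ closely nested in an already-reconstructed $W\in\Clust(\bV)$ has its root within distance $\fm(W')$ of $\rho(\partial\hull W)$, giving again $C^{O(\fm(W'))}$ choices per wall. Multiplying over the $\le 2M$ walls and using $\sum_{W\in\Clust(\bV)}\fm(W)=M$ yields a total of $C^{O(M)}$ reconstructions, i.e.\ $C^M$ after renaming the constant. Finally, one must verify that the reconstructed standard wall collection, reinserted into the standard wall representation of $\cJ$ in the prescribed nesting positions, really is admissible and really does map back to $\cJ$ under $\Phi_\bV$ — this is immediate from Lemma~\ref{lem:interface-reconstruction} and the construction of $\Phi_\bV$, since the clusters project into $\rho(\bigcup_i\hull W_i)$ which is disjoint from everything $\Phi_\bV$ leaves untouched.

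\medskip\noindent\textbf{The main obstacle} I anticipate is precisely the anchoring: a naive count would pay $|R|$ per wall for the choice of its location in $\cL_0$, and $|R|$ can be as large as $\mathrm{poly}(M)$ or worse, which would give $\mathrm{poly}(M)^{2M}$ rather than $C^M$. The resolution — reconstructing walls in nesting order and anchoring each one to the (already known) boundary of the ceiling it is \emph{closely} nested in, exploiting the one-sided close-nesting inequality $d_\rho(\partial\hull\cC,W)\le\fm(W)$ to localise the root to within $\fm(W)$ of a curve of length $O(M)$ — is exactly the place where the one-sided wall-cluster definition earns its keep (the original group-of-walls anchoring of~\cite{Dobrushin72a} works similarly). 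Care is needed to make the iteration well-founded: the nesting partial order on $\Clust(\bV)$ is finite and each wall is closely nested in a \emph{strictly} larger one, so processing from outermost to innermost terminates, and at each stage the boundary of the relevant hull is determined by $\cJ$ together with the walls already reinserted.
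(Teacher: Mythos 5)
Your high-level plan is sound: you correctly observe that recovering the deleted collection $\Theta_{\textsc{st}}\Clust(\bV)$ from $\cJ$ determines $\cI$, and you correctly identify the anchoring of the deleted walls as the delicate point, proposing to resolve it via the one-sided close-nesting inequality $d_\rho(\partial\hull\cC,W)\le\fm(W)$. However, there is a genuine gap in the accounting. You claim that the number of choices of root face for a deleted wall $W$ is
\[
(\text{length of anchoring curve})\cdot(2\fm(W)+1)^2 \;\le\; C\,M\,\fm(W)^2 \;\le\; C^{O(\fm(W))}\,,
\]
and the last inequality is simply false when $\fm(W)$ is small relative to $\log M$: the factor $|\rho(\partial\hull\cC)|$, which can be of order $M$, is not absorbed by $C^{\fm(W)}$ for walls of bounded excess. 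Concretely, take $\bV=\{W_1\}$ with $\fm(W_1)\asymp M$ and an interior ceiling of $W_1$ whose boundary curve has length $\asymp M$; this curve can have $\asymp M$ walls of excess $O(1)$ closely nested to it (each must merely sit within $O(1)$ of the curve, with disjoint projections). Your per-wall root count is then $\asymp M$ for each of $\asymp M$ walls, giving $M^{\Theta(M)}\gg C^M$. The per-wall ``position $\times$ shape'' enumeration does not decouple.

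The paper sidesteps exactly this by \emph{not} enumerating root positions independently: for each deleted wall $W\in\bF_j$ it adjoins to the witness a $\red$ path of faces in $\cL_{0,n}$ joining $W$ to $\rho(\partial\hull\cC_j)$, whose length is at most $d_\rho(W,\rho(\partial\hull\cC_j))\le\fm(W)$ by close-nesting. The total number of $\red$ faces over all deleted walls is therefore at most $\sum_W\fm(W)\le M$, so the complete witness ($\blue$ faces of $\Theta_{\textsc{st}}\Clust(\bV)$ plus $\red$ connecting paths) forms $r$ $*$-connected $\{\red,\blue\}$-decorated face sets, each containing $\Theta_{\textsc{st}}W_i$, of total cardinality $O(M)$; Fact~\ref{fact:number-of-walls} then gives $C^M$ choices. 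In other words, the positional information you tried to pay for independently per wall is amortized into the total face count of a single connected structure—this is precisely where the fact that the anchoring distance is bounded \emph{additively} by $\fm(W)$ (rather than appearing multiplicatively in a root count) earns its keep. Your nesting-order iteration and use of close-nesting are the right ingredients; what is missing is to convert ``within distance $\fm(W)$ of a known curve'' into a short connecting path, not an independent enumeration of positions.
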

\begin{proof}
Observe that given $\cJ$, the wall collection $\Clust({\bV})$ serves as a \emph{witness} to $\cI$, so that it suffices, for fixed $\bV$, to bound the number of possible $\Clust{(\bV)}$ compatible with $\bV$ and the standard wall representation of $\cJ$. For ease of notation, let $\cC_{1},\ldots,\cC_s$ be the ceilings of $\cI_{\Clust{(\bV)}}$ and partition the walls of $\Clust{(\bV)}$ into $\bV$ and sets $(\bF_i)_{i\le s}$, partitioning the walls of $\Clust{(\bV)}\setminus \bV$ by their innermost nesting ceiling among $(\cC_i)_{i\le s}$. 

We enumerate over all such choices by identifying each choice of $\Clust{(\bV)}$ with the following witness, consisting of $r$ $*$-connected subsets of $\sF(\Z^3)$ (one for each of $W_1,\ldots,W_r$), with each face decorated by a  color among $\{\red,\blue\}$. We construct the witness as follows: 
\begin{enumerate}
    \item Include all faces in $\theta_{\textsc{st}} \Clust{(\bV)}$ and color them $\blue$,
    \item For each wall $W\in \bF_j$, add the shortest path of faces in $\cL_{0,n}$ between $W$ and $ \rho(\partial \hull{\cC}_j)$ in $\red$.
\end{enumerate}
For every $\Clust{(\bV)}$ having $\fm(\Clust{(\bV)})=M$, by Definitions~\ref{def:ceiling-cluster}--\ref{def:wall-cluster}, the number of faces of the witness is  
$$|\Clust{(\bV)}|+ \sum_{j=1}^s \sum_{W \in \bF_j} d_\rho(W,\rho( \partial  \hull{\cC}_j)) \le |\Clust{(\bV)}|+ \sum_{j=1}^s \sum_{W \in \bF_j} \fm(W)  \leq |\Clust{(\bV)}|+ |\Clust{(\bV)} \setminus \bV|\leq 4M\,.$$ 
The number of possible pre-images of $\cJ$ under $\Phi_{\bV}$ having excess area $M$ is at most the number of witnesses arising from those pre-images; this is in turn at most the number of possible $\{\red,\blue\}$ decorated face-sets of $\sF(\mathbb Z^3)$ having at most $4M$ faces in total, consisting of $r$ $*$-connected face subsets containing $\Theta_{\textsc{st}}W_1,\ldots,\Theta_{\textsc{st}}W_r$ respectively. By first partitioning $CM$ into the number of faces that get allocated to each of the connected components of $W_1,\ldots, W_r$, then enumerating over decorated face sets of that size for each of these connected components, by Fact~\ref{fact:number-of-walls}, we find that this is at most $C^M$ for some universal $C>0$.
\end{proof}

\begin{proof}[\textbf{\emph{Proof of Theorem~\ref{thm:rigidity-inside-wall}}}]
It remains to combine Lemmas~\ref{lem:wall-cluster-weights} and \ref{lem:wall-cluster-multiplicity}. Observe first of all that for every $\cI \in \bI_{\bW}\cap \bI_{\bV}$, we have $\fm(\cI;\cJ) \ge \sum_{i\le r}\fm(W_i) = \fm(\bV)$. We can express the probability $\mu_n^{\mp}(\bI_{\bW},\bI_{\bV})$ as 
\begin{align*}
    \sum_{\cI \in \bI_{\bW}\cap \bI_\bV} \mu_n^\mp(\cI) &  = \sum_{\cJ \in \Phi_{\bV}(\bI_{\bW}\cap \bI_\bV)} \mu_n^{\mp}(\cJ)  \sum_{k\ge \fm(\bV)} \sum_{\cI \in \Phi_{\bV}^{-1}(\cJ): \fm(\cI;\cJ) = k} \frac{\mu_n^{\mp}(\cI)}{\mu_n^{\mp}(\cJ)} \\ 
    & \le \sum_{\cJ \in \Phi_{\bV}(\bI_{\bW}\cap \bI_{\bV})} \mu_n^{\mp}(\cJ) \sum_{k\ge \fm(\bV)} C^{k} e^{ - (\beta - C)k} \\ 
    & \le C' \mu_{n}^{\mp} (\bI_{\bW}) \exp\big( - (\beta - C')\fm(\bV)\big)\,,
\end{align*}
where in the second line we used Lemma~\ref{lem:wall-cluster-weights} and Lemma~\ref{lem:wall-cluster-multiplicity}, and in the third line, we used the fact that $\Phi_{\bV}(\bI_{\bW}\cap \bI_{\bV}) \subset \bI_\bW$. Dividing both sides above by $\mu_n^{\mp}(\bI_\bW)$ then concludes the proof.
\end{proof}

\subsection{Consequences of Theorem~\ref{thm:rigidity-inside-wall}} In this section, we prove Corollaries~\ref{cor:nested-sequence-inside-a-ceiling}--\ref{cor:pillar-inside-a-ceiling}, translating the exponential tail of Theorem~\ref{thm:rigidity-inside-wall} to results on the height profile of the interface and its pillars. For ease of notation, we drop the subscripts $n$ from $S_n, \bW_n$. 

\begin{proof}[\textbf{\emph{Proof of Corollary~\ref{cor:nested-sequence-inside-a-ceiling}}}]
Begin by expressing
\begin{align*}
    \mu_{n}^{\mp}(\fm(\fW_{x,S})\ge r & \mid \bI_{\bW})  \leq \sum_{k \geq r} \sum_{\fW_{x,S}: \fm(\fW_{x,S}) = k} \mu_n^{\mp} (\fW_{x,S} \mid \bI_\bW)
\end{align*}
Observe that for any nested sequence of walls $\fW_{x,S} = (W_1,\ldots,W_r)$ compatible with $\bW= (W_z)_{z\notin S}$, since $\cL_0\setminus S$ is connected, $\rho(\bW)\cap\rho(\bigcup_{i\le r}\hull{W}_i)=\emptyset$. 
Thus we can apply Theorem~\ref{thm:rigidity-inside-wall} to find that this is at most 
\begin{align*}
    \sum_{k \geq r} \sum_{\fW_{x,S}: \fm(\fW_{x,S}) = k} C \exp ( - (\beta - C) k)\,.
\end{align*}
It suffices for us to understand how many choices there are for $\fW_{x,S}$ compatible with $\bW$ and having $\fm(\fW_{x,S}) = k$. Evidently, this is in turn bounded by the number of nested sequences of standard walls $\fW_x$ (with no constraint on admissibility with $\bW$), which we recall from Fact~\ref{fact:number-of-walls}, is at most $s^k$ for some universal constant $s>0$. Summing out the exponential tail above, we obtain the desired bound. 
\end{proof}

\begin{proof}[\textbf{\emph{Proof of Corollary~\ref{cor:pillar-inside-a-ceiling}}}]
Let us begin with the upper bound. The proof is analogous that of Theorem 2.26 of~\cite{GL19a}, replacing groups of walls with wall clusters. In order for $\cP_{x,S}$ to have $\hgt(\cP_{x,S})\ge h$, it must be the case that there exists $y$ nested in a wall of $\fW_{x,S}$, such that $\fW_{y,S}\cup \fW_{x,S}$ have excess area at least $4h$. 

In particular, we can union bound over the maximum height attained by $\fW_{x,S}$ beyond $\hgt(\cC_\bW)$ and note that if that height is $0<h_1<h$, then there must exist $y$ nested in $\fW_{x,S}$ such that $\fW_{y,S}$ attains a further height $h - h_1$. Namely, we can bound the probability $\mu_n^{\mp}(\hgt(\cP_{x,S})\ge h \mid \bI_{\bW})$ by 
\begin{align*}
    \sum_{r\ge 0} \sum_{h_1 \leq h} \mu_n^{\mp} (\fm(\fW_{x,S}) >r+4h_1 \mid \bI_{\bW}) \sum_{\substack{y\in \cL_0 \\ d(x,y)\leq r}} \sup_{\substack{\fW_{x,S} \\ y\in \hull{\fW}_{x,S}}} \mu_n^{\mp}(\fm(\fW_{y,S} \setminus\fW_{x,S})> 4(h-h_1) \mid \fW_{x,S},\bI_\bW)\,. 
\end{align*}
(Here we used that the cumulative excess area of $\fW_{x,S}\cup \fW_{y,S}$ must exceed $4h$ if $\hgt(\cP_{x,S})\ge h$.)
By Corollary~\ref{cor:nested-sequence-inside-a-ceiling} applied to the first term, this is in turn at most 
\begin{align*}
        \sum_{r\ge 0} \sum_{h_1 \leq h} e^{ - (\beta - C)(r+ 4h_1)} r^2 \sup_{\substack{\fW_{x,S} \\ y\in \rho(\hull{\fW}_{x,S})}} \mu_n^{\mp}(\fm(\fW_{y,S} \setminus\fW_{x,S})> 4(h-h_1) \mid \bI_{\fW_{x,S}}\cap \bI_\bW)\,. 
\end{align*}
To  apply Corollary~\ref{cor:nested-sequence-inside-a-ceiling} to the probability above, we set $S'$ to be the hull of the ceiling of $\fW_{x,S}$ nesting $y$ (so that $\cL_0 \setminus S'$ is evidently connected) and condition further on all walls of $(S')^c$ admissible with $\fW_{x,S},\bW$. Since Corollary~\ref{cor:nested-sequence-inside-a-ceiling} applies uniformly over such conditioning (the walls we condition on will necessarily have projections contained in  $\rho(\fW_{y,S}\setminus \fW_{x,S})^c$), we find that the above is at most 
\begin{align*}
    \sum_{r\ge 0}  e^{ - (\beta- C)r} \sum_{h_1\le h} r^2 e^{ - 4(\beta - C)h_1}  e^{ - 4(\beta - C)(h-h_1)} \le C' h e^{ - 4(\beta - C)h} \le C' e^{ - 4(\beta - C')h}\,,
\end{align*}
as claimed.

We now turn to the lower bound. Let $W_{x,\parallel}^h$ be a standard wall consisting of the bounding vertical faces of a column of $h$ vertically consecutive sites above $x$ (i.e., centered at $x + (0,0, \frac i2)_{i  \le h}$). The following claim which we isolate, lower bounds the ratio of  $\mu_n^{\mp}(\hgt(\cP_{x,S})\ge h, \bI_\bW)$ to $\mu_n^{\mp}(\fm(\mathfrak W_{x,S}) = 0, \bI_{\bW})$; this latter quantity is at least $(1-\epsilon_\beta)\mu_n^{\mp}(\bI_\bW)$ by Corollary~\ref{cor:nested-sequence-inside-a-ceiling}. 
To avoid boundary-case issues, let $\bar x = \{f: f\sim^* x\} \cup \{x\}$, and let $\mathfrak W_{\bar x,S}= \bigcup_{f\in \bar x}\mathfrak W_{f,S}$. 

\begin{claim}\label{clm:lower-bound-forcing}
For every $S\subset \cL_{0,n}$ such that $\cL_0 \setminus S$ is connected, every $x\in S_n$ and every $\bW= \{W_z: z\notin S_n\}$, we have for every $h\ge 1$, 
\begin{align*}
    \frac{\mu_n^\mp(\{\Theta_{\textsc{st}}\fW_{x,S} = W_{x,\parallel}^h\},\bI_{\bW})}{\mu_n^\mp(\{\fW_{\bar x,S}= \trivincr\}, \bI_{\bW})} \ge \exp ( - 4(\beta + C)h)\,.
\end{align*}
\end{claim}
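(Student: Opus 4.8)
The plan is to prove Claim~\ref{clm:lower-bound-forcing} by an injective insertion map: to each interface $\cI\in\bI_{\bW}$ with $\fW_{\bar x,S}=\trivincr$ we associate the interface $\cI'$ obtained by adjoining the standard wall $W_{x,\parallel}^h$ to the standard wall representation of $\cI$. I would then show that $\cI\mapsto\cI'$ is injective with image contained in $\{\Theta_{\textsc{st}}\fW_{x,S}=W_{x,\parallel}^h\}\cap\bI_{\bW}$, and that the weight ratio satisfies $\mu_n^\mp(\cI')/\mu_n^\mp(\cI)\ge\exp(-4(\beta+C)h)$ for a universal $C$; summing this over all $\cI$ in the domain event then yields the claim after dividing.

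First I would verify that the map is well-defined. Since $\rho(\bW)\subset S_n^c$ and $\fW_{\bar x,S}=\trivincr$, the faces of $\cI\restriction_{S_n}$ at and around $\bar x$ are ceiling faces at height $0$, and $\rho(x)$ is interior to no wall of $\cI\restriction_{S_n}$. In particular $\rho(W_{x,\parallel}^h)$ (the four edges bounding $x$) is vertex-disjoint from the projections of all walls of $\cI$, so the collection consisting of the standard walls of $\cI$ together with $W_{x,\parallel}^h$ is admissible, and defines $\cI'$ via Lemma~\ref{lem:interface-reconstruction}. Because $\bar x$ is flat, the $4h$ vertical faces of the inserted column are $*$-disjoint from every other wall face of $\cI'$, so $W_{x,\parallel}^h$ is a wall of $\cI'\restriction_{S_n}$; it nests $\rho(x)$ and, since nesting depends only on projections (unchanged by the insertion), no other wall of $\cI'\restriction_{S_n}$ does. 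As $W_{x,\parallel}^h$ sits with its floor at height $0$ in $\cI'\restriction_{S_n}$, it is its own standardization, whence $\Theta_{\textsc{st}}\fW_{x,S}(\cI')=W_{x,\parallel}^h$. The insertion leaves all walls with projection in $S_n^c$ unchanged, so $\cI'\in\bI_{\bW}$, and $\cI$ is recovered by deleting $W_{x,\parallel}^h$ from $\cI'$, giving injectivity.

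Next I would compare weights via cluster expansion. Concretely, $\cI'$ is obtained from $\cI$ by removing the single horizontal face above $x$ and adding the $4h$ vertical faces of the column and the horizontal face capping it, so $\fm(\cI';\cI)=4h$ and $\cI,\cI'$ coincide outside the face set $D:=\cI\triangle\cI'$ of cardinality $4h+2$, all of whose faces lie near the column. By Theorem~\ref{thm:cluster-expansion},
\[
\log\frac{\mu_n^\mp(\cI')}{\mu_n^\mp(\cI)} = -4\beta h + \sum_{f\in\cI'}\g(f,\cI') - \sum_{f\in\cI}\g(f,\cI)\,,
\]
and I would bound the $\g$-difference in the standard way: the $\le 4h+2$ faces of $D$ each contribute at most $\bar K$ in absolute value by~\eqref{eq:g-uniform-bound}, while every $f\in\cI\cap\cI'$ has $\br(f,\cI';f,\cI)\ge d(f,D)$, so~\eqref{eq:g-exponential-decay} and integrating the exponential tails against each of the $|D|$ faces of $D$ gives $\sum_{f\in\cI\cap\cI'}|\g(f,\cI')-\g(f,\cI)|\le \bar K\sum_{f\in\sF(\Z^3)}e^{-\bar c\,d(f,D)}\le C|D|$. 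Hence $\log(\mu_n^\mp(\cI')/\mu_n^\mp(\cI))\ge -4\beta h - Ch\ge -4(\beta+C)h$ for a universal $C$, and summing over $\cI$ with $\fW_{\bar x,S}=\trivincr$ and $\cI\in\bI_{\bW}$, together with the injectivity and image containment, yields $\mu_n^\mp(\{\Theta_{\textsc{st}}\fW_{x,S}=W_{x,\parallel}^h\},\bI_{\bW})\ge e^{-4(\beta+C)h}\,\mu_n^\mp(\{\fW_{\bar x,S}=\trivincr\},\bI_{\bW})$.

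I expect the only genuinely delicate point to be the verification that the insertion is well-defined and yields an interface in exactly the event $\{\Theta_{\textsc{st}}\fW_{x,S}=W_{x,\parallel}^h\}$: this is precisely where the hypothesis is stated for $\bar x$ rather than $x$ alone, since otherwise the newly added vertical faces could $*$-merge with a pre-existing wall face adjacent to a neighbor of $x$, altering the combinatorial type of the wall at $x$ (and the excess-area accounting). The cluster-expansion bookkeeping, by contrast, is routine and parallels the lower-bound arguments of~\cite{GL19a} (e.g.\ the proof of~\cite[Proposition~2.29]{GL19a}).
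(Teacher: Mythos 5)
Your proposal matches the paper's proof essentially verbatim: the paper also defines an injective insertion map $\Psi_{x,h}$ that adjoins the standard wall $W_{x,\parallel}^h$ to the standard wall representation of any $\cI\in\bI_{\bW}$ with $\fW_{\bar x,S}=\trivincr$, notes the admissibility follows from the $\bar x$-flatness hypothesis, computes $\fm(\Psi_{x,h}(\cI);\cI)=4h$ and $|\cI\oplus\Psi_{x,h}(\cI)|=4h+2$, and bounds the $\g$-interaction by $C|\cI\oplus\Psi_{x,h}(\cI)|$ via Theorem~\ref{thm:cluster-expansion} before summing. Your additional discussion of why the newly inserted column does not $*$-merge with pre-existing wall faces is a correct elaboration of the role of the $\bar x$ hypothesis, which the paper leaves more implicit.
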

\noindent Let us first complete the proof of Corollary~\ref{cor:pillar-inside-a-ceiling}, before proving Claim~\ref{clm:lower-bound-forcing}. By inclusion, and Claim~\ref{clm:lower-bound-forcing},
\begin{align*}
    \mu_n^\mp(\hgt(\cP_{x,S})\ge h, \bI_\bW) & \ge \mu_n^\mp(\{\Theta_{\textsc{st}}\fW_{x,S} = W_{x,\parallel}^h\},\bI_{\bW}) \\
    &  \ge e^{ - 4(\beta + C)h} \mu_n^\mp(\{\fW_{\bar x,S}= \trivincr\}\cup \{\Theta_{\textsc{st}}\fW_{\bar x,S} = W_{x,\parallel}^h\}, \bI_{\bW})
\end{align*}
which by Corollary~\ref{cor:nested-sequence-inside-a-ceiling} with $r=1$, and a union bound over the faces in $\bar x$, is at least $(1-\epsilon_\beta) e^{- 4(\beta + C)h} \mu_n^\mp(\bI_{\bW})$
for some $\epsilon_\beta \downarrow 0$ as $\beta \uparrow \infty$. 
Dividing out both sides by $\mu_n^\mp(\bI_\bW)$ then implies the desired lower bound.  
\end{proof}

\begin{proof}[\textbf{\emph{Proof of Claim~\ref{clm:lower-bound-forcing}}}]
Consider any interface $\cI$ in $\bI_{\bW}$, having $\fW_{\bar x,S} = \trivincr$, 
and define $\Psi_{x,h}(\cI)$ as the interface whose standard wall representation additionally has the standard wall $W_{x,\parallel}^h$ consisting of the bounding vertical faces of the column of $h$ sites $(x+(0,0,\frac{i}{2}))_{1\le i\le h}$. This results in an admissible collection of standard walls by the assumption that $\cI$ has $\fW_{\bar x,S} = \trivincr$, and the wall representation of $\Psi_{x,h}$ will have $\Theta_{\textsc{st}}\fW_{x,S}= W_{x,\parallel}^h$. Denoting by $A\oplus B$, the symmetric difference of the two sets, by Theorem~\ref{thm:cluster-expansion}, 
\begin{align*}
    \Big|\log\frac{\mu_n^{\mp}(\Psi_{x,h}(\cI))}{\mu_n^{\mp}(\cI)} + \beta \fm(\Psi_{x,h}(\cI);\cI)\Big| & \le \sum_{f\in \cI\oplus \Psi_{x,h}(\cI)} \bar K + \sum_{f\in \cI\cap \Psi_{x,h}(\cI)} |\g(f;\cI) - \g(f;\Psi_{x,h}(\cI))| \\ 
    & \le \bar K |\cI\oplus \Psi_{x,h}(\cI)| + \sum_{f\in \cI \cap \Psi_{x,h}(\cI)}\sum_{g\in \cI \oplus \Psi_{x,h}(\cI)} \bar K e^{ - \bar c d(f,g)} \\
    & \le C |\cI \oplus \Psi_{x,h}(\cI)|\,.
\end{align*}
Noticing that $\fm(\Psi_{x,h}(\cI);\cI) = 4h$, and $|\cI \oplus \Psi_{x,h}(\cI)| = 4h + 2$, and using the fact that $\Psi_{x,h}$ is an injection from $\{\cI\in \bI_{\bW}: \fW_{\bar x,S} = \trivincr\}$ to $\{\cI \in \bI_{\bW}: \Theta_{\textsc{st}}\fW_{x,S} = W_{x,\parallel}^h\}$, we deduce that 
\begin{align*}
    \mu_n^{\mp}(\{\Theta_{\textsc{st}}\fW_{\bar x,S} = W_{x,\parallel}^h\},\bI_{\bW}) & = \sum_{\cJ \in \bI_{\bW}: \Theta_{\textsc{st}}\fW_{\bar x,S}^{\cJ} = W_{x,\parallel}^h} \mu_n^{\mp}(\cJ) \\
    & \ge \!\! \sum_{\cJ \in \Psi_{x,h}(\{\cI\in \bI_\bW:\fW_{\bar x,S} = \trivincr\})} \!\!\! \mu_n^{\mp}(\Psi_{x,h}^{-1}(\cJ)) e^{ - (\beta + C)\fm(\cJ;\Psi_{x,h}^{-1}(\cJ))} \\
    & \ge e^{ - 4(\beta  + C)h} \mu_n^{\mp}(\{\fW_{\bar x,S} =\trivincr\},\bI_{\bW})\,,
\end{align*}
concluding the proof.
\end{proof}

\section{Tall pillars are typically isolated}\label{sec:tall-pillar-shape}
Our aim in this section is to show a shape theorem for tall pillars, uniformly over the conditioning on $\bW_n$. While one could in principle prove the more refined shape theorems of~\cite[Theorem 4]{GL19a} (e.g., proving asymptotic stationarity and mixing properties of the increment sequence), we prove an analogue of~\cite[Theorem 4]{GL19b}, which suffices for showing the tight asymptotics of $M_S$. Namely, we focus on showing that typical tall pillars have exponential tails on their base; beyond that, we add a new property of the shape showing that they are \emph{isolated} from their nearby environments with $1-\epsilon_\beta$ probability. 
This notion of \emph{isolated pillars}, which we next define, will be enough to decorrelate the pillar from its surrounding environment, enabling us to couple its law to infinite volume and obtain Theorem~\ref{thm:cond-uncond-simple} in the next section.

Recall the definition of the pillar $\cP_{x}$ with its spine $\cS_{x}$ and base $\sB_{x}$ from \S\ref{sec:pillar-def}. Label the cut-points of $\cS_{x}$ as $v_1,\ldots v_\sT,v_{\sT+1}:= v_{>\sT}$, and call its increments $\sX_{1},\ldots, \sX_{\sT+1}:= \sX_{>\sT}$. Let $\cI \setminus \cP_x$ be the \emph{truncated interface} which, informally, is the result of deleting the pillar $\cP_x$ from $\cI$: namely, it is the interface obtained from the spin configuration $\sigma'$ which is the result of starting from $\sigma(\cI)$ and flipping all the spins in $\sigma(\cP_x)$ to minus. One can similarly define a truncated interface $\cI \setminus \cS_x$.

\begin{definition}\label{def:isolated-pillars}
Let $x\in \cL_{0,n}$. We say an interface $\cI$ has \emph{$(L,h)$-isolated} pillar $\cP_x$, if it satisfies the following.
\begin{enumerate}
    \item The pillar $\cP_x$ has empty base $\sB_x = \emptyset$ (i.e., $v_1 = x+(0,0, \frac 12)$), and increment sequence satisfying
    \begin{align*}
    \fm(\sX_t) \le \begin{cases}
    0 & \mbox{ if $t\le L^3$} \\
    t & \mbox{ if $t >L^3$}
    \end{cases}\,,
    \end{align*}
    as well as spine whose face-set $\sF(\cS_x) = \bigcup_{j\ge 1} \sF(\sX_t)$ satisfies
    \begin{align*}
    |\sF(\cS_x)| \le 10h\,.
    \end{align*} \label{item:iso-pillar-increments}
    \item The walls $(\tilde W_y)_{y\in \cL_{0,n}}$ of $\cI\setminus \cP_x$ satisfy 
    \begin{align*}
\fm(\tilde W_y) \le \begin{cases}
0 &  \mbox{ if $d(y,x)\le L$} \\
\log [d(y,x)] & \mbox{ if $L < d(y,x) < L^3 h$}
\end{cases}\,.
    \end{align*}\label{item:iso-pillar-walls}
\end{enumerate}
We write $\cI \in \Iso_{x,L,h}$ to denote that $\cI$ has $(L,h)$-isolated pillar $\cP_x$.
\end{definition}

The goal of this section will be to show the following shape theorem for tall pillars (possibly inside ceilings). Recall that we set $o := (- \frac 12, -\frac 12, 0)\in \cL_{0,n}$ to be a fixed origin face.

\begin{theorem}\label{thm:shape-inside-ceiling}
There exist constants $L_\beta$ and $\epsilon_\beta$  (with $L_\beta \uparrow \infty$ and $\epsilon_\beta \downarrow 0$ as $\beta \to \infty$) such that the following holds for all $\beta>\beta_0$. For every $h = h_n \ge 1$ and $x = x_n \in \cL_0$, and any set $S_n$ such that $\cL_0\setminus S_n$ is connected, for every $\bW_n= (W_z)_{z\notin S_n}$ such that  $\rho(\bW_n) \subset S_n^c$, and $h = o(d(x_n,S_n))$, we have for all $0 \le h'\le h$, 
\begin{align}\label{eq:Ph-subset-Lambda}
    \mu_{n}^\mp \big(\cI\restriction_{S_n} \in \Iso_{x,L,h} \mid \hgt(\cP_{x,S_n})\ge h', \bI_{\bW_n}\big) \ge 1-\epsilon_\beta\,. 
\end{align}
By taking a limit as $n\uparrow \infty$ with $h$ fixed and $S_n = \cL_{0,n}$, we obtain 
\begin{equation}\label{eq:Ph-subset-Z3}
\mu_{\Z^3}^\mp(\cI \in \Iso_{o,L,h} \mid \hgt(\cP_o)\geq h') \geq 1-\epsilon_\beta \,.
\end{equation}
\end{theorem}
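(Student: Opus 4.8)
The plan is to prove the quantitative bound~\eqref{eq:Ph-subset-Lambda}; the infinite-volume statement~\eqref{eq:Ph-subset-Z3} then follows by sending $n\to\infty$ along $S_n=\cL_{0,n}$, $\bW_n=(\trivincr)_z$ with $h$ fixed (so $\hgt(\cC_{\bW_n})=0$ and $\cP_{x,S_n}=\cP_x$), using $\mu_n^\mp\to\mu_{\Z^3}^\mp$ and the fact that $\Iso_{o,L,h}$ is determined by finitely many faces once $h$ is fixed. For~\eqref{eq:Ph-subset-Lambda}, Corollary~\ref{cor:pillar-inside-a-ceiling} gives $\mu_n^\mp(\hgt(\cP_{x,S_n})\ge h'\mid\bI_{\bW_n})\ge e^{-4(\beta+C)h'}$, so it suffices to show that each ``bad'' sub-event comprising $\{\cI\restriction_{S_n}\notin\Iso_{x,L,h}\}$ has $\mu_n^\mp(\,\cdot\,,\hgt(\cP_{x,S_n})\ge h'\mid\bI_{\bW_n})\le \epsilon_\beta\, e^{-4(\beta+C)h'}$ and then take a union bound. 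We decompose $\Iso_{x,L,h}^{\,c}$ according to which defining property fails: a non-empty base $\sB_x$; an increment $\sX_t$ violating its excess-area threshold; $|\sF(\cS_x)|>10h$; or a wall $\tilde W_y$ of $\cI\restriction_{S_n}\setminus\cP_x$ exceeding $\log d(y,x)$ (respectively $0$ when $d(y,x)\le L$).

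The common strategy is a Peierls surgery, in the spirit of the shape theorem~\cite[Theorem 4]{GL19b}, carried out inside the class $\bI_{\bW_n}$. For the base and increment bounds, one builds a map on $\bI_{\bW_n}$ that trims the offending part of $\cP_x$ (flattening an increment to $X_\trivincr$, or removing a base layer) \emph{without changing $\hgt(\cP_{x,S_n})$}, so its image still lies in $\{\hgt(\cP_{x,S_n})\ge h'\}\cap\bI_{\bW_n}$. The key points are: (i) on the conditionally typical event $\{\hgt(\cP_{x,S_n})\le\tfrac52 h\}$ — which has probability $1-\epsilon_\beta$ by Corollary~\ref{cor:pillar-inside-a-ceiling}, since $e^{-4(\beta-C)\cdot\frac52 h}/e^{-4(\beta+C)h'}\le e^{-(6\beta-C')h}$ using $h'\le h$ — together with the polylogarithmic bounds on the excess areas of $\fW_{x,S_n}$ and of the walls in $\fG_x$ from Corollary~\ref{cor:nested-sequence-inside-a-ceiling}, the pillar is confined to a cylinder of radius $\ll L^3 h$ about $x$, which by the separation hypothesis $h=o(d(x_n,\partial S_n))$ lies well inside $S_n$, so every surgery preserves $\bI_{\bW_n}$; (ii) the weight gained is $\exp(\beta\,\fm_{\mathrm{trim}}-O(\fm_{\mathrm{trim}}))$ by Theorem~\ref{thm:cluster-expansion} (the $\g$-differences are supported on the $O(\fm_{\mathrm{trim}})$ altered faces), with multiplicity $C^{\,\fm_{\mathrm{trim}}}$ by the witness/counting argument of Fact~\ref{fact:number-of-walls}, so that $\sum_{m\ge m_0}C^m e^{-(\beta-C)m}\le\epsilon_\beta$ above the forced threshold $m_0$. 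The bound $|\sF(\cS_x)|\le 10h$ then follows from $\hgt(\cP_{x,S_n})\le\tfrac52 h$ and the increment-excess control via~\eqref{eq:increment-excess-area}--\eqref{eq:increment-excess-inequalities}, which express $|\sF(\cS_x)|\le\sum_i|\sF(\sX_i)|$ linearly in $\hgt(\cP_{x,S_n})$ and $\sum_i\fm(\sX_i)$.

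For the new isolation property — smallness of the walls $\tilde W_y$ of $\cI\restriction_{S_n}\setminus\cP_x$ — I would split according to whether $y$ lies outside or inside the (polylogarithmic, hence $\ll L^3 h$) horizontal footprint of $\cP_x$. For $y$ outside this footprint, deleting $\cP_x$ does not alter $\cI\restriction_{S_n}$ near $y$, so $\tilde W_y$ agrees with $W_y$ and $\{\fm(\tilde W_y)>\log d(y,x)\}$ is controlled directly by the conditional rigidity of $\cI\restriction_{S_n}$ (Theorem~\ref{thm:rigidity-inside-wall} / Corollary~\ref{cor:nested-sequence-inside-a-ceiling}); here the potentially large conditioning factor $e^{4(\beta+C)h'}$ is again harmless because the surgery deleting such a far wall $W_y$ — which is disjoint from $\fG_x$ and hence, by Observation~\ref{obs:pillar-nested-sequence-of-walls}, leaves the pillar unchanged — stays inside $\{\hgt(\cP_{x,S_n})\ge h'\}$, so one never divides by an exponentially small conditional probability; summing the resulting $d(y,x)^{-(\beta-C)}$ over the $O(d(y,x)^2)$ choices of $y$ at each distance, and over distances up to $L^3 h$, gives $\epsilon_\beta$. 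For $y$ inside the footprint (distance $O(\mathrm{polylog})$, or $\le L$), on the event that $\sB_x=\emptyset$ and $|\sF(\cS_x)|\le 10h$ the truncated interface is essentially flat near $y$, so $\fm(\tilde W_y)$ can be large only if $\cP_x$ itself carried a large wall near $y$, i.e.\ an excess-area deficit in $\fG_x$, which is controlled by Corollary~\ref{cor:nested-sequence-inside-a-ceiling}.

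I expect the main obstacle to be the confinement-and-surgery bookkeeping: designing the Peierls maps for the base, the increments, and the truncated-interface walls so that they are simultaneously (i) well-defined maps \emph{into} $\bI_{\bW_n}$, which forces one to first prove that a conditionally typical tall pillar never protrudes horizontally beyond $o(d(x_n,\partial S_n))$ of $x$, and (ii) height-preserving, so that the factor $e^{4(\beta+C)h'}$ from conditioning on $\{\hgt(\cP_{x,S_n})\ge h'\}$ is never incurred. The remaining, more routine, work is tracking the cluster-expansion $\g$-corrections and the witness multiplicities uniformly in $\bW_n$, $h$, $h'$ and $n$; for this one reuses, essentially verbatim, the estimates already set up in Section~\ref{sec:wall-clusters} (Lemmas~\ref{lem:wall-cluster-weights}--\ref{lem:wall-cluster-multiplicity}) and the unconditional shape analysis of~\cite{GL19b}.
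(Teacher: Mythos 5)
Your high-level strategy is the same as the paper's: a Peierls surgery that (i) stays inside $\bI_{\bW_n}$ (so no division by $\mu_n^\mp(\bI_{\bW_n})$), and (ii) stays inside $\{\hgt(\cP_{x,S_n})\ge h'\}$ (so no division by the exponentially small conditional probability $e^{-4(\beta+C)h'}$). You correctly identify both constraints as the essential difficulty, and your plan to trim the spine while padding with trivial increments — so the image still reaches height $\geq h'$ — is exactly what the paper's Algorithm~\ref{alg:le-big-map} does in its steps 4--5 and 10. The difference is organizational: the paper defines a \emph{single} map $\Phi_{\iso}$ that simultaneously removes all offending features (bad base, overgrown increments, oversized or nearby walls of $\cI\setminus\cS_{x,S}$), and then observes that $\cI\restriction_S\notin\Iso_{x,L,h}$ forces $\fm(\cI;\Phi_{\iso}(\cI))\ge 1$, so one exponential tail (Propositions~\ref{prop:partition-function-contribution} and~\ref{prop:multiplicity}) suffices; you instead propose a union bound over which defining clause of Definition~\ref{def:isolated-pillars} fails, with a tailored surgery for each. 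Both are workable, but the unified map buys you a single weight/multiplicity analysis rather than several, and — more importantly — it fixes an order of operations (remove spine first, then delete wall clusters, then reattach a sanitized spine) that makes the output interface unambiguously well defined even when several defects coexist near $x$.

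Two places where your sketch is thinner than what is actually needed. First, you invoke ``polylogarithmic bounds on the excess areas of $\fW_{x,S_n}$ and of the walls in $\fG_x$ from Corollary~\ref{cor:nested-sequence-inside-a-ceiling}'' to confine the pillar; but that corollary gives tails conditional on $\bI_{\bW_n}$ only, and after intersecting with $\{\hgt(\cP_{x,S_n})\ge h'\}$ the bound inflates by $e^{4(\beta+C)h'}$, so you get $O(h)$-size control (more than enough, since $h=o(d(x,\partial S_n))$), not polylog control. The conclusion (confinement well inside $S_n$) still holds, but the constant claimed is wrong and you should not lean on polylog estimates. Second, your treatment of walls $\tilde W_y$ with $d(y,x)\le L$ (or within the pillar footprint) is hand-wavy: $\sB_{x,S}=\emptyset$ and $|\sF(\cS_x)|\le 10h$ do \emph{not} by themselves imply the truncated interface is flat near $y$ — there may be walls in $\cI\setminus\cS_{x,S}$ indexed by $y$ near $x$ that have nothing to do with the base, and controlling them is precisely clause~\eqref{item:iso-pillar-walls} of the definition, i.e.\ the thing you are trying to prove. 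One must delete the wall cluster of every offending $\tilde W_y$ as an explicit part of the surgery (this is step 6 of Algorithm~\ref{alg:le-big-map}), and verify that this deletion commutes sensibly with the spine detachment/reattachment; that verification is where most of the work of Lemma~\ref{lem:iso-map-well-defined} lives. With these points filled in, your route is essentially a repackaging of the paper's proof.
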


\begin{remark}\label{rem:exp-tails-inside-ceilings}
Theorem~\ref{thm:shape-inside-ceiling} can be modified to also show that the $t$'th increment of $\cP_{x,S}$ has uniformly exponential tails on its excess area. Thus in addition to showing that with probability $1-\epsilon_\beta$, a pillar is $(L,h)$-isolated, it serves to show exponential tails on the excess areas of its base and increments. See e.g.,~\cite[Algorithm 1 and Theorem 4.1]{GL19b} for this modification to additionally get an exponential tail on~$\fm(\sX_t)$. 
\end{remark}

\begin{figure}
    \centering
    \includegraphics[width=0.7\textwidth]{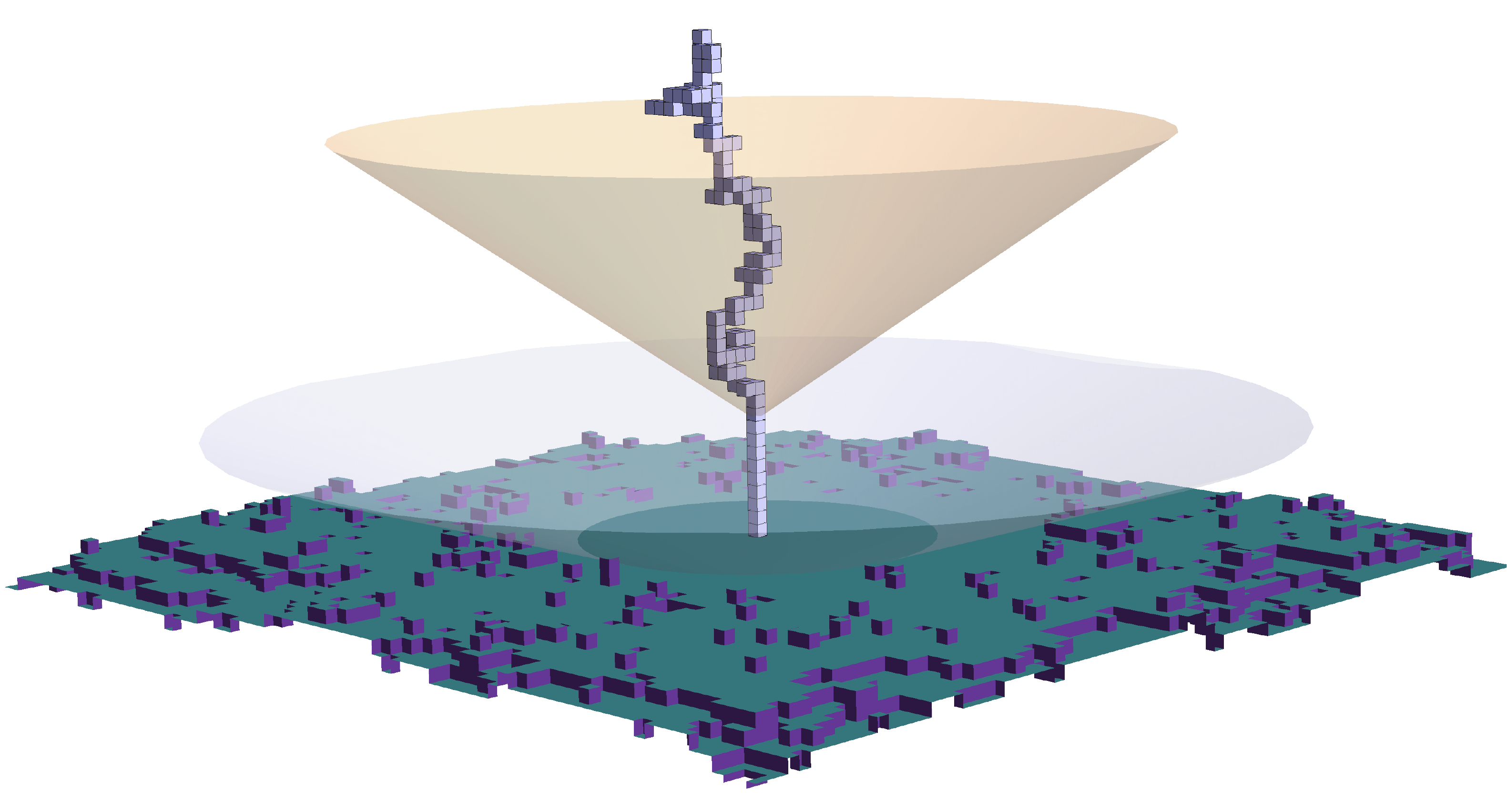};
    \caption{An interface having isolated pillar $\cP_x$ (light purple) and truncated interface $\cI\setminus \cP_x$ (green and dark purple). The sets $\Cone_{x}$ and $\overline \Cone_{x}$ are the regions above the orange cone, and below the light gray cone respectively. 
    This decomposition of isolated pillars ensures that the interactions between the pillar and nearby walls through $\g$ are bounded.}
    \label{fig:isolated-pillar}
\end{figure}

\subsection{Decomposition of interfaces with isolated pillars}
Interfaces with isolated pillars have the nice property that their pillar $\cP_x$ and truncated interface $\cI\setminus \cP_x$ are subsets of two well-separated subsets of $\sF(\mathbb Z^3)$; this property motivates calling their pillar isolated from its surrounding environment, and is important to coupling the laws of isolated pillars under two distinct environments, as we will do in Section~\ref{sec:pillar-couplings}. 
To formalize this notion, let us define the following two sets for $L,h$ implicit from the context (see Fig.~\ref{fig:isolated-pillar}): 
\begin{align}
    \Cone_{x,\bW} & := \{z= (z_1,z_2, z_3)\in \sF(\cL_{>\hgt(\cC_\bW)+ L^3)}): d(\rho(z), x)\le (z_3-\hgt(\cC_\bW))^2 \wedge 10 h \} \label{eq:pillar-cone-def}\,,\\ 
    \overline{\Cone}_{x, \bW} &  := \{z = (z_1,z_2, z_3)\in \sF(\Z^3): d(\rho(z),x)\ge L\,,\, z_3 \le \hgt(\cC_\bW)+ (\log d(\rho(z),x))^2 \}\,, \label{eq:complement-cone-def}
\end{align}
where, we recall, $\cC_\bW$ is the ceiling over $x$ in the interface $\cI_{\bW}$. Let $\cP_{x,\bW,\parallel}^{L^3}$ be the vertical bounding faces of $L^3$ vertically consecutive sites above $x + (0,0,\hgt(\cC_\bW))$, and recall that $\Cyl_{x,r} := \{z\in \mathscr F(\Z^3): d_\rho(z,x) \le r\}$.

\begin{claim}\label{clm:iso-pillar-containments}
Fix any $L$ large and any $h$. Any interface $\cI \in \bI_{\bW}$ having $\cI\restriction_{S} \in \Iso_{x,L,h}$ satisfies
\begin{align}\label{eq:interface-containment-definitions}
    \cI \subset \underbrace{(\Cone_{x,\bW}\cap\cL_{<(\hgt(\cC_\bW) + 10 h)})}_{\bF_{\triangledown}} \cup \underbrace{\cP_{x,\bW,\parallel}^{L^3}}_{\bF_{\parallel}} \cup  \underbrace{(\cL_{\hgt(\cC_\bW)}\cap \Cyl_{x,L})}_{\bF_{-}} \cup \underbrace{\overline{\Cone}_{x,\bW}}_{\bF_{\curlyvee}} \cup \underbrace{\Cyl^c_{x,L^3 h}}_{\bF_{\ext}}\,.
\end{align}
For $\bF_{\triangledown}(\bW),\bF_{\parallel}(\bW), \bF_{-}(\bW), \bF_{\curlyvee}(\bW),\bF_{\ext}$ defined as above, the right-hand side is a disjoint union, 
\begin{align*}
    (\bF_{\triangledown} \cup \bF_{\parallel})\cap (\bF_{-} \cup \bF_{\curlyvee}\cup \bF_{\ext})= \emptyset
\end{align*}
and the pillar $\cP_{x,S}$ is a subset of the first two sets above, while $\cI \setminus \cP_{x,S}$ is a subset of the latter three sets. 
\end{claim}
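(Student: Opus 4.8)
The plan is to verify the two inclusions $\cP_{x,S}\subset\bF_{\triangledown}\cup\bF_{\parallel}$ and $\cI\setminus\cP_{x,S}\subset\bF_{-}\cup\bF_{\curlyvee}\cup\bF_{\ext}$ separately, to check the disjointness $(\bF_{\triangledown}\cup\bF_{\parallel})\cap(\bF_{-}\cup\bF_{\curlyvee}\cup\bF_{\ext})=\emptyset$ directly from~\eqref{eq:pillar-cone-def}--\eqref{eq:complement-cone-def}, and finally to note that $\cP_{x,S}$ and $\cI\setminus\cP_{x,S}$ together exhaust $\cI$ up to the single horizontal face at height $\hgt(\cC_{\bW})$ lying directly below the pillar's column, which belongs to $\bF_{-}$. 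Throughout I would work with the restricted interface $\cI\restriction_{S_n}$ and shift up by $\hgt(\cC_{\bW})$ at the end.

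For the pillar I would use Definition~\ref{def:isolated-pillars}$(\ref{item:iso-pillar-increments})$. Since $\sB_x=\emptyset$ and $\fm(\sX_t)\le 0$ for $t\le L^3$, each of the first $L^3$ increments is trivial (as $\fm(\sX)\ge 2$ whenever $\sX\ne X_\trivincr$), so below relative height $L^3$ the spine is exactly the width-one column $\cP_{x,\bW,\parallel}^{L^3}=\bF_{\parallel}$. For a face $z\in\cP_{x,S}$ at relative height $r:=z_3-\hgt(\cC_{\bW})>L^3$, at most $r$ increments lie below it (each has height at least one), and for each increment $\sX_t$ with $t>L^3$ the bound $\fm(\sX_t)\le t$ together with $|\sF(\sX_t)|\le 3\fm(\sX_t)+4$ from~\eqref{eq:increment-excess-inequalities} controls its horizontal diameter by $O(t)$; summing gives $d(\rho(z),x)=O(r^2)$, while $*$-connectivity of the spine with $|\sF(\cS_x)|\le 10h$ gives both $d(\rho(z),x)\le 10h$ and $r<10h$. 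Absorbing the universal constants into $\Cone_{x,\bW}$ (equivalently, observing that only a slightly weaker confinement is actually used downstream) places all such faces in $\bF_{\triangledown}$.

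The heart of the argument is the truncated interface, where I would use Definition~\ref{def:isolated-pillars}$(\ref{item:iso-pillar-walls})$. Contrapositively, every nontrivial wall $\tilde W_y$ of $\cI\restriction_{S_n}\setminus\cP_x$ has all of its index faces at distance $>L$ from $x$; since $\rho(\tilde W_y)$ is connected of diameter at most $\fm(\tilde W_y)$ (Remark~\ref{rem:excess-area-properties}) and every face of $\cL_0$ within $O(1)$ of $\rho(\tilde W_y)$ and nested in it is an index face, after enlarging $L_\beta$ by a harmless constant the whole hull $\hull{\tilde W_y}$ — hence every wall and interior ceiling it carries — lies at distance $\ge L$ from $x$; moreover, when $\tilde W_y$ has an index face at distance $<L^3h$, each of its faces $z$ satisfies $z_3\le\hgt(\cC_{\bW})+2\fm(\tilde W_y)\le\hgt(\cC_{\bW})+2\log d(y,x)\le\hgt(\cC_{\bW})+(\log d(\rho(z),x))^2$ (using that $d(\rho(z),x)$ is comparable to $d(y,x)$), so $z\in\overline{\Cone}_{x,\bW}=\bF_{\curlyvee}$; walls with an index face at distance $\ge L^3h$ already lie in $\bF_{\ext}$. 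The remaining faces of $\cI\restriction_{S_n}\setminus\cP_x$ over $S_n$ form the flat ceiling at height $\hgt(\cC_{\bW})$, which lies in $\bF_{-}$ inside $\Cyl_{x,L}$ and in $\bF_{\curlyvee}$ (since $0\le(\log d)^2$) outside it. Finally, the part of $\cI$ outside the cylinder $S_n\times\Z$ equals $\cI_{\bW}\cap(S_n^c\times\Z)$ and projects into $S_n^c$, hence sits at horizontal distance $\ge d(x,\partial S_n)>L^3h$ from $x$ — using the standing assumption of this section that $h$ is negligible compared to $d(x,\partial S_n)$ — so it lies in $\bF_{\ext}$.

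Disjointness is then a short parameter check: $\bF_{\parallel}$ has $d(\rho(\cdot),x)=O(1)<L$ and heights in $[\hgt(\cC_{\bW}),\hgt(\cC_{\bW})+L^3]$; $\bF_{\triangledown}$ has heights $>\hgt(\cC_{\bW})+L^3$ and $d(\rho(\cdot),x)\le 10h<L^3h$; $\bF_{-}$ sits at height $\hgt(\cC_{\bW})$; $\bF_{\curlyvee}$ has $d(\rho(\cdot),x)\ge L$; and $\bF_{\ext}$ has $d(\rho(\cdot),x)\ge L^3h$; the only nonimmediate case, $\bF_{\triangledown}\cap\bF_{\curlyvee}=\emptyset$, holds because a common face would force $\sqrt{d}\le z_3-\hgt(\cC_{\bW})\le(\log d)^2$, impossible for $d$ larger than a universal constant, while for smaller $d$ it would force $(\log d)^2>L^3$, impossible once $L_\beta$ is large. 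Since $\cP_{x,S}\cup(\cI\setminus\cP_{x,S})$ differs from $\cI$ only by the horizontal face at height $\hgt(\cC_{\bW})$ below the column, which lies in $\bF_{-}$, this yields~\eqref{eq:interface-containment-definitions}. The step I expect to be the main obstacle is the truncated-interface analysis: turning the \emph{index-face} excess-area bound of Definition~\ref{def:isolated-pillars}$(\ref{item:iso-pillar-walls})$ into genuine confinement of each wall \emph{together with all of its nested interior ceilings} to $\overline{\Cone}_{x,\bW}$ — which requires converting index-face distances into distance and height bounds for the wall's own faces via the excess-area inequalities, and handling walls whose projection straddles the threshold distance $L^3h$ — together with correctly matching $\cI\setminus\cP_{x,S}$ with the appropriately shifted truncated restricted interface.
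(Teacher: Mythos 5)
Your high-level structure matches the paper's: prove the two containments for $\cP_{x,S}$ and $\cI\setminus\cP_{x,S}$ separately, then check disjointness. But there is a genuine gap in the truncated-interface analysis, precisely at the step you flagged as the likely obstacle.

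The bound ``each of its faces $z$ satisfies $z_3\le\hgt(\cC_\bW)+2\fm(\tilde W_y)$'' is false in general: a wall $\tilde W_y$ can be nested in larger walls $\tilde W_{y'}, \tilde W_{y''},\ldots$, whose interior ceilings lift the floor of $\tilde W_y$ far above $\hgt(\cC_\bW)$. The height of $\tilde W_y$'s faces is governed by the \emph{entire nested sequence} $\fW_{y,S}$, not by $\fm(\tilde W_y)$ alone. The paper instead works with ``the height of the interface above $y$'' and invokes Observation~\ref{obs:height-nested-sequence-of-walls}: that height is bounded by the sum of excess areas over $\fW_{y,S}$. Since no wall of $\cI\restriction_{S}\setminus\cP_{x,S}$ can nest $x$ (a short argument: such a wall would have an index face at distance $D\ge L$ from $x$ with $\fm\le\log D$, hence $\diam(\rho(W))\le\log D$, which is incompatible with $\rho(W)$ enclosing $x$ while staying at distance $\ge D$), every wall nesting $y$ has an index face within distance $\approx d(y,x)$ of $x$, so item~(\ref{item:iso-pillar-walls}) gives at most $\log d(y,x)$ nontrivial such walls, each with excess area $\le\log d(y,x)$, and the height above $y$ is $\le(\log d(y,x))^2$. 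This pointwise height bound then directly places every face over $y$ into $\overline\Cone_{x,\bW}$ and also neatly sidesteps your concern about walls straddling the threshold $L^3h$, since the bound is stated per index face $y$, not per wall.

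Two lesser remarks. In the pillar part, ``absorbing the universal constants into $\Cone_{x,\bW}$'' is not available — the claim's cone has no slack constant — and is unnecessary: the horizontal displacement contributed by increment $\sX_t$ is directly bounded by $\fm(\sX_t)$ (no need to pass through $|\sF(\sX_t)|\le3\fm(\sX_t)+4$), so $\sum_{L^3\le t\le r}\fm(\sX_t)\le r^2$ exactly as in the paper. And the final reconciliation of $\cI$ with $\cP_{x,S}\cup(\cI\setminus\cP_{x,S})$ also needs the faces of $\cI$ over $S_n^c$; you correctly place those in $\bF_{\ext}$ via $h=o(d(x,\partial S_n))$, which is how the paper handles it as well.
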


\begin{proof}
Clearly, it suffices to show the latter claim on the containments of $\cP_{x,S}$ and $\cI\setminus \cP_{x,S}$, to obtain the former about the containment of $\cI$. The disjointness of the two sets is by construction. 
 
 To see that $\cP_{x,S}$ is a subset of $\bF_{\triangledown}\cup \bF_{\parallel}$, we first notice by item~\eqref{item:iso-pillar-increments} of Definition~\ref{def:isolated-pillars}, the bounding faces of the first $L^3$ increments of $\cP_{x,S}$ are exactly the set $\bF_{\parallel}$ (if $h\ge L^3$, otherwise, they are a subset of $\bF_{\parallel}$), as they all have $\fm(\sX_t) =0$ for $t\le L^3$. Moreover, the remaining increments $t\ge L^3$ each satisfy $\fm(\sX_t)\le  t$. Now notice that the maximal horizontal displacement of the pillar at height $z_3-  \hgt(\cC_\bW)$ is at most 
    \begin{align*}
        \sum_{t\le \tau_{z_3}} \fm(\sX_t) \le \sum_{L^3 \le t\le z_3- \hgt(\cC_\bW)} \fm(\sX_t)\le (z_3 - \hgt(\cC_\bW))^2\,,
    \end{align*}
    where $\tau_{z_3}$ is the increment number intersecting height $z_3$. The maximal horizontal displacement is also at most $10 h$ using item ~\eqref{item:iso-pillar-increments} of Definition~\ref{def:isolated-pillars}. It remains to show that $\cP_{x,S}\subset \cL_{<(\hgt(\cC_\bW) + 10 h)}$, which also follows from item~\eqref{item:iso-pillar-increments} of Definition~\ref{def:isolated-pillars}. 

 To see that $\cI \setminus \cP_{x,S}$ is a subset of $(\bF_{-} \cup \bF_{\curlyvee}\cup \bF_{\ext})$, we first of all notice that as  $\rho(\bW)\subset S^c$ and $h= o(d(x,S^c))$, all walls of $\cI$ intersecting $\Cyl_{x,L^3 h}$ are nested in $\cC_\bW$, and are vertical shifts by $\hgt(\cC_\bW)$ of the faces of $\cI\restriction_{S}$ projecting into $\rho(\hull{\cC}_\bW)$. It thus suffices to show that 
 \begin{enumerate}
     \item for all $y\in \cL_{0,n}$ having $d(y,x)\le L$, the interface $(\cI\restriction_{S} \setminus \cP_{x,S}) = (\cI \setminus \cP_{x,S})\restriction_S$ has height zero above $y$,
     \item for all $y\in \cL_{0,n}$ having $L \le d(y,x)\le L^3 h$, the maximal height of $(\cI\restriction_{S} \setminus \cP_{x,S})$ above $y$ is $(\log d(y,x))^2$. 
 \end{enumerate} 
 These are proved as follows. 
    \begin{enumerate}
        \item Notice by item~\eqref{item:iso-pillar-walls} of Definition~\ref{def:isolated-pillars}, the interface $(\cI\restriction_{S}\setminus \cP_{x,S})$ has no walls indexed by sites in $\Cyl_{x,L}$: as such, they must all be at $\hgt(\cC_\bW)$ (using the fact that the base of $\cP_{x,S}$, i.e., $\sB_{x,S}$,  is empty). 
        \item By Observation~\ref{obs:height-nested-sequence-of-walls}, the height of an interface above $y$ is bounded by the sum of the excess areas of all walls that nest it; each such wall must go through an index point with distance at most $d(y,x)$ to $x$ (as no wall of $\cI\restriction_{S}\setminus \cP_{x,S}$ nests $x$), and therefore, by item~\eqref{item:iso-pillar-walls}, there are at most $\log d(y,x)$ many walls of $\cI\restriction_{S}\setminus \cP_{x,S}$ nesting $y$, each having excess area at most $\log d(y,x)$, yielding the desired. 
    \end{enumerate}
Combining the above we conclude the desired inclusions on $\cP_{x, S}$ and $\cI\setminus \cP_{x,S}$. 
\end{proof}

The following lemma then controls the interactions between the pillar and the surrounding environment when the pillar is isolated. Recall from Theorem~\ref{thm:cluster-expansion} that when applying maps on the interfaces, the interactions through the bubbles decays through an effective radius of congruence $\br$; the next lemma shows this contribution is uniformly bounded in $h$ and decaying in $L$ when the radius is attained by an interaction between a pillar and non-pillar face of an interface in $\Iso_{x,L,h}$. 

\begin{lemma}\label{lem:iso-pillar-interactions}
There exists $\bar C$ such that for every $\bW$, every $L$ large, and every $h\ge 1$,
\begin{align*}
    \sum_{f\in \bF_{\triangledown}\cup \bF_{\parallel}}\sum_{g\in \bF_{\curlyvee}\cup \bF_{\ext}} e^{ - \bar c d(f,g)} \le \bar C e^{ - \bar c L}\,\qquad \mbox{and}\qquad \sum_{f\in \bF_{\triangledown}}\sum_{g\in \bF_{-} \cup \bF_{\curlyvee}\cup \bF_{\ext}} e^{ - \bar c d(f,g)} \le \bar C e^{ - \bar c L}\,.
\end{align*}
\end{lemma}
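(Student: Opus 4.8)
The plan is to reduce the two double sums to single sums over the pillar-side faces and then exploit the large separation between $\bF_{\triangledown}\cup\bF_{\parallel}$ and the environment region that is built into \eqref{eq:pillar-cone-def}--\eqref{eq:complement-cone-def}. By vertical translation invariance of the lattice one may assume $\hgt(\cC_\bW)=0$, so that $\bF_{\triangledown}$ consists of faces with $L^3<\hgt(f)<10h$ and $d(\rho(f),x)\le\hgt(f)^2\wedge 10h$; $\bF_{\parallel}$ is a width-one column of height $L^3$ over $x$; $\bF_{-}=\cL_0\cap\Cyl_{x,L}$; $\bF_{\curlyvee}$ consists of faces with $d(\rho(f),x)\ge L$ and $\hgt(f)\le(\log d(\rho(f),x))^2$; and $\bF_{\ext}=\Cyl^c_{x,L^3h}$. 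For any face $f$ and any set $B\subset\sF(\Z^3)$, summing the exponential tail against the $O(k^2)$ volume of spherical shells of radius $k$ gives $\sum_{g\in B}e^{-\bar c d(f,g)}\le C\,d(f,B)^2 e^{-\bar c d(f,B)}$, so it suffices to bound $\sum_{f}d(f,B)^2 e^{-\bar c d(f,B)}$, with $f$ ranging over $\bF_{\triangledown}\cup\bF_{\parallel}$ and $B=\bF_{\curlyvee}\cup\bF_{\ext}$ for the first inequality, and over $\bF_{\triangledown}$ with $B=\bF_{-}\cup\bF_{\curlyvee}\cup\bF_{\ext}$ for the second.

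The crux is the separation estimate: for every $f\in\bF_{\triangledown}$ one has $d\big(f,\ \bF_{-}\cup\bF_{\curlyvee}\cup\bF_{\ext}\big)\ge\tfrac12\hgt(f)$. Indeed, $\bF_{-}$ lies at height $0$, so $d(f,\bF_{-})\ge\hgt(f)$; $\bF_{\ext}$ lies at horizontal distance $>L^3h$ from $x$ while $d(\rho(f),x)\le 10h$, so $d(f,\bF_{\ext})\ge(L^3-10)h\ge\hgt(f)$ once $L$ is large. For $\bF_{\curlyvee}$: if $d(f,g)<\tfrac12\hgt(f)$ then $\hgt(g)>\tfrac12\hgt(f)$ and $d(\rho(g),x)<d(\rho(f),x)+\tfrac12\hgt(f)\le 2\hgt(f)^2$, hence $(\log d(\rho(g),x))^2<(3\log\hgt(f))^2<\tfrac12\hgt(f)<\hgt(g)$ for $L$ large (using $\hgt(f)>L^3$), which is incompatible with $g\in\bF_{\curlyvee}$. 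This is exactly the point where the quadratic growth of the cone $\bF_{\triangledown}$ is set against the merely $(\log\cdot)^2$ growth of the complement cone $\bF_{\curlyvee}$, together with the height window $L^3$ in Definition~\ref{def:isolated-pillars}. Similarly, for $f\in\bF_{\parallel}$ (which has $d(\rho(f),x)=O(1)$ and $0\le\hgt(f)\le L^3$) one checks $d(f,\bF_{\ext})\ge L^3h-O(1)$ and $d(f,\bF_{\curlyvee})\ge c\max(L,\hgt(f))$: to enter $\bF_{\curlyvee}$ one must either move out to horizontal radius $\ge L$, at cost $\ge L-O(1)$, or match the height of $f$, which at radius $r$ requires $(\log r)^2\ge\hgt(f)$ and hence $r\ge e^{\sqrt{\hgt(f)}}$, or else incur a vertical drop of order $\hgt(f)$.

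Given these separations the two sums follow by routine summation. For $\bF_{\triangledown}$: at each integer height level there are $O(\hgt^4)$ faces of $\bF_{\triangledown}$, and since $t\mapsto t^2e^{-\bar ct}$ is eventually decreasing, $\sum_{f\in\bF_{\triangledown}}d(f,B)^2e^{-\bar cd(f,B)}\le\sum_{z>L^3}O(z^6)\,e^{-\bar cz/2}\le\bar Ce^{-\bar cL}$ for $L$ large. For $\bF_{\parallel}$ (which enters only the first inequality): there are $O(1)$ faces of $\bF_{\parallel}$ at each height $0\le z\le L^3$, each at distance $\ge\max(L-O(1),\,cz)$ from $B$, so $\sum_{f\in\bF_{\parallel}}d(f,B)^2e^{-\bar cd(f,B)}\le\sum_{0\le z\le L^3}O(1)\cdot O(\max(L,z)^2)\,e^{-c\max(L,z)}\le\bar Ce^{-c'L}$, the polynomial prefactors being absorbed into the rate. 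Adding the two contributions gives both claimed bounds (with the rate a suitable positive multiple of $\bar c$). The one genuinely delicate point is the separation estimate for $f\in\bF_{\triangledown}$ above — everything else is bookkeeping — and it is precisely the reason the definition of an isolated pillar pairs a quadratically growing cone with a $(\log\cdot)^2$ complement cone separated by a window of height $L^3$.
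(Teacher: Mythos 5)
Your proof is correct and follows essentially the same route as the paper: split the double sum according to which of the regions $\bF_{\triangledown},\bF_{\parallel},\bF_{-},\bF_{\curlyvee},\bF_{\ext}$ the two faces belong to, and exploit the separations built into the definitions of the cone and anti-cone to sum the exponential tails. Your separation estimate $d(f,\bF_{-}\cup\bF_{\curlyvee}\cup\bF_{\ext})\geq\tfrac12\hgt(f)$ for $f\in\bF_{\triangledown}$ is in fact sharper and more transparent than the $\sqrt{k}/(\log k)^2$ bound used in the paper, and your remark that the rate comes out as a fixed multiple of $\bar c$ rather than $\bar c$ itself is an honest reading of what either argument actually yields (the paper's third term $L^3e^{-\bar cL}$ also needs to shrink the rate to absorb the polynomial), which is harmless since every application of the lemma only uses decay in $L$.
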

\begin{proof}
We consider three sums that together imply the two inequalities above for some $\bar C$. By summing out $g\in  \bF_{-}\cup \bF_{\curlyvee}$ we can bound the summands with $f\in \bF_{\triangledown}$ and such $g$ by 
\begin{align}\label{eq:ntb-1-third-term}
    \sum_{f\in \bF_{\triangledown}} K e^{ - \bar cd(f,\bF_{\curlyvee}\cup \bF_{-})}\le \sum_{k\ge 1} \sum_{f\in \bF_{\triangledown}: f\cdot e_3 = k} K e^{ - \bar c d(f,\bF_{\curlyvee}\cup \bF_{-})} \le \sum_{L^3 \le k\le 10 h} k^4 e^{ - \bar c \frac{\sqrt k}{(\log k)^2}} \le e^{ - \bar c L}\,,
\end{align}
for all $L$ sufficiently large, using definitions~\eqref{eq:pillar-cone-def}--\eqref{eq:complement-cone-def}. 
Sums over $f\in \bF_{\triangledown}\cup \bF_{\parallel}$ and $g\in \bF_{\ext}$ are at most  
\begin{align}\label{eq:ntb-1-second-term}
    \sum_{f\in \bF_{\triangledown}\cup  \bF_{\parallel}} K e^{ - \bar c d(f,\bF_{\ext})} \le K |\bF_{\triangledown}\cup\bF_{\parallel}| e^{ - \bar c d(\bF_{\triangledown},\bF_{\ext})} \le K \cdot (L^3 + 10 h) \cdot (h^4) \cdot e^{ - \bar c L^3 h/2}\,.
\end{align}
Finally, the sum over $f\in \bF_{\parallel}$ and $g\in \bF_{\curlyvee}$ is at most $L^3 e^{ - \bar c L} \le C e^{ - \bar c L}$. Together these imply the desired for $L$ a large enough constant. 
\end{proof}

\subsection{Construction and properties of
\texorpdfstring{$\Phi_{\iso}$}{Phi\_iso}}
We begin by constructing the map $\Phi_{\iso} = \Phi_{\iso}(x,L,h)$. For the remainder of this section, fix a simply-connected set $S= S_n$, fix a sequence $x = x_n \in S_n$, and let $h = h_n$ be such that $h_n = o(d(x,S_n^c))$.  Further, fix any family of walls $\bW = \bW_n = (W_z)_{z\notin S_n}$ having $\rho(\bW_n)\subset S_n^c$, and let $\cC_\bW$ denote the ceiling of $\cI_{\bW}$ whose projection contains $S_n$. For ease of notation, for a wall $\tilde W$ of an interface $\tilde \cI$, let $\lceil \tilde W\rceil$ be the set of interior ceilings of $\tilde W$ in $\tilde \cI$.

\begin{definition}\label{def:iso-map}
Fix $x,S,\bW$ as above. For every $L$ and every $h = o(d(x,S))$, let  $\Phi_{\iso}= \Phi_{\iso}(x,L,h)$ be the map on $\bI_{\bW}$ constructed in Algorithm~\ref{alg:le-big-map} below.
\end{definition}

\begin{remark}\label{rem:differences-in-the-algorithm}
 We remark on some of the differences between the map $\Phi_{\iso}$ and the matching Algorithm 1 of~\cite{GL19b} used for showing tight exponential tail bounds on the base and increments of the pillar.  
\begin{enumerate}
    \item We add the deletion criterion of step 6 in $\Phi_{\iso}$ to ensure that after application of the map, the resulting interface is well defined, and has isolated pillar at $x$ (see Lemma~\ref{lem:iso-map-well-defined}). This step additionally allows us to remove a more complicated step ({\tt{A2}}) of the algorithm in~\cite{GL19b}, which played a similar role of ensuring well-definedness of the interface therein, and removed atypically large walls near the pillar from the standard wall representation.  
    \item More generally, when processing the spine, the algorithm of~\cite{GL19b} evaluated various criteria of proximity of the increment $\sX_t$ to walls in its surrounding environment, in an iterative manner. Those criteria changed as the algorithm worked its way up the increment sequence, with the distance to a nearby wall evaluated not just 
     on the initial interface, but also on the \emph{prospective} output pillar of the algorithm were it to trivialize all increments $(\sX_s)_{s\le t}$. This was to ensure that the interactions between the pillar and non-deleted walls of the exterior were well-controlled in both $\cI$ and the resulting $\Phi(\cI)$. For us, the interactions in $\Phi_{\iso}(\cI)$ between its pillar and surrounding environment are easily shown to be uniformly bounded by Lemma~\ref{lem:iso-pillar-interactions} because $\Phi_{\iso}(\cI)$ has isolated pillar.
\end{enumerate}
 Beyond these two simplifications allowed by the notion of isolated pillars, the remainder of the map is qualitatively the same as in~\cite{GL19b}, up to the changes of considering the restricted interface $\cI \restriction_{S}$ rather than the full interface, and deleting wall-clusters rather than groups-of-walls of marked sites from the standard wall representation. 
\end{remark}

\begin{figure}
\begin{algorithm2e}[H]
\DontPrintSemicolon
\nl
Let $\{\tilde W_y : y\in \cL_{0,n}\}$ be the walls of $\cI\setminus \cS_{x,S}$. Let $(\sX_i)_{i\geq 1}$ be the increments of $\cS_{x,S}$.
\;

\BlankLine
\tcp{Base modification}
\nl
Mark $\bar x:= \{f\in \cL_{0,n}: f\sim^* x\}\cup \{x\}$ and $\rho(v_1)$ for deletion.\; 

\nl 
\If{the interface with standard wall representation $\Theta_{\textsc{st}}\tilde \fW_{v_1,S}$ has a cut-height}
{Let $h^\dagger$ be the height of the highest such cut-height.\;
Let $y^\dagger$ be the index of a wall that intersects $(\cP_{x,S} \setminus \tilde \fW_{v_1,S}) \cap \cL_{h^\dagger}$ and mark $y^\dagger$ for deletion.\;
}
\BlankLine
\tcp{Spine modification}

\nl
\For{$j=1$ \KwTo $\sT+1$}{     
     \If(\hfill\tcp*[h]{(A1)}){
\quad$ \fm(\sX_j) \geq \begin{cases} 0 & \mbox{if }j\le L^3 \\ j-1 & \mbox{if }j>L^3\end{cases}$ \quad\mbox{}}{Let $\fs \gets j$.} 
      \If(\hfill\tcp*[h]{(A2)}){\quad $d( \lceil \tilde W_y\rceil,\sX_j)\le (j-1)/2$ \qquad\quad for some $y\in S_n$\quad\mbox{}}{
      Let $\fs\gets j$ and let $y^*$ be the minimal index $y\in S_n$ for which~({\tt A2}) holds.}
}
 Let $j^* \gets \fs$ and mark $y^*$ for deletion.\;

 \nl 
 \If(\hfill\tcp*[h]{(A3)}){$|\sF(\cS_{x,S})| >5h$}{let $\fs \gets \sT+1$ and $j^* \gets \fs$.}
 
 \BlankLine
\tcp{Environment modification}
 \nl
 \For {$y\in \cL_{0}\cap \Cyl_{L^3 h}(x)$}{
    \If{\quad$\fm(\tilde W_y) \ge \begin{cases}0 & \mbox{if }d(y,x)\le L \\  \log [d(y,x)] & \mbox{else}\end{cases}$}{Mark $y$ for deletion}
    }
 
\BlankLine
\tcp{Reconstructing the interface}
\BlankLine

\nl 
\lForEach{$y\in\cL_{0,n}$ marked for deletion}
{remove $\Theta_{\textsc{st}}\Clust(\tilde \fW_{y,S})$ from $(\Theta_{\textsc{st}}\tilde W_y)_{y\in \cL_{0,n}}$.}

 \nl Add the standard wall $\Theta_{\textsc{st}}W_{x,\parallel}^{\mathbf h}$ consisting of the bounding vertical faces of $(x+ (0,0,\frac i2))_{i=1}^{\mathbf h}$ where $\mathbf h := (\hgt(v_1)- \frac 12) - \hgt(\cC_\bW)$.

\nl Let $\cK$ be the interface with the resulting standard wall representation.

\nl Let
\[ \cS \gets 
\begin{cases}\big(\underbrace{X_\trivincr,\ldots,X_\trivincr}_{\hgt(v_{j^*+1})-\hgt(v_1)},\sX_{j^*+1},\ldots,\sX_{\sT},\sX_{>\sT}\big) &\mbox{ if \mbox{({\tt{A3}}) is not violated}},\\
\noalign{\medskip}
 \big( \underbrace{X_\trivincr,\ldots,X_\trivincr}_{h- \mathbf h}\big) &\mbox{ if \mbox{({\tt{A3}}) is violated}}\,.\end{cases}\,.
 \]
\nl Obtain $\Phi_{\iso}(\cI)$ by appending the spine with increments $\cS$ to $\cK$ at $x+ (0,0,\hgt(\cC_\bW)+ \mathbf h)$.\;

\caption{The map $\Phi_{\iso}= \Phi_{\iso}(x,L,h)$}
\label{alg:le-big-map}
\end{algorithm2e}
\end{figure}

In what follows, for all $h'$, let $E_{x,S}^{h'}$ be the event $\{\hgt(\cP_{x,S})\ge h'\}$. 

\begin{lemma}[Well-definedness of $\Phi_\iso$]\label{lem:iso-map-well-defined}
For every $L$ large and every $h'\le h$,  for every $\cI \in \bI_{\bW}\cap E_{x,S}^{h'}$, the resulting $\cJ = \Phi_{\Iso}(\cI)$ is a well-defined interface in $E_{x,S}^{h'}\cap \bI_{\bW}$ and $\cJ\restriction_S\in \Iso_{x,L,h}$. 
\end{lemma}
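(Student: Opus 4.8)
The plan is to verify, step by step through Algorithm~\ref{alg:le-big-map}, that the output object is a genuine interface, that it lies in $\bI_{\bW}$, that it still has a tall pillar at $x$, and that this pillar is $(L,h)$-isolated. I would organize the proof around the three blocks of the algorithm (base modification, spine modification, environment modification) plus the reconstruction step, checking an invariant after each block.

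\textbf{Step 1: admissibility of the standard wall representation after the deletions.} The deletions in steps 2, 3, 5, 6 all remove objects of the form $\Theta_{\textsc{st}}\Clust(\tilde\fW_{y,S})$ from the standard wall representation of $\cI\restriction_S\setminus\cS_{x,S}$. I would invoke the key structural property of wall clusters recorded after Definition~\ref{def:wall-cluster}: $\rho(\Clust(\bV))\subset\rho(\hull{\bV})$. Hence each deleted cluster is confined to the hull of the wall(s) it is rooted at; deleting it from an admissible collection (pairwise disjoint projections) leaves an admissible collection, since removing walls never creates overlaps and the nesting structure of the remaining walls is unchanged outside the deleted hulls. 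Crucially, because $h=o(d(x,S_n^c))$ and all marked sites $y$ lie within $\Cyl_{x,L^3h}$, every $\tilde\fW_{y,S}$ and its cluster project into $\rho(\hull{\cC}_\bW)\subset S_n$, so no wall of $\bW_n$ is ever touched: this is exactly the point of using one-sided wall clusters rather than Dobrushin's groups of walls. Therefore the reconstruction in step 8 (via Lemma~\ref{lem:interface-reconstruction}) produces a valid interface $\cK$, and $\cK\in\bI_{\bW}$, with $\cK\restriction_S$ having no walls indexed by sites in $\Cyl_{x,L}$ and (by the environment-modification criterion of step 6) satisfying $\fm(\tilde W_y)\le\log d(y,x)$ for $L<d(y,x)<L^3h$.

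\textbf{Step 2: the appended column and spine give a well-defined pillar.} After step 7 we add the standard wall $\Theta_{\textsc{st}}W_{x,\parallel}^{\mathbf h}$; since $\bar x$ and $\rho(v_1)$ were marked for deletion in step 2, no wall of $\cK\restriction_S$ is incident to $x$ or to the sites directly above it through height $\mathbf h$, so this column is compatible and yields an admissible collection. Then in steps 9--10 we append the increment sequence $\cS$ (either the trivialized-then-resumed sequence, or an all-trivial column of height $h-\mathbf h$ when ({\tt A3}) fired) on top of $x+(0,0,\hgt(\cC_\bW)+\mathbf h)$. I would check that the resulting spine is a legitimate spine in the sense of Lemma~\ref{lem:spine-reconstruction}: its first cut-point is at the prescribed height, the increments are glued consecutively cut-point to cut-point, and — this is where step 6's environment clearing is used — the cone region swept out by the remaining (moderately growing) increments $\sX_{j^*+1},\ldots$ does not collide with any surviving wall, because ({\tt A2}) guaranteed those increments stay at distance $>(j-1)/2$ from interior ceilings of surviving walls, while trivializing the first $j^*$ increments only moves the rest strictly inward/downward. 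Hence $\cJ=\Phi_\iso(\cI)$ is a bona fide interface.

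\textbf{Step 3: $\cJ\in E_{x,S}^{h'}\cap\bI_{\bW}$ and $\cJ\restriction_S\in\Iso_{x,L,h}$.} Membership in $\bI_{\bW}$ is immediate from Step 1 since nothing outside $S_n$ was altered. For the height: if ({\tt A3}) was not violated, $\cJ$ retains increments $\sX_{j^*+1},\ldots,\sX_{>\sT}$ above a trivial column, and since trivialization preserves total height up to the cut-point $v_{j^*+1}$, we get $\hgt(\cP_{x,S}(\cJ))=\hgt(\cP_{x,S}(\cI))\ge h'$; if ({\tt A3}) was violated then $|\sF(\cS_{x,S})|>5h\ge 5h'$ forces $\hgt\ge h'$ already (the pillar being tall in surface area forces it tall in height, by the trivial-increment lower bound on face count), and we replace it by an explicit trivial column of height $h\ge h'$. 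For isolation, I would read off Definition~\ref{def:isolated-pillars} directly: item~\eqref{item:iso-pillar-increments} on the increments holds by the stopping rule ({\tt A1}) (which ensures $\fm(\sX_j)<0$, i.e.\ $=0$, for the trivial increments $j\le L^3$ and $\fm(\sX_j)\le j$ for $j>L^3$ in the resumed tail, after re-indexing) together with ({\tt A3}) giving $|\sF(\cS_x)|\le 10h$; the empty base $\sB_x=\emptyset$ holds because step 2 deleted $\rho(v_1)$ and the column starts at $x+(0,0,\frac12)$ above $\hgt(\cC_\bW)+\mathbf h$; item~\eqref{item:iso-pillar-walls} on the walls of $\cI\setminus\cP_x$ holds by the environment-modification step 6. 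I expect the main obstacle to be Step 2 — carefully arguing that after the deletions and the re-insertion of the column and resumed spine, the face set genuinely closes up into a connected separating surface with exactly one marked plus-component (no accidental mergers of the resumed spine with a surviving wall, and no holes), which is precisely what step 6's extra deletion criterion (absent from~\cite{GL19b}) is designed to guarantee; making this collision-freeness rigorous, rather than heuristic, is the delicate part, and I would handle it by showing that the region $\bF_\triangledown\cup\bF_\parallel$ from Claim~\ref{clm:iso-pillar-containments} containing the new pillar is disjoint from the region $\bF_-\cup\bF_\curlyvee\cup\bF_\ext$ containing $\cK$.
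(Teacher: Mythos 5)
Your overall plan matches the paper's: check admissibility of the post-deletion standard wall representation, check that the appended spine does not collide with the reconstructed interface $\cK$, and then read off membership in $\bI_{\bW}$, $E_{x,S}^{h'}$, and $\Iso_{x,L,h}$ from the algorithm's explicit constraints. Your Steps~1 and~3 are essentially the paper's argument (in particular your treatment of both branches of the height claim via ({\tt A3}) and of items~\eqref{item:iso-pillar-increments}--\eqref{item:iso-pillar-walls} of Definition~\ref{def:isolated-pillars} is correct).

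The one place you go astray is the middle of your Step~2, where you attribute the collision-freeness of the appended spine with the surviving walls to criterion ({\tt A2}). This does not work, and it is not how the paper argues. Criterion ({\tt A2}) compares $\sX_j$ to $\lceil\tilde W_y\rceil$ \emph{in the original interface} $\cI$, but after the map the surviving increments $\sX_{j^*+1},\dots$ have been shifted horizontally by $\lrvec\theta$ to sit over $x$, while the surviving walls of $\cI\setminus\cS_{x,S}$ have been shifted vertically by $\theta_{\udarrow}$ (because the wall collection $\bV$ was removed from the standard wall representation). Neither shift is controlled by the ({\tt A2}) distance bound, so that bound does not transfer to $\cJ$. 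In the paper ({\tt A2}) never appears in the well-definedness argument; it is used only in the weight-gain analysis (Lemmas~\ref{lem:C1-vs-X2-and-X4}--\ref{lem:C2-vs-X2-and-X4}) and in bounding $j^*$ via Claim~\ref{clm:m(W)}. The phrase in your proposal ``this is where step 6's environment clearing is used --- \dots because ({\tt A2}) guaranteed \dots'' conflates two distinct mechanisms.

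What actually forces disjointness --- and what you correctly circle back to in your final sentence --- is the region decomposition of Claim~\ref{clm:iso-pillar-containments}: the spine $\cS$ appended in step~11 satisfies item~\eqref{item:iso-pillar-increments} of Definition~\ref{def:isolated-pillars} by construction (from ({\tt A1}), ({\tt A3})) and is thus confined to $\bF_\parallel\cup(\bF_\triangledown\cap\Cyl_{x,10h})$, while $\cK$ satisfies item~\eqref{item:iso-pillar-walls} (from step~6) so all its faces away from height $\hgt(\cC_\bW)$ lie in $\bF_\curlyvee\cup\Cyl_{x,L^3h}^c$; these regions meet only at $x+(0,0,\hgt(\cC_\bW))$. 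That disjointness argument should replace your ({\tt A2}) detour; with that substitution your proposal is the paper's proof.
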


\begin{proof}
First of all, we claim that the standard wall representation of $\cK$ (defined in step 9) is admissible; this follows from the facts that its standard wall representation is a subset of that of $\cI\setminus \cS_{x,S}$, together with the fact that prior to the addition of $W_{x,\parallel}^{\mathbf h}$, there were no walls incident to the face $x$ (as $\bar x$ is marked for deletion, and therefore $\tilde \fW_{\bar x,S} := \bigcup_{f\in \bar x} \tilde \fW_{f,S}$ is deleted). 

For validity of the interface and thus well-definedness, it remains to show that the pillar we add in step~11 does not intersect any part of the interface $\cK$. This follows from Claim~\ref{clm:iso-pillar-containments}, showing that the spine we generated after step 10 of $\Phi_{\iso}$ and therefore satisfying criterion~\eqref{item:iso-pillar-increments} of the definition of $\Iso_{x,L,h}$ is confined to $\bF_{\parallel}\cup \bF_{\triangledown}\cap \Cyl_{x, 10 h}$. At the same time, all faces of $\cK$ not that are not at height  $\hgt(\cC_\bW)$ are confined to $\bF_{\curlyvee}\cup \Cyl_{x, L^3 h}^c$. As such, the two sets are not incident one another except at $x+ (0,0,\hgt(\cC_\bW))$ as desired, and the interface generated by appending $\cS$ to $\cK$ at $x + (0,0,\hgt(\cC_\bW) + \mathbf h)$ forms a valid interface. 

In order to see that this interface is in $\bI_\bW$, notice first that $\cK$ is in $\bI_\bW$ as it only consisted of the deletion of standard walls, and the addition of the single standard wall $W_{x,\parallel}^{\mathbf h}$ whose projection is interior to $S$. Noting further that the projection $\rho(\cS + x + (0,0,\hgt(\cC_\bW) + \mathbf h))$ is a subset of $\rho(\Cyl_{x,10h})\subset S$, we deduce that the walls of $\bW$ are not projected onto, and remain unchanged in the wall representation of $\cJ$ as desired. 

To see that $\cJ\restriction_{S}$ is in $\Iso_{x,L,h}$, notice from the above, that the pillar $\cP_{x,S}^{\cJ}$ (the pillar of $\cJ\restriction_S$ at $x$) has empty base, and steps 4 and 5 of $\Phi_\iso$ ensure that its increment sequence satisfies item~\eqref{item:iso-pillar-increments} in Definition~\ref{def:isolated-pillars}. Step 6 ensures that the walls of $\cJ\restriction_S \setminus \cP_{x,S}^\cJ$ satisfy item~\eqref{item:iso-pillar-walls} in Definition~\ref{def:isolated-pillars}. Finally, steps 8 and 10 together imply that $\cJ\in E_{x,S}^{h'}$ as claimed. 
\end{proof}

We next consider the change in energy under the map $\Phi_{\iso}$ showing a series of important bounds on the quantity. For ease of notation, let $\bY$ be the set of $y\in \cL_{0,n}$ marked for deletion, and let $\bD$ be a representative set of $y\in \cL_{0,n}$ of all deleted standard walls. First of all, note that the change in energy between $\cI$ and $\cJ := \Phi_{\iso}(\cI)$ is given by
\begin{align}\label{eq:m(I;J)}
\fm(\cI;\cJ)= \begin{cases}
\sum_{z\in \bD} \fm(\tilde W_z) + \sum_{j=1}^{j^*} \fm(\sX_j) - |W_{x,\parallel}^{\mathbf h}|\,, & \mbox{({\tt{A3}}) is not violated}  \\ 
\sum_{z\in \bD} \fm(\tilde W_z) + \sum_{j=1}^{\sT+1} \fm(\sX_j) + 4(\hgt(v_{\sT+1}) - h)  - |W_{x,\parallel}^{\mathbf h}|\,, & \mbox{({\tt{A3}}) is violated}.
\end{cases}
\end{align}

The following inequalities regarding $\fm(\cI; \cJ)$ will be used repeatedly. In particular, these will imply that if $\cI\restriction_S \notin \Iso_{x,L,h}$ then $\fm(\cI; \Phi_{\iso}(\cI))\ge 1$ giving us the energy gain we rely on to prove Theorem~\ref{thm:shape-inside-ceiling}.

\begin{claim}\label{clm:m(W)}For every $L$ large, and every $\cI \in \bI_{\bW}$, denoting by $\cJ = \Phi_\iso(\cI)$, we have
\begin{align}\label{eq:|W_x^J|}
|W_{x,\parallel}^{\mathbf h}| \leq \frac23 \fm(\tilde \fW_{v_1,S} \cup \tilde \fW_{y^\dagger,S})\,,\qquad \mbox{and thus} \qquad \fm(\cI;\cJ) \ge \frac 13 \fm(\bigcup_{y\in \bD} \tilde W_y) + \sum_{j=1}^{j^*}\fm(\sX_j)\,.
\end{align}
In particular
\begin{equation}\label{eq:m(I;J)-WxJ} |W_{x,\parallel}^{\mathbf h}|\leq 2 \fm(\cI;\cJ)\,,\quad\mbox{and}\quad \fm\Big(\bigcup_{y\in \bD} \tilde W_y\Big) \leq 3 \fm(\cI;\cJ)\,,
\end{equation}
and 
\begin{equation}\label{eq:m(I;J)-XAJ}
\begin{cases}
j^* - 1 \leq (2\vee L^3) \fm(\cI;\cJ) & \quad \mbox{if ({\tt{A3}}) is not violated} \\ 
h- \mathbf h \le \fm(\cI;\cJ) & \quad \mbox{if ({\tt{A3}}) is violated}
\end{cases}\,.
\end{equation}
\end{claim}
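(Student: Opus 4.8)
The statement is a bookkeeping consequence of the identity~\eqref{eq:m(I;J)} for $\fm(\cI;\cJ)$, with $\cJ=\Phi_\iso(\cI)$. The plan is to combine that identity with two trivial sign observations and one genuine inequality. The sign observations are that every summand on the right of~\eqref{eq:m(I;J)} other than $|W_{x,\parallel}^{\mathbf h}|$ (and, in the {\tt A3}-violated branch, $4(\hgt(v_{\sT+1})-h)$) is non-negative: $\fm(\tilde W_z)\ge0$ by Remark~\ref{rem:excess-area-properties}, and $\fm(\sX_j)\ge0$ by~\eqref{eq:increment-excess-inequalities} since a trivial increment has excess area exactly $0$. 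The genuine inequality is the first bound in~\eqref{eq:|W_x^J|}, $|W_{x,\parallel}^{\mathbf h}|\le\tfrac23\fm(\tilde\fW_{v_1,S}\cup\tilde\fW_{y^\dagger,S})$, from which all the rest follows by substitution.

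So the first step is to prove $\fm(\tilde\fW_{v_1,S}\cup\tilde\fW_{y^\dagger,S})\ge 6\mathbf h$ (recall $|W_{x,\parallel}^{\mathbf h}|=4\mathbf h$). Because $v_1$ is the lowest cut-point of $\cP_{x,S}$, the cell directly below $v_1$ is a base cell, so in the truncated interface $\cI\restriction_S\setminus\cS_{x,S}$ the face separating that cell from $v_1$ is a horizontal interface face at relative height $\hgt(v_1)-\tfrac12-\hgt(\cC_\bW)=\mathbf h$ above $\rho(v_1)$. Observation~\ref{obs:height-nested-sequence-of-walls} applied to this face, together with the elementary relation between a wall's excess area and its height (a wall carrying a ceiling at relative height $k$ above its floor has at least $4k$ vertical faces, none of which are cancelled in $\fm(W)=|W|-|\sF(\rho(W))|$), bounds the excess area of the standardized nested wall sequence at $\rho(v_1)$ in $\cI\restriction_S\setminus\cS_{x,S}$ from below by a fixed multiple of $\mathbf h$; the role of $y^\dagger$ is precisely to handle the possibility that a single wall does not carry all of the height $\mathbf h$: if that sequence has a cut-height, $\tilde\fW_{v_1,S}$ accounts for the rise up to the top cut-height $h^\dagger$ and $\tilde\fW_{y^\dagger,S}$ for the remaining $\mathbf h-h^\dagger$, so that $\tilde\fW_{v_1,S}\cup\tilde\fW_{y^\dagger,S}$ together account for all of $\mathbf h$. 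Tracking the constant yields the factor $\tfrac23$.

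Granting this, everything else is immediate. In the {\tt A3}-not-violated branch, $\fm(\cI;\cJ)=\sum_{z\in\bD}\fm(\tilde W_z)+\sum_{j=1}^{j^*}\fm(\sX_j)-|W_{x,\parallel}^{\mathbf h}|$; the walls of $\tilde\fW_{v_1,S}\cup\tilde\fW_{y^\dagger,S}$ are among the deleted walls (steps 2--3 mark $\bar x\ni\rho(v_1)$ and $y^\dagger$ for deletion, and step 7 removes the corresponding wall-clusters, which contain at least those walls), and distinct deleted walls are disjoint so their excess areas add; hence $|W_{x,\parallel}^{\mathbf h}|\le\tfrac23\sum_{z\in\bD}\fm(\tilde W_z)$, which gives $\fm(\cI;\cJ)\ge\tfrac13\fm(\bigcup_{y\in\bD}\tilde W_y)+\sum_{j=1}^{j^*}\fm(\sX_j)$, namely~\eqref{eq:|W_x^J|}. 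Discarding the non-negative increment sum yields $\fm(\bigcup_{y\in\bD}\tilde W_y)\le3\fm(\cI;\cJ)$, and then $|W_{x,\parallel}^{\mathbf h}|\le\tfrac23\fm(\tilde\fW_{v_1,S}\cup\tilde\fW_{y^\dagger,S})\le\tfrac23\fm(\bigcup_{y\in\bD}\tilde W_y)\le2\fm(\cI;\cJ)$, i.e.~\eqref{eq:m(I;J)-WxJ}. For~\eqref{eq:m(I;J)-XAJ} in this branch, at $j=j^*$ one of {\tt A1}, {\tt A2} held: if {\tt A1} held with $j^*>L^3$ then $\fm(\sX_{j^*})\ge j^*-1$, so $j^*-1\le\sum_{j\le j^*}\fm(\sX_j)\le\fm(\cI;\cJ)$; if {\tt A2} held then the wall indexed by $y^*$, which has an interior ceiling within distance $(j^*-1)/2$ of $\sX_{j^*}$ and hence (by the same height-versus-excess-area estimate) excess area of order $j^*$ when taken with its nested sub-sequence, is among the deleted walls, so $j^*-1\lesssim\fm(\bigcup_{y\in\bD}\tilde W_y)\le3\fm(\cI;\cJ)$; and when $j^*\le L^3$ the bound is trivial once one checks $\fm(\cI;\cJ)\ge1$, which follows since any $j^*\ge2$ forces either a non-trivial deleted increment or a deleted wall.

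The part I expect to be the real obstacle is the {\tt A3}-violated branch, where~\eqref{eq:m(I;J)} carries the extra term $4(\hgt(v_{\sT+1})-h)$ that can a priori be negative, so the bound $|W_{x,\parallel}^{\mathbf h}|\le\tfrac23\fm(\tilde\fW_{v_1,S}\cup\tilde\fW_{y^\dagger,S})$ alone is not enough. Here one uses that {\tt A3} being violated means $|\sF(\cS_{x,S})|>5h$; combining $|\sF(\sX_j)|\le3\fm(\sX_j)+4$ from~\eqref{eq:increment-excess-inequalities} with the bound on the number of increments of a spine of a given height shows that $\sum_{j=1}^{\sT+1}\fm(\sX_j)$ is itself of order $h$, large enough to absorb both $|W_{x,\parallel}^{\mathbf h}|=4\mathbf h$ and $4(h-\hgt(v_{\sT+1}))_+$ and to recover $\fm(\cI;\cJ)\ge\tfrac13\fm(\bigcup_{y\in\bD}\tilde W_y)+\sum_{j=1}^{\sT+1}\fm(\sX_j)\ge h-\mathbf h$. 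This is the only place where the specific constant $5$ in criterion {\tt A3} is used, and the one step where the case analysis is genuinely delicate rather than mechanical.
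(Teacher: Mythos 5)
Your high-level plan is the right one: everything is extracted from~\eqref{eq:m(I;J)} once one has the inequality $|W_{x,\parallel}^{\mathbf h}|\le\tfrac23\fm(\tilde\fW_{v_1,S}\cup\tilde\fW_{y^\dagger,S})$, plus a case analysis on what set $j^*$ and a separate treatment of the case ({\tt A3}). The step-by-step substitutions for~\eqref{eq:m(I;J)-WxJ} are exactly what the paper does, and your identification of the delicate spots is accurate. However, both places you flag as needing a genuine argument are not actually resolved by what you wrote.

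First, the factor $\tfrac23$. You lower bound the excess area of a wall of height $k$ by $4k$ and then say ``tracking the constant yields $\tfrac23$,'' but $4k$ gives $|W_{x,\parallel}^{\mathbf h}|=4\mathbf h\le\fm(\,\cdot\,)$, i.e.\ a factor of $1$, not $\tfrac23$. The missing input is that the interface whose standard wall representation is $\Theta_{\textsc{st}}(\tilde\fW_{v_1,S}\cup\tilde\fW_{y^\dagger,S})$ has \emph{no cut-points}: that is precisely what the choice of $y^\dagger$ in step~3 of the algorithm guarantees (the paper cites Corollary~11 of~\cite{GL19b}). No cut-point means at least two $*$-adjacent cells, hence at least $6$ vertical wall faces, at every level $1,\dots,\mathbf h$, giving $\fm(\tilde\fW_{v_1,S}\cup\tilde\fW_{y^\dagger,S})\ge 6\mathbf h$ and thus the factor $4/6=\tfrac23$. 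Your narrative about $y^\dagger$ is also off: $\tilde\fW_{v_1,S}$ by itself already nests a face at relative height $\mathbf h$ (namely $\rho(v_1)$), so it certainly ``reaches'' that height; what $y^\dagger$ buys is the elimination of cut-points, which sharpens $4\mathbf h$ to $6\mathbf h$. Without this, the downstream constants ($2$ and $3$) in~\eqref{eq:m(I;J)-WxJ} are not obtained.

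Second, the ({\tt A3})-violated case. Your proposed argument --- that $\sum_{j=1}^{\sT+1}\fm(\sX_j)$ is itself of order $h$ and can absorb $|W_{x,\parallel}^{\mathbf h}|$ and $4(h-\hgt(v_{\sT+1}))$ --- does not hold. Take $\cS_{x,S}$ to be a straight column of trivial increments of height about $\tfrac54 h$: then $|\sF(\cS_{x,S})|>5h$ so ({\tt A3}) is violated, yet every increment is trivial and $\sum_j\fm(\sX_j)=0$. The paper's argument is a direct count of faces, not of increment excess areas: the old spine $\cS_{x,S}$ has $>5h$ faces, the new $W_{x,\parallel}^{\mathbf h}\cup\cS$ has about $4\mathbf h + 4(h-\mathbf h)=4h$ faces, so the spine replacement alone already contributes $>h\ge h-\mathbf h$ to $\fm(\cI;\cJ)$ (with the wall deletions contributing $\fm(\bV)\ge0$ on top). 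In the counterexample above this is exactly the term $4(\hgt(v_{\sT+1})-h)$ in~\eqref{eq:m(I;J)}, which is large and positive, that saves the bound --- not $\sum_j\fm(\sX_j)$.

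The ({\tt A2}) case is qualitatively right but should be made precise via~\eqref{eq:useful-inequality-from-GL19b} (Fact~2.8 of~\cite{GL19b}), which directly gives $j^*-1\le 2\fm(\tilde\fW_{y^*,S})\le 2\fm(\cI;\cJ)$ with an explicit constant, rather than the $\lesssim$ you wrote.
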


\begin{proof}
The proof goes as the proof of Claim 4.7 of~\cite{GL19b}. Let $\bV = \bigcup_{y\in \bD} \tilde W_y$ be the set of deleted walls. We first observe from definitions of walls and ceilings---a detailed proof is given in Corollary 11 of~\cite{GL19b}---that there are no cut-points in the interface with standard wall representation $\tilde \fW_{v_1,S} \cup \tilde \fW_{y^\dagger,S}$, and therefore, 
\begin{align*}
    |W_{x,\parallel}^{\mathbf h}|  = 4(\hgt(v_1) - \tfrac 12 -\hgt(\cC_\bW)) \le \tfrac 23 (\fm(\tilde \fW_{v_1,S}\cup \tilde \fW_{y^\dagger, S}))\,. 
\end{align*}
Thus, $|W_{x,\parallel}^{\mathbf h}|\le \frac 23 \fm(\bV)$ and the two inequalities of~\eqref{eq:m(I;J)-WxJ} follow from the fact that $\fm(\cI;\cJ)\ge \fm(\bV)- |W_{x,\parallel}^{\mathbf h}|$. If ({\tt{A3}}) is violated, then the replacement of $\cS_{x,S}$ with $\cS$ induces an excess area of $5h - 4(h- \mathbf h)$ which is obviously at least $h - \mathbf h$. 
Otherwise, it remains to bound $j^* - 1$ by $6L^3 \fm(\cI;\cJ)$ via a case analysis of the violation attaining $j^*$; without loss, we assume $j^* >1$ so that this is nontrivial---in that case, $j^*$ was set for the last time due to one of the three possible violations: 
\begin{enumerate}[({A}1)]
    \item If criterion {(\tt{A1})} is violated, then either $j_* \le L^3$ or $j^* \le \fm(\sX_j)+1$. 
    \item In this case, $d( \lceil \tilde W_y\rceil,\sX_{j^*}) \leq (j-1)/2$ for $y  = y^*$.
   By the simple geometric observation (see Fact 2.8 from~\cite{GL19b}) that for every $j, y$, we have
    \begin{align}\label{eq:useful-inequality-from-GL19b}
    j-1 \leq d( \lceil \tilde W_y\rceil,\sX_j) + \fm(\tilde \fW_{y,S})\,,
    \end{align}
    applied to $j^*$ and $y^*$, we have
    \[ j^*-1 \leq d( \lceil \tilde W_{y^*}\rceil,\sX_{j^*}) + \fm(\tilde \fW_{y^*,S}) \leq (j^*-1)/2 + \fm(\tilde \fW_{y^*,S})\,, \qquad \mbox{so} \qquad j^*-1 \leq 2 \fm(\tilde \fW_{y^*,S})\,.\]
    \item In this case, $j^*  = \sT+1$ and $\fm(\cI;\cJ) \ge \sum_{j} \fm(\sX_j) + 4(\hgt(v_{\sT+1}) - h) \ge h \vee 4(\sT+1 - h)$. 
\end{enumerate}
In all of the above situations, we have $j^* -1 \le (2\vee L^3) \fm(\cI;\cJ)$.
\end{proof}

\subsection{Analysis of \texorpdfstring{$\Phi_{\iso}$}{Phi\_iso}}
The two main steps to the proof of Theorem~\ref{thm:shape-inside-ceiling} given $\Phi_{\iso}$ and Lemma~\ref{lem:iso-map-well-defined}, are the analysis of the weight gain and multiplicity of the map. These are captured by the following two propositions.

\begin{proposition}\label{prop:partition-function-contribution}
There exists $C>0$  and such that for all $\beta>\beta_0$, all $L$ large, and every $\cI \in \bI_\bW$, 
\begin{align*}
    \Big|\log \frac{\mu_n^{\mp}(\cI)}{\mu_n^{\mp}(\Phi_{\iso}(\cI))} + \beta \fm(\cI;\Phi_{\iso}(\cI))\Big| \leq C L^3 \fm(\cI;\Phi_{\iso}(\cI))\,.
\end{align*}
\end{proposition}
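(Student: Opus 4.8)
The plan is to run the standard cluster-expansion accounting scheme from Theorem~\ref{thm:cluster-expansion}, exactly as in the proof of Lemma~\ref{lem:wall-cluster-weights}, but now bookkeeping the extra contribution coming from the pillar being appended/trivialized. Writing $\cJ = \Phi_{\iso}(\cI)$, Theorem~\ref{thm:cluster-expansion} gives
\[
 \Big|\log \frac{\mu_n^{\mp}(\cI)}{\mu_n^{\mp}(\cJ)} + \beta \fm(\cI;\cJ)\Big| \le \Big|\sum_{f\in\cI}\g(f,\cI)-\sum_{f'\in\cJ}\g(f',\cJ)\Big|\,,
\]
so everything reduces to bounding the right-hand side by $CL^3\fm(\cI;\cJ)$. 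First I would set up a partition of the faces of $\cI$ and of $\cJ$ that refines the one used in Lemma~\ref{lem:wall-cluster-weights}: the deleted wall-clusters $\bigcup_{y\in\bD}\Theta_{\textsc{st}}\Clust(\tilde\fW_{y,S})$ and their images $\bH$ in $\cJ$; the old spine $\cS_{x,S}^{\cI}$ (the increments $\sX_1,\dots,\sX_{j^*}$ that get trivialized, or all of $\cS_{x,S}$ if ({\tt A3}) fired) and the new spine in $\cJ$ together with the appended column $W_{x,\parallel}^{\mathbf h}$; the faces $\bB_i$ organized by innermost nesting ceiling among the ceilings of $\cI_{\Theta_{\textsc{st}}\bV}$ together with their vertical shifts $\theta_{\udarrow}\bB_i$; and the exterior faces $\bF_{\ext}$ which are congruent in $\cI$ and $\cJ$ outside the affected region. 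For each block one bounds the $\g$-differences using \eqref{eq:g-uniform-bound} on the symmetric-difference faces and \eqref{eq:g-exponential-decay} on the surviving faces, where in the latter the congruence radius $\br(f,\cI;f,\cJ)$ is attained by a face in the ``touched'' set (deleted walls, old/new spine, the column $W_{x,\parallel}^{\mathbf h}$, or a $\bB_i$-wall that underwent a different vertical shift).

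The wall-cluster part of the bound is literally Claims~\ref{clm:wall-cluster-W-to-else}--\ref{clm:wall-cluster-ceiling-to-else} applied to $\bV = \bigcup_{y\in\bD}\tilde W_y$, yielding a contribution $\le C\fm(\bV)\le C\fm(\cI;\cJ)$ by \eqref{eq:m(I;J)-WxJ}. The new ingredient is the spine and the appended column: the number of faces in $\cS_{x,S}^\cI \cup \cS^{\cJ} \cup W_{x,\parallel}^{\mathbf h}$ is $O(\sum_{j\le j^*}|\sF(\sX_j)| + |W_{x,\parallel}^{\mathbf h}|)$, which by \eqref{eq:increment-excess-inequalities}, \eqref{eq:m(I;J)-WxJ} and \eqref{eq:m(I;J)-XAJ} is $O(L^3\fm(\cI;\cJ))$ — here is exactly where the $L^3$ enters, via the bound $j^*-1\le (2\vee L^3)\fm(\cI;\cJ)$. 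Integrating the exponential tail $\sum_g e^{-\bar c d(f,g)}\le C$ over those $O(L^3\fm(\cI;\cJ))$ faces $f$, and over the $f$'s on the other side whose congruence radius is attained by a spine/column face (again a sum of exponential tails, bounded by $C$ times the number of touched faces, i.e.\ $O(L^3\fm(\cI;\cJ))$), gives a total of $CL^3\fm(\cI;\cJ)$. The $\bB_i$-to-outside interactions are handled exactly as in Claim~\ref{clm:wall-cluster-ceiling-to-else} and contribute only $O(\fm(\cI;\cJ))$ since $\sum_i|\rho(\partial\hull\cC_i)|\le 2\fm(\cI;\cJ)$; the exterior faces $\bF_{\ext}$ are handled as the third term of \eqref{eq:wall-cluster-exp-tail-splitting}, with the congruence radius attained by a touched face, again giving $O(L^3\fm(\cI;\cJ))$.

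I expect the main obstacle to be purely organizational rather than conceptual: one must carefully verify that \emph{every} face that changes its $\g$-value between $\cI$ and $\cJ$ has its congruence radius attained by a face lying in one of the controlled blocks (deleted wall-clusters, old/new spine, appended column, or a $\bB_i$ wall shifted relative to $f$), so that no interaction is left unaccounted for — this is where the case analysis ({\tt A3} violated vs.\ not) and the precise output structure guaranteed by Lemma~\ref{lem:iso-map-well-defined} (namely $\cJ\restriction_S\in\Iso_{x,L,h}$, so the new spine lives in $\bF_{\parallel}\cup\bF_{\triangledown}$ and is well-separated from $\cI\setminus\cP_{x,S}$) must be invoked. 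A secondary point to be careful about is that the appended column $W_{x,\parallel}^{\mathbf h}$ is genuinely new in $\cJ$ (not present in $\cI$), so its faces contribute $\bar K|W_{x,\parallel}^{\mathbf h}|\le 2\bar K\fm(\cI;\cJ)$ via \eqref{eq:g-uniform-bound}, and the cross-interactions between the column and the surviving faces of $\cJ$ decay exponentially and sum to $O(|W_{x,\parallel}^{\mathbf h}|)=O(\fm(\cI;\cJ))$. Collecting all blocks, the right-hand side is at most $CL^3\fm(\cI;\cJ)$, as claimed.
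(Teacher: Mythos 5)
Your overall scheme is right, and you correctly identify where the $L^3$ enters (via $j^*\le(2\vee L^3)\fm(\cI;\cJ)$ from Claim~\ref{clm:m(W)}), you correctly re-use Claims~\ref{clm:wall-cluster-W-to-else}--\ref{clm:wall-cluster-ceiling-to-else} for the wall-cluster part, and you correctly invoke the isolation of $\cJ\restriction_S$ (Claim~\ref{clm:J-face-interactions}) to decouple the new pillar from $\cJ\setminus\cP_{x,S}^\cJ$. But there is a genuine gap in the spine accounting. You classify as ``touched'' only the trivialized increments $\bX_A^\cI=\bigcup_{j\le j^*}\sF(\sX_j)$, the new column $W_{x,\parallel}^{\mathbf h}\cup\bX_A^\cJ$, and the deleted wall-clusters, and you bound the total interaction by $C\cdot|\mathrm{touched}|=O(L^3\fm(\cI;\cJ))$ by summing exponential tails over the touched faces. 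However, the \emph{surviving} increments $\bX_B^\cI=\bigcup_{j>j^*}\sF(\sX_j)$ are also displaced (they undergo the horizontal shift $\lrvec\theta$ to become $\bX_B^\cJ$), so for $f\in\bB$ the congruence radius $\br(f,\cI;\theta_\udarrow f,\cJ)$ can be attained by a face of $\bX_B^\cI$. These interactions cannot be absorbed by ``$C$ times the number of touched faces,'' because $|\bX_B^\cI|$ is \emph{not} bounded by any function of $\fm(\cI;\cJ)$: the surviving part of the spine can be arbitrarily long while $\fm(\cI;\cJ)$ stays small.

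The mechanism the paper uses to control exactly these $\bB$-to-$\bX_B^\cI$ interactions is not isolation (the original interface $\cI$ need not have an isolated pillar) but the algorithmic criteria ({\tt A1}) and ({\tt A2}), which guarantee that every surviving increment $\sX_j$ with $j>j^*$ has $\fm(\sX_j)<j-1$ and is at distance $>(j-1)/2$ from every $\lceil\tilde W_y\rceil$. This is the content of Lemmas~\ref{lem:C1-vs-X2-and-X4} and~\ref{lem:C2-vs-X2-and-X4}: the quadratic separation in $j$ beats the (at most linear) size of $\sX_j$, yielding an $O(\fm(\cI;\cJ))$ total without any cardinality bound on $\bX_B^\cI$. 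You never mention ({\tt A1}) or ({\tt A2}) in your proposal, and your bookkeeping gives no alternative route for these terms, so this step is missing. (A secondary, smaller imprecision: you say the new spine is ``well-separated from $\cI\setminus\cP_{x,S}$,'' but the isolation of $\cJ\restriction_S$ separates $\cP_{x,S}^\cJ$ from $\cJ\setminus\cP_{x,S}^\cJ$; it says nothing about $\cI$.)
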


\begin{proposition}\label{prop:multiplicity}
There exists $C_\Phi$ such that 
for every $L$ large and every $M\geq 1$ and $0\le h'\le h$,
\begin{align*}
\max_{\cJ\in\Phi_{\iso}(\bI_{\bW_n}\cap E_{x,S}^{h'})}\left|\{\cI\in\Phi_{\iso}^{-1}(\cJ) \,:\; \fm(\cI;\cJ)=M\}\right| \leq C_\Phi^{L^3 M}\,.
\end{align*}
\end{proposition}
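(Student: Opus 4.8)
The plan is to reconstruct $\cI$ from $\cJ = \Phi_{\iso}(\cI)$ using a bounded-size combinatorial \emph{witness}, exactly in the spirit of Lemma~\ref{lem:wall-cluster-multiplicity}, but now accounting for the three sources of data erased by the algorithm: (i) the spine increments $\sX_1,\ldots,\sX_{j^*}$ that were trivialized, (ii) the standard walls in $\bigcup_{y\in\bD}\Theta_{\textsc{st}}\Clust(\tilde\fW_{y,S})$ that were removed, and (iii) enough bookkeeping information to know \emph{which} increments/walls were deleted and to undo the height reshuffle. First I would observe that the pillar of $\cJ$ in $S$ is, by Lemma~\ref{lem:iso-map-well-defined}, a straight column $W_{x,\parallel}^{\mathbf h}$ of known width, topped by the surviving part of the old spine; so locating $x$ and reading off $\mathbf h$ in $\cJ$ is free. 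The nontrivial content is that the deleted data has total excess area (and hence total face-count, via~\eqref{eq:excess-area-wall-relations} and~\eqref{eq:increment-excess-inequalities}) comparable to $M$: indeed Claim~\ref{clm:m(W)} gives $\fm(\bigcup_{y\in\bD}\tilde W_y)\le 3\fm(\cI;\cJ)=3M$ and either $j^*-1\le (2\vee L^3)M$ or $h-\mathbf h\le M$, so the deleted spine increments $\sX_1,\dots,\sX_{j^*}$ have $\sum|\sF(\sX_j)|\le \sum(3\fm(\sX_j)+4)\le 3M + 4(j^*+1)\le C L^3 M$ by~\eqref{eq:increment-excess-inequalities} and Claim~\ref{clm:m(W)}.

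Next I would build the witness. For the spine part: record the integer $j^*$ (or the flag that ({\tt A3}) fired, together with $\hgt(v_{\sT+1})-h$), then record the face-set of the increment sequence $(\sX_1,\ldots,\sX_{j^*})$ as a $*$-connected rooted face set of size $\le C L^3 M$; by Lemma~\ref{lem:spine-reconstruction} this, appended at $x+(0,0,\hgt(\cC_\bW))$ below the surviving spine of $\cJ$, recovers the spine of $\cI$. For the environment part: proceed as in Lemma~\ref{lem:wall-cluster-multiplicity}. For each deleted wall-cluster $\Clust(\tilde\fW_{y,S})$ with $y\in\bY$, include in the witness the blue faces $\Theta_{\textsc{st}}\Clust(\tilde\fW_{y,S})$ and, for each non-initiating wall $W'$ in the cluster, a shortest red path in $\cL_0$ from $\rho(W')$ to $\rho(\partial\hull\cC)$ of the ceiling $\cC$ it is closely nested in; by Definition~\ref{def:closely-nested} each such red path has length $\le\fm(W')$, so the red faces total $\le|\bigcup_{y\in\bD}\tilde W_y|$, and the whole environment witness has $\le C M$ faces. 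Crucially, the marking rules of $\Phi_\iso$ (steps 2--3 and 6, together with the criteria ({\tt A1})--({\tt A2})) are \emph{deterministic functions of $\cI$}, so once the witness specifies the full pre-deletion data near $x$ the set $\bY$ — and hence $\bD$ — is determined; there is no extra ambiguity to pay for beyond choosing the witness face-set itself. Reattaching the recovered standard walls and the recovered spine to $\cJ$ then yields $\cI$ uniquely, so the witness is a genuine injective encoding of $\Phi_\iso^{-1}(\cJ)\cap\{\fm(\cI;\cJ)=M\}$.

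Finally I would count witnesses. A witness consists of: an integer $j^*\in\{1,\ldots,(2\vee L^3)M+1\}$ (or the ({\tt A3})-flag plus an integer $\le M$), contributing a factor $O(L^3 M)\le C^{L^3 M}$; a rooted $*$-connected face set of size $\le C L^3 M$ encoding the deleted increments, of which there are at most $C^{L^3 M}$ by Fact~\ref{fact:number-of-walls}; and a $\{\red,\blue\}$-decorated collection of $*$-connected face sets of total size $\le C M$ rooted at the (at most $O(M)$, since $|\bY|\le C M$) marked index faces, which by first partitioning the budget among components and then applying Fact~\ref{fact:number-of-walls} to each is at most $C^{M}$. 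Multiplying, the number of pre-images is at most $C_\Phi^{L^3 M}$ for a universal $C_\Phi$, as claimed. The main obstacle I anticipate is bookkeeping the roots: one must check that every deleted wall-cluster and every trivialized increment is anchored to a face that is \emph{recoverable from $\cJ$} (e.g.\ on $\partial\rho(\hull\cC)$ for a surviving ceiling $\cC$ of $\cJ$, or within $\Cyl_{x,L^3h}$ near the known column $W_{x,\parallel}^{\mathbf h}$), so that the total number of root locations is $O(M)$ rather than polynomial in $n$; this is where one leans on the ``one-sided'' confinement $\rho(\Clust(\tilde\fW_{y,S}))\subset\rho(\hull\cC)$ from the Remark after Definition~\ref{def:wall-cluster} and on the fact that all marked sites lie in $\bar x\cup\{y^\dagger,y^*\}\cup(\cL_0\cap\Cyl_{x,L^3h})$, whose relevant (nonempty-wall) portion has size $O(M)$ by~\eqref{eq:m(I;J)-WxJ}.
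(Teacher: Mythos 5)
Your overall strategy matches the paper's: a witness encoding the trivialized increments (green) plus the deleted wall-clusters (blue/red), injectivity by reconstruction from $\cJ$ plus the witness, and a count of witnesses of combined size $O(L^3 M)$. However, there is a genuine gap precisely at the step you flag as ``the main obstacle,'' and the resolution you sketch does not close it.

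Concretely, the step-6 deletions may mark indices $y$ anywhere in $\cL_0\cap\Cyl_{x,L^3 h}$, a region of $\asymp(L^3 h)^2$ faces. It is true that $|\bY|=O(M)$, but the number of ways to \emph{place} $O(M)$ roots in a region of $(L^3 h)^2$ faces is naively $\binom{(L^3 h)^2}{O(M)}$, which is not $C^{L^3 M}$ when $h\gg M$ (e.g.\ $h\asymp\log n$ with $M=O(1)$). Your statement that ``the total number of root locations is $O(M)$'' conflates the number of roots with the entropy of choosing their locations; neither the one-sided confinement $\rho(\Clust(\tilde\fW_{y,S}))\subset\rho(\hull{\fW}_{y,S})$ nor~\eqref{eq:m(I;J)-WxJ} gives the placement count. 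What actually closes this gap in the paper is the specific logarithmic form of the step-6 marking criterion: a site $y$ at distance $d(y,x)\in(L,L^3 h)$ is marked only when $\fm(\tilde W_y)\ge\log[d(y,x)]$, so the excess $M_i$ allocated to the $i$th root satisfies $M_i\ge\log d(y^i,x)$. Decomposing $\Cyl_{x,L^3 h}$ into dyadic annuli $\cA_\ell=B_{2^{\ell+1}}(x)\setminus B_{2^\ell}(x)$, this gives $N_\ell\lesssim M_\ell/\ell$ (where $N_\ell$ and $M_\ell$ are the number of roots and the total excess in ring $\ell$), so the placement cost $\binom{2^{2\ell}}{N_\ell}\le 2^{2\ell N_\ell}$ is $\le 4^{M_\ell}$ and multiplying over $\ell$ yields $C^M$. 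Your proof nowhere invokes the $\log d(y,x)$ threshold---which is the very reason item~(\ref{item:iso-pillar-walls}) of Definition~\ref{def:isolated-pillars} is phrased with logarithmic growth---so this entropy bound is unproved. (A secondary point: the distinguished roots $x,\rho(v_1),y^\dagger,y^*$ need separate location bounds, e.g.\ $\rho(v_1),y^\dagger\Subset\fW_{x,S}$ gives $O(M^2)$ choices and $d(y^*,x)\le CL^3 M$ gives $O((L^3 M)^2)$ choices; your write-up does not isolate these, though this is minor compared to the dyadic issue.)
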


We defer the proofs of these two lemmas to Section~\ref{sec:deferred-proofs} as the proofs are quite involved, and the key ideas in them (the manner in which the interaction term and multiplicity are controlled) are quite similar to those of the more involved map in~\cite{GL19b}. The additional ingredients required to replace groups of walls by wall clusters, so that no walls of $\bW$ are deleted, already appeared in the conditional rigidity result of Section~\ref{sec:wall-clusters}.

\begin{proof}[\textbf{\emph{Proof of Theorem~\ref{thm:shape-inside-ceiling}}}]
Observe first of all that if $\cI\restriction_{S} \notin \Iso_{x,L,h}$, then $\fm(\cI;\Phi_\iso(\cI)) \ge 1$ necessarily. This is because if $\cI\restriction_S \notin \Iso_{x,L,h}$, either it has $\bD \ne \emptyset$, in which case this follows from~\eqref{eq:m(I;J)-WxJ}, or it has $\fm(\sX_t) >0$ for some $t\le j^*$ in which case this follows directly from~\eqref{eq:m(I;J)}.

It thus suffices to prove that for every $r\ge 1$,
  \begin{align*}
       \mu_{n}^{\mp} \big(\fm(\cI; \Phi_{\iso}(\cI)) \geq r\mid \bI_{\bW}, E^{h'}_{x,S}\big) \leq C\exp\big[-  (\beta- CL^3) r)\big]\,.
  \end{align*} 
  and take $L = L_\beta = \beta^{1/4}$, say.  
For every $r\ge 1$, 
\begin{align*}
    \sum_{M\geq r}\,\, \sum_{\substack{\cI\in \bI_\bW\cap E^{h'}_{x,S}\\ \fm(\cI;\Phi_{\iso}(\cI)) = M}} \mu_n^\mp (\cI) & \leq \sum_{M\geq r} \sum_{\substack{\cI\in \bI_\bW\cap E^{h'}_{x,S}\\ \fm(\cI;\Phi_\iso(\cI)) = M}} e^{ - (\beta- C)M} \mu_n^\mp(\Phi_{\iso}(\cI))  \\
    & = \sum_{M\geq r}\,\, \sum_{\cJ\in \Phi_{\iso}(\bI_{\bW}\cap E^{h'}_{x,S})}  \mu_n^\mp(\cJ) \,\, \sum_{\substack{\cI\in \Phi_{\iso}^{-1}(\cJ) \\ m(\cI; \Phi_{\iso}(\cI))=M}} e^{-(\beta - CL^3 )M} \\
    & \leq \sum_{M\geq r} C_\Phi^M e^{- (\beta- C L^3)M} \mu^\mp_n(\Phi_{\iso}(\bI_{\bW}\cap E^{h'}_{x,S}))\,.
\end{align*}
In the first inequality, we used  Proposition~\ref{prop:partition-function-contribution} and in the second inequality, we used Proposition~\ref{prop:multiplicity}. 

Now, noting by Lemma~\ref{lem:iso-map-well-defined} that $\Phi_{\iso}(\bI_{\bW}\cap E_{x,S}^{h'}) \subset ( \bI_{\bW}\cap E^{h'}_{x,S})$, 
we deduce that
\[
\mu_n^\mp(\fm(\cI; \Phi_{\iso}(\cI)) \geq r,  \bI_\bW\cap E^{h'}_{x,S})\leq C e^{-(\beta - CL^3 -\log C_\Phi) r}\mu_n^\mp(\bI_\bW\cap E^{h'}_{x,S})\,.
\]
Dividing through by $\mu_n^\mp(\bI_\bW\cap E^{h'}_{x,S})$ then yields the desired conditional bound. \end{proof}

\section{Independence of pillars from the surrounding environment}\label{sec:pillar-couplings}
In this section, we prove the main decorrelation result, showing that the typical pillars in $S_n$ conditionally on $\bW_n$, are close in law, to those in infinite volume, even given that the pillars attain some high height~$h$. 
More precisely, we show that on the event that the pillars are isolated, the Radon--Nikodym derivative between pillars in $\mathbb Z^3$ and pillars conditionally on some external environment $\bW_n$ is bounded by $1\pm \epsilon_\beta$.

\begin{theorem}\label{thm:cond-uncond}
For every $\beta>\beta_0$ there exist $L_\beta \uparrow \infty$ and $\epsilon_\beta \downarrow 0$ such that the following hold. Let $S_n \subset \cL_{0,n}$ be such that $\cL_0\setminus S_n$ is connected, let $\bW_n = (W_z)_{z\in S_n}$ be an admissible collection of walls such that $\rho(\bW_n) \subset S_n^c$, and let $x_n, h_n$ be such that $h_n = o(d(x_n, S_n^c))$. 
Then
\begin{align}
    \label{eq:cond-uncond}
     1-\epsilon_\beta \leq \frac{\mu_{n}^\mp\left(\cP_{x,S_n}\in A\,,\, \cI\restriction_{S_n}\in\Iso_{x,L,h} \mid \bI_{\bW_n}\right)}{
     \mu_{\Z^3}^\mp\left(\cP_o\in A\,,\, \cI \in\Iso_{o,L,h}\right)} \leq 1+\epsilon_\beta\,, 
\end{align}
for every subset $A$ of pillars $\cP$ compatible with $\cI$ being $(L,h)$-isolated at $\cP$. 
\end{theorem}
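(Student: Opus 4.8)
The plan is to prove Theorem~\ref{thm:cond-uncond} by constructing a \emph{pillar-swap bijection} between pairs of interfaces, one sampled from the conditional measure $\mu_n^\mp(\cdot\mid\bI_{\bW_n})$ and one sampled from the infinite-volume measure $\mu_{\Z^3}^\mp$, and then showing that the ratio of the product-measures of a pair to that of the swapped pair is $1\pm\epsilon_\beta$. Concretely, fix $L=L_\beta$ (say $L_\beta = \beta^{1/4}$ as in the proof of Theorem~\ref{thm:shape-inside-ceiling}) and $h=h_n$. Given $\cI\in\bI_{\bW_n}$ with $\cI\restriction_{S_n}\in\Iso_{x,L,h}$ and $\cI'\in\Iso_{o,L,h}$, I would define $\swap(\cI,\cI')=(\cJ,\cJ')$ where $\cJ$ is obtained by excising the restricted pillar $\cP_{x,S_n}$ (which, by Claim~\ref{clm:iso-pillar-containments}, lives in $\bF_\triangledown(\bW_n)\cup\bF_\parallel(\bW_n)$) and grafting in the shifted copy of the pillar $\cP_o$ of $\cI'$ at $x+(0,0,\hgt(\cC_{\bW_n}))$, while $\cJ'$ is obtained by grafting the (unshifted) copy of $\cP_{x,S_n}$ onto the truncated interface $\cI'\setminus\cP_o$ at $o$. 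Because both pillars are $(L,h)$-isolated, Claim~\ref{clm:iso-pillar-containments} guarantees the grafted pillar only meets its new environment at the single base site, so $\cJ,\cJ'$ are valid interfaces; moreover $\cJ\in\bI_{\bW_n}$ with $\cJ\restriction_{S_n}\in\Iso_{x,L,h}$ and $\cJ'\in\Iso_{o,L,h}$, and $\swap$ is an involution on the relevant product sets. Matching the pillar face-sets under $\swap$ identifies $\{\cP_{x,S_n}=P\}\cap\{\cI\restriction_{S_n}\in\Iso_{x,L,h}\}$ with $\{\cP_o=P\}\cap\{\cI\in\Iso_{o,L,h}\}$ up to the vertical shift, so summing the identity over $\cI,\cI'$ with $\cP_{x,S_n}\in A$ will yield \eqref{eq:cond-uncond} once the weight ratio is controlled.

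**Controlling the weight ratio.** The core estimate is that
\[
\Bigl|\log\frac{\mu_n^\mp(\cI)\,\mu_{\Z^3}^\mp(\cI')}{\mu_n^\mp(\cJ)\,\mu_{\Z^3}^\mp(\cJ')}\Bigr|\le\epsilon_\beta
\]
uniformly over valid pairs. By Theorem~\ref{thm:cluster-expansion}, the excess-area (energy) terms cancel exactly: $\fm(\cI;\cJ)+\fm(\cI';\cJ')=0$, since $\swap$ merely exchanges two pillars and their total area is conserved. What remains is the difference of $\g$-sums, which I would organize using the decomposition of Claim~\ref{clm:iso-pillar-containments}: faces split into pillar faces $\bF_\triangledown\cup\bF_\parallel$ and environment faces $\bF_-\cup\bF_\curlyvee\cup\bF_\ext$ (for the conditional side, and analogously for the $\Z^3$ side). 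For a face internal to the pillar, or internal to the environment and far from the base, the radius of congruence $\br$ before and after the swap is essentially unchanged because the pillar (resp.\ environment) it sees is identical. The only faces whose congruence radius is affected are those that "see across" the pillar-environment interface; by the product structure of the bound \eqref{eq:g-exponential-decay}, each such contribution is bounded by $\bar K e^{-\bar c\br}$ with $\br$ forced to be attained by a pillar--environment pair, hence by a sum of the form $\sum_{f\in\bF_\triangledown\cup\bF_\parallel}\sum_{g\in\bF_-\cup\bF_\curlyvee\cup\bF_\ext}e^{-\bar c d(f,g)}$ — exactly the quantity bounded by $\bar C e^{-\bar c L}$ in Lemma~\ref{lem:iso-pillar-interactions}. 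Summing over the (four) such cross-terms arising in $\cI,\cJ,\cI',\cJ'$ and taking $L=L_\beta\to\infty$ gives the bound by $\epsilon_\beta$.

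**Subtlety at the base and existence of the counterpart.** Two points need care. First, one must confirm the swap is genuinely well-defined and involutive: a priori $\cI'$ might not exist or the grafted pillar might collide with leftover walls. But Theorem~\ref{thm:shape-inside-ceiling}, specifically \eqref{eq:Ph-subset-Z3}, guarantees $\mu_{\Z^3}^\mp(\Iso_{o,L,h}\mid\hgt(\cP_o)\ge h')\ge1-\epsilon_\beta$, so the $\Z^3$-side isolated set has near-full measure; combined with \eqref{eq:Ph-subset-Lambda} for the conditional side, one can restrict attention to isolated configurations on both sides at the cost of an additional $\epsilon_\beta$. The collision-freeness is precisely the disjointness statement $(\bF_\triangledown\cup\bF_\parallel)\cap(\bF_-\cup\bF_\curlyvee\cup\bF_\ext)=\emptyset$ from Claim~\ref{clm:iso-pillar-containments}. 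Second, the $\g$-weights of the handful of faces immediately adjacent to the base site $x$ (resp.\ $o$) change by $O(1)$ each, not by an exponentially small amount — but there are only $O(1)$ such faces, and crucially these same $O(1)$ faces appear with the \emph{same} local geometry on both the $\cI\to\cJ$ and $\cI'\to\cJ'$ sides (since $\cI$'s pillar base equals $\cJ'$'s pillar base, and $\cI'$'s base region equals $\cJ$'s base region, up to the shift), so these $O(1)$ contributions cancel in the product ratio. \textbf{The main obstacle} I anticipate is handling precisely this base region carefully: one must verify that after excision, the truncated interfaces $\cI\setminus\cP_{x,S_n}$ and the re-grafted $\cJ$ agree as face sets in a neighborhood of $x$ (both are flat there, by item~\eqref{item:iso-pillar-walls} of Definition~\ref{def:isolated-pillars} forcing $\fm(\tilde W_y)=0$ for $d(y,x)\le L$), so that the local environment the grafted pillar sees matches what the original pillar saw — this is what makes the $O(1)$ base contributions cancel rather than accumulate. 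This bookkeeping, together with checking the $\Z^3$-side analogue of Lemma~\ref{lem:iso-pillar-interactions}, is routine but is where all the actual work lies.
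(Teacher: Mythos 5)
Your proposal follows essentially the same route as the paper's proof of Theorem~\ref{thm:cond-uncond}: an involutive pillar-swap map on pairs of interfaces, well-definedness via the containment structure of Claim~\ref{clm:iso-pillar-containments}, exact cancellation of the excess-area terms, and control of the residual $\g$-interaction terms via Lemma~\ref{lem:iso-pillar-interactions} with $L=L_\beta\to\infty$. Your identification of the key subtlety — that faces near the base must be paired across the swap so their $\g$-contributions are compared to near-congruent counterparts rather than accumulating — is precisely what the paper's face-level bijection $\Theta_\swap$ (which transports the whole cylinder $\Cyl_{x,L/2}$ along with the pillar) accomplishes; and you correctly note that Theorem~\ref{thm:shape-inside-ceiling} supplies the near-full-measure estimates needed to close the normalization. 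The one technicality you elide is that $\mu_{\Z^3}^\mp$ must be approached as a $\Lambda_m$-limit since the cluster expansion (Theorem~\ref{thm:cluster-expansion}) is a finite-volume statement and $\mu_{\Z^3}^\mp$ assigns no mass to individual interfaces, but this is routine.
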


Before proving this result, we combine it with Theorem~\ref{thm:shape-inside-ceiling} to deduce Theorem~\ref{thm:cond-uncond-simple}.
\begin{proof}[\emph{\textbf{Proof of Theorem~\ref{thm:cond-uncond-simple}}}]
For ease of notation, define the following distributions on pillars:
\begin{align*} \phi(\cdot) &:= \mu_{n}^\mp \left(\cP_{x,S_n}\in\cdot \mid \bI_{\bW_n}\right)\,, 
&\psi(\cdot) := \mu_{\Z^3}^\mp \left(\cP_o\in\cdot \right)\,, \\
 \widetilde\phi(\cdot) &:= \mu_{n}^\mp \left(\cP_{x,S_n}\in\cdot\,,\,\cI\restriction_{S_n}\in\Iso_{x,L,h} \mid \bI_{\bW_n}\right)\,, 
&\widetilde\psi(\cdot) := \mu_{\Z^3}^\mp \left(\cP_o\in\cdot\,,\, \cI\in\Iso_{o,L,h}\right)\,.
\end{align*}
Define the set $E^h$ as the set of pillars (modulo horizontal translations) that attain height $h$, that is, $\cP_{x,S}\in E^h$ if $\cI\restriction_S \in E_x^h$. Let $\fP_h$ be the subset of all pillars in $E^h$ that are compatible with $\Iso_{x,L,h}$ i.e., appear as $\cP_x$ in at least one interface $\cI\in \Iso_{x,L,h}$.
With these notations, we have by~\eqref{eq:Ph-subset-Lambda} and~\eqref{eq:Ph-subset-Z3} (with $h'=h$) that
\begin{align*} &\widetilde\phi( \fP_h) \geq (1-\epsilon_\beta) \phi(E^h)\,, 
 & \widetilde\psi(\fP_h) \geq (1-\epsilon_\beta)\psi(E_h)\,,
\end{align*}
whereas the inequalities in~\eqref{eq:cond-uncond} (with $A=\fP_h$) imply
\[ (1-\epsilon_\beta)\widetilde\psi(E^h) \leq \widetilde{\phi}(E^h)\leq (1+\epsilon_\beta)\widetilde\psi(E^h)\,.\]
Showing~\eqref{eq:cond-uncond-coupling-1} amounts to proving that 
\[ (1-\epsilon_\beta)\psi(E^h) \leq \phi(E^h) \leq (1+\epsilon_\beta)\psi(E^h)\,,\]
which immediately follows from the preceding inequalities and the facts  $\widetilde\phi \leq \phi $, $\widetilde\psi \leq\psi$ and $\fP_h\subset E^h$, as
\begin{align*}
    \phi(E^h)&\geq \widetilde\phi(\fP_h) \geq  (1-\epsilon_\beta)\widetilde\psi(\fP_h)\geq (1-\epsilon_\beta)^2\psi(\fP_h)\,,\qquad\mbox{ and}\\ \phi(E^h) &\leq \frac1{1-\epsilon_\beta}\widetilde\phi(\fP_h) \leq \frac{1+\epsilon_\beta}{1-\epsilon_\beta} \widetilde\psi(\fP_h)\leq \frac{1+\epsilon_\beta}{1-\epsilon_\beta} \psi(E^{h})\,.
\end{align*}
It remains to show~\eqref{eq:cond-uncond-coupling-2}. Let $\bar A\subset E^h$, and write $A = \bar A \cap \fP_h$. We have
\begin{align*} \psi(\bar A) &= \widetilde\psi(A) + \mu_{\Z^3}^\mp(\cP_o\in \bar A\,,\,\cI\notin \Iso_{h}) \leq \widetilde\psi(A) + \mu_{\Z^3}^\mp(\cP_o\in E^h\,,\,\cI\notin \Iso_{h})\,,
\end{align*}
and in particular,
\[\frac{\widetilde\psi(A)}{\psi(E^h)}\leq \psi_h(\bar A \mid E^h) \leq \frac{\widetilde\psi(A)}{\psi(E^h)} + \epsilon_\beta  \,.\]
Similarly, 
by the same argument applied to $\phi$, and using the above inequalities relating $\widetilde\phi$ and $\widetilde\psi$, as well as $\phi(E^h)$ and $\psi(E^h)$,
 we find that 
\[
\frac{(1-\epsilon_\beta)\widetilde\psi(A)}{(1+\epsilon_\beta)\psi(\overline \fP_h)}\leq
\frac{\widetilde\phi(A)}{\phi(\overline\fP_h)}\leq  \phi(\bar A \mid \overline\fP_h) \leq \frac{\widetilde\phi( A)}{\phi(\overline \fP_h)}+ \epsilon_\beta
\leq
\frac{(1+\epsilon_\beta)\widetilde\psi(A)}{(1-\epsilon_\beta)\psi(\overline\fP_h)} + \epsilon_\beta
\,.\]
Combining the last two displays yields
\[ 
\left|\psi(\bar A \mid E^h) - \phi(\bar A \mid E^h) \right|\leq \epsilon_\beta + \frac{2\epsilon_\beta}{1-\epsilon_\beta} \frac{\widetilde\psi(A)}{\psi(E^h)} \leq \epsilon_\beta + \frac{2\epsilon_\beta}{1-\epsilon_\beta}\,,
\]
which gives the desired for some other sequence $\epsilon_\beta$. 
\end{proof}

The majority of this section (Sections~\ref{subsec:swap-map-construction}--\ref{subsec:swap-main-proof}) is now devoted to the proof of Theorem~\ref{thm:cond-uncond}. In Section~\ref{subsec:crude-pillar-correlations}, we end with a cruder decorrelation estimate used on interactions between pillars at $o(h)$ distances.

\subsection{Constructing a swapping map}\label{subsec:swap-map-construction}
We will use a 2-to-2 map, similar to those used in Section~7 of~\cite{GL19a} (for proving stationarity of the increment sequence of a tall pillar) to swap $\cP_{x,S_n}$ in an interface in $\Lambda_n$ having $\cI\in \bI_{\bW_n}$, with  $\cP_{x'}$ in an interface in some $\Lambda_m$. The fact that the pair of pillars $\cP_{x'}$ and $\cP_{x,S_n}$ are isolated per Definition~\ref{def:isolated-pillars}, ensures that their weights, in their different environments, are close to one another. 

\begin{definition}
Recall Definition~\ref{def:isolated-pillars} and for $L$, $n,m$, $h = h_n$ and $x= x_n \in \cL_{0,n}$ and $x' = x_m'\in \cL_{0,m}$, let 
\begin{align*}
    \Iso_{h,S}^{(n)} := \{\cI: \cI\restriction_{S_n}\in \Iso_{x,L,h}\}\,, \qquad \mbox{and}\qquad \Iso_{h}^{(m)} := \{\cI': \cI' \in \Iso_{x',L,h}\}\,.
\end{align*}
\end{definition}

Interfaces in $\Iso_{x,L,h}$ are such that the pillar $\cP_x$ is sufficiently isolated from its surrounding walls that its interactions (as far as the sub-critical bubbles, measured through the term $\g$ in Theorem~\ref{thm:cluster-expansion} are concerned) with any two different ``environments" $\cI \setminus \cP_o$ are close. This lets us perform a swapping operation that interchanges an isolated pillar under $\mu_n^\mp(\cdot \mid \bI_{\bW_n})$ with one drawn from $\mu_{\mathbb Z^3}^\mp$ and show that this swapping operation preserves weights on the events $\cI \in \Iso_{h,S}^{(n)}$ and $\cI' \in \Iso_h^{(m)}$. We formalize this as follows.

For the next few subsections (through Section~\ref{subsec:swap-main-proof}), we fix ourselves in the setting of Theorem~\ref{thm:cond-uncond} by taking sequences $h=h_n$ such that $1\le h\le n \le m$, $S= S_n \subset \cL_{0,n}$ such that $\cL_0\setminus S_n$ is connected, $x= x_n \in S_n$ such that $h_n = o(d(x_n, \partial S_n))$ and $x' = x_m'$ such that $h_n = o(d(x'_m,\partial \cL_{0,m})$. Further, fix $\bW = \bW_n = (W_z)_{z\notin S_n}$ having $\rho(\bW_n) \subset S_n^c$ and let $\cC_{\bW}$ be the ceiling in the interface $\cI_{\bW}$ whose projection includes $x$. 

\begin{figure}
\vspace{-0.21in}
    \centering
    \begin{tikzpicture}
    \node at (-4.75,1.5) {\includegraphics[width  = .4\textwidth]{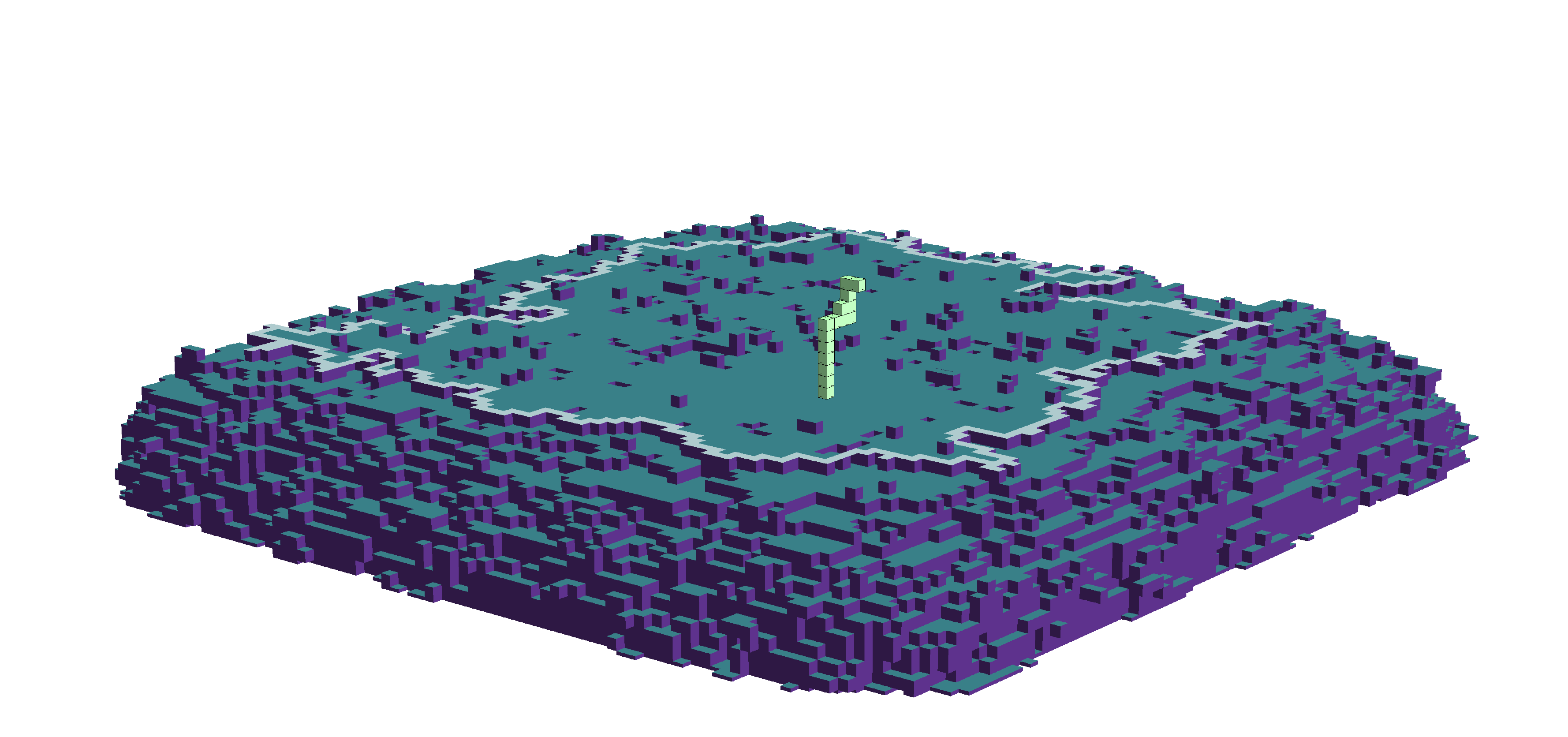}};
    \node at (-4.75,-1.5) {\includegraphics[width  = .42\textwidth]{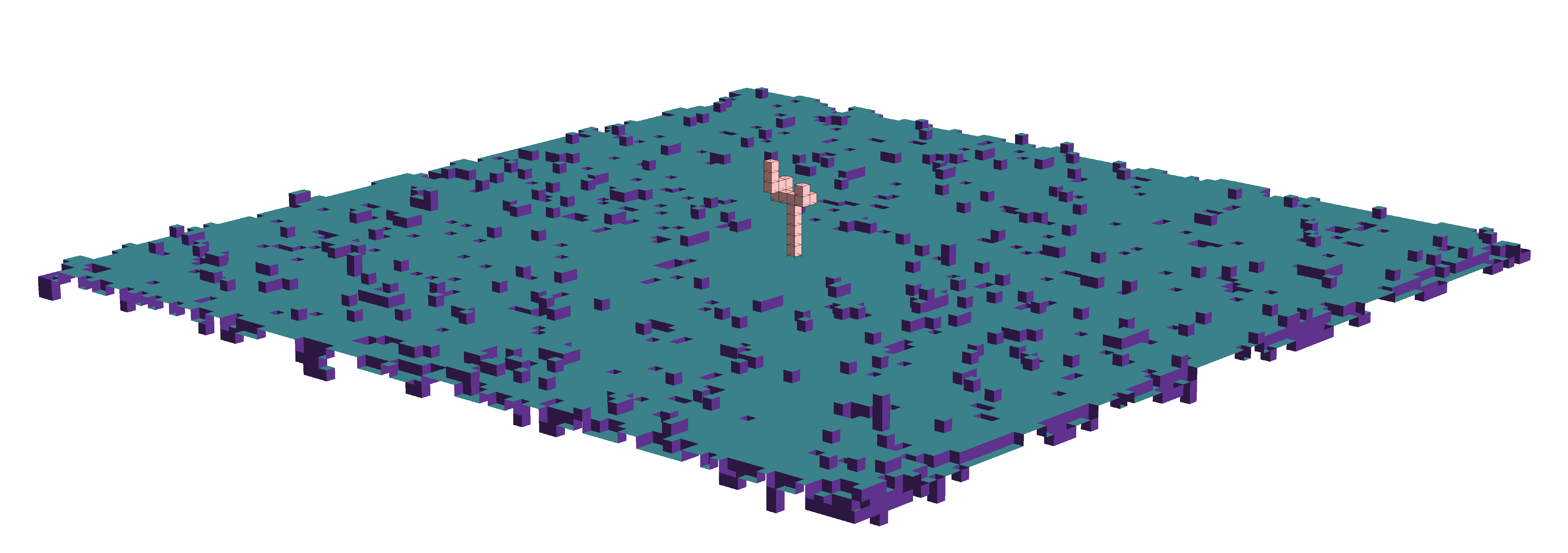}};
     \node at (4.75,1.5) {\includegraphics[width  = .42\textwidth]{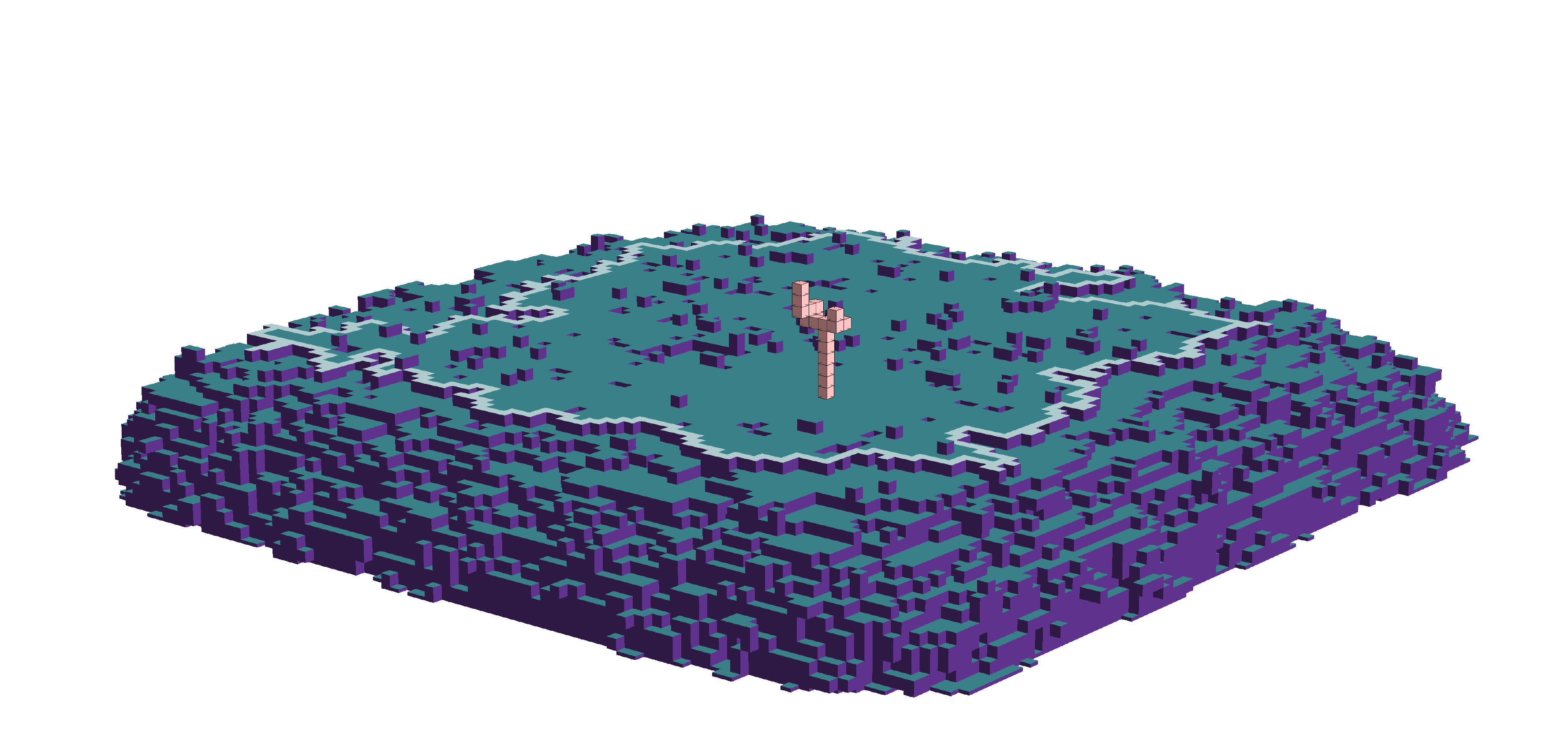}};
    \node at (4.75,-1.5) {\includegraphics[width  = .4\textwidth]{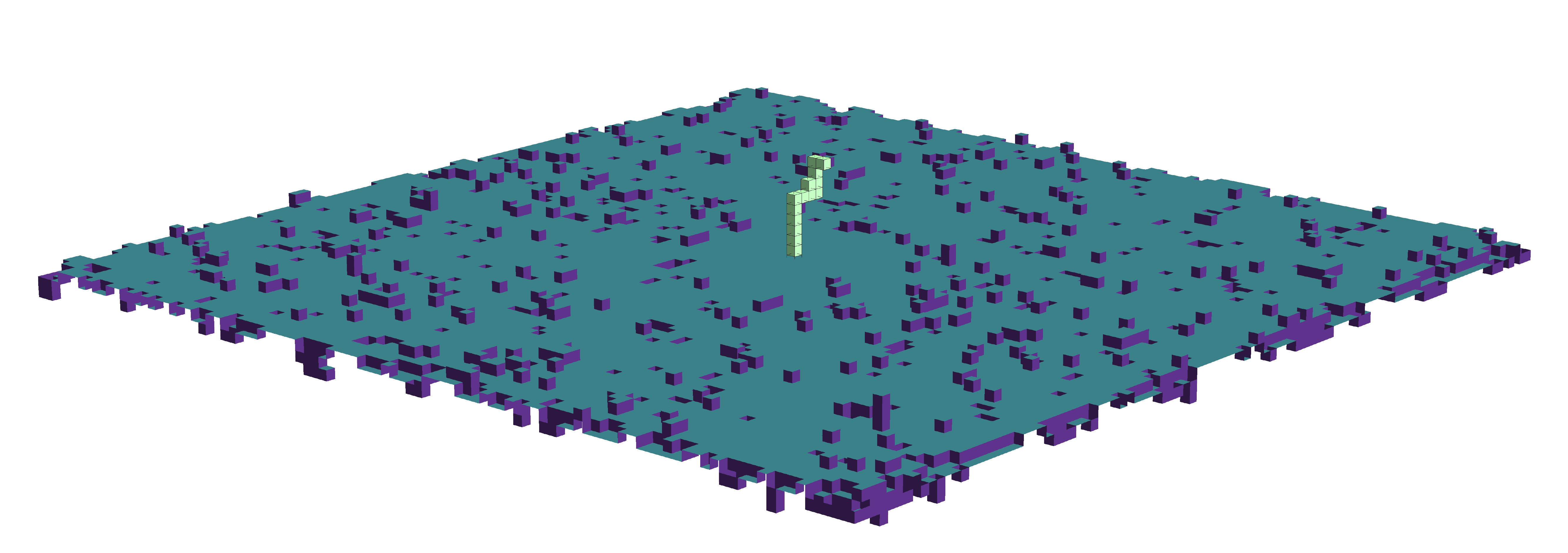}};
    
    \node[circle,draw,opacity=0.3,fill=gray!10,inner sep=0mm,minimum width=20pt] (green) at (-4.5,1.6) {
    };
    
    \node[circle,draw,color=gray!50,opacity=0.3,fill=gray!10,inner sep=0mm,minimum width=20pt] (red) at (-4.7,-1.2) {
    };
    
    \path (green.south west) edge [<->, color=gray!50, thick, bend right=20] (red.north west);

    \draw[->, thick] (-1.25,0)--(1.25,0);
    \node[font  = \large] at (0,.25) {$\Phi_\swap$};
    \end{tikzpicture}
    \caption{The  map $\Phi_\swap$ takes two interfaces $(\cI,\cI')$ having isolated pillars $\cP_{x,S}$ and $\cP_{x'}'$ respectively (left), and swaps their pillars to obtain two interfaces $(\cJ,\cJ')$ (right).}
    \label{fig:swap-map}
\end{figure}

\begin{definition}\label{def:swap-map}
Take $L = L_\beta$ to be chosen later, and consider two interfaces
$$ \cI \in \Iso_{h,S_n}^{(n)}\,, \qquad \mbox{and} \qquad \cI'\in \Iso_{h}^{(m)}\,.$$
The map $\Phi_{\swap} = \Phi_{\swap}(x,S,\bW)$ acts on such pairs of interfaces, and from $(\cI,\cI')$ constructs the pair $(\Phi_{\swap}^1(\cI,\cI'), \Phi_{\swap}^2(\cI,\cI'))$ as follows: 
\begin{enumerate}
    \item Construct $\Phi_{\swap}^1(\cI,\cI')$ by taking the interface $\cI$ and replacing $\cP_{x,S}$ with $\cP'_{x_m'}$ (in $\sigma(\cI)$, flipping all sites in $\sigma(\cP_{x,S})$ to minus, then flipping sites in the shift $\sigma(\cP_{x'}') - x'+x+ (0,0,\hgt(\cC_\bW))$ to plus). 
    \item Construct $\Phi_{\swap}^2(\cI,\cI')$ by taking the interface $\cI'$ and similarly replacing $\cP'_{x'}$ with $\cP_{x,S}$. 
\end{enumerate}
\end{definition}

\noindent See Figure~\ref{fig:swap-map} for a depiction of $\Phi_\swap$.

\subsection{Properties of \texorpdfstring{$\Phi_{\swap}$}{Phi\_swap}}
We show first that the fact that $\cI\restriction_{S}\in \Iso_h^{(n)}$ and $\cI' \in \Iso_h^{(m)}$ ensures that the map of Definition~\ref{def:swap-map} is a well-defined bijection on the right sets of interfaces. Towards this, recall the definitions of $\Cone_{x,\bW}$ and $\overline\Cone_{x,\bW}$ from~\eqref{eq:pillar-cone-def}--\eqref{eq:complement-cone-def}, and the sets $\bF_{\triangledown},\bF_{\parallel}, \bF_{-}$, $\bF_{\curlyvee},\bF_{\ext}$ depending on $x,L,h,\bW$, which were shown to cover every interface $\cI \in \Iso_{h,S}^{(h)}\cap \bI_{\bW}$ in~\eqref{eq:interface-containment-definitions}.

\begin{lemma}\label{lem:swap-map-well-defined}
The map $\Phi_{\swap}$ is a bijection from 
\begin{align}\label{eq:swap-map-domain}
  (\Iso_{h,S}^{(n)}\cap \bI_{\bW} \cap \{\cI:\cP_{x,S} \in A\})\times (\Iso_{h}^{(m)} \cap \{\cI': \cP'_{x'}\in A'\})\,,
  \end{align}
  to
  \begin{align}\label{eq:swap-map-range}
  (\Iso_{h,S}^{(n)}\cap \bI_{\bW}\cap \{\cI:\cP_{x,S}\in A'\}) \times (\Iso_{h}^{(m)} \cap \{\cI':\cP'_{x'}\in A\})\,.
\end{align}
In particular, the map $\Phi_{\swap}$ is a bijection from $$(\Iso_{h,S}^{(n)}\cap \bI_{\bW})\times \Iso_{h}^{(m)} \longrightarrow (\Iso_{h,S}^{(n)} \cap \bI_{\bW}) \times \Iso_{h}^{(m)}\,.$$
\end{lemma}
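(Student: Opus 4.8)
The plan is to verify that $\Phi_{\swap}$, as defined in Definition~\ref{def:swap-map}, indeed maps the product set in~\eqref{eq:swap-map-domain} into the product set in~\eqref{eq:swap-map-range}, and then that it has a well-defined inverse of the same form (obtained by running the same swap in reverse), which will give the bijection; the final ``in particular'' statement then follows by taking $A = A' = $ (all pillars compatible with isolation). First I would check that the swap operation even produces valid interfaces. This is where Claim~\ref{clm:iso-pillar-containments} does the heavy lifting: since $\cI\restriction_{S}\in\Iso_{x,L,h}$, the pillar $\cP_{x,S}$ is contained in $\bF_{\triangledown}\cup\bF_{\parallel}$ and the truncated interface $\cI\setminus\cP_{x,S}$ is contained in $\bF_{-}\cup\bF_{\curlyvee}\cup\bF_{\ext}$, and these two face-sets are disjoint and touch only at $x+(0,0,\hgt(\cC_\bW))$. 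Likewise, since $\cI'\in\Iso_{x',L,h}$, the pillar $\cP'_{x'}$ and its truncated interface $\cI'\setminus\cP'_{x'}$ decompose the same way (with $\bW = \varnothing$, i.e. $\hgt(\cC_\bW)=0$, in $\Lambda_m$). Hence in $\sigma(\cI)$ we may flip $\sigma(\cP_{x,S})$ to minus without affecting any other $*$-connected plus component, and then graft in $\sigma(\cP'_{x'})$ translated to sit on top of $x+(0,0,\hgt(\cC_\bW))$: the translated pillar lands inside $\bF_{\triangledown}\cup\bF_{\parallel}$ (cone and parallel-column regions are defined only by $x,L,h$, not by which pillar occupies them), so it is disjoint from the retained environment $\bF_{-}\cup\bF_{\curlyvee}\cup\bF_{\ext}$ except at the base cell, and the result is a legitimate spin configuration whose induced interface is $\Phi^1_{\swap}(\cI,\cI')$. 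The symmetric statement gives validity of $\Phi^2_{\swap}(\cI,\cI')$.

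Next I would verify the membership claims for the outputs. That $\Phi^1_{\swap}(\cI,\cI')\in\bI_{\bW}$ is immediate: the operation only alters faces projecting into $\rho(\Cyl_{x,10h})\subset S_n$ (both the old pillar and the new one live in that cylinder, by the containments above and the isolation bounds), so the walls indexed by $z\notin S_n$ are untouched, hence $\Phi^1_{\swap}(\cI,\cI')$ has the same restriction to $S_n^c\times\Z$ as $\cI$. That $\Phi^1_{\swap}(\cI,\cI')\restriction_{S_n}\in\Iso_{x,L,h}$ requires checking both clauses of Definition~\ref{def:isolated-pillars}: clause~\eqref{item:iso-pillar-increments} (the increment and spine-size bounds) transfers verbatim from $\cI'\in\Iso_{x',L,h}$ because the new pillar $\cP'_{x'}$ satisfied exactly those bounds and these are translation-invariant intrinsic properties of the pillar; clause~\eqref{item:iso-pillar-walls} (the bounds on $\fm(\tilde W_y)$ for the walls of the truncated interface) transfers verbatim from $\cI\restriction_S\in\Iso_{x,L,h}$ because $\Phi^1_{\swap}(\cI,\cI')\setminus\cP_{x,S}$ has precisely the same walls as $\cI\restriction_S\setminus\cP_{x,S}$ (we only changed the pillar, not the environment). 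Since the new pillar $\cP'_{x'}\in A'$, we get $\Phi^1_{\swap}(\cI,\cI')\in\Iso_{h,S}^{(n)}\cap\bI_{\bW}\cap\{\cP_{x,S}\in A'\}$. The analogous four-part check for $\Phi^2_{\swap}(\cI,\cI')\in\Iso_h^{(m)}\cap\{\cP'_{x'}\in A\}$ is symmetric (even simpler, as there is no $\bW$-constraint).

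Finally, bijectivity: I would exhibit the inverse explicitly. Define $\Psi$ on the set~\eqref{eq:swap-map-range} by the very same recipe with the roles of the two coordinates' pillars interchanged --- take the first interface, delete its pillar (now in $A'$) and insert the second interface's pillar (now in $A$), and vice versa. By the argument just given, $\Psi$ maps~\eqref{eq:swap-map-range} back into~\eqref{eq:swap-map-domain}. It remains to observe $\Psi\circ\Phi_{\swap} = \mathrm{id}$ and $\Phi_{\swap}\circ\Psi = \mathrm{id}$: this is clear at the level of spin configurations, since removing a pillar and inserting a pillar are mutually inverse operations once we know (from the disjointness in Claim~\ref{clm:iso-pillar-containments}) that neither operation disturbs the environment or the other interface. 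The one genuinely subtle point --- and the main thing to be careful about --- is that after the first swap the pillar region is still exactly $\bF_{\triangledown}\cup\bF_{\parallel}$ and the environment region still exactly $\bF_{-}\cup\bF_{\curlyvee}\cup\bF_{\ext}$, with the same separation, so that the second (inverse) swap is itself well-defined and undoes the first; this is guaranteed because the sets $\bF_{\bullet}$ depend only on $(x,L,h,\bW)$ and $\Phi^1_{\swap}(\cI,\cI')$ is again in $\Iso_{h,S}^{(n)}\cap\bI_{\bW}$, to which Claim~\ref{clm:iso-pillar-containments} applies. With both $\Psi\circ\Phi_{\swap}$ and $\Phi_{\swap}\circ\Psi$ the identity, $\Phi_{\swap}$ is a bijection between the stated sets; specializing $A,A'$ to the full compatible-pillar sets gives the displayed bijection $(\Iso_{h,S}^{(n)}\cap\bI_{\bW})\times\Iso_h^{(m)}\to(\Iso_{h,S}^{(n)}\cap\bI_{\bW})\times\Iso_h^{(m)}$.
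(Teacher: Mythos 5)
Your proposal is correct and follows essentially the same route as the paper's proof: you invoke Claim~\ref{clm:iso-pillar-containments} to get the pillar/environment separation that makes the spin-flip grafting well-defined, verify $\bI_\bW$-membership and both clauses of the isolation definition by noting the pillar and the truncated interface are inherited from $\cI'$ and $\cI$ respectively, and close with the observation that $\Phi_\swap$ is its own inverse (your $\Psi$ is literally $\Phi_\swap$ again). The paper phrases the last step more compactly---``$\Phi_\swap$ is its own inverse and sends~\eqref{eq:swap-map-domain} back to~\eqref{eq:swap-map-range} by swapping the roles of $A$ and $A'$''---but the content is identical.
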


\begin{proof}
The second claim in the lemma is a special case of the first, so it suffices to prove the former. Consider two interfaces, $\cI \in \Iso_{h,S}^{(n)} \cap \bI_{\bW} \cap \{\cP_{x,S} \in A\}$ and $\cI'\in \Iso_h^{(m)}\cap \{\cP_{x'}\in A'\}$. We begin with the following observation from Claim~\ref{clm:iso-pillar-containments}.  
\begin{enumerate}
    \item Every face $f$ in $\cI\setminus \cP_{x,S}$ having $d(f,x)\le L$ is a ceiling face at height $\hgt(\cC_\bW)$.
    \item Every face $f$ in $\cI'\setminus \cP_{x'}'$ having $d(f,x)\le L$ is a ceiling face at height zero.
\end{enumerate}

It follows from this that if no plus site in $\sigma(\cI\setminus \cP_{x,S})\cap \cL_{>\hgt(\cC_\bW)}$ is $*$-adjacent to a site of 
$$\theta \cP' := \cP'_{x'} -x' + x + (0,0, \hgt(\cC_\bW))\,,$$ 
then $\cJ= \Phi_\swap^1(\cI,\cI')$
is a valid interface with $\cP_{x,S}(\cJ) = \theta\cP'_{x'}$, and with \begin{align}\label{eq:truncated-swaps-equivalent}
    \cJ\setminus \cP_{x,S}(\cJ) = \cI \setminus \cP_{x,S}(\cI)\,, \quad \mbox{and thus} \quad \cJ\restriction_S\setminus \cP_{x,S}(\cJ) = \cI\restriction_S \setminus \cP_{x,S}(\cI)
\end{align} This non-adjacency follows from Claim~\ref{clm:iso-pillar-containments}, along with the observations that $d(\bF_{\curlyvee},\bF_{\triangledown}\cup \bF_{\parallel})\ge L>0$, and $d(\bF_{\ext},\bF_{\triangledown}  \cup \bF_{\parallel})\ge L^3 h/2>0$.  
By analogous reasoning, $\cJ' = \Phi^2_\swap(\cI,\cI')$ is also a well-defined interface, and in turn has $\theta\cP'_{x'}(\cJ') = \cP_{x,S}(\cI)$, and $\cJ'\setminus \cP_{x'}'(\cJ) = \cI' \setminus \cP_{x'}'(\cI)$. 

Let us now see that $\cJ$ is in $\Iso_{h,S}^{(n)} \cap \bI_{\bW} \cap \{\cP_{x,S}\in A'\}$. To see that $\cJ \in \bI_{\bW}$, it suffices for us to show that $\rho(\cP_{x'}')\cap \rho(\bW) = \emptyset$, from which we would deduce that  $\cJ\in \bI_{\bW}$ via~\eqref{eq:truncated-swaps-equivalent}. This follows immediately from the pillar containment of Claim~\ref{clm:iso-pillar-containments} and the fact that $h = o(d(x,S^c))$. It follows that $\cJ$ is in $\Iso_{h,S}^{(n)}$ as its restricted pillar $\cP_{x,S}(\cJ) = \theta\cP'$ which satisfies item~\eqref{item:iso-pillar-increments} in Definition~\ref{def:isolated-pillars}, and the remainder of its interface $\cJ\restriction_S\setminus \cP_{x,S}(\cJ)$ agrees with $\cI\restriction_{S}\setminus\cP_{x,S}(\cI)$, which satisfied item~\eqref{item:iso-pillar-walls} in Definition~\ref{def:isolated-pillars}. Finally, the pillar $\cP_{x,S}(\cJ)$ is in $A'$ as it is a horizontal shift of $\cP'_{x'}\in A'$. 
The fact that $\Phi^2_{\swap}(\cI,\cI')\in \Iso_{h}^{(m)} \cap \{\cP_{x'}\in A\}$ follows from analogous reasoning. 

To conclude that the map is a bijection, observe that $\Phi_{\swap}$ is its own inverse and sends the set of~\eqref{eq:swap-map-domain} back to~\eqref{eq:swap-map-range} by swapping the roles of $A$ and $A'$.
\end{proof}

We next construct a bijection from the pair of face-sets $(\cI,\cI')$ that encodes the action of $\Phi_{\swap}$. Let $\bF_{\triangledown}, \bF_{\parallel}, \bF_{-}, \bF_{\curlyvee}, \bF_{\ext}$ and $\bF'_{\triangledown}, \bF'_{\parallel}, \bF'_{-}, \bF'_{\curlyvee}, \bF'_{\ext}$ be the sets defined by~\eqref{eq:interface-containment-definitions} with respect to $(x, \bW, \cC_\bW)$ and $(x',\emptyset,\cL_{0,m})$ respectively.

\begin{definition}\label{def:swap-face-bijection}
For every $\cI\in \bI_{\bW}\cap \Iso_{h,S}^{(n)}$ and $\cI'\in \Iso_{h}^{(m)}$, we construct the following map $\Theta_{\swap}$ from the pair of face-sets $(\cI,\cI')$, viewed as subsets of $\sF(\Lambda_n)\times \sF(\Lambda_m)$, to pairs of face subsets of $\sF(\Lambda_n)\times \sF(\Lambda_m)$. 
\begin{enumerate}
    \item Map $\cI \cap (\bF_{\triangledown}\cup \bF_{\parallel} \cup \Cyl_{x,L/2})$ to their shift by $-x + x' -(0,0,\hgt(\cC_\bW))$, in the second face subset. 
    \item Map $\cI'\cap (\bF'_{\triangledown}\cup \bF'_{\parallel}\cup \Cyl_{x',L/2})$ to their shift by $-x' + x +(0,0,\hgt(\cC_\bW))$, in the first face subset. 
    \item Identify the remaining faces of $\cI$ with themselves (in the first face subset).
    \item Identify the remaining faces of $\cI'$ with themselves (in the second face subset). 
\end{enumerate}
\end{definition}

\begin{lemma}\label{lem:swap-map-properties}
For every $(\cI,\cI') \in  (\{\cI\restriction_{S}\in \Iso_{h}^{(n)}\}\cap \{\cP_{x,S} \in A\}\cap \bI_{\bW})\times \Iso_{h}^{(m)}$, the map $\Theta_{\swap}$ is a bijection from the sets of faces $(\cI,\cI')$ to the sets of faces $(\Phi_{\swap}^1(\cI,\cI'), \Phi_{\swap}^2(\cI,\cI'))$. In particular, 
\begin{align}\label{eq:swap-map-excess-area}
    |\cI|+ |\cI'| = |\Phi_{\swap}^1(\cI, \cI')| + |\Phi_{\swap}^2(\cI,\cI')|\,.
\end{align}
Moreover, $\Theta_{\swap}$ is such that 
\begin{align}\label{eq:swap-map-face-bijection}
    \br(f,\cI; \Theta_{\swap}f,\cJ) & \ge d(f,\bF_{\triangledown}\cup \bF_{\parallel}) \qquad\mbox{for all }f\in \cI\setminus (\bF_{\triangledown}\cup \bF_{\parallel}\cup \Cyl_{x,L/2})\,,  \\
    \br(f',\cI'; \Theta_{\swap}f',\cJ') & \ge d(f',\bF'_{\triangledown}\cup \bF'_{\parallel}) \qquad \mbox{for all }f'\in \cI'\setminus (\bF_{\triangledown}'\cup \bF_{\parallel}'\cup\Cyl_{x',L/2})\,. \label{eq:swap-map-face-bijection-2} \\ 
    \br(f,\cI; \Theta_{\swap}f,\cJ') & \ge d(f,\bF_{\curlyvee}\cup \bF_{\ext}) \qquad \mbox{for all }f\in \cI\cap (\bF_{\triangledown}\cup \bF_{\parallel}\cup \Cyl_{x,L/2})\,, \label{eq:swap-map-face-bijection-3} \\ 
    \br(f', \Theta_{\swap}f') & \ge d(f',\bF'_{\curlyvee}\cup \bF'_{\ext}) \qquad \mbox{for all }f'\in \cI'\cap (\bF'_{\triangledown}\cup \bF'_{\parallel}\cup \Cyl_{x',L/2})\,. \label{eq:swap-map-face-bijection-4}
    \end{align}
\end{lemma}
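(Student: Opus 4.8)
The plan is to verify that $\Theta_{\swap}$ is a well-defined bijection between the face sets $(\cI,\cI')$ and $(\cJ,\cJ')$, where $\cJ := \Phi_{\swap}^1(\cI,\cI')$ and $\cJ' := \Phi_{\swap}^2(\cI,\cI')$ — which yields~\eqref{eq:swap-map-excess-area} by a face count — and then to read off the congruence bounds~\eqref{eq:swap-map-face-bijection}--\eqref{eq:swap-map-face-bijection-4} by tracking which faces $\Phi_{\swap}$ modifies and where those faces sit relative to the cones $\bF_{\triangledown},\bF_{\parallel},\bF_{-},\bF_{\curlyvee},\bF_{\ext}$ and $\bF'_{\triangledown},\ldots,\bF'_{\ext}$ supplied by Claim~\ref{clm:iso-pillar-containments}.

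For the bijection, write $\theta\cP' := \cP'_{x'} - x' + x + (0,0,\hgt(\cC_\bW))$. By Claim~\ref{clm:iso-pillar-containments}, $\cI$ is the disjoint union of $\cP_{x,S}\subseteq\bF_{\triangledown}\cup\bF_{\parallel}$ and $\cI\setminus\cP_{x,S}\subseteq\bF_{-}\cup\bF_{\curlyvee}\cup\bF_{\ext}$, and — because $\cP_{x,S}$ has empty base and the walls of $\cI\setminus\cP_{x,S}$ indexed within $\Cyl_{x,L}$ are trivial — every face of $\cI$ inside $\Cyl_{x,L/2}$ is either a pillar face or a face of the flat ceiling at height $\hgt(\cC_\bW)$; thus $\cI\cap(\bF_{\triangledown}\cup\bF_{\parallel}\cup\Cyl_{x,L/2})$, the set appearing in item~(1) of Definition~\ref{def:swap-face-bijection}, is precisely $\cP_{x,S}$ together with the flat ceiling disk of $\rho$-radius $L/2$ about $x$ (and likewise for $\cI'$ with $x',\cL_{0,m}$ and height $0$). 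By Lemma~\ref{lem:swap-map-well-defined}, $\cJ$ is a valid interface in $\bI_{\bW}\cap\Iso_{h,S}^{(n)}$ with $\cP_{x,S}(\cJ)=\theta\cP'$ and $\cJ\setminus\cP_{x,S}(\cJ)=\cI\setminus\cP_{x,S}(\cI)$; since $\cP_{x,S}(\cI)$ and $\theta\cP'$ both lie in $\bF_{\triangledown}\cup\bF_{\parallel}$, it follows that $\cJ$ and $\cI$ coincide as subsets of $\R^3$ off $\bF_{\triangledown}\cup\bF_{\parallel}$, and that $\cJ\cap(\bF_{\triangledown}\cup\bF_{\parallel}\cup\Cyl_{x,L/2})$ is the image of $\cI'\cap(\bF'_{\triangledown}\cup\bF'_{\parallel}\cup\Cyl_{x',L/2})$ under the shift $z\mapsto z-x'+x+(0,0,\hgt(\cC_\bW))$, the two flat ceiling disks being congruent under that shift. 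With the symmetric statements for $\cJ'$, the four images produced by items~(1)--(4) of Definition~\ref{def:swap-face-bijection} tile $\cJ$ and $\cJ'$ without repetition, so $\Theta_{\swap}$ is a bijection from the faces of $(\cI,\cI')$ onto those of $(\cJ,\cJ')$; in particular $|\cI|+|\cI'|=|\cJ|+|\cJ'|$, which is~\eqref{eq:swap-map-excess-area}.

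For the congruence bounds: if $f\in\cI\setminus(\bF_{\triangledown}\cup\bF_{\parallel}\cup\Cyl_{x,L/2})$ then $\Theta_{\swap}f=f$ as an element of $\cJ$, and since $\cI$ and $\cJ$ coincide off $\bF_{\triangledown}\cup\bF_{\parallel}$ we get $(\cI-f)\cap B_r(0)\equiv(\cJ-f)\cap B_r(0)$ for every $r\le d(f,\bF_{\triangledown}\cup\bF_{\parallel})$, hence $\br(f,\cI;\Theta_{\swap}f,\cJ)\ge d(f,\bF_{\triangledown}\cup\bF_{\parallel})$, which is~\eqref{eq:swap-map-face-bijection}, and~\eqref{eq:swap-map-face-bijection-2} is its mirror image for $\cI',\cJ',x'$. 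If instead $f\in\cI\cap(\bF_{\triangledown}\cup\bF_{\parallel}\cup\Cyl_{x,L/2})$, set $g:=\Theta_{\swap}f=f-x+x'-(0,0,\hgt(\cC_\bW))\in\cJ'$; for every $r\le d(f,\bF_{\curlyvee}\cup\bF_{\ext})$ the ball $B_r(f)$ meets $\cI$ (which by~\eqref{eq:interface-containment-definitions} lies in $\bF_{\triangledown}\cup\bF_{\parallel}\cup\bF_{-}\cup\bF_{\curlyvee}\cup\bF_{\ext}$) only in faces of the pillar $\cP_{x,S}$ and of the flat ceiling $\bF_{-}$ at height $\hgt(\cC_\bW)$, and the shift $z\mapsto z-x+x'-(0,0,\hgt(\cC_\bW))$ carries this local picture exactly onto the local picture of $\cJ'$ about $g$, namely the relocated pillar $\cP'_{x'}(\cJ')=\cP_{x,S}(\cI)-x+x'-(0,0,\hgt(\cC_\bW))$ together with the flat ceiling of $\cI'$ at height $0$ near $x'$ (flat within $\Cyl_{x',L}$ since $\cI'$ is isolated at $x'$). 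As this shift maps $\bF_{\curlyvee}\cup\bF_{\ext}$ precisely onto $\bF'_{\curlyvee}\cup\bF'_{\ext}$, the relevant distances agree and the congruence persists for all such $r$, giving $\br(f,\cI;\Theta_{\swap}f,\cJ')\ge d(f,\bF_{\curlyvee}\cup\bF_{\ext})$, which is~\eqref{eq:swap-map-face-bijection-3}; \eqref{eq:swap-map-face-bijection-4} follows by exchanging the roles of $(\cI,x)$ and $(\cI',x')$. The hypotheses $h=o(d(x,\partial S))$ and $h=o(d(x',\partial\cL_{0,m}))$ keep $\Cyl_{x,L^3h}$ and $\Cyl_{x',L^3h}$ interior to the respective domains, so no boundary faces intrude into any of these comparisons.

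The one genuinely delicate point is the face-counting at the ``seam'' near $x$ and $x'$ in the bijection step: one must check that the flat ceiling disk of radius $L/2$, the hole in the ceiling at $\rho(x)$ out of which the pillar emerges, and the lowest horizontal faces of the pillar all match their counterparts near $x'$ under the prescribed shift, so that items~(1)--(4) of Definition~\ref{def:swap-face-bijection} partition $\cJ$ and $\cJ'$ with no face omitted or double-counted. This rests on the disjointness $(\bF_{\triangledown}\cup\bF_{\parallel})\cap(\bF_{-}\cup\bF_{\curlyvee}\cup\bF_{\ext})=\emptyset$ and the pillar/ceiling confinements of Claim~\ref{clm:iso-pillar-containments}, together with the identity $\cJ\setminus\cP_{x,S}(\cJ)=\cI\setminus\cP_{x,S}(\cI)$ from Lemma~\ref{lem:swap-map-well-defined}; once the bijection is pinned down, the congruence inequalities are immediate from the definition of $\br$.
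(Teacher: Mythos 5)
Your proof is correct and follows essentially the same route as the paper's: use Claim~\ref{clm:iso-pillar-containments} and the identity $\cJ\setminus\cP_{x,S}(\cJ)=\cI\setminus\cP_{x,S}(\cI)$ from Lemma~\ref{lem:swap-map-well-defined} to verify that the four pieces of Definition~\ref{def:swap-face-bijection} tile $(\cJ,\cJ')$ with neither omission nor double-counting at the seam, and then read off the congruence radii from the fact that, within the relevant distance, the relocated pillar plus the flat ceiling disk of one interface is carried exactly onto the corresponding local picture of the other. Your phrasing of the radius bounds (showing the local balls match outright, rather than tracking which face attains $\br$) differs only stylistically from the paper's.
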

\begin{proof}
Evidently the existence of such a bijection implies~\eqref{eq:swap-map-excess-area}, so it suffices to show that $\Theta_{\swap}$ is a bijection. 

Recall from Clam~\ref{clm:iso-pillar-containments} that the pillars $\cP_{x,S}$ and $\cP_{x'}'$ are contained in the respectively defined $\bF_{\triangledown} \cup \bF_{\parallel}$ and $\bF_{\triangledown}', \bF_{\parallel}'$. The definition of the map $\Phi_{\swap}$ and the fact that the intersection of $\cI\setminus \cP_{x,S}$ with $\Cyl_{x,L/2}$ is just a shift of the intersection of $\cI'\setminus \cP'_{x'}$ with $\Cyl_{x',L/2}$ by $x-x' + (0,0,\hgt(\cC_\bW))$ implies that this map sends the faces in $\cI$ and  the faces in $\cI'$ to the faces in $\cJ$ and the faces in $\cJ'$. The fact that $\Theta_\swap$ is a bijection is evident as it is its own inverse. 

To deduce the bounds~\eqref{eq:swap-map-face-bijection}--\eqref{eq:swap-map-face-bijection-4} on the radii of congruence, first consider $f\in \cI\setminus (\bF_{\triangledown}\cup \bF_{\parallel}\cup \Cyl_{x,L/2})$. Observe that for such $f$, $\Theta_{\swap}f = f$. By construction of the bijection and the inclusion of Claim~\ref{clm:iso-pillar-containments}, the radius $\br(f,\cI; \Theta_{\swap} f,\cJ)$ is at least $d(f,\bF_{\triangledown}\cup \bF_{\parallel}\cup \Cyl_{x,L/2})$. However, by definition and Claim~\ref{clm:iso-pillar-containments}, the only faces of $\cI$ in $\Cyl_{x,L/2}\setminus (\bF_{\triangledown}\cup\bF_{\parallel})$ are exactly those of $\cJ$ in $\Cyl_{x,L/2}\setminus (\bF_{\triangledown}\cup\bF_{\parallel})$; thus the radius $\br(f,\cI; \Theta_{\swap} f,\cJ)$ must be attained by a face in $\bF_{\triangledown}\cup \bF_{\parallel}$, implying~\eqref{eq:swap-map-face-bijection}. The bound~\eqref{eq:swap-map-face-bijection-2} is deduced analogously. 

Next, consider $f\in \cI \cap (\bF_{\triangledown}\cup \bF_{\parallel}\cup \Cyl_{x,L/2})$. It must be the case that the radius $\br(f,\cI; \Theta_{\swap}f, \cJ')$ is either attained by a face of $\cI\setminus (\bF_{\triangledown}\cup \bF_{\parallel} \cup \Cyl_{x,L/2})$, or by a face of $\cJ'\setminus (\bF_{\triangledown}' \cup\bF_{\parallel}'\cup \Cyl_{x',L/2})$. By Claim~\ref{clm:iso-pillar-containments}, these sets are contained in $\bF_{\curlyvee}\cup \bF_{\ext}\cup (\Cyl_{x,L}\setminus \Cyl_{x,L/2})$, or $\bF'_{\curlyvee}\cup\bF'_{\ext}\cup (\Cyl_{x',L}\setminus \Cyl_{x',L/2})$ respectively.  Notice that the faces of $\cI \setminus (\bF_{\triangledown}\cup \bF_{\parallel})$ in $\Cyl_{x,L}$ are exactly those of $\Cyl_{x,L}\cap \cL_{\hgt(\cC_\bW)}$, and the faces of $\cJ'\setminus (\bF'_{\triangledown}\cup \bF'_{\parallel})$ in $\mathsf{Cyl}_{x',L}$ are exactly the shift of that set by $-x + x'-(0,0,\hgt(\cC_\bW))$, which equals the shift of $f$ to $\Theta_{\swap}f$. Thus, we deduce that the radius is either attained from $f$ by a face in $\bF_{\curlyvee} \cup \bF_{\ext}$, or from $\Theta_{\swap} f$ by a face in $\bF'_\curlyvee \cup \bF'_{\ext}$. This implies~\eqref{eq:swap-map-face-bijection-3}, and~\eqref{eq:swap-map-face-bijection-4} follows analogously.  
 \end{proof}

\subsection{Analysis of \texorpdfstring{$\Phi_{\swap}$}{Phi\_swap}}
The key to proving Theorem~\ref{thm:cond-uncond} will be the following bound on the weight distortion of pairs of interfaces under the application of the map $\Phi_{\swap}$. By~\eqref{eq:swap-map-excess-area}, the excess area of the map is zero, and our goal is to show that the ratio of weights of the pair $(\cI,\cI')$ to $(\cJ,\cJ')$ is almost one.  

\begin{proposition}\label{prop:swap-map}
Fix $L$ in the definition of $\Iso_{L,h}$ sufficiently large. For all $\cI\in \bI_{\bW}\cap \Iso_{h,S}^{(n)}$ and $\cI'\in \Iso_{h}^{(m)}$,  
\begin{align*}
     \Big|\frac{\mu_n^\mp(\cI)\mu_m^\mp(\cI')}{\mu_n^\mp(\Phi^1_{\swap}(\cI,\cI'))\mu_m^\mp(\Phi^2_{\swap}(\cI,\cI'))} - 1\Big| \le \bar K e^{ - L/C}\,.
\end{align*}
\end{proposition}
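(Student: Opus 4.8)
The proof is a direct application of the cluster-expansion formula (Theorem~\ref{thm:cluster-expansion}) to the two ratios $\mu_n^\mp(\cI)/\mu_n^\mp(\cJ)$ and $\mu_m^\mp(\cI')/\mu_m^\mp(\cJ')$, where $\cJ=\Phi_\swap^1(\cI,\cI')$ and $\cJ'=\Phi_\swap^2(\cI,\cI')$. By Lemma~\ref{lem:swap-map-properties}, the total number of faces is preserved, $|\cI|+|\cI'|=|\cJ|+|\cJ'|$, so the excess-area contribution vanishes: $\fm(\cI;\cJ)+\fm(\cI';\cJ')=0$. Hence
\[
\Big|\log\frac{\mu_n^\mp(\cI)\mu_m^\mp(\cI')}{\mu_n^\mp(\cJ)\mu_m^\mp(\cJ')}\Big|
\le \Big|\sum_{f\in\cI}\g(f,\cI)-\sum_{f\in\cJ}\g(f,\cJ)\Big|
+\Big|\sum_{f'\in\cI'}\g(f',\cI')-\sum_{f'\in\cJ'}\g(f',\cJ')\Big|,
\]
and it suffices to bound the right-hand side by $e^{-L/C}$ (after which $|e^t-1|\le \bar K e^{-L/C}$ for $t$ small follows).

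\textbf{Reorganizing the sum via $\Theta_\swap$.} I would use the face bijection $\Theta_\swap$ of Definition~\ref{def:swap-face-bijection} to pair up faces across the four interfaces. Write $\cI=\cI_{\mathrm{in}}\sqcup\cI_{\mathrm{out}}$, where $\cI_{\mathrm{in}}:=\cI\cap(\bF_{\triangledown}\cup\bF_\parallel\cup\Cyl_{x,L/2})$ and $\cI_{\mathrm{out}}$ the rest, and similarly for $\cI'$; under $\Theta_\swap$ the faces of $\cI_{\mathrm{out}}$ match to the corresponding faces of $\cJ$ (same positions), faces of $\cI'_{\mathrm{out}}$ to $\cJ'$, faces of $\cI_{\mathrm{in}}$ to (a shift into) $\cJ'$, and faces of $\cI'_{\mathrm{in}}$ to (a shift into) $\cJ$. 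Then
\[
\sum_{f\in\cI}\g(f,\cI)+\sum_{f'\in\cI'}\g(f',\cI')-\sum_{f\in\cJ}\g(f,\cJ)-\sum_{f'\in\cJ'}\g(f',\cJ')
\]
splits into four groups of differences $\g(f,\cdot)-\g(\Theta_\swap f,\cdot)$, one for each of $\cI_{\mathrm{out}},\cI'_{\mathrm{out}},\cI_{\mathrm{in}},\cI'_{\mathrm{in}}$. For each such term, invoke the Lipschitz bound~\eqref{eq:g-exponential-decay}: $|\g(f,\cI)-\g(\Theta_\swap f,\cdot)|\le \bar K e^{-\bar c\,\br(f,\cdot;\Theta_\swap f,\cdot)}$, and then use the four radius lower bounds~\eqref{eq:swap-map-face-bijection}--\eqref{eq:swap-map-face-bijection-4} from Lemma~\ref{lem:swap-map-properties}. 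So for $f\in\cI_{\mathrm{out}}$ the exponent is $\ge d(f,\bF_{\triangledown}\cup\bF_\parallel)$; for $f\in\cI_{\mathrm{in}}$ it is $\ge d(f,\bF_{\curlyvee}\cup\bF_\ext)$; and analogously on the primed side.

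\textbf{Summing the tails.} It remains to show each of the four sums of $e^{-\bar c\,(\text{distance})}$ is $O(e^{-\bar c L})$. This is exactly the content of Lemma~\ref{lem:iso-pillar-interactions}: the sums $\sum_{f\in\bF_{\triangledown}\cup\bF_\parallel}\sum_{g\in\bF_{\curlyvee}\cup\bF_\ext}e^{-\bar c d(f,g)}$ and $\sum_{f\in\bF_{\triangledown}}\sum_{g\in\bF_{-}\cup\bF_{\curlyvee}\cup\bF_\ext}e^{-\bar c d(f,g)}$ are both $\le \bar C e^{-\bar c L}$. The only wrinkle is that here the outer sum for the $\cI_{\mathrm{out}}$-term runs over $f\in\cI_{\mathrm{out}}\subseteq \bF_{-}\cup\bF_{\curlyvee}\cup\bF_\ext\cup(\Cyl_{x,L}\setminus\Cyl_{x,L/2})$, so one needs to also absorb the annulus $\Cyl_{x,L}\setminus\Cyl_{x,L/2}$ of ceiling faces at height $\hgt(\cC_\bW)$; each such face is at horizontal distance $\ge L/2$ from $\bF_\triangledown\cup\bF_\parallel$ (which sit above $x$ at radius $\le 10h$ but their lowest faces near $x$ have radius $0$ — so in fact one must be careful and use the vertical separation: $\bF_\triangledown$ starts at height $\hgt(\cC_\bW)+L^3$, giving distance $\ge L^3$; and $\bF_\parallel$ is the width-$1$ column over $x$, from which an annulus face at radius $\in[L/2,L]$ is at distance $\ge L/2$). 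Combining these gives a bound $O(L^2 e^{-\bar c L/2})\le e^{-L/C}$ for $L$ large. The primed side is identical with $\hgt(\cC_\bW)=0$. Thus the whole log-ratio is $\le C' e^{-L/C}$ and exponentiating gives the claim, adjusting $C$ as needed.

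\textbf{Main obstacle.} The only genuinely delicate point is the bookkeeping at the seam between the "pillar region" $\bF_\triangledown\cup\bF_\parallel$ and the cylinder $\Cyl_{x,L/2}$ used to cut over to the swapped environment: one must verify that faces in $\Cyl_{x,L}\setminus\Cyl_{x,L/2}$ (and their partners under $\Theta_\swap$) are genuinely at distance $\gtrsim L$ from whichever of $\bF_\triangledown\cup\bF_\parallel$ or $\bF_\curlyvee\cup\bF_\ext$ appears in the radius bound. This is precisely why the radius of $\Cyl_{x,L/2}$ was chosen to be half that of the region where the environments agree, and it is handled exactly as in the proof of Lemma~\ref{lem:iso-pillar-interactions} together with the containment statement of Claim~\ref{clm:iso-pillar-containments}. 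Everything else is a routine integration of exponential tails.
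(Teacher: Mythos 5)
Your proposal matches the paper's proof in every essential respect: the cluster-expansion formula together with the vanishing total excess area $\fm(\cI;\cJ)+\fm(\cI';\cJ')=0$, the reorganization of the $\g$-sums via the face bijection $\Theta_{\swap}$ into the four groups indexed by $\bP$, $\bP'$, $\cI\setminus\bP$, $\cI'\setminus\bP'$, the radius lower bounds~\eqref{eq:swap-map-face-bijection}--\eqref{eq:swap-map-face-bijection-4} from Lemma~\ref{lem:swap-map-properties}, and Lemma~\ref{lem:iso-pillar-interactions} to integrate the exponential tails. The one place your sketch is slightly loose is the annulus-versus-$\bF_{\triangledown}$ interaction: to get a bound uniform in $h$ (note $|\bF_{\triangledown}|$ can be of order $h^3$, so a crude product of the annulus count with the largest exponential factor would carry an $h$-dependence), one should sum over the height $k\ge L^3$ of the $\bF_{\triangledown}$ face, using that there are at most $k^4$ such faces at each height and each is at vertical distance $\ge k$ from $\cL_{\hgt(\cC_\bW)}$, which yields $\sum_{L^3\le k\le 10h}k^4 e^{-\bar c k}=O(e^{-\bar c L^3/2})$ exactly as the paper does.
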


\begin{proof}
As before, let $\cJ = \Phi^1_{\swap}(\cI,\cI')$ and $\cJ' = \Phi^2_{\swap}(\cI,\cI')$. We express the ratio on the left-hand side using Theorem~\ref{thm:cluster-expansion}. By~\eqref{eq:swap-map-excess-area}, 
$$\fm(\cI;\cJ) + \fm(\cI';\cJ')  = 0\,,$$
so that the ratio can be written as 
\begin{align}\label{eq:swap-map-g-terms}
    \exp \bigg[ \sum_{f\in \cI}\g(f;\cI) + \sum_{f'\in \cI'} \g(f';\cI) - \sum_{f\in \cJ} \g(f;\cJ) - \sum_{f'\in \cJ'} \g(f';\cJ')\bigg]\,.
\end{align}
We rearrange the summands according to the bijection $\Theta_{\swap}$ of Definition~\ref{def:swap-face-bijection}, sending the faces in $(\cI,\cI')$ to those in $(\cJ,\cJ')$. Let 
\begin{align*}
    \mathbf P:= \cI \cap (\bF_{\triangledown}\cup \bF_{\parallel} \cup \Cyl_{x,L/2})\,,\qquad \mbox{and similarly,} \qquad \mathbf P' := \cI'\cap (\bF'_{\triangledown}\cup \bF'_{\parallel} \cup \Cyl_{x',L/2})\,.
\end{align*}
We can then rearrange~\eqref{eq:swap-map-g-terms} to obtain 
\begin{align}\label{eq:swap-map-splitting}
   \bigg|\log \frac{\mu_n^\mp(\cI)\mu_m^\mp(\cI')}{\mu_n^\mp(\cJ)\mu_m^\mp(\cJ')}\bigg| &    \le \sum_{f\in \bP} \big|\g(f;\cI) - \g(\Theta_{\swap}f;\cJ')\big| + \sum_{f'\in \bP'} \big|\g(f',\cI')  - \g(\Theta_{\swap}f,\cJ)\big| \\ 
   &  \qquad +  \sum_{f\in \cI\setminus \bP} \big|\g(f;\cI) - \g(\Theta_{\swap}f;\cJ)\big| + \sum_{f'\in \cI'\setminus \bP'}\big| \g(f';\cI') - \g(\Theta_{\swap}f',\cJ')\big|\,. \nonumber
\end{align}
We consider the summands above individually.
By~\eqref{eq:g-exponential-decay} and~\eqref{eq:swap-map-face-bijection-3}, the first term is bounded as 
\begin{align*}
     \sum_{f \in \bP} \big|\g(f;\cI) - \g(\Theta_{\swap} f;\cJ')\big| \le \sum_{f\in \bP} \bar K e^{ - \bar c \br(f,\cI; \Theta_{\swap} f,\cJ')} \le \sum_{f\in \bP} K e^{ - \bar c d(f,\bF_{\curlyvee}\cup \Cyl_{x,L^3 h}^c)}\,.
\end{align*}
Let us now consider this latter sum. We can bound it by 
\begin{align}\label{eq:need-to-bound-1}
   \sum_{f\in \bF_{\parallel}\cup \Cyl_{x,L/2}}  K e^{ - \bar c d(f,\Cyl_{x,L}^c)} + \sum_{f\in \bF_{\triangledown}}  K e^{-  \bar c d(f,\bF_{\curlyvee})}+ \sum_{f\in \bF_{\triangledown}} K e^{ - \bar c d(f,\bF_{\ext})}    \,.
\end{align}
The first sum in~\eqref{eq:need-to-bound-1} can be bounded as 
\begin{align}\label{eq:ntb-1-first-term}
     \sum_{f\in \bF_{\parallel}\cup \Cyl_{x,L/2}}  K e^{ - \bar c d(f,\Cyl_{x,L}^c)}\le K |\bF_{\parallel} \cup \Cyl_{x,L/2}| e^{ - \bar c d(\bF_{\parallel}\cup \Cyl_{x,L/2},\Cyl_{x,L}^c)}\le K (4 L^3 + L^2) e^{ - \bar c L/2}\,.
\end{align}
The second and third terms in~\eqref{eq:need-to-bound-1} are together at most $\bar C e^{ - \bar c L}$ as long as $L$ is large, by Claim~\ref{clm:iso-pillar-containments}. Combining the three terms above, we easily find that as long as $L$ is large,~\eqref{eq:need-to-bound-1} is at most $K e^{ -  L/C}$.  

Now consider the third term from~\eqref{eq:swap-map-splitting}, for which by~\eqref{eq:g-exponential-decay} we have
\begin{align*}
     \sum_{f\in \cI\setminus \bP} \big|\g(f;\cI) - \g(f;\cJ)\big| \le \sum_{f\in \cI \setminus \bP} \bar K e^{ - \bar c \br(f, \cI; f, \cJ)}\,.
\end{align*}
By~\eqref{eq:swap-map-face-bijection}, this is at most 
\begin{align*}
    \sum_{f\in \cI\setminus \bP} \sum_{g\in \bF_{\triangledown}\cup \bF_{\parallel}} e^{ - \bar c d(f,g)} &  \le \sum_{f\in  \Cyl_{x,L/2}^c} \sum_{g\in \bF_{\parallel}} e^{ - \bar c d(f,g)} + \sum_{f\in \bF_{\curlyvee}\cup \bF_{\ext}} \sum_{g\in \bF_\triangledown} e^{ - \bar c d(f,g)} + \sum_{f\in \cI \cap \cL_{\hgt(\cC_\bW)}}\sum_{g\in \bF_{\triangledown}} e^{ - \bar c d(f,g)}\,.
\end{align*}
The first term here, is at most $4 K L^3 e^{ - \bar c L/2}$, by summing out in $f$. The second term is, after summing out in $f$, bounded by $K e^{ - L/C}$ by~\eqref{eq:ntb-1-third-term}--\eqref{eq:ntb-1-second-term}. The third term can be bounded as follows: 
\begin{align*}
    \sum_{f\in \cI \cap \cL_{\hgt(\cC_\bW)}}\sum_{g\in \bF_{\triangledown}} e^{ - \bar c d(f,g)}\le \sum_{k\ge 1} \sum_{g\in \bF_{\triangledown}: g\cdot e_3 = k} e^{ -\bar c k} \le \sum_{L^3 \le k \le 10 h} k^4 e^{ - \bar c k}\,,
\end{align*}
which is at most some $e^{ - \bar c L^3 /2}$ for $L$ large.  The second and fourth terms in~\eqref{eq:swap-map-splitting} are evidently bounded symmetrically.  Altogether, 
 we obtain the desired for some other $C$, using $|e^{x}-1|\le 2x$ for $x\le 1$. 
\end{proof}

\subsection{Proof of Theorem~\ref{thm:cond-uncond}}\label{subsec:swap-main-proof}
For ease of notation, let
    $\nu_n^\mp  = \mu_n^\mp ( \cdot \mid \bI_{\bW})$. 
Recall from Lemma~\ref{lem:swap-map-well-defined}, that for every $\cI\in \bI_{\bW}\cap \Iso_{h,S}^{(n)}$,and every $\cI'\in \Iso_h^{(m)}$, we have $\Phi^1_\swap(\cI,\cI') \in \bI_{\bW}$. Therefore, 
dividing and multiplying the ratio in Proposition~\ref{prop:swap-map} by $\mu_n^\mp(\bI_{\bW})$, we obtain the following: for every $\cI \in \bI_{\bW} \cap \Iso_{h,S}^{(n)}$, and every $\cI'\in \Iso_h^{(m)}$, for $L$ large,
\begin{align*}
         \Big|\frac{\nu_n^\mp(\cI)\mu_m^\mp(\cI')}{\nu_n^\mp(\Phi^1_{\swap}(\cI,\cI'))\mu_m^\mp(\Phi^2_{\swap}(\cI,\cI'))} - 1\Big| \le K e^{ - c L}\,.
\end{align*}

It will suffice for us to show that for every set of pillars $A$, we have 
\begin{align}\label{eq:wts-pillar-decorrelation-1}
    \nu_n^\mp (\Iso_{h,S}^{(n)}, \cP_{x,S} \in A)\le (1+\epsilon_\beta)\mu_{m}^\mp(\Iso_{h}^{(m)}, \cP_{x'}\in A)\,,
    \end{align}
    and
    \begin{align}\label{eq:wts-pillar-decorrelation-2}
\mu_{m}^\mp(\Iso_h^{(m)}, \cP_{x'}\in A) \le (1+\epsilon_\beta) \nu_n^\mp (\Iso_{h,S}^{(n)}, \cP_{x,S} \in A)\,,
\end{align}
and then let $x_m' = o$ identically and take a limit as $m\to \infty$ for $h=h_n$ fixed w.r.t.\ $m$. 
We will prove the first of these inequalities, as the proof of the second is mutatis mutandis. Multiplying and dividing the left-hand side of~\eqref{eq:wts-pillar-decorrelation-1} by $\mu_m^\mp(\cI \in \Iso_h^{(m)})$, we get by Proposition~\ref{prop:swap-map},  
\begin{align*}
    \frac{1}{\mu_m^\mp(\cI \in \Iso_h^{(m)})} & \sum_{\cI\in \bI_{\bW}\cap \Iso_{h,S}^{(n)}, \cP_{x,S} \in A}  \,\sum_{\cI' \in \Iso_h^{(m)}}\nu_n^\mp(\cI)  \mu_m^\mp (\cI') \\
    & \le \frac{1+K e^{ - c L}}{\mu_m^\mp(\cI \in \Iso_h^{(m)})} \sum_{\cI\in \bI_{\bW}\cap \Iso_{h,S}^{(n)}, \cP_{x,S} \in A} \,\sum_{\cI' \in \Iso_h^{(m)}} \nu_n^\mp(\Phi^1_{\swap}(\cI,\cI')) \mu_m^\mp (\Phi^2_{\swap}(\cI,\cI'))\,.
\end{align*}
Now recall from Lemma~\ref{lem:swap-map-properties} that the map $\Phi_{\swap}$ is a bijection from 
\begin{align*}
    \{\cI\in \Iso_{h,S}^{(n)} \cap \bI_{\bW}: \cP_{x,S_n} \in A\} \times \{\cI'\in \Iso_h^{(m)}\}\,,\qquad \mbox{to}\qquad  \{\cI\in \Iso_{h,S}^{(n)}\cap \bI_{\bW}\}\times \{\cI'\in \Iso_h^{(m)}: \cP_o'\in A\}\,.
\end{align*} 
We can therefore rewrite the right-hand side of the above inequality as 
\begin{align*}
\frac{1+ K e^{ - c L}}{\mu_m^\mp (\Iso_h^{(m)})} \sum_{\cJ\in \bI_{\bW} \cap \Iso_{h,S_n}^{(n)}} &  \sum_{\cJ'\in \Iso_h^{(m)}: \cP_o'\in A} \nu_n^\mp(\cJ) \mu_m^\mp(\cJ') \\
& \le \frac{(1+ K e^{ - c L})\nu_n^\mp(\Iso_h^{(n)})}{\mu_m^\mp(\Iso_h^{(m)})} \mu_m^\mp(\cJ' \in \Iso_h^{(m)}, \cP'_o\in A)\,.
\end{align*}
Recall from Theorem~\ref{thm:shape-inside-ceiling}, that there exists a sequence $L_\beta\uparrow \infty$ and $\epsilon_\beta\downarrow 0$  such that $\mu_m^\mp(\Iso_h^{(m)})\ge 1-\epsilon_\beta$. With that choice of $L_\beta$, we see that $K e^{ - c L} \le \epsilon'_\beta$, for some other sequence $\epsilon'_\beta \downarrow 0$. As such, for a new $\epsilon_\beta \downarrow 0$, the right-hand side above is at most $(1+\epsilon_\beta)\mu_m^\mp (\cJ'\in \Iso_h^{(m)}, \cP_o'\in A)$ as desired in~\eqref{eq:wts-pillar-decorrelation-1}.  

Taking $m$ to infinity in the inequalities~\eqref{eq:wts-pillar-decorrelation-1}--\eqref{eq:wts-pillar-decorrelation-2} implies the desired. \qed

\subsection{Cruder bound on nearby pillar correlations}\label{subsec:crude-pillar-correlations}
A consequence of Theorem~\ref{thm:cond-uncond-simple} is that the probability of $\cP_{x,S}$ reaching height $h$ decays like $e^{ - \alpha_h}$ even conditionally on $\bW$. For that, however, it is important that the walls $\bW$, are at distance much greater than $h$ from $x$. 
We conclude this section with a crude but simple bound that allows us to control correlations of pillars to nearby walls (distances that are $O(h)$, even $O(1)$), and as such are not covered by  Corollary~\ref{cor:cond-uncond-k-simple}.

Of course two pillars are not independent if they are the same, by sharing a base, and therefore we restrict attention to pillars with an empty base (which Section~\ref{sec:tall-pillar-shape} showed to happen with probability $1-\epsilon_\beta$), even conditionally on $\bI_\bW$. For ease of notation, for $h\ge 1$, henceforth let 
$$\overline E_{x,S}^h := \{\hgt(\cP_{x,S})\ge h\}\cap \{\sB_{x,S} \ne \emptyset\}\,.$$

\begin{proposition}\label{prop:two-pillars}
For every $S\subset \cL_{0,n}$ every $\bW =(W_z)_{z\notin S}$ such that $\rho(\bW)\subset S^c$, and every distinct $x,y\in S_n$, for every $h$, 
\[ \mu_{n}^\mp\left(\overline E_{x,S}^h \cap \overline E_{y,S}^h\mid \bI_{\bW}\right) \leq e^{ - (4\beta - C)h} \mu_n^{\mp}(\overline E_{x,S}^{h}\mid \bI_{\bW})\,.\]
\end{proposition}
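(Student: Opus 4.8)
The plan is to prove Proposition~\ref{prop:two-pillars} by a Peierls-type argument in the spirit of the upper bound in Corollary~\ref{cor:pillar-inside-a-ceiling}, but now applied to the \emph{second} pillar after conditioning on a (favorable) realization of the first. First I would observe that on $\overline E_{x,S}^h \cap \overline E_{y,S}^h$ the two pillars $\cP_{x,S}$ and $\cP_{y,S}$ both have empty base, so in particular $x\neq y$ forces $\cP_{x,S}$ and $\cP_{y,S}$ to be distinct pillars (they cannot share a base cell); thus their spines are vertex-disjoint face sets. The key point is to expose the pillar $\cP_{x,S}$ together with enough of its surrounding walls — concretely, the wall collection $\fG_{x,S}$ of Observation~\ref{obs:pillar-nested-sequence-of-walls} together with any further walls needed to certify $\cP_{x,S}$ — as a conditioning, and then run the lower-bound forcing map of Claim~\ref{clm:lower-bound-forcing} type argument, or rather its \emph{inverse}: the event $\overline E_{y,S}^h$ requires (via Observation~\ref{obs:height-nested-sequence-of-walls} and the reasoning of Corollary~\ref{cor:pillar-inside-a-ceiling}) that the cumulative excess area of $\fW_{y,S}$ together with walls nested in it exceed $4h$, and one can delete this excess using a wall-cluster map $\Phi$ that lives entirely inside $S$ and is disjoint from the already-exposed data about $\cP_{x,S}$.

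More precisely, the steps in order would be: (1) Write $\mu_n^\mp(\overline E_{x,S}^h \cap \overline E_{y,S}^h \mid \bI_\bW)$ by first summing over all admissible realizations $G$ of the exposed neighborhood of $\cP_{x,S}$ (the pillar itself plus its witnessing walls $\fG_{x,S}$), each of which lies in $\Cyl_{x, r}$ for $r \le \fm(\fG_{x,S})$-controlled radius; restricted to realizations $G$ with $\cP_{x,S}(G)$ of height $\ge h$ and empty base. (2) For each such $G$, the conditional law $\mu_n^\mp(\,\cdot\mid \bI_\bW, G)$ is again a conditioning of the Ising interface law on an outer wall collection (now $\bW$ together with the walls in $G$), and crucially $y\in S$ still lies outside all of these exposed walls — here I need that, since $\cP_{x,S}$ has empty base and $x\neq y$, the point $y$ is not nested in any wall of $\fG_{x,S}$; if it were, one has to argue $\cP_{y,S}$ would have to climb through that wall, which is accounted for separately, or one restricts to the dominant case $y \notin \rho(\hull{\fG_{x,S}})$ and handles the complementary case by the crude bound that $\overline E_{y,S}^h$ is already $\le e^{-(4\beta-C)h}$ there. (3) Apply Corollary~\ref{cor:pillar-inside-a-ceiling} (upper bound) — whose proof, using wall clusters, goes through uniformly over conditioning on arbitrary external walls whose projections avoid a neighborhood of $y$ — to get $\mu_n^\mp(\overline E_{y,S}^h \mid \bI_\bW, G) \le \mu_n^\mp(\hgt(\cP_{y,S})\ge h\mid \bI_\bW, G) \le e^{-4(\beta-C)h}$. (4) Sum back over $G$: $\sum_G \mu_n^\mp(G\mid \bI_\bW)\, e^{-4(\beta-C)h} = e^{-4(\beta-C)h}\,\mu_n^\mp(\overline E_{x,S}^h\mid \bI_\bW)$, which is the claimed bound with $C$ adjusted.

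The main obstacle I anticipate is step (2)–(3): making sure that after exposing $\cP_{x,S}$ and its witnessing walls, the \emph{region available to $\cP_{y,S}$} is still of the right form (complement of a set $S'\subseteq S$ whose complement in $\cL_0$ is connected, conditioned on external walls with projections disjoint from a neighborhood of $y$), so that Corollary~\ref{cor:pillar-inside-a-ceiling} — or more safely Corollary~\ref{cor:nested-sequence-inside-a-ceiling} bootstrapped as in the proof of Corollary~\ref{cor:pillar-inside-a-ceiling} — applies verbatim. The subtlety is that the exposed walls $\fG_{x,S}$ need not have connected complement, and a nested sequence of them could in principle ``surround'' $y$; this is exactly the kind of issue the one-sided wall-cluster notion (Definition~\ref{def:wall-cluster}) was designed to avoid, so I would phrase the conditioning in terms of $\Clust$ of the exposed walls and invoke the fact (Remark after Definition~\ref{def:wall-cluster}) that $\rho(\Clust(\fG_{x,S})) \subset \rho(\hull{\fG_{x,S}})$, which keeps the exposed region confined near $x$ and hence — since both pillars have empty base and $d(x,y)\ge 1$ with the relevant walls small on the event, or else the contribution is already exponentially small — disjoint from a neighborhood of $y$. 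A clean way to sidestep the geometry entirely is to note that we do not need sharpness: it suffices to bound $\mu_n^\mp(\overline E_{y,S}^h\mid\bI_\bW,G)$ by $e^{-(4\beta-C)h}$ for \emph{any} $C$, and for that we may always take $S' := S \setminus \rho(\hull{\fG_{x,S}})$ (or an even smaller simply-connected set containing $y$ and avoiding the exposed walls), condition on \emph{all} remaining walls outside $S'$, and apply the uniform-in-conditioning upper bound of Corollary~\ref{cor:pillar-inside-a-ceiling}; the only case not covered is when no such $S'$ contains $y$, i.e. $y$ is itself nested in (or adjacent to) an exposed wall of total excess area $\gtrsim d(x,y)$, and then one simply absorbs that event into the $G$-sum using the exponential tail of $\fm(\fG_{x,S})$ from Corollary~\ref{cor:nested-sequence-inside-a-ceiling}, which costs at most a constant factor.
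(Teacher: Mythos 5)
Your approach is genuinely different from the paper's, and it contains a gap that the paper avoids by design. The paper's proof (Definition~\ref{def:two-pillars-map}, Lemmas~\ref{lem:two-pillars-properties}--\ref{lem:two-pillars-multiplicity}) does not condition on the pillar at $x$ at all; it applies a single deletion map $\Psi_{y,S}$ that flips all plus cells in $\sigma(\cP_{y,S})$ to minus. Since both pillars have empty bases and $x\neq y$, the two pillars $\cP_{x,S}$ and $\cP_{y,S}$ are disjoint, so the deletion preserves $\overline E_{x,S}^h \cap \bI_\bW$ exactly, removes at least $4h-1$ faces, and has $C^k$-bounded multiplicity since the deleted set is $*$-connected and rooted at $y$. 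That yields the claimed bound directly without ever needing to re-cast the conditioning in the $\bI_{\bW'}$ form required by Corollary~\ref{cor:pillar-inside-a-ceiling}. Your route — expose $\cP_{x,S}$ and its witness $\fG_{x,S}$, then apply the upper bound of Corollary~\ref{cor:pillar-inside-a-ceiling} at $y$ — is spiritually similar (a dual decoupling in which the $x$-data rather than the $y$-pillar plays the role of the ``frozen'' part), but it imports two subtleties that the paper's deletion map cleanly sidesteps.

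The concrete gap is in your treatment of the case $y\in \rho(\hull\fG_{x,S})$. You propose to absorb it ``using the exponential tail of $\fm(\fG_{x,S})$ from Corollary~\ref{cor:nested-sequence-inside-a-ceiling}, which costs at most a constant factor.'' As written this does not close: the exponential tail is in $\fm(\fG_{x,S})$, not in $h$, and $y\in\rho(\hull\fG_{x,S})$ only forces $\fm(\fG_{x,S})\gtrsim d(x,y)$, which is useless for $d(x,y)=O(1)$. What one actually needs is the observation that on $\overline E_{x,S}^h\cap\overline E_{y,S}^h\cap\{y\in\rho(\hull\fG_{x,S})\}$ one has $\fm(\fG_{x,S})\geq 8h-O(1)$ (both pillars have height $\geq h$, their walls are distinct by the empty-base argument, and the one over $y$ is nested in $\fG_{x,S}$), and then a careful constant comparison: the resulting bound $e^{-8(\beta-C_1)h}$ must be checked against the target $e^{-(4\beta-C)h}\mu_n^\mp(\overline E_{x,S}^h\mid\bI_\bW)\geq e^{-(8\beta+4C_2-C)h}$, which works only if $C$ is chosen larger than $8C_1+4C_2$. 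None of this is in your proposal, and even stated, it requires arguing that the walls of the two pillars are distinct (not merely the pillars as face sets), which is not immediate: two disjoint plus columns can in principle share a $*$-connected wall. Separately, in the good case you would still need to verify that the conditioning $(\bI_\bW, \fG_{x,S})$ can be rewritten as $\bI_{\bW'}$ for some $S'\ni y$ with $\cL_0\setminus S'$ connected and $\rho(\bW')\subset (S')^c$, and that $\cP_{y,S'}=\cP_{y,S}$ and $\hgt(\cC_{\bW'})=\hgt(\cC_\bW)$ there — plausible but not trivial. The paper's deletion of $\cP_{y,S}$ buys exactly the avoidance of all this bookkeeping, which is why the authors remark that ``because we only require this coarser bound, the proof goes via a straightforward map that deletes the entire pillar $\cP_{y,S}$.''
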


Before proceeding to the proof, we comment briefly on the relation of Proposition~\ref{prop:two-pillars} to the analogous Claim 6.3 of~\cite{GL19b}, where we proved a similar decorrelation estimate between nearby (but distinct, due to the empty base criterion) pillars. 
That argument relied crucially on the FKG inequality; conditionally on $\bI_\bW$ (a non-monotone and potentially exponentially unlikely set), we cannot apply the FKG inequality and must prove this by hand, resulting in the loss in the rate $\alpha_h \to (4\beta - C)h$.
Because we only require this coarser bound, the proof goes via a straightforward map that deletes the entire pillar $\cP_{y,S}$. 

\begin{definition}\label{def:two-pillars-map}
Let $\Psi_{y,S}$ be the following map on interfaces $\cI\in \bI_{\bW}\cap \overline E_{x,S}^{h} \cap \overline E_{y,S}^h$. From the interface $\cI$, delete all bounding faces of the pillar $\cP_{y,S}$, and extract from that the resulting interface $\Psi_{y,S}(\cI) = \cI\setminus \cP_{y,S}$ (possibly by adding back the horizontal face at $y + (0,0,\hgt(\cC_\bW))$).
\end{definition}

\begin{lemma}\label{lem:two-pillars-properties}
The map $\Psi_{y,S}$ is well defined and if $\cI\in \bI_\bW$ and $\cI \in \overline E_{x,S}^h \cap \overline E_{y,S}^h$, it satisfies the following: $\cJ:= \Psi_{y,S}(\cI)$ is in $\bI_{\bW}\cap \overline E_{x,S}^h$, and 
\begin{align*}
    \fm(\cI; \cJ) \ge |\sF(\cP_{y,S_n})|- 1 \ge |\cI \oplus \cJ| - 2 \ge 4h -1\,. 
\end{align*}
\end{lemma}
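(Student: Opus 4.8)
The plan is to track precisely which spins are flipped by $\Psi_{y,S}$ and which faces consequently enter or leave the interface, and to deduce all four assertions from that bookkeeping. First, $\cJ:=\Psi_{y,S}(\cI)=\cI\setminus\cP_{y,S}$ is, by the definition of the truncated interface underlying Definition~\ref{def:two-pillars-map}, the interface of the configuration obtained from $\sigma(\cI)$ by flipping the spins of $\sigma(\cP_{y,S})$ to minus (together with the single horizontal face at $y+(0,0,\hgt(\cC_\bW))$ added back), so it is a well-defined interface. For $\cJ\in\bI_{\bW}$: the restricted pillar $\cP_{y,S}$ is a vertical shift of the pillar at $y$ of $\cI\restriction_{S_n}$, whose wall collection contains only walls projecting into $S_n$; by Observation~\ref{obs:pillar-nested-sequence-of-walls} its wall faces lie in $\fG_y$, and since $S_n$ is simply connected the hulls of all walls of $\fG_y$ also project into $S_n$, so $\rho(\cP_{y,S})\subset S_n$. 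Hence $\rho(\cI\oplus\cJ)\subset S_n$, so $\cI$ and $\cJ$ have identical faces projecting into $S_n^c$, with identical multiplicities and hence identical ceiling/wall classifications over $S_n^c$; therefore the walls of $\cJ$ indexed by $S_n^c$ are still $\bW_n$, i.e.\ $\cJ\in\bI_{\bW}$. (This is the confinement mechanism already used for $\Phi_\iso$ in Lemma~\ref{lem:iso-map-well-defined}.)

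Next, $\cJ\in\overline E_{x,S}^h$. On $\overline E_{x,S}^h\cap\overline E_{y,S}^h$ both pillars have empty base, so the spin-components $\sigma(\cP_{x,S})$ and $\sigma(\cP_{y,S})$ each contain a unique cell in the bottom slab $\cL_{\hgt(\cC_\bW)+1/2}$; since $x\ne y$ these bottom cells differ, so $\sigma(\cP_{x,S})$ and $\sigma(\cP_{y,S})$ are \emph{distinct} $*$-connected plus components of $\sigma(\cI)$ in $\cL_{>\hgt(\cC_\bW)}$, and in particular are not $*$-adjacent. Flipping $\sigma(\cP_{y,S})$ to minus therefore leaves $\sigma(\cP_{x,S})$ and all its $*$-neighbours unchanged, so $\sigma(\cP_{x,S})$ remains the $*$-connected plus component of its root, whence $\cP_{x,S}(\cJ)=\cP_{x,S}(\cI)$; thus $\hgt(\cP_{x,S}(\cJ))\ge h$ and $\sB_{x,S}(\cJ)=\emptyset$, i.e.\ $\cJ\in\overline E_{x,S}^h$.

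Finally, the excess-area chain. The faces that leave $\cI$ are exactly those separating a cell of $\sigma(\cP_{y,S})$ from a minus cell; as any plus cell of $\cL_{>\hgt(\cC_\bW)}$ that is $*$-adjacent to $\sigma(\cP_{y,S})$ already lies in it, and since $\sB_{y,S}=\emptyset$, this face set is precisely $\sF(\cP_{y,S})$, while the only face gained is the one added back at $y$. Hence $\cI\setminus\cJ=\sF(\cP_{y,S})$ and $|\cJ\setminus\cI|=1$, so $\fm(\cI;\cJ)=|\cI|-|\cJ|=|\sF(\cP_{y,S})|-1$ and $|\cI\oplus\cJ|=|\sF(\cP_{y,S})|+1$, which already gives the first two inequalities (with equality). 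For the last, $|\sF(\cP_{y,S})|\ge 4h$: since $\hgt(\cP_{y,S})\ge h$ and $\sigma(\cP_{y,S})$ contains the root cell at height $\hgt(\cC_\bW)+\tfrac12$, and a $*$-connected cell set meets every half-integer level between its extremes, $\sigma(\cP_{y,S})$ contains a cell in each of the $h$ slabs $\cL_{\hgt(\cC_\bW)+1/2},\dots,\cL_{\hgt(\cC_\bW)+h-1/2}$; in each slab the planar set of pillar cells has at least $4$ boundary edges, each of which is a vertical face of $\sF(\cP_{y,S})$ (its other side is a minus cell, else it would belong to $\sigma(\cP_{y,S})$), and these faces are distinct across slabs, whence $|\sF(\cP_{y,S})|\ge 4h$.

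The only step requiring genuine care is the confinement claim behind $\cJ\in\bI_{\bW}$ — that the pillar of $\cI\restriction_{S_n}$ cannot spill over $S_n^c$ and that its removal does not perturb the walls indexed by $S_n^c$; but this is the same bookkeeping as for $\Phi_\iso$ in Lemma~\ref{lem:iso-map-well-defined}, so no real obstacle is expected, and all the remaining steps are immediate geometry.
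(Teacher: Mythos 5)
Your proof is correct and follows essentially the same route as the paper's: the map flips the spins of a $*$-connected plus component above $\hgt(\cC_\bW)$, adds back one filling horizontal face, the pillar's projection is confined to $S_n$ so $\bW$ is unchanged, the two pillars at $x$ and $y$ are distinct plus-components (both having empty base), and the face count of $\cP_{y,S}$ gives the bound $\ge 4h$. The paper is a bit terser (it just asserts $|\sF(\cP_{y,S})|\ge 4h$ and argues the wall structure of $\cJ$ via $\rho(W_y)\cap\rho(\bW)=\emptyset$ rather than via $\rho(\cI\oplus\cJ)\subset S_n$), and it hedges with ``possibly'' one horizontal face added back where you correctly argue it is always exactly one on the event $\overline E_{y,S}^h$, but these are cosmetic differences in the same argument.
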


\begin{proof}
That $\Psi_{y,S}$ is well-defined follows from the observation that the vertical faces of $\cI$ determine the entirety of $\cI$, and the map removes all vertical bounding faces of a $*$-connected component of $+$-spins above $\hgt(\cC_\bW)$. Indeed this forms a (maximal) connected component of the vertical wall faces of $W_y$; since $\rho(W_y)\cap \rho (\bW)=\emptyset$, this leaves $\bW$ unchanged in the standard wall representation of $\cJ$, and since $\sB_{y,S} = \emptyset$, $\cP_{y,S} \cap \cP_{x,S} = \emptyset$, so that the latter pillar is unchanged in $\cJ$, and $\overline E_{x,S}^h$ is still satisfied. 

In particular, the set $\cI\oplus \cJ$ consists of the bounding faces of $\cP_{y,S}$, possibly together with a single horizontal face at $y + (0,0,\hgt(\cC_\bW))$ to ``fill in" the interface from the vertical faces. This is at most a single horizontal face because $\sB_{y,S}  =\emptyset$. With that, we find that $|\cI \oplus\cJ| \le |\sF(\cP_{y,S})|+ 1$. Since $\hgt(\cP_{y,S})\ge h$, it must be the case that $|\sF(\cP_{y,S})|\ge 4h$. Finally, since at most one face of $\cI \oplus \cJ$ belongs to $\cJ$, clearly $\fm(\cI ;\cJ) = |\cI|-|\cJ|\ge |\cI \oplus \cJ|- 2$. 
\end{proof}

We next turn to analyzing the map $\Psi_{y,S}$ of Definition~\ref{def:two-pillars-map}; this goes via the usual analysis of the change of weights under $\Psi_{y,S}$ and the multiplicity of the map, presented in the following two lemmas. 

\begin{lemma}\label{lem:two-pillars-weights}
There exists $C>0$ such that for all $\beta>\beta_0$, for every $\cI \in \bI_{\bW} \cap \overline E_{x,S}^h \cap \overline E_{y,S}^h$, we have 
\begin{align*}
    \Big|\log\frac{\mu_n^\mp(\cI)}{\mu_n^\mp(\Psi_{y,S}(\cI))} + \beta \fm(\cI; \Psi_{y,S}(\cI))\Big| \le C\fm(\cI; \Psi_{y,S}(\cI))\,.
\end{align*}
\end{lemma}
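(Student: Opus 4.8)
The statement to prove is Lemma~\ref{lem:two-pillars-weights}: a bound on the weight distortion under the pillar-deletion map $\Psi_{y,S}$. This is a completely standard cluster-expansion calculation of exactly the same flavor as Lemma~\ref{lem:wall-cluster-weights} and the weight analysis in Claim~\ref{clm:lower-bound-forcing}, so I will model the proof on those.

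Here is my plan. Write $\cJ = \Psi_{y,S}(\cI)$. By Theorem~\ref{thm:cluster-expansion}, the left-hand side of the claimed inequality is at most $\left|\sum_{f\in\cI}\g(f,\cI) - \sum_{f'\in\cJ}\g(f',\cJ)\right|$. Split the faces of $\cI$ into those in the symmetric difference $\cI\oplus\cJ$ and those in $\cI\cap\cJ$ (and similarly for $\cJ$), so that this bound becomes
\[
\sum_{f\in\cI\oplus\cJ}\bar K \;+\; \sum_{f\in\cI\cap\cJ}\bigl|\g(f,\cI)-\g(f,\cJ)\bigr|
\]
using~\eqref{eq:g-uniform-bound} for the first sum. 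The first sum is at most $\bar K|\cI\oplus\cJ| \le \bar K\bigl(|\sF(\cP_{y,S})|+1\bigr)$, which by Lemma~\ref{lem:two-pillars-properties} (specifically $\fm(\cI;\cJ)\ge|\cI\oplus\cJ|-2$) is $\le \bar K(\fm(\cI;\cJ)+3) \le C\fm(\cI;\cJ)$, since $\fm(\cI;\cJ)\ge 4h-1\ge 3$.

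For the second sum, for any $f\in\cI\cap\cJ$ the radius of congruence $\br(f,\cI;f,\cJ)$ is attained by a face in the symmetric difference $\cI\oplus\cJ$ (since $\cI$ and $\cJ$ agree away from that set), so by~\eqref{eq:g-exponential-decay},
\[
\sum_{f\in\cI\cap\cJ}\bigl|\g(f,\cI)-\g(f,\cJ)\bigr| \;\le\; \sum_{f\in\cI\cap\cJ}\sum_{g\in\cI\oplus\cJ}\bar K\,e^{-\bar c\,d(f,g)} \;\le\; \sum_{g\in\cI\oplus\cJ}\sum_{f\in\sF(\Z^3)}\bar K\,e^{-\bar c\,d(f,g)}\;\le\; \bar C\bar K\,|\cI\oplus\cJ|,
\]
where the last step integrates the exponential tail (each $g$ contributes a convergent geometric sum bounded by a universal constant). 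Again $|\cI\oplus\cJ|\le\fm(\cI;\cJ)+2\le C\fm(\cI;\cJ)$. Combining the two bounds gives the lemma with a universal constant $C$.

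There is no real obstacle here — every ingredient is already in place. The only mild point to be careful about is bookkeeping the $+1$ and $+2$ additive constants coming from the single horizontal "fill-in" face in $\cI\oplus\cJ$ (handled by Lemma~\ref{lem:two-pillars-properties}), and making sure that whenever we want to absorb an additive constant into $C\fm(\cI;\cJ)$ we invoke the lower bound $\fm(\cI;\cJ)\ge 4h-1\ge 3$ from that lemma. The step one should double-check is that the radius of congruence is genuinely attained within $\cI\oplus\cJ$: this holds because $\cJ$ is obtained from $\cI$ by altering only the faces bounding $\cP_{y,S}$ (plus one face), so any ball around $f$ on which $\cI$ and $\cJ$ disagree must reach $\cI\oplus\cJ$.
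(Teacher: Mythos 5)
Your proof is correct and follows essentially the same route as the paper: split faces into the symmetric difference $\cI\oplus\cJ$ and the intersection $\cI\cap\cJ$ (the paper's $\bP$ and $\cI\setminus\bP$), bound the former by $\bar K|\cI\oplus\cJ|$ via the uniform bound~\eqref{eq:g-uniform-bound}, and bound the latter by summing the exponential tail of~\eqref{eq:g-exponential-decay} against $\cI\oplus\cJ$, then convert $|\cI\oplus\cJ|$ to $\fm(\cI;\cJ)$ via Lemma~\ref{lem:two-pillars-properties}. The bookkeeping point you flag (absorbing the additive constants using $\fm(\cI;\cJ)\ge 4h-1$) is indeed what the paper does, and the observation that the radius of congruence is attained within $\cI\oplus\cJ$ is exactly the justification the paper relies on.
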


\begin{lemma}\label{lem:two-pillars-multiplicity}
There exists $C>0$ such that for every $\cJ$ in the range $\Psi_{y,S}(\{\cI\in \bI_\bW \cap \overline E_{x,S}^{h} \cap \overline E_{y,S}^h\})$, for every $k\ge 0$, 
\begin{align*}
    |\{\cI\in \overline E_{x,S}^h\cap \overline E_{y,S}^h: \Psi_{y,S}^{-1}(\cI)= \cJ\,,\, \fm(\cI; \cJ) = k\}| \le C^k\,.  
\end{align*}
\end{lemma}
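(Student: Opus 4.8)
The plan is the usual witness‑counting scheme: for each preimage $\cI$ of a fixed $\cJ$, exhibit a compact rooted $*$‑connected face set that encodes $\cI$, then bound the number of such witnesses by Fact~\ref{fact:number-of-walls}. The natural witness here is just the deleted pillar $\cP_{y,S}(\cI)$ itself. First I would observe that, given $\cJ$ (and the fixed data $y,\bW$), the interface $\cI$ with $\Psi_{y,S}(\cI)=\cJ$ is determined by the symmetric difference $\cI\oplus\cJ$: as recorded in the proof of Lemma~\ref{lem:two-pillars-properties}, $\cI\oplus\cJ$ consists exactly of the faces of $\cP_{y,S}(\cI)$ together with at most the single horizontal face at $y+(0,0,\hgt(\cC_\bW))$ used to fill in $\cJ$ below the column; re‑inserting these faces into $\cJ$ (equivalently, flipping the spins of $\sigma(\cP_{y,S})$ back to plus in $\sigma(\cJ)$) returns precisely $\cI$. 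Hence the map $\cI\mapsto\cI\oplus\cJ$ is injective on the set of preimages of $\cJ$, and it suffices to bound the number of possible values of $\cI\oplus\cJ$.

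Next I would record the two structural facts needed. Its cardinality is controlled: by Lemma~\ref{lem:two-pillars-properties}, $|\cI\oplus\cJ|\le \fm(\cI;\cJ)+2=k+2$. And it is $*$‑connected and rooted at a fixed location: since $\sB_{y,S}=\emptyset$, the pillar is anchored at the cut cell $v_1=y+(0,0,\hgt(\cC_\bW)+\tfrac12)$, its face set $\cP_{y,S}(\cI)$ is $*$‑connected and contains one of the four vertical faces bounding $v_1$, and the optional extra horizontal face is $*$‑adjacent to those bottom faces; the location of $v_1$ depends only on $y$ and on $\bW$ (through $\hgt(\cC_\bW)$), both of which are fixed. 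Therefore $\cI\oplus\cJ$ is a $*$‑connected face set of cardinality at most $k+2$ containing a prescribed face. By Fact~\ref{fact:number-of-walls}, the number of such sets is at most $C^{k+2}$, which is at most $C'^{\,k}$ for a suitable universal $C'$; combined with the injectivity above, this gives the claim.

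The main obstacle — and really the only non‑routine point — is the reconstruction step, i.e., verifying that $\cI$ is genuinely recovered from $(\cJ,\cP_{y,S}(\cI))$ and hence that distinct preimages of $\cJ$ carry distinct pillars. This hinges on the fact, already established in Lemma~\ref{lem:two-pillars-properties}, that deleting $\cP_{y,S}$ leaves the remainder of the standard wall representation untouched (in particular $\bW$ and the pillar $\cP_{x,S}$), so that the deletion is reversible and $\cI\oplus\cJ$ is exactly the pillar plus at most one filler face. A secondary technical point is the $*$‑connectedness of the pillar face set, which is the standard topological fact that the bounding faces of a $*$‑connected cell set form a $*$‑connected face set; everything beyond this is identical to the multiplicity argument for Lemma~\ref{lem:wall-cluster-multiplicity}.
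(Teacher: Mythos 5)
Your proposal is correct and follows essentially the same route as the paper's proof: both use the face set $\cI\oplus\cJ$ as the witness, observe that it is a $*$-connected set rooted at the fixed face $y+(0,0,\hgt(\cC_\bW))$ with cardinality controlled by $\fm(\cI;\cJ)$ (you use the sharper bound $k+2$ where the paper uses the cruder $3k$, both sufficient), and apply Fact~\ref{fact:number-of-walls} to enumerate.
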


\begin{proof}[\emph{\textbf{Proof of Lemma~\ref{lem:two-pillars-weights}}}]
Since the excess area of the map is comparable to the number of faces in $\cI \oplus \Psi_{y,S}(\cI)$, the analysis of the map is quite straightforward and does not require the wall/ceiling machinery of~\cite{Dobrushin68}. Fix an $\cI\in \overline E_{x,S}^h \cap \overline E_{y,S}^h$ having $\cI\in \bI_{\bW}$, and let $\cJ = \Psi_{y, S}(\cI)$. 

Denote by $\bP$ the faces of $\cI \oplus \cJ$, which we recall are exactly the bounding faces of $\cP_{y,S}$ that were deleted by $\Psi_{y,S}$ together with the at most one horizontal face that was added at $y+ (0,0,\hgt(\cC_\bW))$. By Theorem~\ref{thm:cluster-expansion}, we can expand the left-hand side of the lemma as 
\begin{align*}
     \Big|\log\frac{\mu_n^\mp(\cI)}{\mu_n^\mp(\Psi_{y,S}(\cI))} + \beta \fm(\cI; \Psi_{y,S}(\cI))\Big| \le \sum_{f\in \bP\cap \cI} |\g(f;\cI)| + \sum_{f\in \bP \cap \cJ} |\g(f;\cJ)| + \sum_{\cI \setminus \bP} |\g(f;\cI)- \g(f;\cJ)|\,.
\end{align*}
By~\eqref{eq:g-uniform-bound}, the first two terms above are at most $\bar K |\cI\oplus \cJ|\le 3 \bar K \fm(\cI;\cJ)$ by Lemma~\ref{lem:two-pillars-properties}. Turning to the last term, by~\eqref{eq:g-exponential-decay}, 
\begin{align*}
    \sum_{\cI\setminus \bP} |\g(f;\cI)  - \g(f;\cJ)| \le \sum_{\cI\setminus \bP} \bar Ke^{ - \bar c\br(f,\cI; f,\cJ)} \le \sum_{f\in \cI \setminus \bP}  \sum_{g\in \bP} \bar K e^{ - \bar c d(f,g)}\le  3\bar K^2 \fm(\cI,\cJ)\,.
\end{align*}
Combining the above bounds yields the desired.
\end{proof}

\begin{proof}[\emph{\textbf{Proof of Lemma~\ref{lem:two-pillars-multiplicity}}}]
Fix any $\cJ$ in the range of $\Psi_{y,S}$ applied to the set of interfaces in $\bI_{\bW} \cap \overline E_{x,S}^h\cap \overline E_{y,S}^h$. As noted in Lemma~\ref{lem:two-pillars-properties}, any such $\cJ$ has $\cP_{y,S}(\cJ) = \emptyset$ and from that interface, together with the set of faces $\cI \oplus \cJ$, we can reconstruct the interface $\cI$. The face-set $\cI \oplus \cJ$ is rooted at $y+ (0,0,\hgt(\cC_\bW))$ (the height $\hgt(\cC_\bW)$ can be read off from $\cJ$), and therefore, it suffices to enumerate over the number of connected face-sets of size $|\cI \oplus\cJ| \le 3\fm(\cI;\cJ)= 3k$ rooted at a fixed face: by Fact~\ref{fact:number-of-walls}, the total number of such rooted connected is at most $s^{3k}$ for some universal $s>0$, concluding the proof. 
\end{proof}

We are now in position to combine Lemmas~\ref{lem:two-pillars-weights}--\ref{lem:two-pillars-multiplicity}, to deduce Proposition~\ref{prop:two-pillars}.

\begin{proof}[\textbf{\emph{Proof of Proposition~\ref{prop:two-pillars}}}]
We can express the probability $\mu_{n}^{\mp}(\overline E_{x,S}^h \cap \overline E_{y,S}^h \cap \bI_{\bW})$ as follows:
\begin{align*}
    \sum_{\cI \in \overline E_{x,S}^h\cap \overline E_{y,S}^h\cap \bI_{\bW}} \mu_n^\mp(\cI) & = \sum_{\cJ \in \Psi_{y,S}(\overline E_{x,S}^h\cap \overline E_{y,S}^h\cap \bI_{\bW})} \sum_{k\ge 4h-1} \sum_{\substack{\cI \in \Psi_{y,S}^{-1}(\cJ) \\ \fm(\cI;\cJ) = k}}\mu_n^{\mp}(\cI) \\
    & \leq \sum_{\cJ \in \Psi_{y,S}(\overline E_{x,S}^h\cap \overline E_{y,S}^h\cap \bI_{\bW})} \sum_{k\ge 4h-1} C^{k} e^{- (\beta - C)k}\mu_n^{\mp}(\cJ)\,,
\end{align*}
where the inequality used Lemmas~\ref{lem:two-pillars-weights}--\ref{lem:two-pillars-multiplicity}. By Lemma~\ref{lem:two-pillars-properties}, we have $\Psi_{y,S}(\overline E_{x,S}^h \cap \overline E_{y,S}^h\cap \bI_{\bW})\subset \overline E_{x,S}^h \cap \bI_{\bW}$, and therefore the right-hand above is at most 
\begin{align*}
    \sum_{\cJ \in \overline E_{x,S}^h\cap \bI_{\bW}} C e^{ - 4(\beta - C)h} \mu_n^{\mp}(\cJ)\le Ce^{ - 4(\beta - C)h}\mu_n^{\mp} (\overline E_{x,S}^h\cap \bI_{\bW})\,.
\end{align*}
Dividing out both sides by $\mu_n^{\mp}(\bI_{\bW})$ then yields the desired bound of the proposition.
\end{proof}

\section{Maximal height fluctuations within a ceiling}\label{sec:fluctuations-in-ceiling}
Our goal in this section is to bound, conditionally on the walls $\bW$ outside a given set $S$ giving rise to a ceiling~$\cC_\bW$ of $\cI_\bW$ with $S \subset \rho(\hull\cC_\bW)$, the tails of the shifted maximum height $\bar M_{S}$ of the interface $\cI$ above~$S$. Recall that $\bar M_S = M_S - \hgt(\cC_\bW)$ is given by the maximum height of $\cI \restriction_S$. 
When analyzing the event $\bar M_{S}\geq h$, the cost of a localized pillar $\cP_{x,S}$ in the interface reaching height $h$ will be dominated, for the bulk of the index points $x\in S$, by the large deviation rate $\alpha_h$. To control the effect of sites near $\partial S$, as well as situations where distant pillars interact, we introduce the following events for any $A\subset S$:
\begin{equation*}
\cG^{\fm}_A(r)= \bigcap_{x\in A}\left\{ \fm(\fW_{x,S})< r \right\}\qquad,\qquad 
\cG^{\mathfrak{D}}_A(r)= \bigcap_{x\in A}\left\{ \diam(\fW_{x,S})< r \right\}
\,.
\end{equation*}
First, an application of the conditional rigidity estimates of~\S\ref{sec:wall-clusters} will yield the next result for fluctuations of the recentered maximum within arbitrarily shaped $\hgt(\cC_\bW)$-level curves of the interface.

\begin{proposition}\label{prop:max-generic}
There exist $\beta_0,C>0$ so the following holds for all $\beta>\beta_0$. Let $S_n\subset\cL_{0,n}$ be a sequence of simply-connected sets such that $|S_n|\to\infty$ with $n$, and let $\bW_n = \{ W_z : z\notin S_n\}$ be such that $\rho(\bW) \subset S_n^c$. Then for every $h = h_n \ge 1$,
\begin{align} 
\mu^\mp_{n}\left( \bar M_{S_n}< h \given \bI_{\bW_n}\right) \geq 
\mu^\mp_{n}\left( \cG^{\fm}_{S_n}(4h) \given \bI_{\bW_n}\right) &\geq \exp\big(- |S_n| e^{-(4\beta -C)h}\big)\,, \label{eq:max-lower-crude}
\end{align}
and
\begin{align}\label{eq:max-upper-crude}
\mu^\mp_{n}\left(\bar M_{S_n} < h \mid \bI_{\bW_n}\right) &\leq  \exp\left(- |S_n| e^{-(4\beta+C)h}\right) \,.
\end{align}
\end{proposition}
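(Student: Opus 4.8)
The plan is to prove Proposition~\ref{prop:max-generic} by reducing the statement about $\bar M_{S_n}$ to a statement about the nested sequences of walls $\fW_{x,S_n}$, which are controlled by the conditional rigidity estimate of Corollary~\ref{cor:nested-sequence-inside-a-ceiling} and the pillar tail bounds of Corollary~\ref{cor:pillar-inside-a-ceiling}. The first inclusion $\{\bar M_{S_n}<h\}\supseteq\cG^{\fm}_{S_n}(4h)$ is essentially definitional: by Observation~\ref{obs:height-nested-sequence-of-walls}, if some face of $\cI\restriction_{S_n}$ above $x$ has height $\ge h$ then the standardized nested sequence $\Theta_{\textsc{st}}\fW_{x,S_n}$ has total height of its walls exceeding $4h$, hence $\fm(\fW_{x,S_n})\ge 4h$; so on $\cG^{\fm}_{S_n}(4h)$ every column above every $x\in S_n$ has height $<h$, giving $\bar M_{S_n}<h$. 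Thus the left inequality in~\eqref{eq:max-lower-crude} is immediate and it remains to prove the two quantitative bounds.

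For the lower bound~\eqref{eq:max-lower-crude}, the plan is a union-bound--type argument, but since we want a bound of the form $\exp(-|S_n|e^{-(4\beta-C)h})$ rather than a first-moment bound, I would instead directly estimate $\mu_n^\mp(\cG^{\fm}_{S_n}(4h)\mid\bI_{\bW_n})$ from below by exhibiting enough mass on interfaces whose restricted nested sequences are all short. Concretely, I would follow the Peierls-type forcing argument used in Claim~\ref{clm:lower-bound-forcing}: construct an injection that flattens all the walls of $\cI\restriction_{S_n}$ (deleting, via the $\Phi_{\bV}$-type map of Section~\ref{sec:wall-clusters}, every wall cluster whose projection meets $S_n$ and violates $\fm(\fW_{x,S_n})<4h$), and track the cost. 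The energy gained is at least $\beta$ times the excess area removed, while the multiplicity of the map is at most $C^{\text{(excess area)}}$ by a Fact~\ref{fact:number-of-walls}-style count, and there are at most $|S_n|$ index points at which a violation can be rooted; summing the resulting geometric series over all ways to place total excess area $\ge 4h$ at $\le|S_n|$ sites gives $\mu_n^\mp(\cG^{\fm}_{S_n}(4h)^c\mid\bI_{\bW_n})\le |S_n|e^{-(4\beta-C)h}$, and exponentiating (using $1-t\ge e^{-t/(1-t)}$ together with the fact that this quantity is $o(1)$, which one may assume by taking $h$ large, the bounded-$h$ case being trivial after adjusting $C$) yields~\eqref{eq:max-lower-crude}.

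For the upper bound~\eqref{eq:max-upper-crude}, the idea is to produce $\gtrsim |S_n|$ "independent-enough" sites in $S_n$ at which a pillar can independently try to reach height $h$, each succeeding with probability $\ge e^{-4(\beta+C)h}$ by the lower bound in Corollary~\ref{cor:pillar-inside-a-ceiling}, and then to show the failure probabilities multiply up to a constant factor. Since the conditional measure $\mu_n^\mp(\cdot\mid\bI_{\bW_n})$ has no FKG, I would mimic the revealing scheme advertised in \S\ref{subsec:methods}: tile $S_n$ by a sublattice of widely spaced points $x_1,x_2,\ldots$ (spacing $\gg h$, so $\Omega(|S_n|)$ of them remain since $\isodim$ is irrelevant here — only that $|S_n|\to\infty$), and expose the restricted nested sequences $\fW_{x_i,S_n}$ one at a time. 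At each step, having revealed $\fW_{x_1,S_n},\ldots,\fW_{x_{j-1},S_n}$, condition additionally on all walls whose projection lies outside the hull of the ceiling nesting $x_j$; Corollary~\ref{cor:pillar-inside-a-ceiling} applies uniformly over such conditioning and gives $\mu_n^\mp(\hgt(\cP_{x_j,S_n})\ge h\mid\text{revealed})\ge e^{-4(\beta+C)h}$, so the conditional probability that $x_j$ also fails is $\le 1-e^{-4(\beta+C)h}$. Chaining these conditional estimates yields $\mu_n^\mp(\bar M_{S_n}<h\mid\bI_{\bW_n})\le (1-e^{-4(\beta+C)h})^{c|S_n|}\le \exp(-|S_n|e^{-(4\beta+C')h})$ after absorbing constants. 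The main obstacle is making this revealing procedure rigorous without FKG: one must check that after exposing the earlier pillars the remaining conditioning is still of the form $\bI_{\bW'}$ for an admissible wall family with projection outside a simply connected set containing $x_j$ at distance $\gg h$ — i.e.\ that exposing one restricted pillar really does leave a valid conditional problem to which Corollary~\ref{cor:pillar-inside-a-ceiling} applies — and that the spacing is large enough that the revealed region is disjoint from the relevant hull with room to spare.
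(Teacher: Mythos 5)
Your reduction of $\{\bar M_{S_n}<h\}$ to $\cG^{\fm}_{S_n}(4h)$ via Observation~\ref{obs:height-nested-sequence-of-walls} is correct, and the overall philosophy of replacing FKG with a revealing procedure is the right one. However, there are genuine gaps in both halves.

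\textbf{Lower bound.} Your plan is to bound $\mu_n^\mp\big(\cG^\fm_{S_n}(4h)^c\mid\bI_{\bW_n}\big)\le |S_n|e^{-(4\beta-C)h}$ by a union bound and then exponentiate via $1-t\ge e^{-t/(1-t)}$. This only produces the claimed bound when $|S_n|e^{-(4\beta-C)h}<1$, i.e.\ when $h\gtrsim\frac{1}{4\beta}\log|S_n|$. For $h$ fixed (or any $h$ below that threshold) the union bound is vacuous, yet the proposition still demands a nontrivial, exponentially-small-in-$|S_n|$ lower bound; this regime is not ``trivial after adjusting $C$''. The paper avoids this entirely by \emph{not} taking a union bound: instead it orders the faces $x_1,x_2,\ldots$ of $S_n$ from the boundary inward, reveals the nested clusters $\fG_{x_k,S}$ one at a time, and uses Corollary~\ref{cor:nested-sequence-inside-a-ceiling} at each step to obtain $\mu_n^\mp\big(\widehat G_{x_k}\mid\cF_{k-1},\bigcap_{i<k}\widehat G_{x_i},\bI_\bW\big)\ge 1-e^{-4(\beta-C')h}$, where $\widehat G_x$ is the event that $\fm(\fW_{u,S})<4h$ for all $u$ in the projected hull $\rho(\hull\fG_x)$. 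Chaining these conditional bounds gives the product $(1-e^{-4(\beta-C')h})^{s_n}$, which yields $\exp(-s_ne^{-(4\beta-C'')h})$ uniformly over $h\ge1$. This chained conditional argument is the essential content you are missing; a one-shot complement estimate cannot reach the same conclusion.

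\textbf{Upper bound.} Two issues. First, the spacing ``$\gg h$'' is both unnecessary and inconsistent with your claim of $\Omega(|S_n|)$ surviving points: Corollary~\ref{cor:pillar-inside-a-ceiling} already holds for every $x\in S_n$ with no distance requirement (the $h=o(\Delta_n)$ separation is needed for the sharp rate $\alpha_h$ in Corollary~\ref{cor:cond-pillar-rate}, i.e.\ for Proposition~\ref{prop:max-thick}, not for the crude $4(\beta\pm C)h$ rates here). The paper uses a mesh of spacing $2$, giving $|\bar S|\ge |S_n|/8$ points. With spacing $\gg h$ you only get $O(|S_n|/h^2)$ points; the extra $h^2$ can be absorbed into $C$, but you need to say so. Second, and more substantively, you flag but do not resolve the obstacle of ensuring each revealing step still yields a valid conditional problem. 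The paper's device is to first prove, via a Peierls bound built on Theorem~\ref{thm:rigidity-inside-wall}, that $|\bar S^+|\ge\frac34|\bar S|$ except with probability $e^{-(\beta-C')s_n/32}$, where $\bar S^+$ consists of mesh points whose nested wall sequence is either $\trivincr$ or exactly the column $W_{x,\parallel}^h$. Conditioning on $\bar S^+$ and revealing all walls outside each $\bar x_i$ in turn, Claim~\ref{clm:lower-bound-forcing} (applied with $S=\{\bar x_i\}$ and $\bW=(W_z)_{z\notin\bar x_i}$) then controls the two-outcome distribution $\fW_{\bar x_i,S}\in\{\trivincr,W^h_{x_i,\parallel}\}$ uniformly in the revealed environment. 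This two-step structure---first identify the good mesh points, then flip each one independently of the rest---is precisely what makes the revealing argument go through, and is what your proposal needs to supply.
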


For sets $S_n$ with a uniformly bounded isoperimetric dimension and $h$ that is at least poly-logarithmic (concretely, say, $h\geq \sqrt{\log |S_n|}$), we can apply the results of Theorem~\ref{thm:cond-uncond-simple} and obtain
 much finer estimates.

\begin{proposition}\label{prop:max-thick} 
There exist $\beta_0,\kappa_0>0$ such that the following holds for every fixed $\beta>\beta_0$. Let $S_n\subset\cL_{0,n}$ be a sequence of simply-connected sets such that $\isodim(S_n)\leq \sqrt \beta$ and 
 $\lim_{n\to\infty}|S_n|=\infty$. Let $\bW_n = (W_z)_{z\notin S_n}$ be such that $\rho(\bW_n) \subset S_n^c$.
For every $ \sqrt{\log |S_n| } \leq h \leq \frac1{\sqrt\beta}\log |S_n|$,
\begin{align}\label{eq:max-thick-lower-large-h} 
\mu^\mp_{n}\left( \bar M_{S_n}< h \,,\,\cG^\fm_{S_n\setminus S_{n,h}^\circ}
(4h)\,,\, \cG^{\fm}_{S^\circ_{n,h}}
(5h) \mid \bI_{\bW_n}\right) &\geq \exp\left(
- (1+\epsilon_\beta)|S_n| e^{-\alpha_h}\right)\,,
\end{align}
and
\begin{align}\label{eq:max-thick-upper-large-h}
\mu^\mp_{n}\left(\bar M_{S_n} < h\,,\,\cG^{\mathfrak D}_{S_{n,h}^\circ}(e^{\kappa_0 h})  \mid \bI_{\bW_n}\right) &\leq  \exp\left(-(1-\epsilon_\beta) |S_n| e^{-\alpha_h}\right)\,,
\end{align}
where $S_{n,h}^\circ := \{x\in S_n : d(x,\partial S_n) \geq e^{2\kappa_0 h}\}$. In particular, for some absolute constant $C>0$, \begin{align}\label{eq:max-thick-combined-large-h}
\exp(- (1+\epsilon_\beta) |S_n| e^{-\alpha_h}) \leq  \mu^\mp_{n}\left( \bar M_{S_n}< h  \mid \bI_{\bW_n}\right) \leq 
\exp(
- (1-\epsilon_\beta) |S_n| e^{-\alpha_h})+\exp(-(\beta-C)e^{\kappa_0 h})
\,.\end{align}	
\end{proposition}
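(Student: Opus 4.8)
The plan is to realize the interface over $S_n$ as a near-independent field of pillars, each reaching height $h$ at rate $e^{-\alpha_h}$, and to glue together the single-pillar estimates of~\S\ref{sec:wall-clusters}--\ref{sec:pillar-couplings} along a \emph{sequential exposure} of the pillars, the surrogate for the FKG inequality (unavailable under $\mu_n^\mp(\cdot\mid\bI_{\bW_n})$). Fix large absolute constants $\kappa_1\gg\kappa_0>0$ with $\kappa_1\le\sqrt\beta$, so that for the permitted range $\sqrt{\log|S_n|}\le h\le\tfrac1{\sqrt\beta}\log|S_n|$ boxes of side $\ell:=e^{\kappa_1 h}$ fit inside $S_n$ while $e^{2\kappa_0 h}\gg\log|S_n|$. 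Tile (a suitable subset of) $S_n$ by such boxes $Q_1,\dots,Q_N$, let $Q_j^\circ$ be the $e^{\kappa_0 h}$-interior of $Q_j$, and expose, box by box in any fixed order, the walls determining the restricted pillars $\cP_{y,S}$, $y\in Q_j$ (Observation~\ref{obs:pillar-nested-sequence-of-walls}). The events $\cG^\fm$, $\cG^\fD$ in the statement are exactly what makes this exposure usable: on $\cG^\fD_{Q_i}(e^{\kappa_0 h})$ the walls revealed in $Q_i$ lie within distance $e^{\kappa_0 h}$ of $Q_i$, hence never reach $Q_j^\circ$ for $j>i$, so the conditional law over $Q_j^\circ$ given everything revealed so far is again of the conditioned form $\mu_n^\mp(\cdot\mid\bI_{\bW'})$ for a wall family $\bW'$ whose projection misses the ceiling over $Q_j^\circ$, with moreover every $x\in Q_j^\circ$ at distance $\ge e^{\kappa_0 h}\gg\log|S_n|$ from the boundary of the residual domain. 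Thus Theorem~\ref{thm:cond-uncond-simple} and Corollaries~\ref{cor:cond-pillar-rate}--\ref{cor:cond-uncond-k-simple} apply to pillars rooted in $Q_j^\circ$, conditionally on the revealed data, up to a $1\pm\epsilon_\beta$ distortion, while Corollaries~\ref{cor:nested-sequence-inside-a-ceiling}--\ref{cor:pillar-inside-a-ceiling} supply the crude but uniform rigidity needed on the leftover sites.

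For the upper bound~\eqref{eq:max-thick-upper-large-h} I would tile $S_{n,h}^\circ$ by boxes $Q_j\subset S_{n,h}^\circ$ and set $B_j:=\{\text{no }x\in Q_j^\circ\text{ with }\hgt(\cP_{x,S})\ge h\}\cap\cG^\fD_{Q_j}(e^{\kappa_0 h})$; since each $B_j$ is determined by the walls revealed in $Q_j$ and the target event is contained in $\bigcap_j B_j$, the chain rule gives $\mu_n^\mp(\text{target}\mid\bI_{\bW_n})\le\prod_j\mu_n^\mp(B_j\mid B_{<j},\bI_{\bW_n})$. To bound a single factor I would average over the revealed configuration in $B_{<j}$ and apply a Bonferroni inequality to $\overline E^h_{x,S}=\{\hgt(\cP_{x,S})\ge h,\ \sB_{x,S}=\emptyset\}$ over $x\in Q_j^\circ$: the first-moment sum is $\ge(1-\epsilon_\beta)|Q_j^\circ|e^{-\alpha_h}$ by Corollary~\ref{cor:cond-pillar-rate} and Theorem~\ref{thm:shape-inside-ceiling} (the empty-base restriction costing a $1-\epsilon_\beta$ factor), and the pairwise sum, split at distance $e^{\kappa_0 h}$, is $o(|Q_j^\circ|e^{-\alpha_h})$ — far pairs by Corollary~\ref{cor:cond-uncond-k-simple} (giving $O((|Q_j^\circ|e^{-\alpha_h})^2)$) and near pairs by Proposition~\ref{prop:two-pillars} (giving $O(|Q_j^\circ|e^{2\kappa_0 h-(4\beta-C)h}e^{-\alpha_h})$), both negligible for $\beta$ large since $|Q_j^\circ|e^{-\alpha_h}\le e^{2\kappa_1 h-\alpha_h}\to0$ and $2\kappa_0<4\beta-C$. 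Hence $\mu_n^\mp(B_j\mid B_{<j},\bI_{\bW_n})\le1-(1-\epsilon_\beta)|Q_j^\circ|e^{-\alpha_h}\le\exp(-(1-\epsilon_\beta)|Q_j^\circ|e^{-\alpha_h})$; multiplying over $j$ and using $\sum_j|Q_j^\circ|=(1-o(1))|S_{n,h}^\circ|=(1-o(1))|S_n|$ (where $\isodim(S_n)\le\sqrt\beta$ bounds $|S_n\setminus S_{n,h}^\circ|$) gives~\eqref{eq:max-thick-upper-large-h}.

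For the lower bound~\eqref{eq:max-thick-lower-large-h} I would tile all of $S_n$, additionally condition — for free, at cost $\exp(-o(|S_n|e^{-\alpha_h}))$ via Corollary~\ref{cor:nested-sequence-inside-a-ceiling} — on $\cG^\fD_{S_n}(e^{\kappa_0 h})$, and again factorize the event of~\eqref{eq:max-thick-lower-large-h} over boxes, the box factor $G_j$ now also imposing $\cG^\fm(4h)$ near $\partial S_n$ and $\cG^\fm(5h)$ away from it; by the chain rule it suffices that $\mu_n^\mp(G_j\mid G_{<j},\bI_{\bW_n})\ge\exp(-(1+\epsilon_\beta)|Q_j|e^{-\alpha_h})$. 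A union bound over $x\in Q_j$ reduces this to $\mu_n^\mp(\hgt(\cP_{x,S})\ge h\mid\cdots)\le(1+\epsilon_\beta)e^{-\alpha_h}$ for $x$ in the bulk (Corollary~\ref{cor:cond-pillar-rate}, or Theorem~\ref{thm:cond-uncond-simple} at distances $\omega(h)$ from $\partial S_n$), together with $\mu_n^\mp(\fm(\fW_{x,S})\ge r_x\mid\cdots)\le e^{-(\beta-C)r_x}\le\epsilon_\beta e^{-\alpha_h}$ with $r_x\in\{4h,5h\}$ and $\beta$ large (using $\alpha_h\le(4\beta+C)h$, Corollary~\ref{cor:nested-sequence-inside-a-ceiling}); for the $o(|S_n|)$ sites very close to $\partial S_n$ and to box boundaries one falls back on the crude $e^{-4(\beta-C)h}$ of Corollary~\ref{cor:pillar-inside-a-ceiling}, their total contribution being absorbed thanks to the isoperimetric hypothesis and $h\le\tfrac1{\sqrt\beta}\log|S_n|$. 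This yields $\mu_n^\mp(\neg G_j\mid\cdots)\le(1+\epsilon_\beta)|Q_j|e^{-\alpha_h}$, hence the factor bound and~\eqref{eq:max-thick-lower-large-h}. Finally, \eqref{eq:max-thick-combined-large-h} follows: its left inequality by dropping the $\cG$-events from~\eqref{eq:max-thick-lower-large-h}, and its right inequality by splitting on $\cG^\fD_{S_{n,h}^\circ}(e^{\kappa_0 h})$, bounding the good part by~\eqref{eq:max-thick-upper-large-h} and its complement, via Corollary~\ref{cor:nested-sequence-inside-a-ceiling} and a union bound, by $|S_n|e^{-(\beta-C)e^{\kappa_0 h}}\le e^{-(\beta-C')e^{\kappa_0 h}}$, using $\log|S_n|\le h^2=o(e^{\kappa_0 h})$.

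The main obstacle is making the sequential exposure airtight: one must check that after revealing the pillars and nested walls of the processed boxes and conditioning on the \emph{local} regularity events $\cG^\fD_{Q_i}$, what remains over the next box core is \emph{verbatim} an instance of $\mu_n^\mp(\cdot\mid\bI_{\bW'})$ with that core in the bulk, so that Corollaries~\ref{cor:cond-pillar-rate}--\ref{cor:cond-uncond-k-simple} may be quoted directly. This is what pins down the roles of $\cG^\fm$, $\cG^\fD$, forces the interlocking scales ($e^{\kappa_1 h}$ boxes with $\kappa_1$ a large constant, so box-boundary effects are swamped by the box's $\asymp|Q_j|e^{-\alpha_h}$ yield; $e^{\kappa_0 h}$ cores with $\kappa_0$ small, at once $\gg\log|S_n|$ and leaving $|S_n\setminus S_{n,h}^\circ|=o(|S_n|)$), and dictates the thresholds $4h$ and $5h$. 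A secondary difficulty, present already in~\cite{GL19b} but here without recourse to FKG, is the pairwise bound for nearby pillars, which is precisely Proposition~\ref{prop:two-pillars}; and the bookkeeping near $\partial S_n$, where none of the sharp couplings apply, must be squeezed between the perimeter bound from $\isodim(S_n)\le\sqrt\beta$ and the sub-polynomial losses afforded by $h\le\tfrac1{\sqrt\beta}\log|S_n|$.
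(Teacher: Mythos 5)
Your proposal matches the paper's proof in essentially all respects: coarse-graining $S_n$ into boxes of side $e^{\Theta(h)}$ whose cores sit at distance $\gg h$ from one another and from all previously revealed walls, sequentially exposing nested wall families box by box as a surrogate for FKG, invoking Corollary~\ref{cor:cond-pillar-rate}/Theorem~\ref{thm:cond-uncond-simple} for the sharp rate $e^{-\alpha_h}$ in the bulk and the crude rigidity of Corollary~\ref{cor:nested-sequence-inside-a-ceiling} near box and domain boundaries, a union bound for the lower tail, and an inclusion--exclusion (with Proposition~\ref{prop:two-pillars} and the empty-base restriction via Theorem~\ref{thm:shape-inside-ceiling}) for the upper tail, with all boundary losses absorbed thanks to $\isodim(S_n)\le\sqrt\beta$ and $h\le\frac1{\sqrt\beta}\log|S_n|$. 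The one small superfluity is your near/far split of the Bonferroni pair term: Proposition~\ref{prop:two-pillars} already bounds the joint probability for \emph{all} distinct pairs, so summing it over $y\in Q_j^\circ$ directly gives a factor $|Q_j^\circ|e^{-(4\beta-C)h}=o(1)$ times the first moment without needing Corollary~\ref{cor:cond-uncond-k-simple}.
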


\subsection{Maximum within general ceilings}\label{subsec:max-generic-ceil}
We begin with the proof of Proposition~\ref{prop:max-generic} which only requires the conditional rigidity estimates of Section~\ref{sec:wall-clusters}, as the exponential rates in the exponents of~\eqref{eq:max-lower-crude}--\eqref{eq:max-upper-crude} differ by a constant $C$. 
While not used for Theorem~\ref{thm:gumbel}, we include this as may be of use in settings where one has no control on the geometry of the set $S$, and its proof serves as a warm-up for that of Proposition~\ref{prop:max-thick}. 

\medskip 
\noindent \emph{Proof strategy.} For the lower bound on the probability in~\eqref{eq:max-lower-crude}, we would like to show that the probability of $\bar M_{S_n}<h$ is at least the product of probabilities of $\hgt(\cP_{x,S_n})\le h$ over $x\in S_n$. For height functions (e.g., SOS and DG model~\cite{CLMST14,LMS16}), this comparison follows from the FKG inequality, but our conditioning on $\bI_{\bW_n}$ unfortunately destroys the FKG property of the Ising model. Instead, we prove this kind of bound via a careful revealing procedure of nested collections of walls, one at a time, and iteratively applying Corollary~\ref{cor:nested-sequence-inside-a-ceiling}. 

For the upper bound on the probability in~\eqref{eq:max-upper-crude}, we would like to compare $\one\{\bar M_{S_n}<h\}$ to the maximum of $S_n$ many independent $\ber(e^{ - (4\beta +C)h})$ random variables. By Claim~\ref{clm:lower-bound-forcing} the exponential rate $(4\beta + C)$ we are aiming for is attained by a vertical column of height $h$ uniformly over its environment (cf.\ the sharper bound of Proposition~\ref{prop:max-thick}). Thus, we obtain~\eqref{eq:max-upper-crude} by considering the possible insertion of pillars consisting of straight columns of height $h$ at some mesh of $|S_n|/4$ faces in $S_n$.

\begin{proof}[\textbf{\emph{Proof of Proposition~\ref{prop:max-generic}}}]
Throughout the proof, we will write $S=S_n$ with $s_n = |S_n|$, and $\bW=\bW_n$, and $h= h_n$, for brevity.

\smallskip
\noindent
\emph{Proof of~\eqref{eq:max-lower-crude} (lower bound on the probability of interfaces with $\bar M_{S_n} < h$).}  Define, for $x\in S$ and $A\subset S$, the events
\begin{align}\label{eq-G-H-events} G_{x,A} = \{ \fm(\fW_{x,A})< 4h \}\,,\quad
H_{x,A} = \{ \hgt(\cP_{x,A}) < h \}\,.\end{align}
Denote by $\fG_{x,A}$ the walls nesting $x$ within $A$, as well as every wall nested in them; i.e.,
\begin{equation}\label{eq:fG-def}
\fG_{x,A} = \fW_{x,A} \cup \left\{ W' : W' \Subset W\mbox{ for some }W\in\fW_{x,A}\right\} \,,
\end{equation}
and let 
\begin{equation}\label{eq:hat-Gx-def}\widehat G_{x,A} = \bigcap\{G_{u,A} \,:\; u\in \rho(\hull\fG_{x,A})\}\,.\end{equation}
We will mostly consider $A=S$, in which case we omit this subscript and simply use $G_x$ ,$H_x$, $\fG_{x}$ and $\widehat G_x$.

Noting $\cG^{\fm}_{S}(4h)=\bigcap_{x\in S} \widehat G_x$ and $\{\bar M_S < h\}=\bigcap_{x\in S}H_x$, we claim  $\widehat G_x \subset H_x$, so that $\cG^\fm_S(4h)\subset \{\bar M_S<h\}$. Indeed, under $H_x^c$, there must exist some $y$ such that $W_y \Subset W$ for some $W\in \fW_{x,S}$ and such that the nested sequence of walls $\fW_{y,S}$ reaches height at least~$h$ above $\hgt(\cC)$ (any $y\in S$ such that $\cP_{x,S}$ exceeds height $h$ above $y$ has this property),
and thus has an excess area of at least~$4h$, implying the event 
$\widehat G_x^c$.

We will bound the probability of these events via the exponential tails of $\fm(\fW_{x,S})$ established in~\S\ref{sec:wall-clusters}. Applying those bounds to the entire set of $x\in S_n$ must be done carefully though, as one ``tall'' nested sequence of walls $\fW_{x,S}$ may elevate other walls nested within it.

Take any $x\in S$ and any set $Z \subset S$ such that $(\cL_0\setminus S) \cup Z$ is connected, and let $\overline \bW = \bW \cup \{ W_z : z\in Z\}$ for any collection of walls such that $\rho(\bigcup_{z\in Z} W_z) \subset Z$. For every $r\geq 1$, we deduce from Corollary~\ref{cor:nested-sequence-inside-a-ceiling} that
\[ 
 \mu^\mp_n\left(\diam(\fG_{x,S\setminus Z})\geq r\mid \bI_{\overline \bW}\right) \le \mu^\mp_n\left(\fm(\fW_{x,S\setminus Z})\geq r\mid \bI_{\overline \bW}\right) \leq e^{-(\beta - C)r} \,,\]
(using that $\diam(\fG_{x,S\setminus Z})=\diam(\fW_{x,S\setminus Z})$).
On the event that $\diam(\fG_{x,S\setminus Z}) <r$, a union bound over $\{u: d(x,u)\le r\}$ gives by Corollary~\ref{cor:nested-sequence-inside-a-ceiling}, 
\[
    \mu_{n}^{\mp} \bigg( \widehat G_{x,S\setminus Z}^c\,,\, \diam(\fG_{x,S\setminus Z})<r \mid \bI_{\overline \bW}\bigg)  \le \mu^\mp_n\bigg(\bigcup_{u\in B_r(x)} G_{u,S\setminus Z}^c \given \bI_{\overline\bW}\bigg) \leq r^2 e^{-4 (\beta - C) h}\,.
\]
Combining the last two inequalities with a choice of $r=4h$ we see that, for every  $h\geq 1$,
\begin{equation}\label{eq:hat-Gx-bound} \mu^\mp_n\left(\widehat G_{x,S\setminus Z}^c \mid \bI_{\overline\bW}\right) \leq e^{-4(\beta - C)h} + (4h)^2 e^{-4(\beta - C)h} \leq e^{-4(\beta- C')h}\,,\end{equation}
for some larger absolute constant $C'$. 

Next, label the faces $x\in S$ as $x_1,x_2,\ldots$ in a way such that $x_k$, for every $k\geq 1$, is a closest face to~$\partial S$ among all faces of $S$ not already indexed. 
As $\cL_0 \setminus S$ is connected, this guarantees that the face set $(\cL_0 \setminus S) \cup Z_k$ remains connected for every $k\geq 1$, where $Z_k := \bigcup_{i=1}^k \rho(\hull\fG_{x_i,S})\cup \{x_i\}$. 
Proceed to reveal $\{\fG_{x_i,S}: i\le |S|\}$ iteratively while $\bigcap_{i< k} \widehat G_{x_i}$ holds, as follows. In the $k$'th step:
\begin{enumerate}[(a)]
\item either $x_k\in Z_{k-1}$, in which case $\widehat G_{x_k}$ occurs by our conditioning;
\item or $x_{k}\notin Z_{k-1}$, in which case $ \fG_{x,S} =\fG_{x,{S\setminus Z_{k-1}}}$ by construction of $Z_{k-1}$ and we may apply~\eqref{eq:hat-Gx-bound}. 
\end{enumerate}
 It follows that, if $\cF_k$ is the  filtration associated with this process, 
 then in both cases
\begin{equation}\label{eq:upper-crude-iter} \mu^\mp_{n}\bigg(\widehat G_{x_k} \given  \cF_{k-1}\,,\,\bigcap_{i<k}\widehat G_{x_i}\,,\, \bI_\bW\bigg)  \geq 1 - e^{-4(\beta-C')h}\,,\end{equation}
and we can conclude that, for the event $\cG^{\fm}_{S}(4h)=\bigcap_{x\in S}G_x=\bigcap_{x\in S}\widehat G_x$, we have
\[
\mu^\mp_{n}\left(\cG_{S}(4h) \given \bI_{\bW} \right)=\mu^\mp_n\bigg(\bigcap_{x\in S} \widehat G_x \given \bI_{\bW} \bigg) \geq \left(1 - e^{-4(\beta-C')h}\right)^{s_n}\geq \exp\bigg(-s_n e^{-4(\beta-C'')h}\bigg)\,,\]
using $1-x \geq e^{-x-x^2}\geq e^{-\frac32 x}$ for all $x<\frac12$ (as $h\geq 1$, we may set $\beta_0$ such that $4(\beta-C')h\geq 1$). This establishes the right inequality in~\eqref{eq:max-lower-crude}, and the left inequality in that display follows from the aforementioned observation that $\cG^{\fm}_S(4h)$ implies that $\bar M_S < h$.

\smallskip
\noindent
\emph{Proof of~\eqref{eq:max-upper-crude} (upper bound on the probability of interfaces with $\bar M_{S_n} < h$).} The sought bound will follow from a straightforward forcing argument, iteratively applying  Claim~\ref{clm:lower-bound-forcing}. 

Consider a (maximal) mesh $\bar S$ of the faces of $S_n$ such that for every $x,y\in \bar S$, their distance is at least two and notice that $|\bar S|\ge \frac 18 s_n$. Let $\bar x = \{f\in \cL_{0,n}: f\sim^* x\}\cup\{x\}$ as before, and define   
$$\bar S^+ := \{ x \in \bar S : \fW_{\bar x,S}=\trivincr\}\cup \{x\in \bar S: \fW_{x,S} = \{W_{x,\parallel}^{h}\}\}\,.$$
We first establish that $|\bar S^+|$ is comparable to $s_n$ with very high probability. 
Indeed, if $|\bar S^+|\leq \frac 18 s_n-r$ then $|\bar S^+|\le |\bar S| -r$ and thus there must exist a subset $\{x_i\}\subset \bar S$ with disjoint nested sequences of walls $(\fW_{\bar x_i,S})_i$ such that $\sum \fm(\fW_{\bar x_i,S})\geq r$. 
The number of choices for the standard wall collection $\Theta_{\textsc{st}}\bigcup_{z\in \bar S} \{W\in \fW_{\bar z,S}\}$ with an excess of $r$ is at most
$\binom{|\bar S| + r-1}r s^r$ for an absolute constant $s>0$
(here it is useful to identify to each wall a representative in some predetermined ordering of the vertices of $\bar S$; then, one needs to partition the total excess of $r$ into the faces of $S_n$ as $r=\sum_{\bar x\in |\bar S|} r_{\bar x}$ according to their representative, and enumerate over at most $s^{r_{\bar x}}$ options (by Fact~\ref{fact:number-of-walls}) for each  $\Theta_{\textsc{st}}\fW_{\bar x,S}$). 
By Theorem~\ref{thm:rigidity-inside-wall},
\[ \mu_{n}^\mp\left(|\bar S^+| \leq \tfrac34 |\bar S|  \mid \bI_{\bW} \right) \leq 
\sum_{r\geq |\bar S|/4} \binom{|\bar S|+r}r s^r e^{-(\beta-C)r} \leq \sum_{r\geq |\bar S|/4}\left(5e^{C+1} s e^{-\beta}\right)^r \leq e^{-\frac1{32}(\beta-C')s_n}\,,
\]
using $\binom{a}b \leq (ea/b)^b$ in the second inequality. Having established that, we can upper bound 
\begin{align}\label{eq:lower-bound-reduction-to-S-plus}
    \mu_n^{\mp}(\bar M_{S_n}< h \mid \bI_{\bW}) \le \sup_{\bar S^+\subset \bar S : |\bar S^+|\ge \frac 34 |\bar S|}\mu_n^{\mp}\Big(\bigcap_{x\in \bar S^+} \{\fW_{\bar x,S} = \trivincr\} \mid \bar S^+, \bI_\bW\Big)+ e^{ - (\beta - C')s_n/32}\,.
\end{align}
Consider the supremum, by fixing any subset $\bar S^+ \subset \bar S$ having $|\bar S^+|\ge 3|\bar S|/4$, and revealing all the walls $(W_z)_{z\in S_n \setminus \bar S^+}$ under the distribution. Call $\cF_0$ the filtration generated by the family of walls $(W_z)_{z\in S_n\setminus \bar S^+}$, enumerate the first $\frac 34 |\bar S|$ many faces of $\bar S^+$ as $x_1,x_2,\ldots,x_{3|\bar S|/4}$, and let $\cF_i$ be the filtration generated by $\cF_0$ and $\fW_{\bar x_j,S} \in \{\trivincr, W_{x_j,\parallel}^h\}$ for $j<i$. Recall from Claim~\ref{clm:lower-bound-forcing} (with the choices of $S_n= \{\bar x_i\}$ and $\bW =(W_z)_{z\notin \bar x_i}$ there, where the criteria are satisfied due to the conditioning on $x_i \in \bar S^+$) that for every $i$, 
\begin{align*}
    \mu_n^{\mp}\big(\fW_{\bar x_{i},S} = \{W_{x_i,\parallel}^h\} \mid  \bar S^+, \bI_\bW, \cF_{i-1}\big) \ge \frac 12 \exp( - 4(\beta + C)h)\,.
\end{align*}
Iteratively applying the complementary bound, we obtain 
\begin{align*}
    \mu_n^{\mp}\Big(\bigcap_{x\in \bar S^+} \{\fW_{\bar x,S}  = \trivincr\} \mid \bar S^+, \bI_\bW , \cF_0\Big) & \le \prod_{i=1}^{3|\bar S|/4} \mu_n^{\mp}(\fW_{\bar x_i,S} = \trivincr \mid \bar S^+, \bI_{\bW}, \cF_{i-1}) \\ 
    & \le \big(1- e^{ - 4(\beta + C)h}\big)^{3|\bar S|/4}\,. 
\end{align*}
Using the fact that $|\bar S|\ge \frac 18 s_n$ and $(1-x)\le e^{-x}$, this is at most $\exp( - \frac 1{10} s_n e^{ - 4(\beta + C)h})$. Plugging this in to~\eqref{eq:lower-bound-reduction-to-S-plus} we obtain the desired up to changing the constant $C$. 
\end{proof}

\subsection{Maximum within thick ceilings}\label{subsec:thick-ceils}
For ceilings $S_n$ whose isoperimetric dimension is uniformly bounded ($|\partial S_n|\le |S_n|^{(d-1)/d}$ for some fixed $d$) we are able to use the shape of typical pillars and their coupling to the tall pillars of $\mu_{\Z^3}^{\mp}$ as established in Sections~\ref{sec:tall-pillar-shape}--\ref{sec:pillar-couplings}, to close the gap between the $4\beta \pm C$ rates in the exponents of~\eqref{eq:max-lower-crude}--\eqref{eq:max-upper-crude}. Closing this gap and identifying the rate as the infinite-volume large deviation (LD) rate $\alpha_h$ is critical to sharp asymptotics of the recentered maximum height oscillations $\bar M_{S}$, as desired by Theorem~\ref{thm:gumbel}.

\medskip 
\noindent \emph{Proof strategy.} In order to refine the lower bound on the probability in~\eqref{eq:max-lower-crude}, and replace $(4\beta - C)h$ with $\alpha_h$, the bulk of the steps in the revealing scheme must use pillars whose probability of attaining height $h$ is exactly $\exp ( - \alpha_h)$. 
To achieve this, we coarse-grain $S_n$ into smaller-scale tiles, separated by an $\omega(h)$ distance. The pillars in each of these tiles are then iteratively revealed: conditionally on the pillars in some set of tiles, the LD rate of any pillar in the next tile to be exposed is $\alpha_h$ by Corollary~\ref{cor:cond-pillar-rate} (within each tile a union bound is satisfactory). The boundary regions between the tiles are then of a smaller order (here using our assumption on $\isodim(S_n)$), and are processed as a final step using the cruder $(4\beta - C)h$ rate.  

When refining the upper bound on the probability in~\eqref{eq:max-upper-crude}, there is no mechanism for inserting random pillars attaining LD rate $\alpha_h$; we therefore use a tiling scheme similar to that of the upper bound here. As this is an upper bound on the event under consideration, it suffices to consider the interiors of the tiles (and disregard the boundary regions between tiles), and furthermore focus on the class of nice pillars with empty base which are already sufficiently typical and costly. The proof proceeds by iteratively revealing the pillar profiles in the tiles, but this time it uses the  correlation control of Proposition~\ref{prop:two-pillars} to perform a second-moment method within each tile.  

\begin{proof}[\textbf{\emph{Proof of Proposition~\ref{prop:max-thick}}}]
For ease of notation, throughout this proof we let $S=S_n$ with $s_n = |S_n|$, as well as $\bW=\bW_n$ and $h = h_n$. 
The quantity $\alpha_h$  from~\eqref{eq:alpha-alpha-h-def} satisfies, for every $h\geq 1$,
\[(4\beta - C)h \leq \alpha_h \leq (4\beta+e^{-4\beta})h+C\,,\] with $C>0$ an absolute constant (cf.~\cite[Prop.~2.29 and Eq.~(6.3)]{GL19a} and also~\cite[Cor.~5.2]{GL19b}). 
Let $C_0>0$ be a large absolute constant w.r.t.\ which both this and the statements of Theorem~\ref{thm:rigidity-inside-wall} and Eq.~\eqref{eq:upper-crude-iter}  hold. 
Set $\kappa_0=2C_0$, recalling that the subset $S^\circ = S_{n,h}^\circ= \{x\in S \,:\; d(x,\partial S)\geq e^{4 C_0 h}\}$.
We will soon use that, comparing $\exp(-\alpha_h)$ to its crude estimate $\exp(-(4\beta-C_0)h)$, for large $\beta$ we have
\begin{equation}\label{eq:alpha-h-lower-4beta-h}
e^{-\alpha_h+(4\beta-C_0)h} \geq e^{-(C_0+1)h}\,.
\end{equation}
Tile $\cL_0$ using boxes of side length 
\[L := \lfloor \tfrac12 e^{4 C_0 h} \rfloor\,,\] 
(e.g., $\llb i L,(i+1) L\rrb\times \llb j L,(j+1)L\rrb\times\{0\}$ for all $i,j\in\Z$),  observing that $e^{4C_0 h}\leq s_n^{4C_0/\sqrt\beta} \leq s_n^{\epsilon_\beta}$. Let 
\[ S' = \left\{ x\in S : d(x,\partial S) > 2 e^{4C_0 h}\right\}\,,\]
and let $\{Q_i\}$ be the subset of every such box intersecting $S'$. By definition, $d(Q_i,\partial S)> e^{4C_0 h}$ for every~$i$, and thus $Q_i\subset S^\circ$.
Letting $d = \isodim(S)$, we get
\begin{equation}\label{eq-S-S'-bound} |S\setminus S'| \leq |\partial S| (2e^{4C_0 h})^2 \leq 4 s_n^{(d-1)/d + 8C_0/\sqrt{\beta}}\,,\end{equation}
which is $o(s_n)$ provided $d< \sqrt{\beta}/(8C_0)$.
Further let $Q'_i\subset Q_i$ be the concentric sub-rectangle of side length 
\[ L' = L - \lceil e^{2C_0 h}  \rceil\]
within $Q_i$, and write $Q = \bigcup_i Q_i$ and $Q' = \bigcup_i Q'_i$, 
recalling from the above  definitions (and $h\gg 1$) that
\[|Q'|=(1-o(1))|Q|\quad\mbox{and}\quad s_n\geq |Q| \geq |S'| = (1-o(1))s_n\,.\]

\smallskip
\noindent
\emph{Proof of~\eqref{eq:max-thick-lower-large-h} (lower bound on the probability of interfaces with $\bar M_{S_n} < h$).} 
For every $x\in S$, define the events $G_x$, the quantity $\fG_x$ and the event $\widehat G_x$ as in~\eqref{eq-G-H-events}--\eqref{eq:hat-Gx-def}. Further define, for $A\subset S$,
\begin{equation}
\label{eq:def-H-dagger-lower}
\widehat H_{x,A} = \{ \hgt(\cP_{x,A})<h\}\cap\{\fm(\fW_{x,A})<5h\}\,.
\end{equation}
(As before, referring to the events $G_x$, $\widehat G_x$ and $\widehat H_x$ without a subscript $A$ indicates the default choice $A= S$.)
We will show that for every $h\geq\sqrt{\log s_n}$,
\begin{align}\label{eq:thick-upper-bound}
\mu_{n}^\mp\bigg(\bigcap_{x\in Q'} \widehat H_x \cap \bigcap_{x\in S\setminus Q'} \widehat G_x
\given \bI_{\bW} \bigg) &\geq \exp\left(- (1+o(1))|S\setminus Q'| e^{-(4\beta-C_0)h}-(1+\epsilon_\beta+o(1))s_n e^{-\alpha_h}\right)\,,\end{align}
which will imply~\eqref{eq:max-thick-lower-large-h}. Indeed, $\bigcap_{x\in S\setminus Q'} \widehat G_x \subset \cG^\fm_{S\setminus S^\circ}(4h)$ (as $\widehat G_{x}\subset G_{x}$ and $S\setminus Q'\supset S\setminus S^\circ$) and $\widehat G_x\subset \widehat H_x$ since, on one hand, it clearly implies $\fm(\fW_{x,S})\leq 5h$, and on the other (as explained in the proof of Proposition~\ref{prop:max-generic}),
having $\hgt(\cP_{x,S})\geq h$ would imply that some $W_y$ nested in $\fW_{x,S}$ has $\fm(\fW_{y,S})\geq 4h$, violating~$\widehat G_x$.
Therefore, the event on the left-hand side of~\eqref{eq:thick-upper-bound} is contained in the event $\{\bar M_{S}<h\}\cap \cG^\fm_{S\setminus S^\circ}(4h) \cap \cG_{S^\circ}^{\fm}(5h)$. 
Since $S' \subset Q$ implies that $|S\setminus Q'|\leq |S\setminus S'| + |Q \setminus Q'|$, whereas by ~\eqref{eq:alpha-h-lower-4beta-h} and~\eqref{eq-S-S'-bound}, if $d=\isodim(S)$ then
\[ |S\setminus S'|e^{-(4\beta-C_0)h} \leq 4 s_n^{(d-1)/d+(9C_0+1)/\sqrt{\beta}} e^{-\alpha_h} = o(s_n e^{-\alpha_h})
\]
provided that $d<\sqrt{\beta}/ (9C_0+1)$, and by~\eqref{eq:alpha-h-lower-4beta-h} and the definition of $L$ and $L'$,
\[ |Q\setminus Q'| e^{-(4\beta-C_0)h}
\leq O((L-L')/L) s_n e^{-(4\beta-C_0)h} 
=O(e^{-(C_0-1)h} s_n e^{-\alpha_h})=o(s_n e^{-\alpha_h})\,.
\]
It thus follows that~\eqref{eq:thick-upper-bound} implies the sought inequality~\eqref{eq:max-thick-lower-large-h}.

To show~\eqref{eq:thick-upper-bound}, we first treat the walls in $S\setminus Q$.
By iteratively revealing $\fG_u$ for all $u\in S\setminus Q$, in the exact same manner as was done in~\eqref{eq:upper-crude-iter}, we find that
\begin{align}\label{eq:bdy-upper}
    \mu_n^\mp \Big(\bigcap_{x\in S\setminus Q}  \widehat G_x \mid \bI_{\bW}\Big) \ge \Big(1-e^{ - (4\beta - C_0) h}\Big)^{|S\setminus Q|} \geq \exp\Big( - |S\setminus Q| e^{ - (4\beta - C_0) h}\Big)
    \,.
\end{align}

We next treat $Q$, the bulk of the sites.
Assume w.l.o.g.\ that the ordering of the $Q_i$'s is such that $Q_i$ minimizes $d(Q_k,\partial S\cup \bigcup_{j<i} Q_j)$ among all boxes $Q_k$ for $k\geq i$.  
Via this ordering, if 
 $$ Z_i = \bigcup\Bigl\{ \rho(\hull\fG_x)\cup \{x\}\,:\; x\in (S\setminus Q) \cup \bigcup_{j\le i} Q_j\Bigr\}\,,$$
then the face set $(\cL_0\setminus S)\cup Z_i$
remains connected for every $i$.
Let $(\cF_i)$ denote the filtration generated by $\fG_{u}$ for all $ u\in Z_i$. 
Condition on $\cF_{i-1}$ and the event \[ D_{i-1} =  \left(\bigcap\left\{ \widehat H_x \,:\; x\in \mbox{$\bigcup_{j<i} Q'_j$}\right\} \right)\cap\left( \bigcap\left\{ \widehat G_x\,:\; x\in \mbox{$(S\setminus Q)\cup\bigcup_{j<i} Q_j\setminus Q'_j$}\right\}\right)\,.\]
(Note that $D_{i-1}$ is measurable w.r.t.\ $\cF_{i-1}$.) We next argue that for every $x\in Q_i'$,
\begin{equation}\label{eq:couple-pillar-to-inf-vol}
\mu_{n}^\mp(\hgt(\cP_{x,S})\geq h \mid \cF_{i-1}\,, D_{i-1}) \leq  (1+\epsilon_\beta) e^{-\alpha_h}\,.
\end{equation}
Indeed, the conditioning revealed $\fG_{u}$ for every $u\in \bigcup_{j<i} Q_j$, yet importantly, the event $D_{i-1}$ stipulates that
\begin{enumerate} [(a)]
	\item every wall $W\Subset S$ that was revealed as part of $\fG_{u}$ for $u\in \bigcup_{j<i}(Q_j\setminus Q'_j)$ has $\fm(W)<4h$ (as per the event~$\widehat G_u$), so in particular $d(W,Q'_i)\geq L-L'-4h =  (1-o(1))e^{2C_0 h}$; and 
	\item every wall $W\Subset S$ that was revealed as part of $\fG_{u}$ for $u\in \bigcup_{j<i}Q'_j$ must also have $d(W,Q'_i)\geq L-L' $, or else it must necessarily be part of $\fG_{v}$ for some $v\in Q_j\setminus Q'_j$ while having $\fm(W)\geq(1-o(1))e^{2C_0 h}$, which (recalling that $\sqrt{\log s_n}\leq h=O(\log s_n)$) is in violation of~$\widehat G_v$. 
\end{enumerate} Altogether, every wall revealed as part of $\cF_{i-1}$, on $D_{i-1}$, is at distance at least $(1-o(1))e^{2C_0 h}$ from~$Q'_i$.
Applying Corollary~\ref{cor:cond-pillar-rate}
with $\overline\bW = \bW\cup \{\fG_{u,S} : u \in Z_i\}$, here $h=o(\Delta_n)$ (as $h\gg 1$ and 
 $\Delta_n\geq(1-o(1))e^{2C_0 h}$),  so~\eqref{eq:cond-uncond-coupling-1} in the conclusion of that theorem 
 implies the required bound~\eqref{eq:couple-pillar-to-inf-vol}. 
 
 In addition, as the walls revealed as part of $\cF_{i-1}$, on $D_{i-1}$, do not intersect $Q'_i$, Corollary~\ref{cor:nested-sequence-inside-a-ceiling} implies that
 \[ \mu_n^\mp(\fm(\fW_{x,S}) \geq 5h \mid \cF_{i-1},D_{i-1}) \leq e^{-(\beta-C)5h} = o(e^{-\alpha_h})\,,\]
using that $\alpha_h \geq (4\beta-C_0)h$ from~\eqref{eq:alpha-h-lower-4beta-h}.
 
Combining these two estimates (while absorbing the latter in the term $\epsilon_\beta$ from~\eqref{eq:couple-pillar-to-inf-vol}), we deduce that
\begin{align*} \mu_{n}^\mp\bigg(\bigcup_{x\in Q'_i} \widehat H_x^c \given \cF_{i-1}\,, D_{i-1}, \bI_{\bW}\bigg) &\leq  
 \sum_{x\in Q_i'} \mu_{n}^\mp\left( \widehat  H_x^c \given \cF_{i-1}\,, D_{i-1}, \bI_{\bW}\right) \leq 
(1+\epsilon_\beta)|Q'_i|e^{-\alpha_h} \,.
\end{align*}
For any $x\in Q_i\setminus Q'_i$, if $x\in Z_{i-1}$ then necessarily $\fW_{x,S}\subset \fG_u$ for some $u\in\bigcup_{j<i}(Q_j\setminus Q'_j)$ (as $Q_j\setminus Q'_j$ separates $Q'_j$ from $Q_i$), and thus $\widehat G_x^c$ automatically holds by $D_{i-1}$. Otherwise, by~\eqref{eq:hat-Gx-bound} with the choices $Z= Z_{i-1}$ and $\overline \bW = \bW \cup \{W_z: z\in Z_{i-1}\}$, we have for every $x\in Q_i \setminus Q_i'$, 
\[ \mu_n^\mp\left( \widehat G_x^c \mid \cF_{i-1}\,,D_{i-1}\,, \bI_{\bW}\right) \le \mu_n^{\mp}\left(\widehat G^c_{x,S\setminus Z_{i-1}} \mid \cF_{i-1}\,, D_{i-1}\,, \bI_{ \overline \bW}\right) \leq e^{-(4\beta-C_0)h}\,, 
\]
where we used the fact that $\widehat G_{x,S} = \widehat G_{x,S\setminus Z_{i-1}}$ when $x\notin Z_{i-1}$. 
Thus, 
\[ \mu_{n}^\mp\bigg(\bigcup_{x\in Q_i\setminus Q'_i} \widehat G_x^c \given \cF_{i-1}\,, D_{i-1}\,, \bI_{\bW}\bigg) \leq  
\sum_{x\in Q_i\setminus Q'_i}\mu_{n}^\mp( \widehat G_x^c \given \cF_{i-1}\,, D_{i-1}\,,\bI_{\bW}) \leq 
|Q_i\setminus Q'_i|e^{-(4\beta-C_0) h}\,.
\]
Recalling that $|Q_i|=L^2 \leq e^{8C_0 h}$, along with the fact that $\alpha_h \geq (4\beta-C_0)h$ from~\eqref{eq:alpha-h-lower-4beta-h}, we see that
\[
|Q_i|e^{-\alpha_h} \leq |Q_i|e^{-(4\beta-C_0)h} = (1+o(1))e^{(9C_0 - 4\beta)h} = o(1)
\]
for all $\beta>9C_0$; thus, applying $1-x\geq e^{-x/(1-x)}$ for $0<x<1$, we obtain that
\[ \mu_{n}^\mp\bigg(\Big(\bigcap_{x\in Q'_i} \widehat H_x\Big) \cap\Big(  \bigcap_{x\in Q_i\setminus Q'_i} \widehat G_x \Big)\given \cF_{i-1}\,, D_{i-1}\,,\bI_{\bW}\bigg) \geq e^{-(1+o(1))|Q_i\setminus Q'_i|e^{-(4\beta-C_0)h}
-(1+\epsilon_\beta+o(1))|Q_i'|e^{-\alpha_h} }\,.\]
Iterating this over all $i$ results in a lower bound of
\[ \exp\left(-(1+o(1))|Q\setminus Q'| e^{-(4\beta-C_0)h}-(1+\epsilon_\beta+o(1))|Q'|e^{-\alpha_h} \right)\,,\]
which, when multiplied by the right-hand of~\eqref{eq:bdy-upper}, 
establishes~\eqref{eq:thick-upper-bound}, and hence also~\eqref{eq:max-thick-lower-large-h}.

\smallskip
\noindent
\emph{Proof of ~\eqref{eq:max-thick-upper-large-h} (upper bound on the probability of interfaces with $\bar M_{S_n} < h$).} 
For this part, 
define the following variants of the events $\widehat G_x$ and $\widehat H_x$ from above:
\[ G^\dagger_x = \{\diam(\fW_{x,S})< e^{2 C_0 h}\}\,,\qquad H^\dagger_x = \{ \hgt(\cP_{x,S}) < h \} \cup \{ \sB_{x,S}\neq\emptyset\}\,,\]
where $\sB_{x,S}$ is the base of $\cP_{x,S}$. 
 We will show that
\begin{align}\label{eq:thick-lower-bound}
\mu_n^\mp\bigg(\bigcap_{x\in Q'} H^\dagger_x\cap \bigcap_{x\in Q\setminus Q'}  G^\dagger_x \given \bI_{\bW} \bigg) &\leq \exp\left(-(1-\epsilon_\beta)s_n e^{-\alpha_h}\right)\,,
\end{align}
which will immediately imply the inequality~\eqref{eq:max-thick-upper-large-h}, since  $\{\bar M_\cC < h\} \subset \bigcap_{x\in Q'} H^\dagger_x$ and $\cG^{\mathfrak D}_{S^\circ}(e^{2C_0 h})\subset \bigcap_{x\in Q'}G^\dagger_x$.
To prove~\eqref{eq:thick-lower-bound}, 
recall the definition of the filtration $(\cF_i)$ given above, and let
\[ D'_{i} = \bigcap_{j\leq i}\bigg(\bigcap_{x\in Q'_j} H^\dagger_x \cap   \bigcap_{x\in Q_j\setminus Q_j'} G^\dagger_x\bigg)\,.\]
By the Bonferroni inequalities (inclusion--exclusion),
\begin{align}
 \mu_{n}^\mp\bigg(\bigcup_{x\in Q'_i} (H^\dagger_x)^c \given \cF_{i-1}\,,D'_{i-1}\,, \bI_{\bW} \bigg) &\geq \sum_{x\in Q'_i} \mu_{n}^\mp\left((H^\dagger_x)^c\mid\cF_{i-1}\,,D'_{i-1}\,,\bI_{\bW}\right) \nonumber\\
&- \frac12 \sum_{x\in Q'_i} \sum_{y\in Q'_i}
\mu_{n}^\mp\left((H^\dagger_x)^c\cap(H^\dagger_y)^c\mid\cF_{i-1}\,,D'_{i-1}\,,\bI_{\bW}\right) \,.
\label{eq:bonferroni}
\end{align}
For every $x\in Q'_i$, as argued above, the walls revealed as part of $\cF_{i-1}$ on the event $D'_{i-1}$ are all at distance at least $\Delta_n \geq (1-o(1))e^{2C_0 h} \gg h$ from $x$ as per the event $G^\dagger_x$. Hence, Corollary~\ref{cor:cond-pillar-rate} yields
\[ 
 \mu_{n}^\mp\left(\hgt(\cP_{x,S})\geq h \mid \cF_{i-1}\,,D'_{i-1}\,, \bI_{\bW}\right) \geq 
(1-\epsilon_\beta) e^{-\alpha_h}\,.\]
Furthermore, via Theorem~\ref{thm:shape-inside-ceiling} (and the fact that $\{\cI\restriction_S \in \Iso_{x,L,h}\} \subset\{\sB_{x,S} = \emptyset\}$), 
\[ 
\mu_{n}^\mp(\sB_{x,S}=\emptyset \mid \hgt(\cP_{x,S})\geq h\,,\cF_{i-1}\,,D'_{i-1}, \bI_{\bW}) 
\geq 1-\epsilon_\beta\,.
\]
(Alternatively, one could have deduced this by combining
the coupling bound~\eqref{eq:cond-uncond-coupling-2} in Theorem~\ref{thm:cond-uncond-simple} with the result of~\cite[Theorem 4.1(a)]{GL19b} stating that $\mu_{\Z^3}^\mp(\sB_{o}=\emptyset\mid \hgt(\cP_{o})\ge h)\geq 1-\epsilon_\beta$.)
Combined, it follows that
\[ \mu_{n}^\mp\left( (H^\dagger_x)^c \mid \cF_{i-1}\,, D'_{i-1}\,,\bI_{\bW}\right) \geq (1-\epsilon_\beta) e^{-\alpha_h}\]
for some other $\epsilon_\beta$ going to $0$ as $\beta\to\infty$.
At the same time, Proposition~\ref{prop:two-pillars} 
shows that, for every $x,y\in Q_i'$,
\[\mu_n^{\mp} \left((H^{\dagger}_x))^c \cap (H^\dagger_y)^c \mid \cF_{i-1}, D_{i-1}'\,, \bI_{\bW}\right)\le e^{ - (4\beta - C)h} \mu_n^{\mp} \left((H^\dagger_x)^c\mid \cF_{i-1}, D_{i-1}'\right)\,,
\]
and plugging the last two displays in~\eqref{eq:bonferroni},  while recalling $|Q'_i|e^{-(4\beta-C)h} =o(1)$ as $h\geq\sqrt{\log n}$, shows that
\[  \mu_{n}^\mp\bigg(\bigcup_{x\in Q'_i} (H^\dagger_x)^c \given \cF_{i-1}\,, D'_{i-1}\,,\bI_{\bW}\bigg) \geq  (1-\epsilon_\beta - o(1))|Q_i'| e^{-\alpha_h} \,.
\]
Rearranging this and using $1-x \le e^{ - x}$ we find that
\begin{align*}
 \mu_{n}^\mp\bigg(\bigcap_{x\in Q'_i} H^\dagger_x\cap \!\!\bigcap_{x\in Q_i\setminus Q'_i} \!\!G^\dagger_x \given \cF_{i-1}\,, D'_{i-1}\,,\bI_{\bW}\bigg) & \leq
 \mu_{n}^\mp\bigg(\bigcap_{x\in Q'_i} H^\dagger_x \given \cF_{i-1}\,, D'_{i-1}\,,\bI_{\bW}\bigg) \\
 & \leq \exp\left(-(1-\epsilon_\beta - o(1))|Q_i'| e^{-\alpha_h} \right)\,,
\end{align*}
which yields~\eqref{eq:thick-lower-bound} once we iterate this bound over all $i$.

Finally, we deduce~\eqref{eq:max-thick-combined-large-h} from~\eqref{eq:max-thick-lower-large-h} and~\eqref{eq:max-thick-upper-large-h} 
via a simple union bound over $\{\cG^{\mathfrak D}_x(e^{2C_0 h}) : x\in S_n\}$, as we have that
$\mu_{n}^\mp(\cG^{\mathfrak D}_x(e^{2C_0 h})\mid\bI_{\bW}) \leq \exp(-(\beta-C)e^{2C_0 h} )$ by Corollary~\ref{cor:nested-sequence-inside-a-ceiling}.
\end{proof}

With the tail bounds on $\bar M_{S_n}$ established in Propositions~\ref{prop:max-generic}--\ref{prop:max-thick}, we are able to easily deduce tightness, as well as Gumbel tail behavior of the maximum oscillation within $S_n$ conditionally on $\bI_{\bW_n}$. 

\subsection{Proof of Theorem~\ref{thm:gumbel}}
We will establish the required estimates for every $0 \leq k\leq \frac1{\beta^2}\log s_n$.
Recall that $m_{s_n}^*$ as defined in~\eqref{eq:m*-def} satisfies $m_{s_n}^* \asymp \log s_n$. We further have that
\[ \gamma := s_n \exp(-\alpha_{m_{s_n}^*})\quad\mbox{satisfies}\quad \exp(-2\beta-e^{-4\beta})\leq \gamma <\exp(2\beta)\,, \]
with the upper bound $\gamma < e^{2\beta}$ following by definition of $m^*_{s_n}$, and the lower bound due to the super-additivity
\[ \alpha_{h_1}+\alpha_{h_2}-\epsilon_\beta \leq \alpha_{h_1+h_2} \leq \alpha_{h_1}+\overline \alpha h_2 \qquad\mbox{for every $h_1,h_2\geq 1$}\,,\]
with $\overline\alpha = 4\beta+e^{-4\beta}$, 
as established in~\cite[Corollary~5.2]{GL19b}. Applying the two inequalities in the last display with the pair $(h_1,h_2)$ taking values either $(m_{s_n}^*,k)$ or $(m^*_{s_n}-k,k)$, we respectively deduce  
\begin{align} \gamma \exp\left( -\overline\alpha k \right) &\leq s_n \exp(-\alpha_h) \leq (1+\epsilon_\beta)\gamma \exp\left( -\alpha_k \right)&\mbox{for $h=m_{s_n}^*+k$}\,,\label{eq:alpha-h-ml+k}\\
(1-\epsilon_\beta)\gamma \exp\left( \alpha_k \right) &\leq s_n \exp(-\alpha_h) \leq \gamma \exp\left( \overline\alpha k \right)&\mbox{for $h=m_{s_n}^*-k$}\label{eq:alpha-h-ml-k}\,.
\end{align}
For the lower tail, we invoke Proposition~\ref{prop:max-thick} for $h=m_{s_n}^*-k$, absorbing the term $\exp(-s_n^{C/\beta})$ in~\eqref{eq:max-thick-combined-large-h} into the $\epsilon_\beta$ in the first term appearing in that bound since (taking $\beta_0$ large enough) \[ s_n \exp(-\alpha_h)\leq  \gamma\exp(\overline \alpha k) = o(s_n^{C/\beta})\quad\mbox{for all $k\leq \frac1{\beta^2}\log s_n$} \]
provided that $\beta$ is large enough, and deduce from~\eqref{eq:alpha-h-ml-k} that
\[ 
\exp\left(- (1+\epsilon_\beta)\gamma e^{\overline \alpha k}\right) \leq  \mu^\mp_{n}\left( \bar M_{S_n}< m^*_{s_n} -k  \mid \bI_{\bW_n}\right) \leq 
\exp\left(
- (1-\epsilon_\beta) \gamma e^{\alpha_k}\right)\,.
\]
 For the upper tail, Proposition~\ref{prop:max-thick} (this time, using~\eqref{eq:alpha-h-ml+k}) analogously gives 
\[ 
\exp\left(- (1+\epsilon_\beta)\gamma e^{-\alpha_k }\right) \leq  \mu^\mp_{n}\left( \bar M_{S_n}<m^*_{s_n}+k  \mid \bI_{\bW_n}\right) \leq 
\exp\left(
- (1-\epsilon_\beta) \gamma e^{-\overline \alpha k}\right)\,.
\]
For every $k\geq 1$, we use $1-x\leq e^{-x} \leq 1-x(1-x/2)$, whereby the fact $\gamma e^{-\overline
\alpha k}\leq\gamma e^{- \alpha_k} \leq e^{2\beta-(4\beta-C)k}\leq\epsilon_\beta $ allows us to replace the term corresponding to $(1-x/2)$ by $1-\epsilon_\beta$, to deduce 
\begin{align*} 
(1-\epsilon_\beta) \gamma e^{-\overline \alpha k} \leq 
 \mu_{n}^\mp (\bar M_{S_n} \geq m^*_{s_n} + k \mid \bI_{\bW_n}) \leq (1+\epsilon_\beta)\gamma e^{-\alpha_k} \,.
\end{align*}
This completes the proof.
 \qed

\section{Deferred proofs from Section~\ref{sec:tall-pillar-shape}}\label{sec:deferred-proofs}
In this section we complete the proofs deferred from Section~\ref{sec:tall-pillar-shape}. 
We emphasize that the map $\Phi_\iso$ and its analysis are in fact somewhat simplified from~\cite{GL19b} and the proofs herein resulting from those simplifications can be of interest for pedagogical reasons when compared to the corresponding proofs in~\cite{GL19b}.

\subsection{Decomposition of the interfaces}\label{subsec:decomposition-of-interface}
Fix any interface $\cI \in \bI_{\bW_n}$ and for ease of notation, let $\cJ = \Phi_{\iso}(\cI)$. We begin by partitioning the faces of $\cI$ and $\cJ$ into their constituent parts as dictated by the map $\Phi_{\iso}$. This partioning will govern the pairings of $\g(f,\cI)$ with $\g(f',\cJ)$ when applying~\eqref{eq:g-exponential-decay}. 

Recall that $\cP_{x,S}\subset \cI$ has spine $\cS_{x,S}$ and denote its increments by $\sX_{j}$. Let $\bY\subset \cL_0$ be the set of indices of walls in $\cI \setminus \cS_{x,S}$ that were marked for deletion. Let ${\bD}\subset \cL_0$ be the indices of walls that were deleted (i.e., walls in $\bigcup_{y\in \bY} \Clust(\tilde \fW_{y,S})$). 
Split up the faces of $\cI$ as follows:
{\renewcommand{\arraystretch}{1.3}
\[
\begin{tabular}{m{0.05\textwidth}m{0.3\textwidth}m{0.5\textwidth}}
\toprule
\midrule
 $\bX_A^\cI$ & $\bigcup_{1\leq j \leq j^*} \sF(\sX_j)$ & Increments between $v_1$ and $v_{j^*+1}$  \\ 
 $\bX_{B}^{\cI}$ & $\bigcup_{j^* + 1\leq j \leq \sT+1}\sF(\sX_j)$ & Increments above $v_{j^*+1}$ \\
 $\bV$ & $\bigcup_{z\in \bD} \tilde W_z$ & All walls that were deleted \\
 $\bB$ & $\cI\setminus(\bX_A^\cI\cup\bX_1^\cI\cup\bV)$
 & The remaining set of faces in $\cI$ \\
 \bottomrule
 \end{tabular}
\]
}
where $\bB$ splits further into
{\renewcommand{\arraystretch}{1.25}
\[
\begin{tabular}{m{0.05\textwidth}m{0.3\textwidth}m{0.5\textwidth}}
\toprule
\midrule
    $\bC_1$ & 
    $\bigcup_{y\in \bY} \lceil \tilde W_y \rceil$
& Interior ceilings of walls marked for deletion \\
 $\bC_2$& $\bigcup_{z\in \bD\setminus \bY} \lceil \tilde W_z\rceil \cup \bigcup_{z\notin \bD} \tilde W_z \cup \lceil \tilde W_z \rceil $& 
Interior ceilings of walls that were not marked, along with all non-deleted walls and their interior ceilings\\
$\mathbf{Fl}$ & $\bB \setminus (\bC_1 \cup \bC_2)$
& All remaining faces of $\bB$ (ceiling faces in $S_n^c\times \{0\})$.  \\
\bottomrule
 \end{tabular}
\]
}

We next partition the faces of $\cJ $. Let us first introduce a few pieces of notation. Denote by $\cP_{x,S}^\cJ$ the pillar of $x$ in $\cJ\restriction_{S_n} = \Phi_{\iso}(\cI)\restriction_{S_n}$; observe that by construction, its base $\sB_{x,S}^{\cJ}$ is empty, and its spine $\cS_{x,S}^\cJ$ is all of $\cP_{x,S}^\cJ$. For a given $\cI,x,t$ define the shift map $\lrvec \theta$ as the horizontal shift on the increments $\sX_{j^* + 1},\ldots, \sX_{\sT+1}$ induced by $\Phi_{\iso}$. Namely, for $f\in \bX_B^\cI$, let
\begin{align}\label{eq-theta-lr-def}
        \lrvec \theta f =     
         f + \rho(x- v_{j^*+1}) & \qquad \mbox{if $f\in \bX_{B}^{\cI}$}\,.
\end{align}
We can also define the map $\theta_{\udarrow}$ on faces in $\bB$, that vertically shifts faces of $\bB$ to obtain corresponding faces of $\cJ$ as dictated by the bijection Lemma~\ref{lem:interface-reconstruction} and the removal of the walls in $\bV$. With these, let:  

{\renewcommand{\arraystretch}{1.3}
\[
\begin{tabular}{m{0.05\textwidth}m{0.33\textwidth}m{0.51\textwidth}}
\toprule
\midrule
 $W_{x,\parallel}^{\mathbf h}$ & $\sF(\{x\}\times [\hgt(\cC_\bW),\hgt(\cC_\bW) + \mathbf h])$ & New faces added in step 8 of $\Phi_{\iso}$ \\
 $\bX_A^\cJ$ & $\sF(\{x\} \times [\hgt(\cC_\bW) + \mathbf h, \hgt(v_{j^*+1})+\frac 12))$  $\sF(\{x\} \times [\hgt(\cC_\bW) + \mathbf h, \hgt(\cC_\bW) + h))$,  & Increments of $\cJ$ between $\hgt(v_1)$ and $\hgt(v_{j^*+1})$ or $h$, depending on whether ({\tt{A3}}) is violated or not. \\ 
 $\bX_{B}^{\cJ}$ & $\lrvec \theta \bX_{B}^{\cI}$ & Increments of $\cJ$ above $\hgt(v_{j^*+1})$ \\
 $\theta_{\udarrow} \bB$ & $\theta_{\udarrow}\bC_1 \cup \theta_\udarrow \bC_2 \cup \mathbf{Fl}$ & Vertical translations of $\bB$ due to the deletion of walls~$\bV$ \\
 $\bH$ & $\bigcup \{f\in \cJ\setminus \cP_{x,S}^{\cJ}: \rho(f) \in \sF(\rho(\bV))\}$ & Faces added to ``fill in" the rest of the interface  \\
 \bottomrule
 \end{tabular}
\]
}

\subsection{Preliminary bounds on interface interactions}
We first prove a series of preliminary estimates to which we will reduce Proposition~\ref{prop:partition-function-contribution} by pairing faces together according to the decomposition of $\cI$ and $\cJ$ from \S\ref{subsec:decomposition-of-interface}. Several of these are direct analogues of similar claims/lemmas in~\cite{GL19b}. For these we may omit some details of the proofs when they are straightforward, and repeated from~\cite{GL19b}. The first first two claims of these are immediate by summability of exponential tails in $\Z^d$ together with Claim~\ref{clm:m(W)}.

\begin{claim}[Analogue of {\cite[Claim 4.13]{GL19b}}]\label{clm:W-and-H-vs-everything}There exists $\bar C$ such that 
\[ 
\sum_{f\in \sF(\Z^3)} \sum_{g\in \bV\cup\bH}   e^{-\bar c d(f,g)} \leq \bar C \fm(\bV)\,.
 \]
\end{claim}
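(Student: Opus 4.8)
The statement to prove is Claim~\ref{clm:W-and-H-vs-everything}: there exists $\bar C$ such that $\sum_{f\in\sF(\Z^3)}\sum_{g\in\bV\cup\bH}e^{-\bar c\,d(f,g)}\le\bar C\,\fm(\bV)$.

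\textbf{Approach.} The plan is to swap the order of summation so that the outer sum runs over $g\in\bV\cup\bH$ and the inner sum over all $f\in\sF(\Z^3)$; the inner sum $\sum_{f\in\sF(\Z^3)}e^{-\bar c\,d(f,g)}$ is a geometric-type series in $\Z^3$ that converges to an absolute constant $C_0$ (depending only on $\bar c$ and the lattice), uniformly in $g$. Hence the whole double sum is bounded by $C_0\,|\bV\cup\bH| \le C_0\big(|\bV|+|\bH|\big)$, and it remains only to control these two cardinalities in terms of $\fm(\bV)$.

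\textbf{Key steps.} First, $|\bV|\le 2\fm(\bV)$: this is immediate from the basic wall inequality $\fm(W)\ge\tfrac12|W|$ in~\eqref{eq:excess-area-wall-relations}, summed over the (disjoint) walls comprising $\bV=\bigcup_{z\in\bD}\tilde W_z$, using $\fm(\bV)=\sum_{z\in\bD}\fm(\tilde W_z)$ as in Definition~\ref{def:excess-area}. Second, $|\bH|\lesssim\fm(\bV)$: by construction (see the table in~\S\ref{subsec:decomposition-of-interface}), $\bH$ consists of the faces of $\cJ\setminus\cP_{x,S}^\cJ$ projecting into $\sF(\rho(\bV))$, i.e.\ the horizontal faces that ``fill in'' the interface where the deleted walls used to project; since $\cJ$ is a valid interface, for each edge or face $u\in\rho(\bV)$ there is exactly one face of $\cJ$ (at the ceiling height of the relevant component) projecting to $u$, so $|\bH|\le|\rho(\bV)|$. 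Then $|\rho(\bV)|\le\sum_{z\in\bD}|\rho(\tilde W_z)|\le\sum_{z\in\bD}\fm(\tilde W_z)=\fm(\bV)$, again by~\eqref{eq:excess-area-wall-relations}. Combining, $|\bV\cup\bH|\le 3\fm(\bV)$, so the double sum is at most $3C_0\,\fm(\bV)$, giving $\bar C=3C_0$.

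\textbf{Main obstacle.} There is essentially no serious obstacle here — this is a bookkeeping claim. The only point requiring a little care is the precise bound on $|\bH|$: one must be sure that the ``fill-in'' faces are genuinely indexed by $\rho(\bV)$ (or a subset thereof) rather than by something larger, which follows from the fact that $\Phi_\iso$ removes exactly the standard walls in $\Theta_{\textsc{st}}\Clust(\tilde\fW_{y,S})$ and reconstructs via Lemma~\ref{lem:interface-reconstruction}, so the only new faces of $\cJ$ (outside the reinserted column $W_{x,\parallel}^{\mathbf h}$ and the swapped pillar) lie over $\rho(\bV)$. Everything else is the standard summability of $e^{-\bar c\,d(\cdot,\cdot)}$ over the lattice, exactly as in~\cite[Claim 4.13]{GL19b}.
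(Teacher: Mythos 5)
Your proof is correct and follows essentially the same route the paper takes: swap the summation order, use summability of $e^{-\bar c\,d(\cdot,\cdot)}$ over $\Z^3$ to reduce to bounding $|\bV\cup\bH|$, and then invoke the basic excess-area inequalities $\fm(W)\ge\tfrac12|W|$ and $\fm(W)\ge|\rho(W)|$. One minor imprecision worth noting: $\bH$ consists only of faces of $\cJ$ projecting onto $\sF(\rho(\bV))$ (not onto edges of $\rho(\bV)$), so the tighter bound is $|\bH|\le|\sF(\rho(\bV))|$, but this is subsumed by the bound $|\rho(\bV)|\le\fm(\bV)$ you used anyway.
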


\begin{claim}[Analogue of {\cite[Claim 4.14]{GL19b}}]\label{clm:X1-and-X3-vs-everything}There exists $\bar C$ such that 
\[ 
\sum_{f\in \sF(\Z^3)} \sum_{g\in \bX_A^\cI\cup \bX_A^{\cJ}}  e^{-\bar c d(f,g)} \leq \bar C \fm(\cI;\cJ)\,,
\]
\end{claim}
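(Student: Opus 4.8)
\textbf{Proof proposal for Claim~\ref{clm:X1-and-X3-vs-everything}.}

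The plan is to bound the double sum by first performing the inner sum over $f\in\sF(\Z^3)$ for each fixed $g$, which by summability of $e^{-\bar c\, d(\cdot,\cdot)}$ in $\Z^3$ (i.e.\ $\sum_{f\in\sF(\Z^3)}e^{-\bar c\, d(f,g)}\le C_0$ for a lattice constant $C_0$ depending only on $\bar c$) reduces the task to showing $|\bX_A^\cI|+|\bX_A^{\cJ}|\le C\,\fm(\cI;\cJ)$ for a universal $C$. So the whole claim is really a counting statement about the number of faces in the two ``bottom'' increment stacks.

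First I would handle $\bX_A^\cI=\bigcup_{1\le j\le j^*}\sF(\sX_j)$. For each $j$, either $\sX_j=X_\trivincr$, contributing exactly $4$ faces, or $\sX_j\neq X_\trivincr$, in which case~\eqref{eq:increment-excess-inequalities} gives $|\sF(\sX_j)|\le 3\fm(\sX_j)+4$. Summing over $1\le j\le j^*$, we get $|\bX_A^\cI|\le 3\sum_{j\le j^*}\fm(\sX_j)+4j^*$. The first term is bounded by $3\,\fm(\cI;\cJ)$ directly from the second inequality in~\eqref{eq:|W_x^J|} of Claim~\ref{clm:m(W)}. For the term $4j^*$, I split on whether ({\tt A3}) is violated: if not, then by~\eqref{eq:m(I;J)-XAJ} we have $j^*-1\le(2\vee L^3)\fm(\cI;\cJ)$, so $j^*\le (2\vee L^3)\fm(\cI;\cJ)+1\le C\,\fm(\cI;\cJ)$ since $\fm(\cI;\cJ)\ge 1$ whenever $j^*>0$ is relevant (and if $j^*=0$ there is nothing to bound); if ({\tt A3}) is violated then $j^*=\sT+1$ and in that case $\fm(\cI;\cJ)\ge h\vee 4(\sT+1-h)$ as recorded in case (A3) of the proof of Claim~\ref{clm:m(W)}, which yields $\sT+1\le \tfrac14\fm(\cI;\cJ)+h\le \tfrac54\fm(\cI;\cJ)$. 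Either way $j^*\le C\,\fm(\cI;\cJ)$, so $|\bX_A^\cI|\le C'\,\fm(\cI;\cJ)$.

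For $\bX_A^{\cJ}$, the description in \S\ref{subsec:decomposition-of-interface} shows it is a straight column of vertical faces of height $\hgt(v_{j^*+1})-\hgt(v_1)$ (if ({\tt A3}) not violated) or $h-\mathbf h$ (if violated); in either case its cardinality is $4$ times that height. When ({\tt A3}) is not violated, $\hgt(v_{j^*+1})-\hgt(v_1)\le\sum_{j\le j^*}(\hgt(v_{j+1})-\hgt(v_j))$, and for each non-trivial increment $\hgt(v_{j+1})-\hgt(v_j)-1\le\tfrac12\fm(\sX_j)$ by the first inequality in~\eqref{eq:increment-excess-inequalities}, while trivial increments contribute height exactly $1$; combining with the bound on $j^*$ and on $\sum_{j\le j^*}\fm(\sX_j)$ from the previous paragraph gives $|\bX_A^{\cJ}|\le C\,\fm(\cI;\cJ)$. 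When ({\tt A3}) is violated, $h-\mathbf h\le\fm(\cI;\cJ)$ by the second line of~\eqref{eq:m(I;J)-XAJ}, so again $|\bX_A^{\cJ}|\le 4\,\fm(\cI;\cJ)$. Adding the two estimates and multiplying by the lattice constant $C_0$ from the inner sum completes the proof. I do not anticipate a serious obstacle here — the only mild care needed is to ensure $\fm(\cI;\cJ)\ge 1$ in the regime where $\bX_A^\cI$ and $\bX_A^\cJ$ are nonempty, which follows since a nonempty increment stack or a nontrivial column forces positive excess area (as noted before the statement of Claim~\ref{clm:m(W)}); if $j^*=0$ and $\mathbf h=0$ both sets are empty and the bound is trivial.
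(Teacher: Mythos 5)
Your proposal is correct and takes essentially the same approach as the paper, which dispatches this claim (together with Claim~\ref{clm:W-and-H-vs-everything}) in one line by summing out $f$ and invoking Claim~\ref{clm:m(W)}; note only that a trivial increment has $8$ vertical faces rather than $4$, so trivial increments contribute $8j^*$, not $4j^*$ — which does not change the conclusion — and that via~\eqref{eq:m(I;J)-XAJ} the constant $\bar C$ picks up an $L^3$-dependence through $j^*$, consistent with how the bound is used in Proposition~\ref{prop:partition-function-contribution}.
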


The next lemma similarly controls interactions between the pillars and faces in $\bW$.

\begin{lemma}[Analogue of {\cite[Lemma 4.16]{GL19b}}]\label{lem:F-vs-X2-and-X4}
There exists $\bar C$ such that 
\[ 
\sum_{f\in \bX_B^\cI} \sum_{g\in \cL_{\hgt(\cC_\bW)}} e^{-\bar c d(f,g)} \leq \bar C\,, \qquad \mbox{and}\qquad \sum_{f\in \bX_B^\cI} \sum_{g: \rho(g)\notin S_n} e^{-\bar c d(f,g)} \le \bar C\,.
\]
\end{lemma}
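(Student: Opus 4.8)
\textbf{Proof plan for Lemma~\ref{lem:F-vs-X2-and-X4}.} The statement asserts two bounds: the interactions between the shifted upper increments $\bX_B^\cI$ and the ceiling $\cL_{\hgt(\cC_\bW)}$, and between $\bX_B^\cI$ and all faces projecting outside $S_n$, are each uniformly bounded by an absolute constant. The key geometric input is that, because $\cI\restriction_S\in\Iso_{x,L,h}$ (or more precisely, because these lemmas are invoked for interfaces with isolated pillar), the increments $\sX_{j^*+1},\ldots,\sX_{\sT+1}$ lie in the cone region $\bF_{\triangledown}$, hence after the horizontal shift $\lrvec\theta$ of~\eqref{eq-theta-lr-def} the faces of $\bX_B^\cI$ are confined to a shifted cone over $x$ whose height at horizontal displacement $\delta$ from $x$ is at least $\sqrt\delta$-ish — that is, a face of $\bX_B^\cI$ at height $z_3-\hgt(\cC_\bW) = k$ is within horizontal distance $k^2\wedge 10h$ of $x$. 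The plan is to use this quadratic confinement exactly as in Claim~\ref{clm:iso-pillar-containments} and Lemma~\ref{lem:iso-pillar-interactions}.

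First I would set up the sum $\sum_{f\in\bX_B^\cI}\sum_{g\in\cL_{\hgt(\cC_\bW)}}e^{-\bar c d(f,g)}$: sum out the inner sum over $g$ ranging in the flat horizontal plane $\cL_{\hgt(\cC_\bW)}$, which for a fixed $f$ at height $k$ above that plane contributes at most $C(1+k)^2 e^{-\bar c k}$ (a plane is two-dimensional, so the sum of $e^{-\bar c d}$ over it at distance $k$ is $O(k^2 e^{-\bar c k})$, or really just $O(e^{-\bar c k/2})$ after absorbing polynomial factors). Then I would sum over $f\in\bX_B^\cI$ by organizing them by their height $k$ above $\hgt(\cC_\bW)$: for each $k$ there are at most $C(k^2\wedge 10h)\le Ck^2$ faces of $\bX_B^\cI$ at that height (by the cone confinement). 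Hence the total is bounded by $\sum_{k\ge 1} C k^2\cdot k^2 e^{-\bar c k/2} \le \sum_{k\ge1}Ck^4 e^{-\bar c k/2} =: \bar C<\infty$, an absolute constant independent of $h$ (since we never needed the $\wedge 10h$ truncation). For the second bound, $\sum_{f\in\bX_B^\cI}\sum_{g:\rho(g)\notin S_n}e^{-\bar c d(f,g)}$, the point is that $h = o(d(x,S_n^c))$, so any $g$ with $\rho(g)\notin S_n$ has $d(\rho(g),x)\ge d(x,\partial S_n)\gg 10h$, whereas every $f\in\bX_B^\cI$ has $d(\rho(f),x)\le 10h$; therefore $d(f,g)\ge d(x,\partial S_n) - 10h = (1-o(1))d(x,\partial S_n)$, and so $\sum_{f\in\bX_B^\cI}\sum_{g:\rho(g)\notin S_n}e^{-\bar c d(f,g)} \le |\bX_B^\cI|\cdot C e^{-\bar c(1-o(1))d(x,\partial S_n)}$, which is at most $C h^4 e^{-\bar c' d(x,\partial S_n)}\le\bar C$ for $n$ large (bounded, in fact going to $0$). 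One subtlety: $\bX_B^\cI$ is defined via $\cI$, which need not be in $\Iso_{h,S}^{(n)}$ in the generality of Proposition~\ref{prop:partition-function-contribution}; one should check that the relevant confinement of $\bX_B^\cI$ still holds — but this is precisely criterion~\eqref{item:iso-pillar-increments}-type control enforced by steps~4--5 of $\Phi_\iso$ on the increments $j>j^*$ that survive (they are not trivialized, and the algorithm's deletion criteria ensure $\fm(\sX_j)<j-1$ for $j>L^3$ among surviving increments), giving the same $k^2$ bound on the horizontal spread; alternatively one restricts to $j>j^*$ and notes those increments were not flagged by (A1), so the quadratic growth estimate from the proof of Claim~\ref{clm:iso-pillar-containments} applies verbatim.

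The main obstacle — really the only one requiring care — is verifying that the quadratic-confinement bound on $\bX_B^\cI$ is legitimate when $\cI$ is an \emph{arbitrary} interface in $\bI_{\bW_n}$ rather than one already known to be isolated: the definition of $j^*$ via criteria (A1)--(A3) guarantees that the surviving increments $\sX_j$ ($j>j^*$) each satisfy $\fm(\sX_j)< j-1$ (else (A1) would have reset $\fs$), so the cumulative horizontal displacement of $\bX_B^\cI$ above height $k$ is at most $\sum_{j\le k}\fm(\sX_j)\le\sum_{j\le k}(j-1)\le k^2$, exactly as in~\eqref{eq:pillar-cone-def}. Once this is pinned down, both displayed bounds follow by the two-line summation above, mirroring~\cite[Lemma 4.16]{GL19b} with the only change being the vertical offset by $\hgt(\cC_\bW)$ and the replacement of "faces outside the box" by "faces with projection outside $S_n$", handled by the $h=o(d(x,\partial S_n))$ hypothesis.
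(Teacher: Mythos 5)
Your proof is essentially correct and follows the same strategy as the paper's, with one bookkeeping difference: for the first inequality the paper decomposes $\bX_B^\cI$ by \emph{increment number} $j$, using that $\sX_j$ sits at height $\geq j$ above $\hgt(\cC_\bW)$ and $|\sF(\sX_j)| \leq 3\fm(\sX_j)+4 \leq 3j+1$ (from (A1) not being triggered), giving $\sum_{j>j^*}(3j+1)e^{-\bar c j}\leq \bar C$; you instead decompose by \emph{height} $k$. Your version works, but the claim that there are at most $Ck^2$ faces of $\bX_B^\cI$ at height $k$ ``by cone confinement'' is misattributed: cone confinement gives the horizontal \emph{diameter} at height $k$, not a face count (it would only yield $O(k^4)$ faces naively, from the disc area). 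The correct $O(k)$ count comes from the excess-area criterion: only one increment $j(k)\leq k$ crosses height $k$, and $|\sF(\sX_{j(k)})|\leq 3j(k)+1$. Since the polynomial is in any case dominated by $e^{-\bar c k/2}$, your sum still converges, but the reasoning for the intermediate bound should be re-sourced. Similarly, for the second inequality your $h^4$ factor is an over-count — (A3) not being violated gives $|\bX_B^\cI|\leq |\sF(\cS_{x,S})|\leq 5h$ directly (and when (A3) is violated, $\bX_B^\cI=\emptyset$) — though again this is immaterial against the exponential decay. Apart from these two numerical over-estimates, the argument matches the paper's.
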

\begin{proof}
Let us begin with the first inequality; decomposing the faces of $\bX_{B}^\cI$ according to the increment number they belong to, and noting that $j>j^*$ implies $\fm(\sX_j)<j-1$ (as ({\tt{A1}}) was not violated), we have 
\begin{align*}
    \sum_{f\in \bX_B^\cI} \sum_{g\in \cL_{\hgt(\cC_\bW)}} e^{-\bar c d(f,g)} \le \sum_{j>j^*} \bar K |\sF(\sX_j)| e^{ - \bar c j}\le \sum_{j>j^*} \bar K (4 + 3j) e^{ - \bar c j} \le \bar C e^{ - \bar c j^*}\,.
\end{align*}
Turning to the proof of the second inequality, observe that $|\bX_{B}^{\cI}|\le |\sF(\cS_{x,S})|\le 5h$ and $d(\rho(\bX_{B}^\cI), S_n^c) \ge h$, by summability of exponential tails, the double sum is at most $C h e^{ - \bar c h}\le \bar C$. 
\end{proof}

The next lemma is used to control the interactions between the vertical shifts in $\bB$ incurred by the deletions of walls in $\cI\setminus \cS_{x,S}$; this is essentially identical to Claim~\ref{clm:wall-cluster-ceiling-to-else}, and replaces the group-of-wall based analogue Lemma 4.17 of~\cite{GL19b}. Following the notation of Claim~\ref{clm:wall-cluster-ceiling-to-else}, let $\cC_1,\ldots,\cC_s$ be the collection of ceilings in the interface whose standard wall representation consists of $\bV$. We can then
decompose the faces of $\bB$ into $(\bB^{i})_{i=1,\ldots,s}$ which are indexed by the innermost nesting ceiling among $\cC_1,\ldots,\cC_s$.  

\begin{lemma}\label{lem:B-vs-B}
There exists $\bar C$ such that  
\begin{align*}
    \sum_{i=1}^{s} \sum_{f\in \bB^i \cup \theta_\udarrow \bB^i} \sum_{g\in \sF(\mathbb Z^3): \rho(g)\notin \rho(\hull {\cC}_i)}  e^{ - \bar c d(f,g)}   \leq \bar C \sum_{i=1}^{s} |\partial \rho(\hull{\cC}_i)| \le \bar C \fm(\bV)\,.
\end{align*}
\end{lemma}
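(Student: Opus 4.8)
The statement is the exact analogue, in the present ``isolated pillar'' setting, of Claim~\ref{clm:wall-cluster-ceiling-to-else}, with $\Clust(\bV)$ playing the role of the deleted wall collection and the ceilings $\cC_1,\ldots,\cC_s$ of the interface $\cI_{\Theta_{\textsc{st}}\bV}$ playing the role of the $(\cC_i)$ there. So the plan is to essentially transcribe the proof of Claim~\ref{clm:wall-cluster-ceiling-to-else}, being careful about two points of divergence: (i) the set $\bV$ here is the full collection of deleted walls $\bigcup_{y\in\bD}\tilde W_y$, which includes entire wall clusters $\Clust(\tilde\fW_{y,S})$, so walls of $\bB^i$ (those nested in $\cC_i$ but \emph{not} deleted) are by construction not in the ceiling cluster of $\cC_i$; and (ii) by vertical translation invariance of the index set $\{g : \rho(g)\notin\rho(\hull\cC_i)\}$ it suffices to treat $f\in\bB^i$ (the $\theta_\udarrow\bB^i$ piece gives the same bound), exactly as in Claim~\ref{clm:wall-cluster-ceiling-to-else}.

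Concretely, I would proceed as follows. Fix $i$. First, for any $g$ with $\rho(g)\notin\rho(\hull\cC_i)$ and any $f\in\bB^i$, sum out $g$ over its $e_3$-coordinate to replace the double sum by $\sum_{f\in\bB^i}\sum_{u\in\rho(\partial\hull\cC_i)} C e^{-\bar c\, d(\rho(f),u)}$ for a universal $C$ (using that $\rho(\hull\cC_i)^c$ is ``seen'' from $\rho(f)$ only through its boundary edges, plus vertical summability). Then split the faces of $\bB^i$ into ceiling faces, whose projections lie in $\rho(\hull\cC_i)$, and wall faces, grouped by the wall $W\subset\bB^i$ they belong to. The ceiling-face contribution is $\sum_{f\in\rho(\hull\cC_i)}\sum_{u\in\rho(\partial\hull\cC_i)} C e^{-\bar c d(f,u)}\le C'|\partial\hull\cC_i|$ by summing the geometric series in $f$ at fixed $u$. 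For the wall-face contribution, the key input is that a wall $W\subset\bB^i$ is nested in a wall of $\Clust(\bV)$ while not lying in $\Clust(\bV)$, hence not in the ceiling cluster $\Clust(\cC_i)$, so by Definition~\ref{def:closely-nested} it satisfies $d(\rho(W),\rho(\partial\hull\cC_i))>\fm(W)$; combined with $|W|\le 2\fm(W)$ from~\eqref{eq:excess-area-wall-relations} this lets me write $C|W|e^{-\bar c d(\rho(W),u)}\le 2C\fm(W)e^{-\frac12\bar c[d(\rho(W),u)+\fm(W)]}\le C' e^{-\frac12\bar c d(\rho(W),u)}$, absorbing the $\fm(W)$ prefactor into the decaying exponential. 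Since distinct walls have disjoint projections, summing over $W\subset\bB^i$ at fixed $u$ converges, giving $C|\rho(\partial\hull\cC_i)|$; then summing over $u$ is redundant—rather, the whole bound is $\bar C|\partial\rho(\hull\cC_i)|$. Finally, summing over $i$ and using that $\sum_i|\partial\rho(\hull\cC_i)|\le 2\fm(\bV)$ (the ceilings $\cC_i$ are the interior ceilings of $\bV$, and each boundary edge of a hull is a projected wall face, counted with multiplicity at most two) yields $\bar C\fm(\bV)$, as claimed.

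The only genuinely new bookkeeping compared to Claim~\ref{clm:wall-cluster-ceiling-to-else} is verifying that $\sum_i|\partial\rho(\hull\cC_i)|\le 2\fm(\bV)$ in this context, i.e.\ that the ceilings produced by standardizing and reassembling $\bV=\bigcup_{y\in\bD}\tilde W_y$ have total projected-boundary length controlled by $\fm(\bV)=\sum_{W\in\bV}\fm(W)$; this follows since $\rho(\partial\hull\cC_i)\subset\rho(\bV)$ and $|\rho(\bV)|\le\sum_{W\in\bV}|\rho(W)|\le\sum_W\fm(W)=\fm(\bV)$ by~\eqref{eq:excess-area-wall-relations}, with the factor $2$ absorbing the (at most two-fold) overlap of hull boundaries. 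I do not anticipate a real obstacle here—the main thing to get right is simply that the ``not in the ceiling cluster'' property (Definition~\ref{def:ceiling-cluster}) is what forces the crucial gap $d(\rho(W),\rho(\partial\hull\cC_i))>\fm(W)$ for the non-deleted walls, which is exactly the mechanism already used in Claim~\ref{clm:wall-cluster-ceiling-to-else}.
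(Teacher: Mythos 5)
Your proposal follows essentially the same route as the paper's proof: both reduce to $f\in\bB^i$ by vertical translation invariance, then transcribe the argument of Claim~\ref{clm:wall-cluster-ceiling-to-else} (split into ceiling and wall faces, use that a non-deleted wall $W\subset\bB^i$ nested in $\cC_i$ is not closely nested in $\cC_i$ to obtain $d(\rho(W),\rho(\partial\hull\cC_i))>\fm(W)$, absorb the $|W|$ prefactor into the exponential via~\eqref{eq:excess-area-wall-relations}, and sum over disjoint projections). The only small addition is that you spell out the final inequality $\sum_i|\partial\rho(\hull\cC_i)|\le 2\fm(\bV)$ via $\rho(\partial\hull\cC_i)\subset\rho(\bV)$, which the paper invokes without comment here.
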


\begin{proof}
Consider the $i$'th summand: clearly, by vertical translation invariance of the index set of the latter sum, it suffices to consider just the sum over $f\in \bB^i$, say.  As in the proof of Claim~\ref{clm:wall-cluster-ceiling-to-else}, we write 
\begin{align*}
\sum_{f\in \bB^i} \sum_{g\in \sF(\mathbb Z^3): \rho(g)\notin \rho(\hull \cC_i)} e^{- \bar c d(f,g)} & \leq \sum_{f\in \rho(\hull{\cC}_i)}\sum_{u\in \rho(\partial \hull{\cC}_i)} C e^{ - \bar c d(f,u)}  +  \sum_{u\in \rho(\partial \hull \cC_i)} \sum_{W: W\subset \bB^i} C |W| e^{ - \bar cd(\rho(W),u)}\,,
\end{align*} 
where the first term on the right-hand side accounts for the ceiling faces of $\bB^i$ and the second term accounts for the wall faces, with the sum running over all walls of $\cI$ that are a subset of $\bB^i$. The first term above is clearly at most $C|\partial \rho(\hull{\cC}_i)|$ for some other $C$. 
Using~\eqref{eq:excess-area-wall-relations} and the fact that $W$ is nested in $\cC_i$ while not being in the ceiling cluster of $\cC_i$, so that $d(\rho(W),\rho(\partial \hull \cC_i))>\fm(W)$, the second term above is at most 
\begin{align*}
    \sum_{u\in \rho(\partial \hull \cC_i)} \sum_{W\subset \bB^i} 2C \fm(\bW) e^{-\frac 12 \bar c [d(\rho(W),u)+\fm(W)]} \leq \sum_{u\in \rho(\partial \hull \cC_i)} \sum_{W\subset \bB_i} C' e^{ - \frac 12\bar c d(\rho(W),u)}\,.
\end{align*}
Using the fact that disjoint walls have disjoint projections, and summing out the inner sum over $W: W\subset \bB^i$, we see that this term is altogether at most $C |\partial \rho (\hull \cC_i)|$ for some other $C$. Combining the two bounds and summing over $i = 1,\ldots,s$ yields the desired inequality.  
\end{proof}

The next two lemmas are more involved and are the core of the analysis of the weight gain; it is here that the specific choices made in the construction of the map, namely criteria ({\tt{A1}})--({\tt{A2}}) appear most explicitly. 

\begin{lemma}[Analogue of {\cite[Lemma 4.19]{GL19b}}]\label{lem:C1-vs-X2-and-X4} There exists $\bar C$ such that 
\[ \sum_{f\in \bC_1} \sum_{g\in \bX_B^\cI} e^{-\bar c d(f,g)} \leq \bar C \fm(\cI;\cJ)\,.\]
\end{lemma}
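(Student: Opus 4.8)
\textbf{Proof plan for Lemma~\ref{lem:C1-vs-X2-and-X4}.}
The plan is to bound the double sum by decomposing both $\bC_1$ and $\bX_B^\cI$ according to the combinatorial structures they come from: faces of $\bC_1$ are interior ceiling faces of walls $\tilde W_y$, $y\in\bY$, and faces of $\bX_B^\cI$ are faces $\sF(\sX_j)$ of increments $\sX_j$ with $j>j^*$. First I would fix an increment $\sX_j$ with $j > j^*$ and a wall $\tilde W_y$ contributing to $\bC_1$ (so $y\in\bY$), and ask: how close can $\lceil\tilde W_y\rceil$ (equivalently $\rho(\tilde W_y)$, up to the excess area of $\tilde W_y$) be to $\sX_j$? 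The point is that since $j>j^*=\fs$, criterion ({\tt A2}) was \emph{not} triggered at step $j$, which means for \emph{every} $y\in S_n$ we have $d(\lceil\tilde W_y\rceil,\sX_j) > (j-1)/2$. This is precisely the separation we exploit. Using the simple geometric bound~\eqref{eq:useful-inequality-from-GL19b}, namely $j-1 \le d(\lceil\tilde W_y\rceil,\sX_j)+\fm(\tilde\fW_{y,S})$, together with $|\sF(\sX_j)|\le 3\fm(\sX_j)+4 \le 3(j-1)+4$ from~\eqref{eq:increment-excess-inequalities} and ({\tt A1}) not being violated, we can sum the exponential tails over the faces of a single increment against a single wall to get a bound of order $\fm(\tilde W_y) e^{-\bar c' [d(\lceil\tilde W_y\rceil,\sX_j) + (j-1)]/C}$ for a suitable constant (splitting the exponent $d(f,g)\ge \frac12 d(\lceil\tilde W_y\rceil,\sX_j)+\frac12(j-1) - O(\fm(\tilde W_y))$, and absorbing the polynomial factors $|W|,|\sF(\sX_j)|$ into the exponential as usual since $\fm(W)\ge\frac12|W|$ and $|\sF(\sX_j)| = O(j)$).

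Then I would sum over $j>j^*$ and over $y\in\bY$. Summing over $j$ first, using the $e^{-\bar c'(j-1)/C}$ factor, the geometric series in $j$ converges and leaves a bound of order $\fm(\tilde W_y)\, e^{-\bar c' d(\lceil\tilde W_y\rceil,\cS_{x,S})/C}$ for each $y$ (since every relevant increment sits inside the spine $\cS_{x,S}$). Summing over $y\in\bY$: here I use that the walls $\tilde W_y$ marked for deletion are disjoint as sets (hence have disjoint projections), so $\sum_{y\in\bY}\fm(\tilde W_y) \lesssim \fm(\bV) \le 3\fm(\cI;\cJ)$ by~\eqref{eq:m(I;J)-WxJ}; and additionally, $\fm(\tilde W_y) e^{-\bar c' d(\rho(\tilde W_y),x)/C}$ summed over disjoint walls $\tilde W_y$ is bounded by a constant times $\fm(\bV)$ anyway (each $\fm$ factor is killed by half of the distance exponent, while $\fm(\tilde W_y)\ge d(\cdot,\cdot)$-type considerations as in Remark~\ref{rem:excess-area-properties} let the remaining half of the exponent be summed out over the disjoint projections). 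Either way the total is $O(\fm(\cI;\cJ))$, as required.

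The main obstacle I anticipate is bookkeeping the interplay between the two scales — the increment index $j$ (controlling $\bX_B^\cI$) and the excess area $\fm(\tilde W_y)$ (controlling $\bC_1$) — so that after using~\eqref{eq:useful-inequality-from-GL19b} to trade $j-1$ against $d(\lceil\tilde W_y\rceil,\sX_j)+\fm(\tilde\fW_{y,S})$, there is still enough exponential decay left over in \emph{both} $j$ and the spatial separation to carry out both summations. Concretely, one wants to split the exponent $\bar c\, d(f,g)$ for $f\in\sF(\sX_j)$, $g\in\lceil\tilde W_y\rceil$ into, say, a $\frac14$-fraction used to kill $|\sF(\sX_j)|\le 3(j-1)+4$, a $\frac14$-fraction used to kill $|\tilde W_y| \le 2\fm(\tilde W_y)$, a $\frac14$-fraction giving the residual $e^{-\bar c(j-1)/C}$ after invoking ({\tt A2})-non-violation and~\eqref{eq:useful-inequality-from-GL19b}, and a $\frac14$-fraction giving summability over disjoint walls $\tilde W_y$ via their disjoint projections. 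This is exactly the argument of~\cite[Lemma 4.19]{GL19b}, the only change being that groups-of-walls are replaced by wall clusters (so that $\bY,\bD$ stay inside $S_n$), but that change does not affect the estimate here since we only use disjointness of the deleted walls and the ({\tt A2}) separation at step $j$; I would therefore carry it out in the same way, citing~\cite[Lemma 4.19]{GL19b} for the details of the routine exponential-tail manipulations.
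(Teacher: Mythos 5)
Your proposal follows the same broad strategy as the paper (isolate the factor $|\sF(\sX_j)|$ using~({\tt A1}), invoke the ({\tt A2})-non-violation separation $d(\lceil\tilde W_y\rceil,\sX_j)>(j-1)/2$ for $j>j^*$, then sum out), but the paper's actual proof handles the double sum over $(j,y)$ by a two-regime argument that you do not reproduce, and a couple of your intermediate steps are off.

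First, the per-pair bound you claim, of the form $\fm(\tilde W_y)\,e^{-\bar c'[d(\lceil\tilde W_y\rceil,\sX_j)+(j-1)]/C}$, is not justified. You say ``a $\tfrac14$-fraction used to kill $|\tilde W_y|\le 2\fm(\tilde W_y)$,'' but the faces of $\bC_1$ coming from $\tilde W_y$ are the \emph{interior ceilings} $\lceil\tilde W_y\rceil$, not the wall faces of $\tilde W_y$, and $|\lceil\tilde W_y\rceil|$ is not bounded by $O(\fm(\tilde W_y))$ (it can be of order $\fm(\tilde W_y)^2$). The correct treatment is to integrate the exponential tail over the (two-dimensional) ceiling, which produces no $\fm(\tilde W_y)$ prefactor; it simply reduces the exponential rate. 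So there is no natural source for the $\fm(\tilde W_y)$ factor you write down, and your subsequent use of $\sum_y\fm(\tilde W_y)\le\fm(\bV)$ is then disconnected from the bound you actually get. (Your final accounting can still close via $|\bY|\lesssim\sum_y\fm(\tilde W_y)$ after the $j$-sum is shown to be $O(1)$ per wall, but that is a different bookkeeping than what you wrote.) Second, you invoke the geometric fact~\eqref{eq:useful-inequality-from-GL19b}, $j-1\le d(\lceil\tilde W_y\rceil,\sX_j)+\fm(\tilde\fW_{y,S})$, which is used in the paper's Claim~\ref{clm:m(W)} for the ({\tt A2})-violation case, but is \emph{not} used in the proof of this lemma; bringing it in here is a misdirection.

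Most substantively, the paper does not ``just sum the geometric series in $j$.'' After reducing to $\sum_{i>j^*}\sum_{y\in\bY} d(\lceil\tilde W_y\rceil,\sX_i)e^{-\bar c d(\lceil\tilde W_y\rceil,\sX_i)}$, it splits the increment indices at $\bar\jmath$, the last increment below the top of all ceilings in $\bC_1$. For $j^*<i\le\bar\jmath$ it sums over $y$ first, exploiting disjointness of the ceiling projections to get $O(1)$ per increment, then bounds $\sum_{i\le\bar\jmath}O(1)=O(\bar\jmath)\le O(\fm(\tilde\fW_{\bar y,S}))$. For $i>\bar\jmath$ (where there is a strictly positive vertical separation) it runs a \emph{renewal-time} argument on the projected distance $d_\rho(y,\cdot)$, decomposing the increment indices into renewal blocks and using $d\ge\frac{1}{\sqrt2}(d_\rho(y,\ell_j)+i-\ell_j)$, to obtain a constant per $y$, and then $\sum_{y\in\bY}\bar C\le\bar C\sum_{y\in\bY}\fm(\tilde W_y)$. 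Your sketch collapses all this into ``sum over $j$ then over $y$,'' citing~\cite[Lemma 4.19]{GL19b} for ``routine manipulations''; but the cited argument \emph{is} the renewal-time argument, so you are implicitly relying on exactly the machinery you declined to describe. The proposal therefore captures the starting point and the relevant algorithm criteria but does not reproduce the actual mechanism (the $\bar\jmath$ split and the renewal decomposition) by which the paper controls the $(j,y)$ double sum, and the intermediate bound it states is not correctly derived.
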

\begin{proof}
We follow the proof of Lemma 4.19 of~\cite{GL19b}.
First of all, there exists $C>0$ such that
\begin{align*}
 \sum_{f\in \bC_1} \sum_{g\in \bX_B^\cI} e^{-\bar c d(f,g)} &\leq C\sum_{i> j^*} \sum_{y\in \bY} |\sF(\sX_i)|e^{- \bar cd(\lceil \tilde W_y\rceil, \sX_i)}\,.
 \end{align*}
Further, since $\sX_i$ was not deleted, by~({\tt A1}) and ({\tt A2}),
\[ \fm(\sX_i) < (i-1)\qquad\mbox{and}\qquad d(\lceil\tilde W_y\rceil,\sX_i) > (i-1)/2\,, \]
so that for every $i>j^*$ and every $y\in \bY$, $\fm(\sX_i) < 2d(\tilde W_y,\sX_i)$.  
Using $|\sF(\sX_i)| \leq 3\fm(\sX_i)+4$, we have that the above sum is at most
\[ 
  8C \sum_{i>j^*} \sum_{y\in \bY}  d(\lceil \tilde W_y\rceil,\sX_i) e^{-\bar cd(\lceil \tilde W_y\rceil ,\sX_i)}\,.
  \]
  Now, let 
\[ \bar\jmath=\min\Big\{j : \hgt(v_{j+1}) > \max_{y\in\bY} \max_{f\in \lceil\tilde W_y\rceil}\hgt(f) \Big\}\,,\]
and denote by $\bar y$ the index of the wall attaining this height. Then, using the fact that the projections $\rho(\lceil \tilde W_y\rceil)$ and $\rho(\lceil \tilde W_{y'}\rceil)$ are disjoint, 
\[ \sum_{j^*<i\leq \bar\jmath} \sum_{y\in \bY}  d(\lceil \tilde W_y\rceil ,\sX_i) e^{-\bar c d(\lceil \tilde W_y\rceil ,\sX_i) )} \leq C \bar\jmath \leq C \fm(\tilde \fW_{\bar y,S})\,.
\]
For the remaining increments, for every $y\in \bY$, let 
\[ d_\rho(y,i) := d\big(\rho(\lceil \tilde W_y\rceil),\rho(\sX_i) \big)\,,\]
and let $ \ell_1 = \bar\jmath + 1 < \ell_2 < \ldots < \ell_r$ be the renewal times of the function $d_\rho(y,\cdot)$, i.e.,  
\begin{align*} 
&d_\rho(y,\ell_j) < d_\rho(y,i)\quad\mbox{for all $\bar\jmath < i<\ell_j$}\,,\qquad \mbox{and}\qquad d_\rho(y,\ell_r) = \min\{ d_{\rho}(y,i) : \bar\jmath < i < t\}\,.
\end{align*}

Let $\ell_{r+1} := \infty$ and observe that for every $j= 1,\ldots,r$ and every $\ell_j \leq i < \ell_{j+1}$,
\[ d(\lceil\tilde W_y\rceil,\sX_i) \geq \sqrt{d_\rho(y,\ell_j)^2 + (\hgt(v_i)-\hgt(v_{\bar\jmath+1}))^2} \geq \frac{d_\rho(y,\ell_j) + i-\ell_{j}}{\sqrt2}\,,\]
using the definition of $\bar\jmath$ and that it satisfies $\bar\jmath < \ell_j$.
In particular, there exists $C, C'>0$ such that
\[ \sum_{\ell_j \leq i < \ell_{j+1}} d(\lceil \tilde W_y\rceil,\sX_i) e^{-\bar c d(\lceil \tilde W_y\rceil,\sX_i)} \leq \sum_{\ell_j \leq i < \ell_{j+1}}
C e^{-\frac{\bar c}{\sqrt2} (d_{\rho}(y,\ell_j)+i-\ell_j)} \leq C' e^{-\frac{\bar c}{\sqrt2} d_{\rho}(y,\ell_j)}\,.
\]
Summing over $1\leq j \leq r$, and noticing that $r \leq d_\rho(y,\ell_1)$,
\[
 \sum_{j=1}^r e^{-\frac{\bar c}{\sqrt{2}} d_{\rho}(y,\ell_j)} = \sum_{j=1}^r e^{-\frac{\bar c}{\sqrt{2}} (d_\rho(y,\ell_1)-j)} \leq \bar C\,.
\]
Summing over $y\in\bY$, this is at most $\sum_{y\in \bY} \bar C \leq \bar C \sum_{y\in \bY} \fm(\tilde W_y)$.
\end{proof}

\begin{lemma}[Analogue of {\cite[Lemma 4.20]{GL19b}}]\label{lem:C2-vs-X2-and-X4}There exists $\bar C$ 
\[ \sum_{f\in \bC_2} \sum_{g\in \bX_B^\cI} e^{-\bar c d(f,g)} \leq \bar C\,.\]
\end{lemma}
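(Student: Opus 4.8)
The statement to prove is Lemma~\ref{lem:C2-vs-X2-and-X4}: there exists $\bar C$ such that $\sum_{f\in \bC_2}\sum_{g\in \bX_B^\cI} e^{-\bar c d(f,g)} \le \bar C$, where $\bC_2$ consists of the interior ceilings of non-marked deleted walls together with all non-deleted walls of $\cI\setminus\cS_{x,S}$ and their interior ceilings, and $\bX_B^\cI = \bigcup_{j^*<j\le \sT+1}\sF(\sX_j)$ is the part of the spine left untouched by $\Phi_{\iso}$ (with the resulting absolute constant bound, not one proportional to $\fm(\cI;\cJ)$ — this is the analogue of \cite[Lemma 4.20]{GL19b}). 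The key point driving the absolute bound is that every wall $W$ contributing to $\bC_2$ was \emph{not} marked for deletion, so in particular criterion ({\tt A2}) was never triggered by it for any increment $\sX_j$ with $j>j^*$; hence $d(\lceil W\rceil, \sX_j) > (j-1)/2$ for all such $j$, and simultaneously $\fm(\sX_j)<j-1$ since ({\tt A1}) was not violated. The plan is therefore to reproduce the renewal-time argument of Lemma~\ref{lem:C1-vs-X2-and-X4}, but now summing the contribution over \emph{all} walls $W$ appearing in $\bC_2$ rather than only those in $\bY$, and exploiting the ({\tt A2})-nonviolation to get summability in the wall index as well.

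First I would bound $\sum_{f\in \bC_2}\sum_{g\in \bX_B^\cI} e^{-\bar c d(f,g)}$ by $C\sum_{j>j^*}\sum_{W\subset \bC_2} |\sF(\sX_j)|\, e^{-\bar c\, d(\lceil W\rceil,\sX_j)}$, where the inner sum is over the (disjoint-projection) walls of $\cI\setminus\cS_{x,S}$ lying in $\bC_2$, and where I absorb the ceiling faces of $\bC_2$ separately: a ceiling face $f$ at distance at least $d_\rho(f)$ from $x$ horizontally contributes $\sum_{j>j^*}|\sF(\sX_j)|e^{-\bar c d(f,\sX_j)}$, which by $|\sF(\sX_j)|\le 3\fm(\sX_j)+4 \le 3j+1$ and $d(f,\sX_j)\ge d_\rho(f)/\sqrt2$ is summable in $j$ and then in $f$ over all ceiling faces to an absolute constant. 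For the wall part, since every such $W$ satisfies $d(\lceil W\rceil,\sX_j) > (j-1)/2$ (({\tt A2}) was not violated by $W$), using $|\sF(\sX_j)|\le 3\fm(\sX_j)+4$ and $\fm(\sX_j)<j-1 \le 2\,d(\lceil W\rceil,\sX_j)$ (from ({\tt A1})) I get $|\sF(\sX_j)| \le 8\, d(\lceil W\rceil,\sX_j) + 4$, so the double sum is at most $C\sum_{j>j^*}\sum_{W\subset\bC_2} d(\lceil W\rceil,\sX_j)\, e^{-\bar c\, d(\lceil W\rceil,\sX_j)}$.

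The core of the argument is then the renewal-time decomposition, exactly as in Lemma~\ref{lem:C1-vs-X2-and-X4}. Fix a wall $W$ with horizontal distance $d_\rho(W,i):=d(\rho(\lceil W\rceil),\rho(\sX_i))$. Let $\bar\jmath_W$ be the smallest $j$ with $\hgt(v_{j+1})$ exceeding the top height of $\lceil W\rceil$; for the increments $i\le \bar\jmath_W$ the contribution telescopes against the vertical span and is bounded by a constant times $\fm(\tilde\fW_{y_W,S})$ where $y_W$ indexes $W$ — but now, crucially, I must \emph{not} sum this over $W$ (that would only give $\fm(\cI;\cJ)$, not an absolute constant). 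Instead I observe that for $W\subset \bC_2$ the \emph{entire} increment stack between heights $\hgt(v_1)$ and the top of $\lceil W\rceil$ must itself be below the point at which ({\tt A2}) would have forced a deletion, so in fact $\bar\jmath_W \le j^*$ is impossible to be relevant — reindexing, only the renewal part (increments above $\bar\jmath_W$) with $d_\rho(W,\ell)\ge 1$ contributes, and there $d(\lceil W\rceil,\sX_i)\ge (d_\rho(W,\ell_k)+i-\ell_k)/\sqrt2$ along renewal blocks, giving $\sum_{\ell_k\le i<\ell_{k+1}} d(\lceil W\rceil,\sX_i)e^{-\bar c d(\lceil W\rceil,\sX_i)} \le C' e^{-(\bar c/\sqrt2) d_\rho(W,\ell_k)}$ and, summing over $k\le r\le d_\rho(W,\ell_1)$ as in Lemma~\ref{lem:C1-vs-X2-and-X4}, at most $\bar C e^{-(\bar c/\sqrt2)\cdot(\text{something})}$; the extra decay in $d_\rho(W,\cdot)$, combined with the disjointness of $\rho(\lceil W\rceil)$ over distinct walls and the fact that the nearest such $W$ to $\sX_j$ is at horizontal distance $>(j-1)/2-$(its own excess), is exactly what makes $\sum_{W\subset\bC_2}$ of these terms converge to an absolute constant rather than to $\fm(\cI;\cJ)$.

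\textbf{Main obstacle.} The delicate point — the one genuinely different from Lemma~\ref{lem:C1-vs-X2-and-X4} — is handling the ``low'' increments $i\le\bar\jmath_W$ for each wall $W$ without paying $\fm(W)$ per wall. In Lemma~\ref{lem:C1-vs-X2-and-X4} one can afford $\sum_{y\in\bY}\fm(\tilde W_y)\le \bar C\fm(\bV)$, but here $\bC_2$ contains \emph{all} non-deleted walls of $\cI\setminus\cS_{x,S}$, whose total excess area is unbounded in general, so that route fails. The fix (as in \cite{GL19b}) is to note that a non-deleted wall $W$ that is close enough to some increment $\sX_i$ with $i>j^*$ to contribute non-negligibly must, by ({\tt A2})-nonviolation, have $\fm(W)$ small relative to that distance; and because the spine has $|\sF(\cS_{x,S})|\le 5h$ with all increments $\sX_i$ ($i>j^*$) confined to the thin cone $\bF_{\triangledown}\cup\bF_{\parallel}$ (Claim~\ref{clm:iso-pillar-containments}), only boundedly many renewal scales per wall, and boundedly many walls per renewal scale (disjoint projections, quadratic area growth), actually contribute, so the double sum telescopes to a constant. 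I would carry this out by first summing over $j$ inside a fixed renewal block to reduce to $\sum_W e^{-(\bar c/C)\, d(\rho(\lceil W\rceil), x)}$-type sums, then summing over $W$ using that at most $O(r)$ walls project to within horizontal distance $r$ of the cone, yielding $\sum_r O(r)\,e^{-(\bar c/C) r} \le \bar C$. With that, combining all the pieces gives the stated absolute bound.
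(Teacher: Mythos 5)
Your proposal takes a substantially different and considerably heavier route than the paper, and the route as sketched has real gaps. The paper's proof of this lemma is a short, direct argument that does not use the renewal-time machinery of Lemma~\ref{lem:C1-vs-X2-and-X4} at all: it splits $\bC_2$ into (i) faces $f$ with $\rho(f)\notin S_n$, handled by the second inequality of Lemma~\ref{lem:F-vs-X2-and-X4}; (ii) ceiling faces at height $\hgt(\cC_\bW)$, handled by the first inequality of Lemma~\ref{lem:F-vs-X2-and-X4}; and (iii) faces of $\tilde W_y\cup\lceil\tilde W_y\rceil$ for $y\in S_n\setminus\bY$. For (iii) the argument is simply: since $j>j^*$ means criterion~({\tt A2}) was \emph{not} triggered at step $j$, one has $d(\lceil\tilde W_y\rceil,\sX_j)>(j-1)/2$ for \emph{every} $y\in S_n$, hence $d(f,g)>(j-1)/2$; since ({\tt A1}) was not triggered, $\fm(\sX_j)<j-1$, so $|\sF(\sX_j)|\leq 3(j-1)+4$. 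Summing over $g\in\sF(\sX_j)$ and then over all $f$ (using the exponential tail, cut off at distance $(j-1)/2$) and finally the geometric series in $j$ gives the absolute constant. No per-wall accounting, no renewal blocks.

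Your proposal instead transposes the order of summation and tries to run the Lemma~\ref{lem:C1-vs-X2-and-X4} renewal argument wall by wall. That is precisely the move that creates the difficulty you then have to fight: the renewal argument produces a bound of order~$\bar C$ \emph{per wall}, and summing this over the unboundedly many non-deleted walls in $\bC_2$ does not close. You acknowledge this as the ``main obstacle,'' but the two fixes you propose are not sound as stated. First, you invoke Claim~\ref{clm:iso-pillar-containments} to confine the increments $\sX_j$, $j>j^*$, of $\cI$ to the cone $\bF_\triangledown\cup\bF_\parallel$. That claim is stated only for interfaces with $\cI\restriction_S\in\Iso_{x,L,h}$; the lemma at hand must hold for arbitrary $\cI\in\bI_\bW$ (only $\Phi_\iso(\cI)$ is guaranteed isolated), and for a general $\cI$ the horizontal location of $v_{j^*+1}$, and hence of the whole upper spine, can be displaced from $x$ by an amount as large as the excess area of the first $j^*$ increments. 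Second, the claim that each wall contributes an amount decaying exponentially in its horizontal distance to $x$ (so that ``$\sum_r O(r)e^{-(\bar c/C)r}$'' closes) does not follow from the renewal computation you cite: in the analogue in Lemma~\ref{lem:C1-vs-X2-and-X4}, the sum over renewal scales for a fixed wall is bounded by an absolute constant, not by an exponentially small quantity in $d_\rho(W,x)$, and ({\tt A2})-nonviolation gives lower bounds on the three-dimensional distance $d(\lceil W\rceil,\sX_j)$, which can be entirely vertical and hence imposes no decay in the horizontal distance of $W$ from $x$. In short, the renewal scaffolding is both unnecessary here and, as assembled, does not yield the stated absolute bound; the intended proof bypasses it entirely by fixing $g\in\sF(\sX_j)$ and summing over \emph{all} $f$ at once using the ({\tt A2}) distance lower bound.
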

\begin{proof}
For $f: \rho(f)\notin S_n$, this sum was already controlled by a constant by Lemma~\ref{lem:F-vs-X2-and-X4}. 
For $f$ projecting into $S_n$, if $f$ is a ceiling face, nested in any wall of $\cI \setminus \cS_{x,S}$ then it must be at height $\hgt(\cC_{\bW})$, and its contribution to the above sum is at most a constant by Lemma~\ref{lem:F-vs-X2-and-X4}. 
It remains to consider $f\in \tilde W_y\cup\lceil \tilde W_y\rceil$ for some $y\in S_n \setminus \bY$, and $g\in\sX_i$ for $i>j^*$; by ({\tt A2}),
\[ (i-1)/2 < d(\lceil\tilde W_y\rceil,\sX_i) \leq d(f,g)\,. \]
Since $i> j^*$, by criterion ({\tt A1}), $\fm(\sX_i) < i-1$. Combining these, we get
\[ \sum_{f\in\bC_2}\sum_{g\in\bX_B^\cI} e^{-\bar c d(f,g)} \leq C \sum_{i>j^*} [3\fm(\sX_i)+4] e^{ - \bar c (i-1)/2} \leq 4 C \sum_{i>j^*} i e^{-\bar c (i-1)/2}\leq C'\,,\]
for some other $C'$. 
\end{proof}

It remains to control the interactions in $\cJ$ between the pillar and the truncated interface. In~\cite{GL19b} this was handled with the analogues of the above, but demanded a more complicated algorithmic map that iteratively considered the possible distances of $\cJ$ with the non-marked walls in order to determine which increments to delete. The introduction of isolated pillars allowed us to remove these steps from the definition of $\Phi_{\iso}$ and simplifies this analysis. Namely, situations in which the radius of congruence $\br$ is attained by interactions between $\cP_{x,S}^{\cJ}$ and the truncated interface $\cJ\setminus \cP_{x,S}^{\cJ}$ are independently controlled as follows.

\begin{claim}\label{clm:J-face-interactions}
There exists $\bar C$ (independent of $L,h$) such that for all $\cJ\in \bI_{\bW}$ with $\cJ\restriction_{S_n}\in \Iso_{x,L,h}$, we have  
\begin{align*}
    \sum_{f\in \cJ\setminus \cP_{x,S}^{\cJ}} \sum_{g\in \cP_{x,S}^{\cJ}} e^{ - \bar c d(f,g)}\le \bar C\,.
\end{align*}
\end{claim}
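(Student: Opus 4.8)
\textbf{Proof proposal for Claim~\ref{clm:J-face-interactions}.} The plan is to exploit the geometric containments of Claim~\ref{clm:iso-pillar-containments}, which apply to $\cJ$ since $\cJ\restriction_{S_n}\in\Iso_{x,L,h}$. By that claim, the pillar $\cP_{x,S}^{\cJ}$ is contained in $\bF_{\triangledown}\cup\bF_{\parallel}$ (the shifted cone and straight-column region), while the truncated interface $\cJ\setminus\cP_{x,S}^{\cJ}$ is contained in $\bF_{-}\cup\bF_{\curlyvee}\cup\bF_{\ext}$ (the flat disk at $\hgt(\cC_\bW)$, the reverse cone, and the far exterior). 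So the double sum in question is bounded by
\[
\sum_{f\in\bF_{-}\cup\bF_{\curlyvee}\cup\bF_{\ext}}\sum_{g\in\bF_{\triangledown}\cup\bF_{\parallel}}e^{-\bar c\,d(f,g)}\,.
\]
First I would split off the interactions involving $\bF_{\triangledown}$: these are exactly handled by Lemma~\ref{lem:iso-pillar-interactions}, whose two inequalities together bound $\sum_{f\in\bF_{\triangledown}}\sum_{g\in\bF_{-}\cup\bF_{\curlyvee}\cup\bF_{\ext}}e^{-\bar c d(f,g)}$ and $\sum_{f\in\bF_{\triangledown}\cup\bF_{\parallel}}\sum_{g\in\bF_{\curlyvee}\cup\bF_{\ext}}e^{-\bar c d(f,g)}$ by $\bar C e^{-\bar c L}\le\bar C$. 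What remains is the interaction of $\bF_{\parallel}$ with $\bF_{-}$, i.e., of the straight column of $L^3$ faces above $x+(0,0,\hgt(\cC_\bW))$ with the flat disk $\cL_{\hgt(\cC_\bW)}\cap\Cyl_{x,L}$. Since $|\bF_{\parallel}|=4L^3$ and each face of $\bF_{\parallel}$ at height $\hgt(\cC_\bW)+k$ is at distance at least $k$ from $\bF_{-}$, summing out the $\bF_{-}$ index and then over the heights $1\le k\le L^3$ gives $\sum_{f\in\bF_{\parallel}}Ke^{-\bar c d(f,\bF_{-})}\le C\sum_{k\ge1}k\,e^{-\bar c k}\le \bar C$, a bounded quantity independent of $L$ and $h$.

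Combining the two pieces yields the claim with some universal $\bar C$. I should be slightly careful that the bottom-most face of $\bF_{\parallel}$ (the one incident to $\hgt(\cC_\bW)$) does sit at bounded distance from $\bF_{-}$, but since there is only one such face (or $O(1)$ faces near $x$), its contribution is a single bounded summand $\sum_{g\in\bF_{-}}Ke^{-\bar c d(f,g)}\le C$ and is harmless; all the other faces of $\bF_{\parallel}$ are genuinely at distance $\ge1$ growing with their height, so the geometric series converges. The only real subtlety — and the step I'd expect to need the most care — is confirming that the decomposition of $\cJ$ into $\cP_{x,S}^{\cJ}$ versus $\cJ\setminus\cP_{x,S}^{\cJ}$ is faithfully captured by the $\bF$-sets, i.e., that one may legitimately invoke Claim~\ref{clm:iso-pillar-containments} for $\cJ$ rather than re-deriving it; but this is immediate from the hypothesis $\cJ\restriction_{S_n}\in\Iso_{x,L,h}$ together with the fact (established in Lemma~\ref{lem:iso-map-well-defined}) that such $\cJ$ indeed lies in $\bI_{\bW}$ and that its restricted pillar has empty base, so the cells of $\cP_{x,S}^{\cJ}$ never touch the flat region $\bF_{-}$ except at the single root face. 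No new ideas beyond Claim~\ref{clm:iso-pillar-containments} and Lemma~\ref{lem:iso-pillar-interactions} are needed — this is essentially a bookkeeping consequence of the isolated-pillar decomposition.
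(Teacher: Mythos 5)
Your proposal is correct and follows essentially the same approach as the paper: both proofs invoke Claim~\ref{clm:iso-pillar-containments} to localize $\cP_{x,S}^{\cJ}$ in $\bF_{\triangledown}\cup\bF_{\parallel}$ and $\cJ\setminus\cP_{x,S}^{\cJ}$ in $\bF_{-}\cup\bF_{\curlyvee}\cup\bF_{\ext}$, dispatch all but one pairing via Lemma~\ref{lem:iso-pillar-interactions}, and then bound the leftover $(\bF_{\parallel},\bF_{-})$ interaction directly by a convergent geometric sum over heights, yielding a constant independent of $L$ and $h$.
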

\begin{proof}
Recall the covering sets $\bF_{\triangledown},\bF_{\parallel},\bF_{-},\bF_{\curlyvee}$, and $\bF_{\ext}$ from~\eqref{eq:interface-containment-definitions} with respect to $(x,\bW,S_n)$, so that the first sum is at most a sum over $f\in \bF_{-}\cup \bF_{\curlyvee}\cup \bF_{\ext}$, and the second sum is at most a sum over $g\in \bF_{\triangledown} \cup \bF_{\parallel}$. Let us first consider the contributions from $f\in \bF_{-}$ with $g\in \bF_{\parallel}$. Their contribution is evidently bounded as
\begin{align*}
    \sum_{f\in \bF_{-}} \sum_{g\in\bF_{\parallel}} \le  \sum_{j\ge 1} C e^{ - \bar c j} \le C'\,,
\end{align*}
for some universal constant $C'$. The remaining pairs of $f$ and $g$ are shown to be at most $\bar C e^{ - \bar c L}$ by Lemma~\ref{lem:iso-pillar-interactions}, concluding the proof. 
\end{proof}

\subsection{Bounding the weight gain under \texorpdfstring{$\Phi_{\iso}$}{Phi\_iso}}
In this section, we use the preliminary lemmas from the previous section to control the contribution from the interactions through $\g$ to the weight-gain under $\Phi_{\iso}$ and show that they are dominated by $\fm(\cI;\cJ)$. 

\begin{proof}[\textbf{\emph{Proof of Proposition~\ref{prop:partition-function-contribution}}}]
In what follows, let $\bar C$ denote the maximum of those constants among Claims~\ref{clm:W-and-H-vs-everything}--\ref{clm:J-face-interactions}.
By Theorem~\ref{thm:cluster-expansion}, the left-hand side of Proposition~\ref{prop:partition-function-contribution} is at most,
\begin{align}
    |\sum_{f\in\cI} \g(f,\cI) - \sum_{f'\in\cJ} \g(f',\cJ)| &\leq \sum_{f\in \bX_A^\cI} |\g(f,\cI)| +\sum_{f\in \bV} |\g(f,\cI)| \label{eq:part-function-splitting-I} \\
    & \qquad + \sum_{f'\in \bX_A^\cJ} |\g(f',\cJ)| + \sum_{f'\in \bH} |\g(f',\cJ)|+ \sum_{f'\in W_{x,\parallel}^{\mathbf h}} |\g(f',\cJ)| \label{eq:part-function-splitting-J}\\
    &\qquad + \sum_{f\in \bB} 
    |\g(f,\cI)-\g(\theta_{\udarrow}f,\cJ)| + \sum_{f\in \bX_B^\cI} |\g(f,\cI)- \g(\lrvec\theta f,\cJ)|
      \label{eq:part-function-splitting-IJ} \,.
\end{align}
The first and third terms above are evidently at most $\bar K |\bX_A^\cI|\le \bar C \fm(\cI;\cJ)$ by Claim~\ref{clm:X1-and-X3-vs-everything}. The second and fourth terms are at most $\bar K \bar C \fm(\bV)$ by Claim~\ref{clm:W-and-H-vs-everything}. The fifth term is at most $\bar K |W_{x,\parallel}^{\mathbf h}|$ which is at most $2\bar K \fm(\cI;\cJ)$ by~\eqref{eq:m(I;J)-WxJ}.  
It remains to consider the two sums in line~\eqref{eq:part-function-splitting-IJ}, which by~\eqref{eq:g-exponential-decay} satisfy,
\begin{align*}
\sum_{f\in \bX_B^\cI} |\g(f,\cI)- \g(\lrvec\theta f,\cJ)| &\leq
\sum_{f\in \bX_B^\cI} \bar K e^{-\bar c\br(f,\cI; \lrvec\theta f,\cJ)}\,,  \\
 \sum_{f\in \bB} |\g(f,\cI)- \g(\theta_\udarrow f,\cJ)| &\leq
\sum_{f\in \bB} \bar K e^{-\bar c\br(f,\cI; \theta_\udarrow f,\cJ)}\,.
\end{align*}
Consider the right-hand sides according to the face $g$ attaining $\br(f,\cI; \lrvec\theta,\cJ)$ and $\br(f,\cI;\theta_{\udarrow},\cJ)$ respectively.
\begin{enumerate}[(i)]
\item If $g \in \bX_A^\cI\cup\bX_A^\cJ$, both these sums are at most 
\begin{align*}
     \sum_{g\in \bX_A^\cI\cup \bX_A^\cJ} \bar K \Big[ \sum_{f\in \bX_B^\cI} (e^{ - \bar c d(f,g)}+ e^{- \bar c d(\lrvec \theta f,g)})+ \sum_{f\in \bB}(e^{ - \bar c d(f,g)} + e^{ -\bar c d(\theta_{\udarrow} f, g)})\Big]\,.
\end{align*}
This is then at most $4\bar K \bar C \fm(\cI;\cJ)$ by Claim~\ref{clm:X1-and-X3-vs-everything}. 

\item If $g\in \bV \cup \bH \cup W_{x,\parallel}^{\mathbf h}$, both these sums are at most 
\begin{align*}
     \sum_{g\in \bV \cup \bH \cup W_{x,\parallel}^{\mathbf h}} \bar K \Big[ \sum_{f\in \bX_B^\cI} (e^{ - \bar c d(f,g)}+ e^{- \bar c d(\lrvec \theta f,g)})+ \sum_{f\in \bB}(e^{ - \bar c d(f,g)} + e^{ -\bar c d(\theta_{\udarrow} f, g)})\Big]\,.
\end{align*}
which is at most $12 \bar K \bar C \fm(\cI;\cJ)$ by Claim~\ref{clm:W-and-H-vs-everything} and~\eqref{eq:|W_x^J|}. 

\item For $g\in \bX_B^\cI\cup\bX_B^\cJ$, we only need to consider the second sum ($f\in \bB$) since if $f\in \bX_B^\cI$, the radius $\br$ cannot be attained by $g\in \bX_B^\cI\cup \bX_B^\cJ$ as all increments in $\bX_B^\cI$ are shifted by the same vector under $\lrvec \theta$ to obtain $\bX_{B}^{\cJ}$.  
Turning to the sum over $f\in\bB$, it splits into the following: 
\begin{align*}
    \bar K \sum_{g\in \bX_B^\cI} \sum_{f\in \bC_1\cup \bC_2 \cup \mathbf{Fl}} (e^{ - \bar c d(f,g)}+ e^{ - \bar c d(\theta_\udarrow f,\lrvec \theta g)})
\end{align*}
The second term here is evidently bounded by $\bar C$ by the sum in Claim~\ref{clm:J-face-interactions}. 
For the first term, the contribution from $f\in \bC_1$ is at most $\bar C \bar K \fm(\cI; \cJ)$ by Lemma~\ref{lem:C1-vs-X2-and-X4}; the contribution from $f\in \bC_2$ is at most $\bar C \bar K $ by Lemma~\ref{lem:C2-vs-X2-and-X4}; the contribution from $f\in \mathbf{Fl}$ is at most $\bar C\bar K$  by Lemma~\ref{lem:F-vs-X2-and-X4}.

\item For $g \in \bB$, the first sum can be expressed as 
\begin{align*}
    \bar K \sum_{f\in \bX_B^\cI} \sum_{g\in \bC_1\cup \bC_2\cup \mathbf{Fl}} (e^{ - \bar c d(f,g)}+ e^{ - \bar c d(\lrvec \theta f, \theta_\udarrow g)})\,.
\end{align*}
As in (iii), this is at most $3\bar C \bar K + \bar C \bar K  \fm(\cI;\cJ)$ by Lemmas~\ref{lem:F-vs-X2-and-X4} and~\ref{lem:C1-vs-X2-and-X4}--\ref{clm:J-face-interactions}. 

For the second sum, in which $f \in \bB$, we use the decomposition according to innermost nesting ceiling of $(\bB^i)_{i\le s}$ per Lemma~\ref{lem:B-vs-B}, and collect in $\bB_{\ext}$ all faces of $\bB$ that are not in any of $\bB^i$, i.e., are not nested in any wall in $\bV$. Note, firstly, that for $f\in \bB_{\ext}$, the radius cannot be attained by $g\in \bB_{\ext}$ as $\theta_{\udarrow} \bB_{\ext} = \bB_{\ext}$. It therefore must be attained by a wall face in $\bB$, which means it must be attained by a face in $\bB^i$ for some $i= 1,\ldots,s$. The contribution from these terms can be bounded by 
\begin{align*}
    \sum_{f\in \bB_{\ext}} \sum_{i=1}^s \sum_{g\in \bB^i} e^{ - \bar c d(f,g)} \le \bar C \fm(\bV)\,,
\end{align*}
using Lemma~\ref{lem:B-vs-B}.  Next consider $f\in \bB^i$ for some $i=1,\ldots,s$. Since all of $\bB^i$ undergoes the same vertical shift under the deletion of $\bV$, if the radius is attained in $\bB$, it must be attained by a face of $\bB_{\ext}$ or $\bB^j$ for some $j\ne i$: in particular, it must be attained either by a face projecting into $\rho(\hull{\cC}_i)^c$, or by a face in $\bB^j$ for some $j$ satisfying $\cC_i \subset \rho(\cC_j)^c$ (i.e., $\cC_j$ nested in $\cC_i$). These contributions are collected in 
\begin{align*}
    \sum_i \sum_{f\in \bB^i} \sum_{g\in \sF(\mathbb Z^3): \rho(g)\notin \rho(\hull \cC_i)} e^{ - \bar c d(f,g)}  + \sum_{i} \sum_{f\in \bB^i} \sum_{j:\cC_i \subset \rho(\hull \cC_j)^c} \sum_{g\in \bB^j} e^{-\bar c d(f,g)}\,.
\end{align*}
By Lemma~\ref{lem:B-vs-B}, the first sum is at most  $\bar C \fm(\bV)$; the second sum can be rewritten as 
\[
\sum_{j} \sum_{g\in \bB^j} \sum_{i:\cC_i \subset \rho(\hull \cC_j)^c} \sum_{f\in \bB^i} e^{- \bar c d(f,g)} \leq \sum_j \sum_{g\in \bB^j} \sum_{f\in \sF(\mathbb Z^3): \rho(f)\notin \rho(\hull \cC_i)} e^{ - \bar cd(f,g)} \leq \bar C \fm(\bV)\,,
\]
again by Lemma~\ref{lem:B-vs-B}. 
\end{enumerate}
Combining the above bounds with Claim~\ref{clm:m(W)}, we find that the two terms in~\eqref{eq:part-function-splitting-IJ} are themselves at most $C\fm(\cI;\cJ)$ for some other $C$. Together with the bounds on~\eqref{eq:part-function-splitting-I}--\eqref{eq:part-function-splitting-J} we conclude the proof. 
\end{proof}

\subsection{Bounding the multiplicity of \texorpdfstring{$\Phi_\iso$}{Phi\_iso}}
Fix $x, L,h',h$ for the remainder of this section and let $\Phi_{\iso} = \Phi_{\iso}(x,L,h)$. 
We bound the multiplicity of $\Phi_\iso$, by, for each $\cJ\in \Phi_{\iso}(\bI_{\bW_n}\cap E_{x,S}^{h'})$, defining an injective map, called a \emph{witness}, $\Xi=\Xi_{\cJ,x}$ on $\Phi_{\iso}^{-1}(\cJ)$, and bounding the cardinality of $\Xi(\{\cI\in\Phi_{\iso}^{-1}(\cJ) \,:\; \fm(\cI;\cJ)=M\})$. 

\medskip
\noindent \textbf{Construction of the witness.}
Fix any $\cJ\in \Phi_{\iso}(\bI_{\bW_n}\cap E_{x,S}^{h'})$. For each $\cI \in \Phi_{\iso}^{-1}(\cJ)$, our witness $\Xi_{\cJ,x}(\cI)$ will be a face subset of $\sF_{\mathbb Z^3}$ consisting of at most $1+ |\bY|$ many $*$-connected components, and decorated by a coloring of its faces by one of $\red,\blue,\green$, constructed as follows. 
\begin{enumerate}
    \item Include the faces of $\bX_{A}^{\cI}$ and color them $\green$. 
    \item Process the faces $y\in \bY$ via some lexicographic order $(y^1 , y^2 , \ldots,y^{|\bY|})$: for $i = 1,\ldots, |\bY|$,
\begin{itemize}
    \item Let the \blue\ faces of $\Upsilon_{y^i}$ be the set  
\[ \Gamma_{y^i} = \Theta_{\textsc{st}}\Clust(\tilde \fW_{y,S}) \setminus \bigcup_{j<i} \Gamma_{y^j}\,.\]
\item For each standard wall $W$ in $\Gamma_{y^i}\setminus \Theta_{\textsc{st}}\tilde \fW_{y^i,S}$, add to $\Upsilon_{y^i}$ the shortest path of \red\ faces of $\cL_{0,n}$ connecting $W$ to its innermost nesting wall in $\Theta_{\textsc{st}}\tilde \fW_{y^i,S}$. 
\item Also, connect the walls of $\Theta_{\textsc{st}}\tilde \fW_{y^i,S} \cap \Gamma_{y^i}$ by connecting, via a path of $\red$ faces in $\cL_{0,n}$ each one except the outermost one, to its innermost nesting wall among $\Theta_{\textsc{st}}\tilde \fW_{y^i,S} \cap \Gamma_{y^i}$. 
\end{itemize}
\end{enumerate}

\medskip
\noindent \textbf{Reconstructing $\cI$ from the witness.} To see that this yields a valid  ``witness" of the pre-image $\cI$ (i.e., is injective), we show that from a witness $\Xi(\cI)$ and the interface $\cJ$, one can uniquely reconstruct $\cI$. 

\begin{lemma}\label{lem:witness-injective}
For every $\cJ \in \Phi_\iso(\bI_{\bW})$, the map $\Xi_{\cJ,x}$ is injective on $\Phi_\iso^{-1}(\cJ)$. 
\end{lemma}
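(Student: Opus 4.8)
\textbf{Proof proposal for Lemma~\ref{lem:witness-injective}.}

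The plan is to exhibit an explicit reconstruction procedure that, given $\cJ$ and the decorated face-set $\Xi_{\cJ,x}(\cI)$, recovers $\cI$ uniquely; injectivity of $\Xi_{\cJ,x}$ is then immediate. The key observation is that $\Phi_\iso$ only modifies the interface in a controlled way: it deletes the standard walls in $\bigcup_{y\in\bY}\Theta_{\textsc{st}}\Clust(\tilde\fW_{y,S})$ from the standard wall representation of $\cI\setminus\cS_{x,S}$, adds the standard wall $\Theta_{\textsc{st}}W_{x,\parallel}^{\mathbf h}$, and replaces the spine $\cS_{x,S}$ by the new spine $\cS$ built from $v_1$ together with the surviving increments $\sX_{j^*+1},\ldots,\sX_{>\sT}$ (or trivializes it entirely if ({\tt A3}) was violated). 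Thus to invert $\Phi_\iso$ on a fiber $\Phi_\iso^{-1}(\cJ)$ it suffices to recover (a) the deleted standard walls together with their correct nesting positions, and (b) the portion $\bX_A^{\cI}=\bigcup_{1\le j\le j^*}\sF(\sX_j)$ of the original spine that was removed.

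First I would recover the deleted walls. From the $\blue$ faces of the witness one reads off the collection $\bigcup_{y\in\bY}\Gamma_{y}=\Theta_{\textsc{st}}\Clust(\tilde\fW_{y,S})$, i.e. the full set of deleted standard walls (as abstract standard walls, modulo their standardization). The $\red$ faces record, for each deleted standard wall $W$, a path in $\cL_{0,n}$ to its innermost nesting wall within the relevant nested sequence, and the walls of each $\Theta_{\textsc{st}}\tilde\fW_{y^i,S}\cap\Gamma_{y^i}$ are similarly chained to the outermost among them; since $\cJ$ determines $\hgt(\cC_\bW)$ and the standard wall representation of $\cJ\setminus\cP_{x,S}^{\cJ}$ (which agrees with that of $\cI\setminus\cS_{x,S}$ outside the deleted walls), these paths pin down unambiguously \emph{where} in the nesting structure of $\cJ$ each deleted wall is to be reinserted. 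Reinserting all of $\bigcup_{y}\Gamma_y$ into the standard wall representation of $\cJ\setminus\cP_{x,S}^{\cJ}$ and removing $\Theta_{\textsc{st}}W_{x,\parallel}^{\mathbf h}$ (identifiable from $\cJ$ as the unique straight column of vertical faces above $x$ supporting no interior structure, of height $\mathbf h$ readable from $\cJ$) reconstructs, via Lemma~\ref{lem:interface-reconstruction}, the truncated interface $\cI\setminus\cS_{x,S}$; together with the cut-point $v_1$ (which is $x+(0,0,\hgt(\cC_\bW)+\mathbf h+\frac12)$, hence determined) this fixes the base and the starting height of the spine.

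Next I would recover the spine. The $\green$ faces are precisely $\bX_A^{\cI}=\bigcup_{1\le j\le j^*}\sF(\sX_j)$, the deleted lower increments, rooted at $v_1$. The surviving increments $\sX_{j^*+1},\ldots,\sX_{>\sT}$ appear in $\cJ$ as $\bX_B^{\cJ}=\lrvec\theta\,\bX_B^{\cI}$; knowing $j^*$ and $v_{j^*+1}$ (which is $v_1+(0,0,\hgt(v_{j^*+1})-\hgt(v_1))$ with $\hgt(v_{j^*+1})-\hgt(v_1)$ equal to the number of $\green$-increments, and the horizontal position fixed since $v_{j^*+1}=v_1$ horizontally by the empty-base structure after step~10) we invert $\lrvec\theta$ by the shift in~\eqref{eq-theta-lr-def} to place these increments back above the $\green$ ones. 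Concatenating $v_1$, the $\green$ increments, and the shifted-back surviving increments rebuilds $\cS_{x,S}$ via Lemma~\ref{lem:spine-reconstruction}; appending it to the reconstructed $\cI\setminus\cS_{x,S}$ at $v_1$ gives $\cI$. (In the case ({\tt A3}) was violated there are no $\green$ faces and $\cS$ is a trivial stack in $\cJ$, so $\cS_{x,S}$ is recovered directly from $\cI\setminus\cS_{x,S}$ and no spine-reconstruction is needed; this degenerate case is handled identically.)

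The main obstacle is the bookkeeping in step (a): one must check that the $\red$ connecting paths, together with the standard wall representation of $\cJ$, really do determine the nesting location of each deleted wall without ambiguity—in particular that deleting $\Clust(\tilde\fW_{y,S})$ does not destroy the information needed to know which ceiling of which surviving (or reinserted) wall a given deleted wall was interior to. This is where one uses the one-sided structure of wall clusters: $\rho(\Clust(\bV))\subset\rho(\hull\bV)$ (Definition~\ref{def:wall-cluster} and the ensuing remark), so the deleted walls are confined to the hulls of the walls indexed by $\bY$, and the $\red$ paths of the witness, being shortest paths to innermost nesting walls, reconstruct the poset of nestings layer by layer in the lexicographic order $(y^1,\ldots,y^{|\bY|})$. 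I expect this to reduce to a straightforward but slightly tedious induction on that order, exactly parallel to the reconstruction argument behind the multiplicity bound in~\cite{GL19b}; once it is in place, injectivity of $\Xi_{\cJ,x}$ follows since distinct preimages $\cI,\cI'$ with $\Xi_{\cJ,x}(\cI)=\Xi_{\cJ,x}(\cI')$ would reconstruct to the same interface, a contradiction.
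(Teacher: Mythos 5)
Your proposal follows the same reconstruction strategy as the paper (recover the spine from the $\green$ faces and $\bX_B^{\cJ}$, recover the truncated interface from the $\blue$ faces plus $\cJ\setminus\cP_{x,S}^{\cJ}$, then glue). However, the paragraph you flag as the ``main obstacle'' is a non-issue, and the work you propose to do there---tracking nesting locations via the $\red$ paths---is unnecessary. The $\blue$ faces are the \emph{standardized} deleted walls $\Theta_{\textsc{st}}\Clust(\tilde\fW_{y,S})$, recorded with their actual positions (their projections sit in $\cL_{0,n}$), not ``abstract standard walls modulo standardization.'' By Lemma~\ref{lem:interface-reconstruction}, the map from admissible collections of standard walls to interfaces is a bijection, so one only needs the \emph{set} of standard walls: you take the standardizations of all walls of $\cJ\setminus\cP_{x,S}^{\cJ}$, add the $\blue$ faces, and Lemma~\ref{lem:interface-reconstruction} reconstitutes $\cI\setminus\cS_{x,S}$ with no ambiguity about ``where to reinsert'' anything---the nesting is encoded automatically by the projections. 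The $\red$ faces play no role in reconstruction; they are there only to make the witness a bounded number of $*$-connected components so that the enumeration in Lemma~\ref{lem:multiplicity-witness} goes through via Fact~\ref{fact:number-of-walls}. Your spine reconstruction is essentially right (match the paper's: read $v_{j^*+1}$ off the top of the $\green$ set, extract $\bX_B^{\cJ}$ as the cells of $\cP_{x,S}^{\cJ}$ above that height, and horizontally shift so the bottom of $\bX_B^{\cJ}$ aligns with the top of $\bX_A^{\cI}$). Similarly there is no need to ``identify and remove'' $W_{x,\parallel}^{\mathbf h}$ from $\cJ$: it belongs to $\cP_{x,S}^{\cJ}$ and is therefore already excluded when you restrict to $\cJ\setminus\cP_{x,S}^{\cJ}$.
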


\begin{proof}It suffices to show that from a given $\cJ$ and any element of  $\Xi_{\cJ, x}(\Phi_{\iso}^{-1}(\cJ))$ we can recover, uniquely, $\cI\in \Phi_\iso^{-1}(\cJ)$.  
From a witness in $\Xi_{\cJ,x}(\Phi_{\iso}^{-1}(\cJ))$, we reconstruct $\cI$ by reconstructing its spine $\cS_{x,S}$ together with the standard wall representation of $\cI \setminus \cS_{x,S}$. 
\begin{enumerate}
    \item In order to reconstruct the spine $\cS_{x,S}$:
    \begin{enumerate}
        \item Extract $\bX_A^\cI$ as the connected set of \green\ faces of $\Xi(\cI)$. 
        \item Extract $\bX_B^\cJ$ by taking (the bounding faces of) all cells in $\cP_{x,S}^{\cJ}$ above $\hgt(v_{j^\star+1})$ (this height is read off from $\bX_A^\cI$).
    \item Obtain $\cS_{x,S}$ by horizontally shifting $\bX_B^\cJ$ so that its bottom cell aligns with the top cell of~$\bX_A^\cI$.
    \end{enumerate}
    \item In order to reconstruct the standard wall representation of $\cI\setminus \cS_{x,S}$:
    \begin{enumerate}
    \item Collect the standardizations of all walls of $\cJ\setminus \cP_{x,S}^{\cJ}$, and add all $\blue$ faces of $\Xi(\cI)$.
    \end{enumerate}
\end{enumerate}
Given $\cS_{x,S}$ and the standard wall collection $\Theta_{\textsc{st}}(\tilde W_z)_{z\in \cL_{0,n}}$, we obtain $\cI$ by first recovering the interface $\cI \setminus \cS_{x,S}$ via Lemma~\ref{lem:interface-reconstruction}, then appending to that $\cS_{x,S}$. 
\end{proof}

\noindent \textbf{Enumerating over possible witnesses.} 
It remains to enumerate over the set of all possible witnesses of interfaces in $\Phi_{\iso}^{-1}(\cJ)$ with excess area $\fm(\cI;\cJ) = M$ and show it is at most exponential in $M$. 
\begin{lemma}\label{lem:multiplicity-witness}
There exists some universal $C_\Phi>0$ such that for every $M\geq 1$, and every $\cJ \in \Phi_{\iso}(\bI_{\bW})$,
\begin{align*}
|\{\Xi_{\cJ,x} (\cI): \cI\in \Phi^{-1}_{\iso}(\cJ)\,,\, \fm(\cI;\cJ) = M\}| \leq C_\Phi^{L^3 M}\,.
\end{align*}
\end{lemma}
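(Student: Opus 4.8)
\textbf{Proof plan for Lemma~\ref{lem:multiplicity-witness}.}
The plan is to count witnesses by bounding their total number of faces in terms of $M=\fm(\cI;\cJ)$, and then invoking Fact~\ref{fact:number-of-walls} to enumerate coloured $*$-connected face-sets of that size. First I would establish that every witness $\Xi_{\cJ,x}(\cI)$ with $\fm(\cI;\cJ)=M$ has at most $C L^3 M$ faces. The witness has three types of faces. The $\green$ faces form $\bX_A^\cI=\bigcup_{1\le j\le j^*}\sF(\sX_j)$; by~\eqref{eq:increment-excess-inequalities} we have $|\sF(\sX_j)|\le 3\fm(\sX_j)+4$, so using $\sum_{j\le j^*}\fm(\sX_j)\le \fm(\cI;\cJ)$ (from~\eqref{eq:|W_x^J|}) and the bound $j^*-1\le(2\vee L^3)\fm(\cI;\cJ)$ from~\eqref{eq:m(I;J)-XAJ}, the number of $\green$ faces is at most $3M + 4(1+(2\vee L^3)M)\le C L^3 M$. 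The $\blue$ faces are exactly $\Theta_{\textsc{st}}\Clust(\bigcup_{y\in\bY}\tilde\fW_{y,S})$, i.e.\ $\Theta_{\textsc{st}}\bigcup_{y\in\bD}\tilde W_y = \Theta_{\textsc{st}}\bV$; by~\eqref{eq:excess-area-wall-relations}, $|\bV|\le 2\fm(\bV)$, and~\eqref{eq:m(I;J)-WxJ} gives $\fm(\bV)\le 3\fm(\cI;\cJ)=3M$, so the number of $\blue$ faces is at most $6M$. Finally the $\red$ faces are the connecting paths; for each deleted wall $W$ in some $\Gamma_{y^i}$ the path has length at most $d_\rho(W,\cdot)\le\fm(W)$ by the definition of the ceiling/wall cluster (Definitions~\ref{def:closely-nested}--\ref{def:wall-cluster}), and the paths connecting walls of $\Theta_{\textsc{st}}\tilde\fW_{y^i,S}$ within a single $\Gamma_{y^i}$ are also bounded by the relevant excess areas; summing, exactly as in the proof of Lemma~\ref{lem:wall-cluster-multiplicity}, the total $\red$ length is at most $\sum_{y\in\bD}\fm(\tilde W_y)+\sum_{y\in\bY}\fm(\tilde W_y) \le 2\fm(\bV)\le 6M$. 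Hence the witness has at most $C L^3 M$ faces in total.

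Next I would count the number of such decorated face-sets. A witness consists of at most $1+|\bY|$ many $*$-connected components, each containing a distinguished standardized wall $\Theta_{\textsc{st}}\tilde\fW_{y^i,S}$ or (for the $\green$ component) being anchored at $x$; the components are anchored at vertices determined by $\cJ$ and the ordering of $\bY$ (each $y^i$ is the index of a wall of $\cJ\setminus\cP_{x,S}^{\cJ}$, hence determined by $\cJ$ once the component is known). Since $|\bY|\le|\Clust(\bV)|\le 2\fm(\bV)\le 6M\le CL^3M$, we may first partition the budget of $CL^3M$ faces among the $\le 1+|\bY|\le CL^3M$ components, then for each component enumerate over $*$-connected coloured face-sets rooted at a fixed face of that size. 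By Fact~\ref{fact:number-of-walls} the number of $*$-connected face-sets of size at most $N$ through a fixed face is at most $C^N$, and the $3^N$ colourings multiply this by a constant factor per face; the number of ways to partition $CL^3M$ into $\le CL^3M$ ordered parts is at most $2^{CL^3M}$. Multiplying these contributions over all components, the total count is bounded by $C_\Phi^{L^3M}$ for a suitable universal constant $C_\Phi$, which is the claim. Finally, combining Lemma~\ref{lem:witness-injective} (injectivity of $\Xi_{\cJ,x}$) with this enumeration yields Proposition~\ref{prop:multiplicity}.

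\textbf{Main obstacle.} The only genuinely delicate point is verifying that the $\red$ connecting-path lengths are controlled by the excess areas of the deleted walls, so that the total witness size stays linear in $M$ (with the $L^3$ factor coming solely from the increment count $j^*$). This is where the one-sided \emph{wall cluster} construction is essential: a wall $W$ enters $\Clust(\cC)$ only when $d_\rho(\partial\hull\cC,W)\le\fm(W)$, so the connecting path from $W$ to its innermost nesting deleted wall has length at most $\fm(W)$, and since distinct walls have disjoint projections (Remark~\ref{rem:excess-area-properties}) these path-lengths sum, with multiplicity one, to at most $\fm(\Clust(\bV))=\fm(\bV)$. This is precisely the computation already carried out in the proof of Lemma~\ref{lem:wall-cluster-multiplicity}, and the present lemma reuses it essentially verbatim, the only new ingredient being the $\green$ faces of the spine, whose count is governed by Claim~\ref{clm:m(W)}.
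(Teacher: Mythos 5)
Your overall framework is right — bound the total face count, count rooted coloured $*$-connected sets via Fact~\ref{fact:number-of-walls} — and the face-count bounds for the $\green$, $\blue$, and $\red$ faces are all correct. However, there is a genuine gap in the root-enumeration step, and it is exactly where the paper's proof spends its main effort.

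You claim the components $\Upsilon_{y^i}$ "are anchored at vertices determined by $\cJ$ and the ordering of $\bY$ (each $y^i$ is the index of a wall of $\cJ\setminus\cP_{x,S}^{\cJ}$, hence determined by $\cJ$ once the component is known)." This is false: $y^i\in\bY$ indexes a wall of $\cI\setminus\cS_{x,S}$ that was \emph{marked for deletion}, so it is removed in passing to $\cJ$. In general $\cJ$ has no wall at $y^i$; those index points correspond to flat ceiling faces of $\cJ$. Consequently, the locations of the roots $y^i$ are not recoverable from $\cJ$, and you must enumerate them. Without a constraint, each root can be anywhere in $\Cyl_{x,L^3h}$ (step~6 of $\Phi_\iso$ ranges over that cylinder), so with up to $CM$ roots the naive count is $(L^3 h)^{CM}\cdot C^{L^3 M}$, which depends on $h$ (and $h$ can be as large as $\frac{1}{\sqrt{\beta}}\log s_n$) — not $C_\Phi^{L^3 M}$.

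The missing idea, which is the crux of the paper's argument, is to exploit the deletion criterion of step~6: if $y^i\in\bY_2$ satisfies $d(y^i,x)>L$, then it was marked precisely because $\fm(\tilde W_{y^i})\ge\log[d(y^i,x)]$. Since $\tilde W_{y^i}\subset\Upsilon_{y^i}$, this forces $M_i:=|\Upsilon_{y^i}|\ge\log[d(y^i,x)]$, i.e., $d(y^i,x)\le e^{M_i}$. The paper then partitions $\Cyl_{x,L^3 h}$ into dyadic rings $\cA_\ell$, uses $N_\ell\le M_\ell/\ell$ (the number of roots in $\cA_\ell$ is at most the total budget in that ring divided by $\ell$), and bounds the number of placements by $\binom{2^{2\ell}}{N_\ell}\binom{M_\ell+N_\ell+1}{N_\ell}\le 2^{4M_\ell}$, which multiplies over $\ell$ to a $C^M$ factor. (The special roots in $\bY_1=\{x,\rho(v_1),y^\dagger,y^*\}$ are handled separately, with at most $\sim M^3$ choices each, using the constraints imposed by steps 2, 3, and 5 of $\Phi_\iso$.) Your proof as written omits this entire step, so the resulting bound cannot be made independent of $h$.
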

Combining the above lemma with Lemma~\ref{lem:witness-injective} would immediately imply the desired Proposition~\ref{prop:multiplicity}. 

\begin{proof}[\textbf{\emph{Proof of Lemma~\ref{lem:multiplicity-witness}}}]
Evidently it suffices to separately enumerate over the number of choices for $\green$ faces, $\bX_{A}^{\cI}$, and the number of $\{\red,\blue\}$ colored faces constituting $\bigcup_{i=1 ,\ldots,|\bY|} \Upsilon_{y^i}$ and show that each of these are at most $C^{L^3 M}$ for some universal constant $C$. 

\medskip
\noindent \emph{Enumerating over the faces in $\bX_{A}^{\cI}$.}
Let us first enumerate over the choices of $\green$ faces, i.e., faces of $\bX_{A}^{\cI}$. For every witness of an $\cI$ having $\fm(\cI;\cJ) = M$,  $\bX_{A}^{\cI}$ forms a $*$-connected face-set of size at most $|\bX_{A}^{\cI}|\le j^* + \sum_{j=1}^{j^*} \fm(\sX_j)\le 6L^3 \fm(\cI;\cJ)$ by Claim~\ref{clm:m(W)}. This face-set is rooted at $v_1$; to enumerate over the number of choices of $v_1$, notice that it must be in a ball of radius $\fm(\tilde \fW_{v_1,S}) \le 3M$ of $x+ (0,0,\hgt(\cC_\bW))$, and thus crudely there are at most $(3M)^3$ such choices.  Putting these together, by Fact~\ref{fact:number-of-walls} the number of choices for $\bX_{A}^{\cI}$ is at most $CM^3 s^{ 6L^3 M}$.    

\medskip
\noindent \emph{Counting the number of faces in $\Upsilon_{y^i}$.}
Let us now enumerate over the choices of $\blue$ and $\red$ faces of $\Xi(\cI)$. We first bound the total number of $\blue$ and $\red$ faces; the number of $\blue$ faces of $\Xi(\cI)$ is clearly at most $|\bV|\le 2\fm(\bV)\le 3M$ by Claim~\ref{clm:m(W)}. 

We can bound the number of $\red$ faces by bounding the number of $\red$ faces in each $\Upsilon_{y^i}$ and summing over $i = 1,\ldots, |\bY|$. Consider first the number of red faces added to connect walls in $\Gamma_{y^i}\setminus \tilde \fW_{y^i,S}$ to their innermost nesting walls in $\tilde \fW_{y^i,S}$. For every such wall $W$, it is in $\bV$ because it is in the wall-cluster of some wall of $\tilde \fW_{y^i,S}$, while not being in $\tilde \fW_{y^i,S}$; in particular, it is closely nested in some wall of $\tilde \fW_{y^i,S}$, and therefore the number of faces we added here were at most $\fm(W)$. Since no wall is double counted, the total contribution from such added $\red$ faces is at most $\sum_{y\in \bD\setminus \bY} \fm(\tilde W_y) \le 6\fm(\cI;\cJ)$ by Claim~\ref{clm:m(W)}. 

To bound the number of red faces added to connect walls $W$ of $\Theta_{\textsc{st}}\tilde \fW_{y^i,S}\cap\Gamma_{y^i}$ notice that every wall in $\bigcup_{y^i\in \bY} \tilde \fW_{y^i,S}$ is connected to at most one wall interior to it by $\red$ faces in this step of the processing. The distance of a standard wall $W$ to another standard wall interior to it $W'$ along $\cL_{0,n}$ is at most $\fm(W)$ so that the total number of $\red$ faces added in this stage is $\fm(\bigcup_{y\in \bY} \tilde \fW_{y,S})$ which is at most $6\fm(\cI;\cJ)$ by Claim~\ref{clm:m(W)}. 

\medskip
\noindent \emph{Enumerating over the sets $\Upsilon_{y^i}$.}
By the above, the total number of faces in $\bigcup_{i} \Upsilon_{y^i}$ is $CM$. At the same time, by construction, every $\Upsilon_{y^i}$ is a $*$-connected set of faces, rooted at $y^i$, and the number of such roots is at most $|\bY|\le CM$. In order to enumerate over the number of face subsets $\bigcup_{i}\Upsilon_{y^i}$, let us begin by enumerating over the locations of the at most $CM$ roots, and then over the $*$-connected face-sets $\Upsilon_{y^i}|$ each of size $M_i$ such that $\sum_{i} M_i \le CM$. Towards this, split $\mathbf Y$ into $\bY_1$ and $\bY_2$ where $\bY_1 = \{x,\rho(v_1), y^\dagger,y^*\}$ and $\bY_2 = \bY \setminus \bY_1$ are the indices marked by step 6 of $\Phi_\iso$.  
 It suffices to separately enumerate over $(\Upsilon_{y^i})_{y^i\in \bY_1}$ and $(\Upsilon_{y^i})_{y^i\in \bY_2}$.
 
 We claim that for each of the at-most four elements $y \in \bY_1$ having $M_i>0$, there are at most $M^3$ choices for $y$. Evidently if $y  = \{x\}$ there is no choice here. If $y \in \{\rho(v_1),y^\dagger\}$, then $y$ must be nested in $\tilde \fW_{x,S}$, and thus the number of such choices is at most $\fm(\fW_{x,S})^2 \le \fm(\bV)^2\le CM^2$. If $y = \{y^*\}$, then $d_\rho(y^*, \bX_{A}^\cI) \le j^*$, so that $d(y^*,x)\le j^* + |\bX_A^\cI|\le CL^3 M$ by  Claim~\ref{clm:m(W)} and the number of such choices of $y^*$ is at most $(CM)^2$. To each of these, we allocate a $*$-connected rooted face set $(\Upsilon_{y})_{y\in \bY_1}$ of size at most $M_i \le M$, so that by Fact~\ref{fact:number-of-walls}, the number of such choices is at most $C^{4M}$. 
 
We now turn to the contribution from index points in $\bY_2$. Observe, importantly, that for every $y^i\in \bY_2$ for which $M_i>0$, it must be that $M_i \ge \log [d(y^i,x)]$ if $d(y^i,x)\ge L$.
Let us split up $B_{L^3 h}(x)\cap \cL_{0,n}$, into rings $\mathcal A_\ell = B_{2^{\ell+1}}(x)\cap \cL_{0,n} - B_{2^\ell}(x)\cap \cL_{0,n}$. We first enumerate over the number of $y^i$ in $\mathcal A_\ell$ for each $\ell\le C'M$ (we only need to go up to $C'M$ as $\log [2^{C'M}]$ exceeds the total number of faces we have to allot). The number of allocations of at most $|\bY_2|\le |\bY| \le CM$ index points to at most $C'M$ rings is bounded by $2^{(C+C')M}$. Given such an allocation, let $N_\ell$ be the number of root-points in $\mathcal A_\ell$, let $M_\ell = \sum_{i: y^i\in \mathcal A_\ell} M_i$ and enumerate over the number of allocations of $M_\ell$ faces to $N_\ell$ index points. 

For $\ell \le \log_2 L$, there are trivially at most $M^{L^2}$ such allocations. 
Now consider $\ell \ge \log_2 L$: for $y^i \notin B_{L}(x)$, since $M_i \ge \log[d(y^i,x)]$, also $N_\ell \le M_\ell/\ell$. It follows that there are at most 
\begin{align*}
    \binom{2^{2\ell}}{N_\ell} \binom{M_\ell + N_\ell + 1}{N_\ell}\le 2^{2\ell N_\ell} 2^{M_\ell + N_\ell +1} \le 2^{4M_\ell}
\end{align*}
many such choices. 
Multiplying over $\ell$, this implies that the number of possible choices of $(y^i,M_i)$ is at most $C^M$ for some constant $C$. Enumerating over the $*$-connected face sets of size at most $M_i$ rooted at $y^i$, gives a factor of $C^{M_i}$ for each of these by Fact~\ref{fact:number-of-walls}. 

Finally, enumerating over the $\{\red,\blue\}$ colorings of these faces and collecting an additional $2^M$, we find that there exists $C>0$ such that there are at most $C^{L^3 M}$ possible choices of $(\Upsilon_{y^i})_{i = 1,\ldots,|\bY|}$. 
\end{proof}

\subsection*{Acknowledgments} 
R.G.~thanks the Miller Institute for Basic Research in Science for its support. E.L.~was supported in part by NSF grant DMS-1812095.

\bibliographystyle{abbrv}

\bibliography{references}

\end{document}